\newcommand{\Vol}{\mathrm{Vol}\,}
\newcommand{\self}{\circlearrowleft}
\newcommand{\eqdef}{:=}
\newcommand{\norm}[2][{}]{\left\|#2\right\|_{#1}}
\newcommand{\pair}[2]{\left\langle #1,#2 \right\rangle}
\newcommand{\R}{\mathbf{R}}
\newcommand{\C}{\mathbf{C}}
\newcommand{\cp}{\mathbf{P}}
\newcommand{\Z}{\mathbf{Z}}
\newcommand{\N}{\mathbf{N}}
\newcommand{\supp}{\mathrm{supp}\,}
\newcommand{\cC}{\mathcal{C}}
\newcommand{\cE}{\mathcal{E}}
\newcommand{\cS}{\mathcal{S}}
\newcommand{\HR}{H_\R^{1,1}}
\newcommand{\bdd}{H^{1,1}_{bdd}}
\newcommand{\psef}{H^{1,1}_{psef}}
\newcommand{\nef}{H^{1,1}_{nef}}
\newcommand{\kod}{\operatorname{kod}}
\renewcommand{\div}{\operatorname{div}}
\newcommand{\om}{\omega}
\newcommand{\mass}[1]{\mathbf{M}[#1]}
\newcommand{\set}[1]{\left\{#1\right\}}
\newcommand{\abs}[1]{\left\vert#1\right\vert}
\newtheorem{thm}{Theorem}[section]
\newtheorem{lem}[thm]{Lemma}
\newtheorem{defn}[thm]{Definition}
\newtheorem{cor}[thm]{Corollary}
\newtheorem{rem}[thm]{Remark}
\newtheorem{eg}[thm]{Example}
\newtheorem{prop}[thm]{Proposition}
\newtheorem{theo}{Theorem}
\newcommand{\unsur}[1]{\frac{1}{#1}}
\newcommand{\rest}[1]{ \arrowvert_{#1}}
\def\1{\mathbf{1}}
\title[From cohomology to currents]{Dynamics of Meromorphic Maps with Small Topological Degree I: From Cohomology to Currents}
\date{\today}
\author{Jeffrey Diller,  Romain Dujardin and Vincent Guedj}
\address{Department of Mathematics\\
         University of Notre Dame\\
         Notre Dame, IN 46556}
\email{diller.1@nd.edu}
\address{UFR de Math\'ematiques et Institut de Math\'ematiques de Jussieu \\
         Universit\'e Paris Diderot \\
         Case 7012 \\
         2 Place Jussieu \\
         75251 Paris Cedex 05 \\
         France}
\email{dujardin@math.jussieu.fr}
\address{Universit\'e Aix-Marseille 1 \\ LATP \\ 13453 MARSEILLE Cedex 13 \\  France}
\email{guedj@cmi.univ-mrs.fr}
\thanks{First author supported by National Science Foundation grant DMS06-53678}
\subjclass{37F10, 32H50, 14E07}
\keywords{complex dynamics, meromorphic maps, complex surfaces, positive closed currents}
\begin{document}
\begin{abstract}
We consider the dynamics of a meromorphic map on a compact K\"ahler surface whose topological degree is smaller than its first dynamical degree.  The latter quantity is the exponential rate at which iterates of the map expand the cohomology class of a K\"ahler form. Our goal in this article and its sequels is to carry out a program for constructing and analyzing a natural measure of maximal entropy for each such map.  Here we take the first step, using the linear action of the map on cohomology to construct and analyze invariant currents with special geometric structure.  We also give some examples and consider in more detail the special cases where the surface is irrational or the self-intersections of the invariant currents vanish.
\end{abstract}
\maketitle

\section*{Introduction}
Throughout this paper we consider the dynamics of a meromorphic map $f:X\dashrightarrow X$ on a compact connected K\"ahler surface $X$.  Various categories of such maps have been studied from a dynamical point of view for more than twenty years now, beginning in particular with holomorphic 
self-maps \cite{FoSi} of the projective plane $\cp^2$ and polynomial automorphisms of $\C^2$ \cite{bs1, fs-henon,BLS}.  Gradually, there has emerged a clear conjectural  picture concerning the ergodic behavior of generic $f$ \cite{G5}.  The reader is referred to the surveys \cite{Sib, G6} for a more comprehensive discussion.

Though it might not be continuously defined at all points, the meromorphic map $f$ induces natural pullback and pushforward actions $f^*,f_*:H^*(X,\R)\self$ on the cohomology groups of $X$.
A well-known idea of Gromov \cite{gromov} shows that the topological entropy of $f$ is bounded above by $\lim_{n\to\infty} \frac1n \log \norm{(f^n)^*}$.  Conjecturally, equality holds. The action on cohomology can be seen as a way of keeping track of how fast the volumes of compact subvarieties are expanded by iterates of $f$.  In particular, meromorphic maps on surfaces fall into two classes: those with `large topological degree' that expand points faster, i.e. for which $f^*:H^4(X,\R)\self$ is the dominant action; and those with `small topological degree' that expand curves more quickly, i.e. for which $f^*:H^2(X, \R)\self$ predominates.  

To state the distinction more precisely, we let $\lambda_2(f)$ denote the {\em topological degree} of $f$, that is, the number of preimages of a generic point; and we let $\lambda_1(f)\eqdef \lim_{n\to\infty} \norm{(f^n)^*|_{H^{2}(X)}}^{1/n}$ denote the \emph{(first) dynamical degree}.  Then we say that $f$ has \emph{small topological degree} if $\lambda_2(f) < \lambda_1(f)$. A delicate point which must be underlined here is that on $H^2(X,\R)$, the equality $(f^n)^*=(f^*)^n$ is not true in general \cite{FoSi, Sib}.  This is due to the fact that our mappings have indeterminacy points. We say that $f$ is {\em 1-stable} if equality holds for all $n$.
 
 \medskip

The reverse of Gromov's inequality for entropy has been completely justified for maps with large topological degree \cite{briend-duval, DS, G1}. The idea is that equidistributing Dirac masses over the iterated preimages of a generic point gives rise to a convergent sequence of measures, whose limit has maximal entroy (among other good properties). For maps with small topological degree, one hopes to arrive at an interesting invariant measure by choosing two generic curves $C,C'\subset X$ and considering something like the sequence of measures
$$
\frac{f^{-n} (C)\wedge f^n(C')}{\lambda_1(f)^{2n}}.
$$
The wedge product can be understood here as a sum of Dirac masses at intersection points. Of course, the analysis and geometry of such measures is much more involved than those obtained by pulling back points.  The present work and its sequels \cite{part2, part3} are largely devoted to overcoming this extra difficulty.

\medskip

Our approach follows one used in the invertible (i.e. bimeromorphic) case.  A bimeromorphic map has small topological degree as soon as $\lambda_1 > 1$.  A broad class of such maps has been successfully analyzed (see \cite{BLS, Ca, DF, BD, Du4}) in the following fashion.

\begin{enumerate}
\item[-] Step 1: find  a birational model of $X$ where (the conjugate of)
 $f$ becomes 1-stable.
\item[-] Step 2: analyze the action on cohomology and construct a  $f^*$ (resp $f_*$) invariant and `attracting' current $T^+$ (resp.  $T^-$) with special geometric properties.
\item[-] Step 3: give a reasonable meaning to the wedge product $T^+\wedge T^-$, both from the analytic and the geometric points of view. 
This results in a positive measure $\mu$.
\item[-] Step 4: study the dynamical properties of $\mu$.
\end{enumerate}
The only step which remains incomplete in the bimeromorphic setting is Step 3.

In this paper and its sequels, we will completely  carry out Steps 2 (this paper)  and  4 \cite{part3} for arbitrary mappings of  small topological degree, and achieve Step 3 \cite{part2} for a class of meromorphic maps that goes beyond what has previously been considered even in the bimeromorphic case.
In each step, going from $\lambda_2=1$ to arbitrary $1\leq \lambda_2<\lambda_1$ brings up serious difficulties.

We stress that we will not address Step 1, which remains open in general.  Rather, we take $1$-stability as a standing hypothesis on our maps.  However, Favre and Jonsson \cite{FJ2} have recently shown that on passing to an iterate, any polynomial map of $\C^2$ with small topological degree becomes $1$-stable on some compactification of $\C^2$.  Moreover, our results in \cite{part2} suffice completely for Step 3 in the polynomial case.  Hence for polynomial maps of $\C^2$, our results and those in \cite{FJ2} can be viewed as a maximum possible generalization 
of the work completed in \cite{BLS} for polynomial automorphisms.

\medskip
\begin{center}$\diamond$\end{center}
\medskip

Let us review the results of this paper in more detail. The reader may consult \cite{part2, part3} for more about Steps 3 and 4. 

As already noted, the main purpose of this paper is to construct invariant currents and prove convergence theorems. 
To appreciate the level of generality in our results, one should note that even if we were to begin with a map of $\mathbf{P}^2$, the need for $1$-stability might lead us to a new rational surface with much more complicated geometry.  In section \ref{sec:merom} we consider in detail the spectral behavior of the action $f^*$ on $H^{1,1}_\R(X)$. It is known that when $f$ is $1$-stable  and of small topological degree, 
there is a unique (up to scale) nef class $\alpha^+\in H_{\R}^{1,1}(X)$ such that $f^*\alpha^+ = \lambda_1\alpha^+$ 
and that all other eigenvalues of $f^*$ are dominated by $\sqrt{\lambda_2}$.  

We break our first new ground by looking for a good positive current to represent $\alpha^+$. If $\alpha^+$ belongs to the interior of the nef cone, it is represented by a K\"ahler form and therefore much easier to deal with.  Finding a suitable representative for a class on the boundary of the nef cone is an important problem in complex geometry and can be quite difficult.  Demailly et al (see e.g. \cite[Section 2.5]{Dem2} and references therein) have paid much attention to this issue.  We resolve the problem for $\alpha^+$ in a fashion that is, to our knowledge, new.

\begin{theo}\label{theo:bounded} Let $f$ be a 1-stable meromorphic map of small topological degree $\lambda_2<\lambda_1$.  Then the invariant class $\alpha^+$ is represented by a positive closed $(1,1)$ current with bounded potentials. The same is true for the analogous class $\alpha^-$ invariant under $\lambda_1^{-1} f_*$.
\end{theo}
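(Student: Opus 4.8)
The plan is to construct the bounded potential representative of $\alpha^+$ by a direct iteration/averaging scheme, exploiting the spectral gap $\sqrt{\lambda_2}<\lambda_1$ together with the fact that the non-leading part of $f^*$ can absorb the error terms. First I would fix a K\"ahler form $\om$ on $X$ normalized so that $\{\om\}$ and $\alpha^+$ are comparable, and consider the classes $\lambda_1^{-n}(f^n)^*\{\om\}$. By $1$-stability $(f^n)^*=(f^*)^n$, so the spectral decomposition of $f^*$ applies: writing $\{\om\}=c\,\alpha^+ + \beta$ with $\beta$ in the $f^*$-invariant complement on which $f^*$ has spectral radius $\le\sqrt{\lambda_2}$, we get $\lambda_1^{-n}(f^n)^*\{\om\}\to c\,\alpha^+$ with an error of size $O((\sqrt{\lambda_2}/\lambda_1)^n)$, which is summable. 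On the level of forms, $\lambda_1^{-n}(f^n)^*\om = c\,\om_n + dd^c u_n$ for suitable closed positive currents / smooth forms, and the strategy is to show the telescoping sum of the $dd^c$-potentials converges uniformly.

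The key step is to get a \emph{uniform} (not just $L^1$) estimate on the potentials arising at each stage. Here is where the small-topological-degree hypothesis enters decisively: the difficulty with pulling back $(1,1)$-forms through the indeterminacy points of $f$ is that $f^*\om$ acquires a local potential that is bounded (since $f$ is holomorphic off a finite set and the pullback of a smooth form by a meromorphic map on a surface has locally bounded potential near points), and the quasi-psh functions $g_n$ defined by $f^*\om_{n-1} = \lambda_1 \om_n + dd^c g_n$ can be normalized and controlled in $L^\infty$ using that $f_*\om \le C\om$ combined with the compactness of the space of $\om$-psh functions normalized by $\sup = 0$ (so their $L^1$ norms are uniformly bounded) and a bootstrap from the $dd^c$-equation. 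Concretely, I would show $\norm[\infty]{g_n}\le C$ uniformly in $n$, then set $\varphi = \sum_{n\ge 1}\lambda_1^{-n} (f^{n-1})_* g_n$ type series — more precisely, build the potential for the limit current by summing $\lambda_1^{-n}$ times the pullback-corrections, and check geometric convergence in $L^\infty$ using the spectral gap to beat the (at most polynomially growing, by small topological degree) $L^\infty$-norms of the summands. The resulting $u$ satisfies $T^+ := \theta^+ + dd^c u \ge 0$ with $\theta^+$ a fixed smooth form in $\alpha^+$ and $u\in L^\infty$, and $f^*T^+ = \lambda_1 T^+$ by construction.

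The main obstacle I anticipate is precisely the uniform $L^\infty$ control of the correction potentials $g_n$: a priori $f^*$ on forms only behaves well in an $L^1$/cohomological sense, and near indeterminacy points the naive potential of $f^*\om$ is only bounded, not continuous, with a constant that could in principle grow under iteration. One must quantify how the bound degrades — and the whole point is that it degrades no faster than a power of $\sqrt{\lambda_2}$ (this is where $\lambda_2<\lambda_1$ is used, not merely $\lambda_1>1$), so that division by $\lambda_1^n$ still yields a convergent series. I expect this to require a careful local analysis near $I(f^n)$, using that the indeterminacy points contribute a controlled number of blow-ups and that the relevant potentials are comparison-bounded by $\log$ of distances to finite sets, whose sup-norms over a fixed compact $X$ are uniformly bounded. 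Once the $L^\infty$ bound is in hand, convergence and invariance are formal. Finally, applying the same argument to $\lambda_1^{-1}f_*$ (equivalently, to $f$ with the roles of $f^*$ and $f_*$ exchanged, noting that $f$ $1$-stable of small topological degree implies the analogous spectral picture for $f_*$, with leading eigenvalue again $\lambda_1$ and subdominant part $\le\sqrt{\lambda_2}$) gives the bounded-potential representative $T^-$ of $\alpha^-$.
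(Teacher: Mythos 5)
There is a genuine gap, and it sits at the heart of your argument: the claim that ``the pullback of a smooth form by a meromorphic map on a surface has locally bounded potential near points'' is false. If $p\in I_f$, then $f^*\omega$ has Lelong number at $p$ comparable to $\pair{\omega}{f(p)}>0$ (this is Proposition \ref{pullback lelong 2}), so its potential has a logarithmic pole at $p$ and is unbounded below. Worse, the invariant current $T^+=\lim_n\lambda_1^{-n}f^{n*}\omega$ that your telescoping scheme would produce has strictly positive Lelong numbers at every point whose orbit meets a non-spurious indeterminacy point, so it is \emph{not} a bounded-potential representative of $\alpha^+$: no uniform $L^\infty$ bound on your correction potentials $g_n$ can hold, since their sum develops genuine $\log$-singularities. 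The theorem asserts only that \emph{some} positive current in the class $\alpha^+$ has bounded potential, and that current must differ from $T^+$; an approach that tries to prove uniform convergence of the potentials of $\lambda_1^{-n}f^{n*}\omega$ is aimed at the wrong object.

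The actual route is therefore structurally different and cannot be recovered by sharpening your estimates. One first shows (Lemmas \ref{negative1}--\ref{negative2} and Proposition \ref{uniform upper bound}, the latter via a polyhedral/convexity argument on the nef cone) that the potentials $V_{f^*\theta}$ are uniformly bounded \emph{above}. The unboundedness below is then removed by surgery near $I_f$: one replaces the potential of $f^*T$ by $\max\{V_T\circ f+V_{f^*\theta_T},\,-R-\rho\}$ on a neighborhood of $I_f$, producing a new positive current in the same class with controlled oscillation. This yields the key recursion $\norm[bdd]{f^*\alpha}\le\norm[bdd]{\alpha}+C\norm[H^{1,1}]{\alpha}$ (and $\norm[bdd]{f_*\alpha}\le\lambda_2\norm[bdd]{\alpha}+C\norm[H^{1,1}]{\alpha}$), which summed over iterates gives $\norm[bdd]{(f^*)^n\omega_X}=O(\lambda_1^n)$; dividing by $\lambda_1^n$ and extracting a weak limit from currents with uniformly bounded potentials gives the representative. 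Note also that the hypothesis $\lambda_2<\lambda_1$ does not enter where you place it: for $\alpha^+$ only $r_1^2>\lambda_2$ is needed (the pullback recursion has coefficient $1$), whereas $\lambda_2<\lambda_1$ is used only for $\alpha^-$, because pushforward multiplies suprema by $\lambda_2$ and one needs $\lambda_2^n=o(\lambda_1^n)$ for the recursion to close.
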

\medskip

\noindent 

Positive representatives for $\alpha^+$ and $\alpha^-$ with bounded potentials will serve as a starting point for the sequel \cite{part2} to this paper.  Here they give us a convenient way to construct the invariant currents $T^+$ and $T^-$ referred to in Step 2 from the outline above.  Actually, we prove a somewhat more general version (Theorem \ref{thm:bounded}) of Theorem \ref{theo:bounded} in which $1$-stability is unnecessary, and the hypothesis $\lambda_2<\lambda_1$ is needed only to deal with $\alpha^-$.

Section \ref{sec:T+} is devoted to constructing and analyzing the current $T^+$. Over the course of the section, we prove 

\begin{theo}
\label{main 1}
Let $f:X\self$ be a $1$-stable meromorphic map with small topological degree $\lambda_2 < \lambda_1$.
There is a positive closed $(1,1)$ current $T^+$ representing $\alpha^+$ such that for any K\"ahler form $\omega$ on $X$,
$$
\lim_{n\to\infty} \lambda_1^{-n} f^{n*} \omega = cT^+
$$
for some $c > 0$.  In particular, $f^* T^+ = \lambda_1 T^+$, and $T^+$ has minimal singularities among all such invariant currents.

If moreover $X$ is projective, then  $T^+$ is a laminar current.
\end{theo}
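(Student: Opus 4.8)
The plan is to build $T^+$ out of the bounded-potential representative $\theta^+$ furnished by Theorem~\ref{theo:bounded}, then to promote this to the convergence of $\lambda_1^{-n}f^{n*}\omega$ for an arbitrary K\"ahler form, and finally to exhibit $T^+$ as a limit of normalised algebraic curves so as to apply the structure theory of laminar currents.

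\emph{Construction and convergence.} Write $\theta^+=\theta_0^++dd^cu^+$ with $\theta_0^+$ smooth in $\alpha^+$ and $u^+\in L^\infty(X)$, and put $T_n\eqdef\lambda_1^{-n}f^{n*}\omega$. By the spectral analysis of Section~\ref{sec:merom} and $1$-stability, $[T_n]=\lambda_1^{-n}(f^*)^n\{\omega\}\to c\,\alpha^+$ with $c>0$ (the leading component of a K\"ahler class is positive since $f^*$ preserves the closed nef cone, $\alpha^+$ spans its unique $\lambda_1$-eigenray, and the remaining spectrum is dominated by $\sqrt{\lambda_2}<\lambda_1$); in particular $(T_n)$ has bounded mass, hence is relatively compact, and every cluster value is a positive closed current of class $c\,\alpha^+$. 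The heart of the matter is that all cluster values coincide. From $f^*T_n=\lambda_1T_{n+1}$ and the continuity of $f^*$ on positive closed $(1,1)$ currents, the map $S\mapsto\lambda_1^{-1}f^*S$ sends the compact set $\mathcal C$ of cluster values onto itself; writing currents of class $\alpha^+$ as $c\theta^++dd^c\rho$ and using $\lambda_1^{-1}f^*\theta^+=\theta^++dd^c\gamma$, it acts on potentials by $\rho\mapsto\lambda_1^{-1}\rho\circ f+c\gamma+\const$, which is a contraction since $\lambda_1>1$. Here Theorem~\ref{theo:bounded} is essential, both because $\theta^+$ has minimal singularities (so potential differences against it are bounded above, and $\gamma$ in particular is bounded above) and because $\theta^+$ serves as a barrier in the key estimate that $\lambda_1^{-n}f^{n*}\omega$ stays uniformly controlled --- an estimate in which $\lambda_2<\lambda_1$ enters to bound the $-\infty$ singularities accumulated along $I(f^n)$, together with the fact that generic forward orbits avoid $I(f)$ and the critical set. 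Granting this, $\mathcal C$ reduces to the unique fixed point $cT^+$, with $T^+=\theta^++dd^c\big(\sum_{k\ge0}\lambda_1^{-k}\gamma\circ f^k\big)$; thus $\lambda_1^{-n}f^{n*}\omega\to cT^+$, $f^*T^+=\lambda_1T^+$ by passage to the limit, and since any positive closed current in the pseudoeffective class $\alpha^+$ has potentials bounded above while $T^+$ has them bounded, $T^+$ has minimal singularities.

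\emph{Laminarity when $X$ is projective.} Here I would fix a generic smooth curve $C$ in a very ample linear system, so that $\{C\}$ is a K\"ahler class and $f^{-n}(C)$ is reduced, and decompose $[C]=\omega_C+dd^cg_C$ with $\omega_C$ K\"ahler and $g_C\le0$ quasi-psh. Then $\lambda_1^{-n}f^{n*}[C]=\lambda_1^{-n}f^{n*}\omega_C+dd^c(\lambda_1^{-n}g_C\circ f^n)$; the first term converges to $c_CT^+$ with $c_C>0$ by the previous step, and the equivariance argument applied to the cluster values of $\lambda_1^{-n}f^{n*}[C]$ shows the $g_C$-contribution washes out, so that $\operatorname{area}(f^{-n}(C))^{-1}[f^{-n}(C)]$ converges to a positive multiple of $T^+$. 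A genus bound comes from adjunction and the projection formula,
$$
2p_a\big(f^{-n}(C)\big)-2=(f^n)^*\{C\}\cdot\big((f^n)^*\{C\}+K_X\big)=\lambda_2^n\,\{C\}^2+(f^n)^*\{C\}\cdot K_X,
$$
which is $O(\lambda_1^n)$: the self-intersection term $\lambda_2^n\{C\}^2$ is $o(\lambda_1^n)$ precisely because $\lambda_2<\lambda_1$, while $(f^n)^*\{C\}\cdot K_X=O(\lambda_1^n)$; meanwhile $\operatorname{area}(f^{-n}(C))=(f^n)^*\{C\}\cdot\{\omega\}\sim c_C\,(\alpha^+\cdot\{\omega\})\,\lambda_1^n$. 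Thus the geometric genus of $f^{-n}(C)$ grows at most linearly in its area, which is exactly what is needed to apply the structure theorem of de Th\'elin and Dujardin on limits of normalised curves with at most linear genus growth; this yields that $T^+$ is laminar.

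\emph{The main obstacle.} I expect the laminarity assertion to be the substantive new difficulty: one must justify that the normalised preimages $f^{-n}(C)$ genuinely converge to a positive multiple of $T^+$ --- the Green potential $g_C$ being made to disappear in the limit, which once more rests on $\lambda_2<\lambda_1$ --- and one must verify that the linear genus bound really puts us inside the hypotheses of the laminar structure theorem and that laminarity is preserved under the limiting process. The other delicate point, already present in the convergence step, is the uniform control of $\lambda_1^{-n}f^{n*}\omega$ (equivalently, the uniqueness and minimal singularities of the limit), which relies crucially on Theorem~\ref{theo:bounded} and on the spectral gap below $\lambda_1$.
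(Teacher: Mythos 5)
Your architecture matches the paper's (bounded-potential representative $\to$ convergence for K\"ahler forms $\to$ laminarity via genus bounds on $f^{-n}(C)$), but two of the steps you treat as routine are precisely where the substance lies, and one of them contains a concrete error.

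First, the convergence $\lambda_1^{-n}f^{n*}\omega\to cT^+$ for an arbitrary K\"ahler form is the main technical point of the theorem, and your ``contraction on potentials'' argument does not deliver it. The map $\rho\mapsto\lambda_1^{-1}\rho\circ f+c\gamma$ is not a contraction on any space containing the cluster values: the real danger is that $\lambda_1^{-n}f^{n*}\omega$ accumulates positive Lelong numbers along $\bigcup_n f^{-n}(I_f)$, so a cluster value could a priori be $cT^+$ plus a singular correction, and the fixed-point heuristic cannot exclude this because $T^+$ itself has positive Lelong numbers at the non-spurious indeterminacy points. You ``grant'' exactly the estimate that closes this gap. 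The paper's proof requires the two-sided comparison $\nu(f^*T,p)\asymp\pair{T}{f(p)}$ at indeterminacy points, the resulting dichotomy between spurious and non-spurious points of $I_f$ (Proposition \ref{small lelong}), and the volume estimates of \cite{G2} to show that the residual potentials $\lambda_1^{-k}w_n\circ f^k$ tend to zero in $L^1$ along a suitable double subsequence. None of this is present in your sketch, and without it the uniqueness of the cluster value is unproved. (When $\alpha^+$ is K\"ahler, or when there are no spurious indeterminacy points, the easy argument does suffice --- but that is not the general case.)

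Second, your genus bound is wrong. The adjunction formula computes the \emph{arithmetic} genus of $f^{-n}(C)$, and the identity $(f^n)^*\{C\}\cdot(f^n)^*\{C\}=\lambda_2^n\{C\}^2$ fails for meromorphic maps: by the push--pull formula (Proposition \ref{pushpull1}) one only has $\pair{f^*\alpha}{f^*\alpha}=\lambda_2\,\alpha^2+Q(\alpha,\alpha)$ with $Q\geq 0$, and since the class $(f^n)^*\{C\}$ has norm of order $\lambda_1^n$, its self-intersection is generically of order $\lambda_1^{2n}$ (indeed it converges after normalisation to $c^2(\alpha^+)^2\lambda_1^{2n}$). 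So the arithmetic genus grows like $\lambda_1^{2n}$, which is useless. What the laminarity criterion of \cite{Du1} requires is that the \emph{geometric} genus plus the number of local irreducible branches at singular points be $O(\lambda_1^n)$; the gap between arithmetic and geometric genus is exactly the contribution of the singularities of $f^{-n}(C)$, and bounding it is the whole difficulty. The paper does this by passing to the desingularised iterated graphs $\Gamma^n$, where $\widehat C_n=\pi_{2,n}^*L$ is smooth, and by controlling $\pair{\widehat C_n}{\cE_{\pi_{1,n}}}$ and $\pair{\widehat C_n}{\eta^*R_{\pi_2}}$ inductively --- including the removal of the hypothesis (H) of \cite{Du1} via the multiplicity/Lelong-number estimate for the divisors $D_n$ supported on $\cE_{\pi_2}$. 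Also note the criterion is Dujardin's (wovenness, the weaker conclusion, is Dinh's); and the preliminary reduction from a K\"ahler form to an actual hyperplane section uses the Crofton formula, not merely Bertini genericity.
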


Versions of Theorem \ref{main 1} have been previously obtained (e.g. \cite{Sib,DF,G4,DG,Ca}) under restrictions on the surface $X$, the map $f$, or the class $\alpha^+$.  The main innovation here is that even when $\alpha^+$ is not a K\"ahler class, we recover the current $T^+$ as a limit of pullbacks of a K\"ahler form.  Our proof of laminarity in for projective $X$ depends on this.  To get the desired convergence, we work in two stages.  We first prove it when the K\"ahler form $\omega$ is replaced by the positive representative with bounded potentials from Theorem \ref{theo:bounded}.  Then we employ some delicate volume estimates from \cite{G2} and a precise understanding of the singularities of $T^+$ to get convergence for arbitrary K\"ahler forms.  

Concerning the notion of laminarity, we refer readers to \S \ref{subs:lamin} for background.  We point out that Theorem \ref{main 4} below shows that the projectivity assumption is an issue only when $X$ has Kodaira dimension zero.
\medskip

In Section \ref{sec:T-}, we consider the  pushforward operator $f_*:H^{1,1}(X)\self$.  Pushforward of $(1,1)$ currents is harder to control, but by taking advantage of the fact that $f_*$ is dual via intersection to $f^*$, we reduce some of the more difficult questions about $T^-$ to corresponding features of $T^+$.  The end result is a nearly exact analogue of Theorem \ref{main 1}. 

\begin{theo}
\label{main 2}
Let $f:X\self$ be a meromorphic map with small topological degree $\lambda_2<\lambda_1$.
There is a positive closed $(1,1)$ current $T^-$ representing $\alpha^-$ such that for any K\"ahler form $\omega$ on $X$,
$$
\lim_{n\to\infty} \lambda_1^{-n} f_*^n \omega = cT^-
$$
for some $c > 0$.  In particular, $f_* T^- = \lambda_1 T^-$, and $T^-$ has minimal singularities among all such invariant currents.

If  $X$ is projective, $T^-$ is a woven current.
\end{theo}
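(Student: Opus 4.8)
The plan is to obtain $T^-$ and its convergence properties by dualizing the construction of $T^+$ from Theorem \ref{main 1}, exploiting the fact that $f_*$ and $f^*$ are adjoint with respect to the intersection pairing on $H^{1,1}(X)$. First I would recall that, by the spectral analysis of \S\ref{sec:merom} applied to $f_*$ in place of $f^*$ (equivalently, by applying the $f^*$-analysis to $f^{-1}$ in the bimeromorphic model or simply noting $f_*$ has the same characteristic polynomial structure), there is a unique nef class $\alpha^-$ with $f_*\alpha^- = \lambda_1\alpha^-$, all other eigenvalues dominated by $\sqrt{\lambda_2} < \lambda_1$, and $\pair{\alpha^+}{\alpha^-} > 0$. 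By the general version of Theorem \ref{theo:bounded} (the theorem statement notes that for $\alpha^-$ one needs precisely $\lambda_2 < \lambda_1$), $\alpha^-$ is represented by a positive closed $(1,1)$ current $\theta^-$ with bounded potentials. The core of the argument is then to show $\lambda_1^{-n} f_*^n \theta^- \to T^-$, after which the passage to an arbitrary K\"ahler form $\omega$ follows by the same volume-estimate machinery from \cite{G2} used in the proof of Theorem \ref{main 1}.

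The key steps, in order, are as follows. (1) Control the cohomology: show $\lambda_1^{-n}(f_*)^n[\omega] \to (\text{const})\,\alpha^-$ in $H^{1,1}(X)$, using the spectral gap and the fact that $\pair{\alpha^+}{[\omega]} > 0$ so the dominant component does not vanish; here one must take care that $(f^n)_* = (f_*)^n$, which holds because $1$-stability of $f$ is equivalent to $1$-stability in the dual sense — or, if one drops $1$-stability as the statement of Theorem \ref{main 2} apparently does, one works with the regularized dynamical operators and absorbs the error in a summable series, exactly as for $T^+$. (2) Produce the current: using $\theta^-$ with bounded potentials, write $f^* \theta^- = \lambda_1 \theta^- + dd^c(\text{something bounded})$ — wait, the correct relation here involves $f_*$, so one shows $f_*\theta^- - \lambda_1\theta^-= dd^c u_1$ with $u_1$ quasi-bounded, then iterates: $\lambda_1^{-n}f_*^n\theta^- = \theta^- + dd^c(\sum_{k=1}^n \lambda_1^{-k} u_k)$, and the telescoping series of potentials converges in $L^1$ because the $u_k$ grow sub-exponentially (rate $\sqrt{\lambda_2}$) against the decay $\lambda_1^{-k}$. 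This yields $T^-$ as a positive closed current with the stated minimal-singularities property, the latter because $\theta^-$ has bounded (hence minimal) potentials and $f_*$ does not worsen this beyond the controlled correction. (3) Upgrade to arbitrary K\"ahler $\omega$: bound $\lambda_1^{-n} f_*^n\omega$ between multiples of $\lambda_1^{-n}f_*^n\theta^-$ up to $dd^c$ of a potential whose mass is controlled by the volume estimates of \cite{G2}, forcing the same limit $cT^-$. (4) Prove wovenness when $X$ is projective: here $\lambda_1^{-n} f_*^n\omega = \lambda_1^{-n} f_*^n (\text{generic curve, rescaled})$ in the limit, and the push-forward of a curve under $f^n$ decomposes via the graph of $f^n$ into a union of images of disks; passing to the limit and using a geometric intersection/Fubini argument (the analogue of the laminarity proof for $T^+$, but images of curves under $f_*$ need not be disjoint, which is exactly why one gets a \emph{woven} rather than laminar current) gives the structure.

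The main obstacle I expect is Step (4), the wovenness, together with the bookkeeping in Step (2) when $1$-stability is not assumed. Push-forward of currents is genuinely less well-behaved than pull-back: $f_*$ of a smooth form need not be smooth, $f_*$ does not commute with $dd^c$ in the naive sense near the image of the exceptional set, and the geometric pieces one obtains by pushing forward disks in a curve can overlap along subvarieties, so the laminar structure degrades to a woven one. Making the geometric decomposition precise — controlling the areas of the pushed-forward disks uniformly, showing the overlaps are negligible in the limit, and invoking the appropriate structure theorem for woven currents (\S\ref{subs:lamin}) — is the technical heart. The trick that makes it tractable is the duality $\pair{f_* S}{[\omega']} = \pair{S}{f^*[\omega']}$, which lets us transfer the already-established mass and laminarity control for $T^+$ to the $T^-$ side, so that the woven structure of $T^-$ is read off from the laminar structure of $T^+$ for $f^{-1}$-type dynamics rather than constructed from scratch.
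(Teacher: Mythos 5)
There is a genuine gap at your Step (3). You propose to pass from $\theta^-$ to an arbitrary K\"ahler form $\omega$ by sandwiching $\lambda_1^{-n}f^n_*\omega$ using ``the volume estimates of \cite{G2}'' as in the $T^+$ case. But those estimates bound $\Vol f^k(A)$ from below in terms of $\Vol A$, which is exactly what one needs to control sublevel sets of $w\circ f^k$, i.e.\ potentials of \emph{pullbacks}. The potential of a pushforward is $f^n_*w$, a sum over preimages, and no analogous volume estimate is available; the paper explicitly flags this (``we certainly cannot use the same proof, for we do not have volume estimates for pushforwards''). The correct route --- which you gesture at, but attach to the wrong step --- is duality against test forms: one first strengthens the pullback convergence to non-closed forms, proving $\lambda_1^{-n}(f^n)^*(\chi\omega)\to \left(\int\chi\,\omega\wedge T^-\right)T^+$ for any test function $\chi$ (the new difficulty being that $(f^n)^*(\chi\omega)$ is not closed; one kills $\partial S_n$ by a Cauchy--Schwarz estimate giving mass $O((\lambda_2/\lambda_1)^{n/2})$, and this is another place where small topological degree is essential). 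Then $\langle \lambda_1^{-n}f^n_*\omega,\chi\omega'\rangle=\langle\omega,\lambda_1^{-n}(f^n)^*(\chi\omega')\rangle\to\langle cT^-,\chi\omega'\rangle$ gives the theorem. Your closing remark about transferring information from $T^+$ to $T^-$ by adjointness is the right instinct, but as written it is aimed at the wovenness step, where it does no work.

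Two smaller points. In Step (2), the convergence of $g_n^-=\sum\lambda_1^{-j}f^j_*\gamma^-$ is not because the terms ``grow sub-exponentially at rate $\sqrt{\lambda_2}$'' --- that rate governs the cohomological action on $(\alpha^+)^\perp$, not the sup-norm of pushed-forward potentials. The actual mechanism is that $\gamma^-$ is bounded above (because $\alpha^-$ has a representative with bounded potentials), so $g_n^-$ is essentially decreasing, and the lower bound comes from Sibony's argument using $f_*1=\lambda_2<\lambda_1$; this is precisely where $\lambda_2<\lambda_1$ enters. For wovenness, the paper's argument is considerably simpler than you anticipate: by Dinh's criterion it suffices that $\mathrm{genus}(f^n_*L)=O(\lambda_1^n)$, and for a generic hyperplane section $L$ through a generic point the restriction $f^n|_L$ has degree one onto its image, so it is a resolution of singularities of $f^n(L)$ and the geometric genus is \emph{constant} in $n$ --- no area or overlap control is needed. (Also note that Section \ref{sec:T-} of the paper does assume $1$-stability throughout, so your hedge about dropping it is moot.)
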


Currents of this sort for non-invertible maps have been considered in e.g. \cite{DTh1,fs98,G4}.
The fact that $T^-$ is woven is essentially due to Dinh \cite{Dinh}.  Wovenness is weaker than laminarity in that the curves that one averages to approximate $T^-$ are allowed to intersect each other.  This allowance is necessary for maps which are not invertible.  Another point to stress is that, while the current $T^+$ exists even for maps with large topological degree, small topological degree is essential for the construction of $T^-$.  If $\lambda_2(f) > \lambda_1(f)$, one does not generally expect that there is a single current playing the role of $T^-$.

\medskip

In section \ref{sec:examples}, we present several interesting examples of meromorphic maps with small topological degree.  A central theme of the section is that examples are plentiful on rational surfaces but much rarer on others.
In particular, by adapting arguments from \cite{Ca, DF} we classify those surfaces which admit maps with small topological degree.

\begin{theo}
\label{main 4}
 Let $X$  be a compact K\"ahler surface, supporting a meromorphic self map of small topological degree $\lambda_2<\lambda_1$.  Then either $X$ is rational or $X$ has Kodaira dimension zero.  In the latter case, by passing to a minimal model and a finite cover, one may assume that $X$ is a torus or a K3 surface and that the map is $1$-stable.
\end{theo}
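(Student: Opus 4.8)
The plan is to run through the Enriques--Kodaira classification according to the Kodaira dimension $\kod X\in\{-\infty,0,1,2\}$, the guiding principle being: \emph{if $f$, or some iterate, preserves a fibration $\pi\colon X\to B$ over a curve}, i.e. $\pi\circ f=g\circ\pi$ for some holomorphic $g\colon B\to B$, then the fibered (product) formula for dynamical degrees reads $\lambda_1(f)=\max\{\deg g,\,d\}$ and $\lambda_2(f)=d\cdot\deg g$, where $d$ is the degree of $f$ restricted to a general fibre; since $d,\deg g\ge 1$ this gives $\lambda_1(f)\le\lambda_2(f)$, contradicting small topological degree. One may either invoke a product formula for semi-conjugate meromorphic maps, or argue directly from the action of $f^*$ on the Néron--Severi group, which fixes the fibre class, multiplies it by $\deg g$, and acts on the complementary directions through the isogeny induced on a general fibre, so that its spectral radius is at most $\lambda_2(f)$. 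Since $\lambda_i(f^n)=\lambda_i(f)^n$, passing to iterates costs nothing. It therefore suffices to produce such an invariant fibration whenever $X$ is neither rational nor of Kodaira dimension $0$, with the single exception of $\kod X=2$ --- where the pluricanonical image is a surface rather than a curve --- which is handled separately.

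If $\kod X=2$, every dominant meromorphic self-map of $X$ is bimeromorphic: pull back $m$-canonical forms, which extend holomorphically across the finite indeterminacy locus, and use that $K_X$ is big, so the pluricanonical maps are bimeromorphic onto their images. Hence $\lambda_2(f)=1$; and since $\mathrm{Bir}(X)=\mathrm{Aut}(X_{\min})$ is finite for a surface of general type, $f$ has finite order, so $\lambda_1(f)=1=\lambda_2(f)$ and $f$ does not have small topological degree. If $\kod X=1$, the Iitaka fibration $\Phi=\Phi_{|mK_X|}\colon X\dashrightarrow B$ (with $m\gg1$) is an elliptic fibration over a curve, canonically attached to $X$; pullback of $m$-canonical forms is an injective --- hence bijective --- linear endomorphism of $H^0(X,mK_X)$, which induces a projective linear automorphism $L$ of $\cp(H^0(X,mK_X)^*)$ with $\Phi\circ f=L\circ\Phi$, and since $L(B)=B$ for $B=\overline{\Phi(X)}$, the map $f$ carries fibres of $\Phi$ to fibres of $\Phi$; the product formula then applies.

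If $\kod X=-\infty$ then either $X$ is rational, the first alternative of the theorem, or $q(X)\ge1$ and $X$ is bimeromorphically ruled over a curve of positive genus; in the latter case the Albanese map of $X$ has one-dimensional image, its Stein factorisation is a fibration $\pi\colon X\to B$ onto a curve of positive genus, and since a meromorphic map to a complex torus is holomorphic, the universal property of the Albanese produces $g\colon B\to B$ with $\pi\circ f=g\circ\pi$, so $f$ is excluded as above. Finally, if $\kod X=0$ then $X$ is non-uniruled, hence has a unique minimal model $X_0$, to which $f$ descends with unchanged dynamical degrees, and by Enriques--Kodaira $X_0$ is a complex torus, a K3, an Enriques or a bi-elliptic surface. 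A bi-elliptic surface has $q=1$ with Albanese map a holomorphic fibration onto an elliptic curve, preserved by $f$, so it is excluded. If $X_0$ is a torus then $f$ is étale, in particular a morphism and therefore $1$-stable, and we are done. If $X_0$ is an Enriques surface it carries a canonical unramified K3 double cover $\tilde X\to X_0$, and $f$ lifts to $\tilde f\colon\tilde X\dashrightarrow\tilde X$ with $\lambda_i(\tilde f)=\lambda_i(f)$ (cf. \cite{DF}), so we may replace $(X_0,f)$ by $(\tilde X,\tilde f)$. On a K3 surface $1$-stability is automatic: by adjunction, and since $K_X$ is trivial, any curve contracted by an iterate of $f$ is a smooth rational curve of self-intersection $-2$, and as in \cite{Ca,DF} no such curve can be sent by $f$ onto an indeterminacy point.

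The real work lies in making these reductions rigorous for genuinely singular (non-invertible, indeterminate) meromorphic maps: that the canonical fibrations are honestly $f$-invariant despite indeterminacy, that $f$ descends to the minimal model when $\kod X=0$, and that the fibered inequality $\lambda_1(f)\le\lambda_2(f)$ genuinely holds --- which is where the product formula for semi-conjugate maps, or the direct Néron--Severi argument, must be pinned down; this is the main obstacle. The surface-theoretic inputs --- that dominant self-maps of general type surfaces are bimeromorphic, and the finiteness of $\mathrm{Aut}$ in that case --- enter only as citations. By comparison with Theorems~\ref{main 1} and \ref{main 2} the present classification is comparatively soft, amounting to an organised walk through the Enriques--Kodaira list adapting \cite{Ca,DF}.
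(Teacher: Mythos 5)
Your proposal follows essentially the same architecture as the paper: the invariant-fibration lemma forcing $\lambda_1\le\lambda_2$, a walk through the Kodaira dimensions ($\kod=1$ via the Iitaka fibration, $\kod=-\infty$ irrational via the Albanese/ruling, $\kod=2$ via pluricanonical rigidity), then minimal model, surface classification and the covering trick for $\kod=0$. Two of your steps, however, are left softer than the paper's and deserve comment. First, the fibration inequality, which you flag as ``the main obstacle'': the paper settles it in three lines by pairing $\alpha^+$ with the fibre class $F$ --- the Hodge index theorem forces either $\alpha^+\propto F$ (whence $\lambda_1=d_g$) or $\pair{\alpha^+}{F}>0$ (whence, since a generic fibre misses $I_{f^n}$, $\lambda_1^n\pair{\alpha^+}{F}=\pair{\alpha^+}{(f_*)^nF}=d^n\pair{\alpha^+}{F}$, so $\lambda_1=d$); either way $\lambda_1\le d\cdot d_g=\lambda_2$. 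Your ``Néron--Severi'' sketch of acting on ``complementary directions through the isogeny on a fibre'' does not make sense for a general fibration, so you should adopt the intersection-theoretic argument. Second, $1$-stability in the $\kod=0$ case: the paper proves it uniformly for \emph{any} minimal surface with $\kod=0$ by noting that $12K_X=0$ plus the Hurwitz formula forces the critical divisor $K_X-f^*K_X$ to vanish, hence $\cE_f=\emptyset$ and $1$-stability is automatic. Your K3-specific claim --- that a contracted curve would be a $(-2)$-curve and that such a curve ``cannot be sent onto an indeterminacy point'' --- is not justified as stated, and even if granted it only controls the first image of $f(\cE_f)$ rather than its entire forward orbit, which is what the criterion $I_{f^n}\cap I_f^-=\emptyset$ requires; the correct conclusion is the stronger one that there are no contracted curves at all. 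With these two repairs your argument coincides with the paper's.
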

 
\noindent 
We also show that invariant currents, etc. associated to maps on irrational surfaces must behave somewhat better than they do in the rational setting.

Finally, in section \ref{sec:self}, we consider maps for which the self-intersection of either $\alpha^+$ or $\alpha^-$ vanishes.  The general idea here is that such a map must be quite close to holomorphic.  

\begin{theo}
\label{main 3}
Let $f:X\self$ be a $1$-stable meromorphic map with small topological degree $\lambda_2<\lambda_1$.  If $(\alpha^-)^2$ vanishes then so does $(\alpha^+)^2$.  And if the latter vanishes, then there is modification $\pi:X\to \check X$, where $\check X$ is a (possibly singular) surface under which $f$ descends to a holomorphic map $\check f:\check X\self$. 
\end{theo}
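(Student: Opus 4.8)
The plan is to isolate a single ``descent lemma'' and apply it twice. Resolve $f$ by its graph $\Gamma$, with projections $\pi_1,\pi_2\colon\Gamma\to X$; then $\pi_1$ is a modification, $\pi_2$ a generically $\lambda_2$-to-$1$ morphism, $f^*=(\pi_1)_*\pi_2^*$ and $f_*=(\pi_2)_*\pi_1^*$. For a class $\nu$ write $(\pi_2^*\nu)_{\mathrm{exc}}$ for the part of $\pi_2^*\nu$ supported on the $\pi_1$-exceptional curves. Pulling classes back and forth along $\Gamma$ gives
\[
(f^*\nu)^2=\lambda_2\,\nu^2-\big\langle(\pi_2^*\nu)_{\mathrm{exc}},(\pi_2^*\nu)_{\mathrm{exc}}\big\rangle,
\]
in which the subtracted term is $\leq0$ because the $\pi_1$-exceptional lattice is negative definite; thus $(f^*\nu)^2\geq\lambda_2\nu^2$, with equality iff $(\pi_2^*\nu)_{\mathrm{exc}}=0$, and the polarised version $\langle f^*\nu,f^*\nu'\rangle=\lambda_2\langle\nu,\nu'\rangle-\langle(\pi_2^*\nu)_{\mathrm{exc}},(\pi_2^*\nu')_{\mathrm{exc}}\rangle$ will also be used. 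This built-in positivity of $f^*$, with no analogue for $f_*$, is the asymmetry the theorem exploits. I also use that $\lambda_1$ is a simple eigenvalue of $f^*$ and of $f_*$ — so the nef eigenvectors $\alpha^\pm$ pair nondegenerately and $\alpha^+\cdot\alpha^->0$ — and that $f^*$ preserves the nef cone and is injective.

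\smallskip\noindent\emph{Descent Lemma.} \emph{Let $\nu$ be a nonzero nef class with $\nu^2=0$, $f^*\nu=\mu\nu$ for some $\mu>0$, and assume $\nu$ is not proportional to the class of a fibration of $X$. Then $f$ descends to a holomorphic self map of the normal surface $\check X$ got from $X$ by contracting all curves $C$ with $\nu\cdot C=0$.} I would argue: $(f^*\nu)^2=\mu^2\nu^2=0=\lambda_2\nu^2$ forces $(\pi_2^*\nu)_{\mathrm{exc}}=0$, i.e.\ $\pi_2^*\nu=\mu\,\pi_1^*\nu$; hence $\pi_2^*\nu$ vanishes on every $\pi_1$-exceptional curve, so each component of the image $\pi_2(\pi_1^{-1}(p))$ of the fibre over an indeterminacy point $p$ lies in $\mathcal C_\nu:=\{C:\nu\cdot C=0\}$, and $\mathcal C_\nu$ is $f$-invariant since $\nu\cdot C=0$ gives $\nu\cdot f_*[C]=\langle[C],f^*\nu\rangle=\mu(\nu\cdot C)=0$. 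By the Hodge index theorem the intersection form on the span of $\mathcal C_\nu$ is negative semidefinite with radical inside $\R\nu$; as $\nu$ is not fibre-type it lies in the span of no connected subconfiguration of $\mathcal C_\nu$, so every connected component of $\mathcal C_\nu$ has negative definite matrix and is contractible (Grauert--Artin), giving $\pi\colon X\to\check X$. Finally $\check f:=\pi f\pi^{-1}$ is holomorphic: by the Stein factorisation criterion it is enough that every curve of $\Gamma$ collapsed by $\pi\pi_1$ be collapsed by $\pi\pi_2$, and such a curve either lies in some $\pi_1^{-1}(p)$, whose $\pi_2$-image is a point or a union of $\mathcal C_\nu$-curves and so collapsed by $\pi$, or dominates some $C\in\mathcal C_\nu$, whose $\pi_2$-image consists of components of $f(C)\subseteq\mathcal C_\nu$, again collapsed by $\pi$.

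\smallskip
The ``not fibre-type'' hypothesis I would get from $\lambda_2<\lambda_1$. If $\nu$ were proportional to a primitive integral nef class $[D]$ with $D^2=0$, then $f^*[D]=\mu[D]$ is integral, forcing $\mu\in\Z_{>0}$. In the application to Part 1 below $\mu=\lambda_2/\lambda_1<1$, already impossible. In Part 2, $\mu=\lambda_1$, and Riemann--Roch on the rational surface $X$ gives: either $h^0(mD)\to\infty$, so a multiple of $D$ moves and cuts out a fibration preserved by some iterate of $f$, whence $\lambda_2=\deg(\check f)\deg(f|_{\mathrm{fibre}})\geq\lambda_1$; or $h^0(mD)=1$ for all $m$, so $f^{-1}(D)=\lambda_1D$ as a divisor, $f$ ramifies to order $\lambda_1$ transversally to $D$, and counting preimages of a generic point of $D$ again gives $\lambda_2\geq\lambda_1$ — both absurd. (If $X$ is not rational, Theorem \ref{main 4} reduces us to a torus or K3; a torus has no rational curves, so $f$ is already holomorphic, and on a K3 the same dichotomy with elliptic fibrations applies.) Then \emph{Part 2} is the Descent Lemma with $\nu=\alpha^+$, $\mu=\lambda_1$.

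\smallskip
\emph{Part 1.} Suppose $(\alpha^-)^2=0$. Then $f^*\alpha^-$ is nef and $\langle f^*\alpha^-,\alpha^-\rangle=\langle\alpha^-,f_*\alpha^-\rangle=\lambda_1(\alpha^-)^2=0$, so $f^*\alpha^-$ lies in $(\alpha^-)^\perp$, on which the form is negative semidefinite with radical $\R\alpha^-$; being nef it has self-intersection $\geq0$, hence $=0$, hence lies in the radical: $f^*\alpha^-=t\alpha^-$ with $t>0$ an eigenvalue of $f^*$. If $t=\lambda_1$, simplicity of $\lambda_1$ gives $\alpha^-\propto\alpha^+$ and $(\alpha^+)^2\propto(\alpha^-)^2=0$. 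If $t<\lambda_1$ then $t\leq\sqrt{\lambda_2}$, and since $(\pi_2^*\alpha^-)_{\mathrm{exc}}=0$ (as above) the polarised identity gives $\lambda_1t\,(\alpha^+\cdot\alpha^-)=\langle f^*\alpha^+,f^*\alpha^-\rangle=\lambda_2(\alpha^+\cdot\alpha^-)$, so $t=\lambda_2/\lambda_1<1$. The Descent Lemma now applies to $\nu=\alpha^-$, $\mu=\lambda_2/\lambda_1$, producing $\pi\colon X\to\check X$ and a holomorphic $\check f$ of topological degree $\lambda_2$ and first dynamical degree $\lambda_1$. The nef $\check f^*$-eigenvector for $\lambda_1$ pulls back under $\pi$ to a nef $f^*$-eigenvector for $\lambda_1$, hence to a multiple of $\alpha^+$; so $\alpha^+=\pi^*\check\alpha^+$ and $(\alpha^+)^2=(\check\alpha^+)^2$. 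Finally, since $\check f$ is a morphism, $\lambda_1^2(\check\alpha^+)^2=\int_{\check X}(\check f^*\check\alpha^+)^2=\int_{\check X}\check f^*\big((\check\alpha^+)^2\big)=\lambda_2(\check\alpha^+)^2$, and $\lambda_1^2\neq\lambda_2$ forces $(\check\alpha^+)^2=0$; therefore $(\alpha^+)^2=0$.

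\smallskip
The main obstacle is the Descent Lemma — in particular the Stein-factorisation check that contracting the null curves of $\nu$ kills all indeterminacy without creating new indeterminacy, which relies on $\pi_2^*\nu=\mu\,\pi_1^*\nu$ and on excluding fibre-type configurations among the null curves; the hypothesis $\lambda_2<\lambda_1$ is used precisely for that exclusion (and, in Part 1, to descend to a holomorphic model and then invoke the projection formula).
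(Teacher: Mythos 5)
Your outline is correct and reaches both conclusions, but it is organized quite differently from the paper. For the contraction, the paper does not contract all of $\mathcal{C}_{\alpha^+}=\{C:\pair{\alpha^+}{C}=0\}$; it contracts the dynamically defined set $\cE^-_\infty=\bigcup_n \cE^-_{f^n}$, first proving (Lemma \ref{nearly holo}) that this set is finite, negative definite, invariant under both $f^*$ and $f_*$, and contains $\cE^+_\infty$; Grauert's criterion then applies and holomorphy of $\check f$ is checked curve by curve. Your cohomological choice of contracted set makes invariance and the Stein-factorisation check essentially formal (you never need the analogue of Proposition \ref{i before e}), at the cost of having to rule out the degenerate case where $\alpha^+$ lies in the span of its own null curves. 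For the first assertion, the paper stays entirely on $X$: from $(\alpha^-)^2=0$ it gets $f^*\alpha^-=(\lambda_2/\lambda_1)\alpha^-$, deduces $\pair{\alpha^-}{E}=0$ for all $E\subset\cE^-_\infty$, invokes Zariski's lemma to get negative definiteness on the $f_*$-invariant span of $\cE^-_\infty$, and then Theorem \ref{spectral} forces $\pair{\alpha^+}{\cE^-_f}=0$, i.e.\ $(\alpha^+)^2=0$. Your alternative — descend along $\alpha^-$ to a holomorphic model and apply the projection formula $(\check f^*\beta)^2=\lambda_2\beta^2$ — is valid but considerably heavier: it needs the full descent machinery plus intersection theory and pullback of classes on the singular surface $\check X$, none of which the paper requires for this implication.

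The one step that does not hold up as written is your exclusion of the ``fibre-type'' case in Part 2. The dichotomy ``either $h^0(mD)\to\infty$ or $h^0(mD)=1$ for all $m$'' is not exhaustive, the ramification claim in the rigid branch is unjustified when $D$ is reducible or non-reduced, and the whole discussion presupposes that $\alpha^+$ is proportional to an \emph{effective} class, which membership in $\mathrm{span}_\R(\mathcal{C}_{\alpha^+})$ does not give. The repair is the argument you already use in Part 1: if $\alpha^+$ lies in the span of the curves orthogonal to it, the Hodge index theorem makes $\alpha^+$ span the radical of an integral form, so a rescaling of $\alpha^+$ is a primitive integral class; since $(\alpha^+)^2=0$ forces the exceptional part of $\pi_2^*\alpha^+$ to vanish, the push--pull identity $f_*f^*\alpha^+=\lambda_2\alpha^+$ yields $f_*\alpha^+=(\lambda_2/\lambda_1)\alpha^+$, and integrality of $f_*$ then gives $\lambda_2/\lambda_1\in\Z_{>0}$, i.e.\ $\lambda_2\geq\lambda_1$, contradicting small topological degree. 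This is exactly the paper's Proposition \ref{ineffective}, and it removes any need for Riemann--Roch, rationality of $X$, or the appeal to Theorem \ref{main 4}.
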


\noindent 
This generalizes a result of \cite{DF} and, as we explain before Proposition \ref{zero}, has an interesting natural interpretation in terms of the $L^2$ Riemann-Zariski formalism developed in \cite{BFJ}.

\section{Meromorphic maps, cohomology, and positive currents}\label{sec:merom}

\subsection{Meromorphic maps}
Let $X$ be a compact K\"ahler surface with distinguished K\"ahler form $\omega_X$.  Our goal is to analyze the dynamics of a meromorphic map $f:X\dashrightarrow X$.  The term `map' is applied rather loosely here, since $f$ is technically only a correspondence.  That is, there is an irreducible subvariety $\Gamma = \Gamma_f\subset X\times X$ with projections $\pi_1,\pi_2:\Gamma\to X$, and $f=\pi_2\circ\pi_1^{-1}$.  The projection $\pi_1$ is required to be a \emph{modification} of $X$:
there is a (possibly empty) \emph{exceptional} curve $\cE_{\pi_1}\subset \Gamma$ such that $\pi_1$ maps $\cE_{\pi_1}$ onto a finite set 
of points and $\Gamma\setminus\cE_{\pi_1}$ biholomorphically onto the complement of this set.  We will require, among other things, that our map $f$ be \emph{dominating}, i.e. that the projection $\pi_2$ onto the range is surjective.  It is often advantageous, and for our purposes never a problem (see, however, the proof Theorem \ref{thm:lamin}) to replace the graph of $f$ with its minimal desingularization.  Hence we do this implicitly, assuming throughout that $\Gamma$ is smooth.

We let $I_f \eqdef \pi_1(\cE_{\pi_1})$ denote the indeterminacy locus of $f$.  The set theoretic image $f(p)$ of each $p\in I_f$  is a connected curve.  If $C\subset X$ is a curve, then we adopt the convention that $f(C) \eqdef \overline{f(C-I_f)}$ is the (reduced) `strict transform' of $C$ under $f$.  In particular $C$ belongs to the \emph{exceptional locus} $\cE_f$ if $f(C)$ is zero dimensional.  
The exceptional locus is included in turn in the {\em critical locus} $\cC_f$, which also contains curves where $f$ is ramified.
Finally, for convenience, we name the sets $I^-_f = f(\cE_f)$ and $\cE^-_f = f(I_f)$.  These are, morally speaking, the indeterminacy and exceptional loci for $f^{-1}$.

The above terminology extends trivially to the more general case of a meromorphic surface map $g:Y\dashrightarrow Z$ with inequivalent domain and range.  To the extent that iteration is not required, the discussion in the next two subsections will also apply to meromorphic maps generally.  However, for simplicity, we continue to discuss only the given map $f$.  In keeping with our abuse of the term `map' we will usually forego the more correct symbol `$\dashrightarrow$' in favor of `$\to$' when introducing meromorphic maps.

\subsection{Action on cohomology and currents}

Suppose $\theta$ is a smooth $(p,q)$ form on $X$.  We define the pullback and pushforward of $\theta$ by $f$ to be
\begin{equation}
\label{ppdef}
f^*\theta \eqdef \pi_{1*}\pi_2^*\theta,\quad f_*\theta \eqdef \pi_{2*}\pi_1^*\theta,
\end{equation}
where the action $\pi_{j*}$, $j=1,2$ is understood in the sense of currents.  Both currents $f^*\theta$ and $f_*\theta$ are actually $(p,q)$ forms with $L^1$ coefficients.   Indeed $f^*\theta$ is smooth away from $I_f$, whereas $f_*\theta$ is continuous away from 
$I^-_f$ and smooth away from $f(\cC_f)$.

\begin{defn} 
The {topological degree} of $f$
$$
\lambda_2(f) = \frac{\int f^*(\omega_X^2)}{\int \omega_X^2} = \lim_{n\to\infty} \left(\int (f^n)^*\omega_X^2\right)^{1/n}
$$ 
is the number of preimages of a generic point.  

The {first dynamical degree} of $\lambda_1(f)$ is
$$
\lambda_1(f) \eqdef \lim_{n\to\infty} \left[\int (f^n)^*\omega_X \wedge \omega_X \right]^{1/n}.
$$
We say that $f$ has {small topological degree} if $\lambda_2(f)<\lambda_1(f)$.
\end{defn}

Most often we use $\lambda_i$, $i=1,2$ as a shorthand for $\lambda_i(f)$.
Dynamical degrees are discussed at greater length in \cite{RS,DF,G1}.  As the terminology suggests, $\lambda_1(f)$ always exists and is independent of the choice of $\omega_X$.  Furthermore, it is invariant under bimeromorphic conjugacy and satisfies the inequality $1\leq \lambda_2\leq \lambda_1^2$.
Another observation is that the spectral radius of the action on $H^{0,2}$ or $H^{2,0}$ is dominated by $\sqrt{\lambda_2}$  \cite[Proposition 5.8]{Dinh}, so we could replace $H^{1,1}$ with $H^2$ in the definition of $\lambda_1$. 

Both $f^*$ and $f_*$ classically induce 
operators $f_*,f^*:H^{p,q}(X)\to H^{p,q}(X)$.  These really only interest us in two bidegrees.  When $p=q=2$, $f^*$ is just multiplication by the topological degree $\lambda_2$.  When $p=q=1$, the operators $f^*$ and $f_*$ can be quite subtle.  Both preserve the real subspace $\HR(X) \eqdef H^{1,1}(X)\cap H^2(X,\C)$, and we will generally only use their restrictions to this subspace. 
We denote by $\langle \cdot, \cdot\rangle$ the intersection (cup) product  on $H^{1,1}(X)$, and by $(\cdot)^2$ the self intersection of a class. The operators $f^*,f_*:\HR(X)\self$ are adjoint with respect to intersection.  
$$
\pair{f^*\alpha}{\beta} = \pair{\pi_2^*\alpha}{\pi_1^*\beta} = \pair{\alpha}{f_*\beta};
$$
There is also a `push-pull' formula for $f_*f^*$ \cite[Theorem 3.3]{DF}. 
The precise statement of the latter is a bit cumbersome, so 
we'll only  state  those consequences of the push-pull formula that are important to us (Propositions \ref{pushpull1} and \ref{pushpull2}).

\medskip

An important point is that pullback and pushforward might not behave well under composition.

\begin{defn}[\cite{FoSi, Sib}]
\label{as}
 We say that $f$ is 1-stable if $(f^n)^* = (f^*)^n$ for all $n\in\N$.
\end{defn}

This property is equivalent (see \cite{FoSi} or \cite[Theorem 1.14]{DF}) to the condition that $I_{f^n} \cap I^-_f = \emptyset$ for all $n\in\N$. 
If $f$ is 1-stable, then $I(f^n)=\bigcup_{j=0}^{n-1}f^{-j}I(f)$.

 It is known \cite[Theorem 0.1]{DF} that when $\lambda_2=1$, then one can always find a bimeromorphic map $\pi:\hat X \to X$ that lifts $f$ to a map 
$\hat f:\hat X\self$ that is 1-stable.  Much more recently, similar results (see below) have been obtained by Favre and Jonsson \cite[Theorem A]{FJ2} 
for a meromorphic maps obtained by compactifying polynomial maps of $\C^2$.  It remains an open problem to determine whether such results hold for arbitrary meromorphic surface maps of small topological degree. Notice that we do not use the 1-stability assumption until Section \ref{sec:T+}.

\medskip

Much of the geometry of $X$ can be described in terms of positive closed $(1,1)$ currents. Recall that the \emph{pseudoeffective} cone 
$\psef(X)\subset\HR(X)$ is the set of cohomology classes of  positive closed $(1,1)$ currents. It is `strict' in the sense that it contains no non-trivial subspaces.  The cone dual to $\psef(X)$ via intersection is $\nef(X)$, whose interior is precisely equal to the set of K\"ahler classes.  Clearly $\nef(X)\subset\psef(X)$. 

Any effective divisor $D$ on $X$ is naturally a positive closed $(1,1)$ current that acts by integration on smooth test forms.\footnote{By `divisor', in this paper, we will always mean $\R$-divisor, i.e. we allow coefficients to be real numbers rather then just integers.} Most often we use the same letter for a curve and the associated reduced effective divisor, for a divisor and the associated current of integration, and for a current and its cohomology class.  The  context should make the point of view clear in each instance.

\medskip

Given any positive closed $(1,1)$ current $T$ on $X$, we may write $T = \theta + dd^c u$ where $\theta$ is a smooth closed $(1,1)$ form cohomologous to $T$ and $u$ is a $\theta$-plurisubharmonic function determined up to an additive constant by $T$ and $\theta$.  We call $u$ a \emph{potential} for $T$ relative to $\theta$.  The definitions of pushforward and pullback given in \eqref{ppdef} may be applied to $T$, once we declare that $\pi_j^* T \eqdef \pi_j^*\theta + dd^c u\circ \pi_j$ for either projection $\pi_j:\Gamma\to X$.  Thus defined, $f^*T$ and $f_*T$ are positive closed $(1,1)$ currents that vary continuously with $T$ in the weak topology on currents. In particular, they do not depend on the choice of $\theta$ and $u$. 

It is immediate from adjointness that $f^*$ and $f_*$ preserve $\psef(X)$ and $\nef(X)$.

The following  consequence of the pushpull formula from \cite{DF} will be important to us.
Notice that there is a similar statement for cohomology classes rather than currents.
\begin{prop}
\label{pushpull2}
For any positive closed $(1,1)$ current $T$, we have
$$
f_*f^* T = \lambda_2(f) T + \sum \pair{T}{F_i} F_i
$$
where $F_i$ is an effective divisor supported on $\cE^-_f$.  

Furthermore, from the precise expression of the $F_i$, given $p\in I(f)$, if all intersections $\pair{T}{C}$ with curves $C\subset f(p)$
are non-negative, then $E^-(T)|_{f(p)}$ is effective.  If, additionally, one such intersection is positive, then $E^-(T)$ charges all of $f(p)$.
\end{prop}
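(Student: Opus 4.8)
The plan is to pull the composition $f_*f^*$ apart through the graph and reduce the whole statement to a fact about the modification $\pi_1$. Unwinding the definitions of $f^*$ and $f_*$ gives $f_*f^*T=\pi_{2*}\,\pi_1^*\,\pi_{1*}\,\pi_2^*T$. The map $\pi_2$ is a proper surjective holomorphic map of degree $\lambda_2$ between smooth surfaces, so a direct computation on potentials shows $\pi_{2*}\pi_2^*=\lambda_2\cdot\id$ on positive closed $(1,1)$ currents. Hence everything comes down to analyzing $\pi_1^*\pi_{1*}S$ for the positive closed $(1,1)$ current $S:=\pi_2^*T$ on $\Gamma$, and then applying $\pi_{2*}$.

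First I would show that $R:=\pi_1^*\pi_{1*}S-S$ is a (real) divisor supported on $\cE_{\pi_1}$. Both $S$ and $\pi_1^*\pi_{1*}S$ are positive closed, so $R$ is a closed $(1,1)$ current of locally finite mass; and since $\pi_1$ restricts to a biholomorphism of $\Gamma\setminus\cE_{\pi_1}$ onto its image, $R\equiv 0$ there, so $\supp R\subset\cE_{\pi_1}$. The support theorem for closed currents of order zero then gives $R=\sum_j b_jE_j$ with the $E_j$ the components of $\cE_{\pi_1}$ and $b_j\in\R$. To pin down the $b_j$, pair with each $E_k$: using $\pi_{1*}E_k=0$ and the adjointness $\pair{\pi_1^*\alpha}{\beta}=\pair{\alpha}{\pi_{1*}\beta}$ we get $\sum_j b_j\pair{E_j}{E_k}=-\pair{S}{E_k}=-\pair{T}{\pi_{2*}E_k}$ for every $k$. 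The intersection matrix $Q=(\pair{E_j}{E_k})$ is negative definite (it is the exceptional matrix of a modification of smooth surfaces), so $b=-Q^{-1}v$ with $v_k:=\pair{T}{\pi_{2*}E_k}$; in particular $b_j=b_j(T)$ depends linearly on $T$. Pushing forward, $f_*f^*T=\lambda_2T+\sum_j b_j(T)\,\pi_{2*}E_j$, where each $\pi_{2*}E_j$ is an effective divisor supported on $\pi_2(\cE_{\pi_1})=f(I(f))=\cE^-_f$; regrouping these divisors gives the $F_i$, and we write $E^-(T):=\sum_j b_j(T)\,\pi_{2*}E_j$ for the error term.

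For the ``furthermore'' I would localize at a point $p\in I(f)$. Let $J_p$ index the $E_j$ with $\pi_1(E_j)=p$, i.e.\ those forming the fiber $\pi_1^{-1}(p)$; this fiber is connected (Zariski's main theorem), so the diagonal block $Q_p$ of $Q$ over $J_p$ is irreducible. Since $-Q_p$ is symmetric positive definite with nonpositive off-diagonal entries ($-\pair{E_j}{E_k}\le 0$ for distinct curves), it is a Stieltjes matrix, so $-Q_p^{-1}$ has nonnegative entries, strictly positive by irreducibility. For $j\in J_p$ we have $\pi_2(E_j)\subset\pi_2(\pi_1^{-1}(p))=f(p)$, hence $\pi_{2*}E_j$ is either $0$ or a positive multiple of an irreducible curve in $f(p)$, and conversely every component of $f(p)$ arises this way. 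Thus for $k\in J_p$ the number $v_k=\pair{T}{\pi_{2*}E_k}$ is a nonnegative multiple of some $\pair{T}{C}$ with $C\subset f(p)$, and $b_j(T)=\sum_{k\in J_p}(-Q_p^{-1})_{jk}v_k$. If all $\pair{T}{C}\ge 0$ for $C\subset f(p)$ then all $v_k\ge 0$, so all $b_j(T)\ge 0$ and $E^-(T)|_{f(p)}=\sum_{j\in J_p}b_j(T)\,\pi_{2*}E_j$ is effective. If in addition some $\pair{T}{C_0}>0$ with $C_0\subset f(p)$, choose $k_0\in J_p$ with $\pi_{2*}E_{k_0}$ a positive multiple of $C_0$; then $v_{k_0}>0$, and strict positivity of the entries of $-Q_p^{-1}$ forces $b_j(T)>0$ for every $j\in J_p$, so $E^-(T)$ charges each component of $f(p)$.

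The hard part is the first reduction, i.e.\ the passage from cohomology to currents: on the level of cohomology classes the relation $\pi_1^*\pi_{1*}S=S+\sum_j b_jE_j$ is exactly the blow-up identity underlying the push--pull formula of \cite[Theorem 3.3]{DF}, but to have it for the currents themselves one must know that the defect $R$ is genuinely divisorial, with no diffuse part — and this is precisely where finite mass, the support theorem, and hence the positivity of $T$, are used. The other ingredients (negative definiteness of $Q$, the $M$-matrix positivity of $-Q_p^{-1}$, connectedness of $\pi_1^{-1}(p)$) are standard; only the bookkeeping that turns the symmetric expression $\sum_{j,k}(-Q^{-1})_{jk}\pair{T}{\pi_{2*}E_k}\,\pi_{2*}E_j$ into a sum $\sum_i\pair{T}{F_i}F_i$ with effective $F_i$ requires the care already taken in \cite{DF}.
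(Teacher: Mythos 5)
Your argument is essentially correct, but you should know that the paper itself offers no proof of this proposition: it is quoted as a packaged consequence of the push--pull formula of \cite[Theorem 3.3]{DF}, whose proof factors $\pi_1$ into a sequence of point blowups and inducts. That induction produces the stated form directly: a single blowup $\sigma$ with exceptional curve $E$ satisfies $\sigma^*\sigma_*S=S+\pair{S}{E}E$ because $E^2=-1$, and composing blowups yields $\pi_1^*\pi_{1*}S=S+\sum_i\pair{S}{G_i}G_i$ with $G_i$ the (effective) total transforms in $\Gamma$ of the successive exceptional curves, so that $F_i=\pi_{2*}G_i$ is effective by construction. Your route is genuinely different and in some ways cleaner: you identify the defect $R=\pi_1^*\pi_{1*}S-S$ in one shot via the support theorem, solve for its coefficients using negative definiteness of the exceptional intersection matrix, and obtain positivity from the Stieltjes/M-matrix property of $-Q_p$ together with connectedness of the fiber $\pi_1^{-1}(p)$. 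This proves the ``furthermore'' completely and yields everything the paper actually uses later (linear dependence of the correction on the numbers $\pair{T}{C}$, effectiveness, and the charging statement).

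The one thing your approach does not quite deliver is the literal shape $\sum_i\pair{T}{F_i}F_i$ with each $F_i$ effective: passing from your expression $\sum_{j,k}(-Q^{-1})_{jk}\pair{T}{\pi_{2*}E_k}\,\pi_{2*}E_j$ to that form requires factoring the doubly nonnegative matrix $-Q^{-1}$ as $A^{T}A$ with $A\geq 0$ entrywise, i.e.\ complete positivity, which is not automatic; the blowup induction sidesteps this entirely, which is presumably why \cite{DF} proceeds that way. You flag this honestly, so it is a deferred step rather than an error. One further minor point: in the ``furthermore'' you work only with the block $J_p$, whereas if two indeterminacy points share a component of their image curves, $E^-(T)|_{f(p)}$ could also receive contributions from other blocks; this is really an ambiguity in the informal statement rather than a flaw in your argument, but it is worth a sentence of acknowledgment.
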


It is useful to know how $f^*$, $f_*$ act on curves.

\begin{prop} 
\label{curves}
Suppose that $C\subset X$ is an irreducible curve.  Then $f^*C = \sum \mu_j C_j + D$, where $f(C_j) = C$, $\mu_j$ is the local degree of $f$ near a generic point of $C_j$, and $D$ is an effective divisor with support exactly equal to those curves in $\cE_f$ that map to $C$. On the other hand $f_*C = (\deg f|_C) f(C) + D$, where $D$ is an effective divisor with $\supp D = f(I_f\cap D)$.
\end{prop}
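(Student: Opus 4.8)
\emph{Proof strategy.} Both formulas should follow by resolving the map and transporting the divisor $C$ through the two projections of the graph $\Gamma$. For the pullback I would start from $f^*C=\pi_{1*}\pi_2^*C$. Since $\pi_2:\Gamma\to X$ is a morphism of smooth surfaces, $\pi_2^*C$ is the honest pullback of the divisor $C$ (the zero divisor of a local equation of $C$ composed with $\pi_2$); decompose it into irreducible components $Z$. Each $Z$ either dominates $C$ under $\pi_2$ or is contracted by $\pi_2$ to a point of $C$. For a dominating $Z$ not contained in $\cE_{\pi_1}$, set $C_j:=\pi_1(Z)$; since $\pi_1$ is a modification this is a curve with $f(C_j)=C$, and $\pi_{1*}Z=C_j$. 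The coefficient of $Z$ in $\pi_2^*C$ is the vanishing order of the equation of $C$ along $Z$, and because $\pi_1$ is biholomorphic near a generic point of $Z$ this order coincides with the order to which $f$ covers $C$ near a generic point of $C_j$, i.e. with the local degree $\mu_j$. A contracted $Z$ not in $\cE_{\pi_1}$ maps under $\pi_1$ to a curve $\pi_1(Z)$ with $f(\pi_1(Z))$ a point of $C$, hence $\pi_1(Z)\in\cE_f$ and it contributes to the effective remainder $D$; conversely any $C'\in\cE_f$ with $f(C')\in C$ has strict transform contained in $\pi_2^{-1}(C)$, so it appears in $D$ with positive coefficient, which gives $\supp D$ exactly as claimed. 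Finally, components $Z\subset\cE_{\pi_1}$ are contracted by $\pi_1$ and so are killed by $\pi_{1*}$; hence they do not contribute, and we obtain $f^*C=\sum\mu_jC_j+D$.

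For the pushforward I would argue dually, writing $f_*C=\pi_{2*}\pi_1^*C$. As $\pi_1$ is a modification, $\pi_1^*C=\widetilde C+R$, where $\widetilde C$ is the strict transform (so $\pi_1|_{\widetilde C}:\widetilde C\to C$ is birational) and $R$ is an effective divisor supported on $\cE_{\pi_1}$ whose components lie over points of $I_f\cap C$. If $C\notin\cE_f$, then $\pi_2$ maps $\widetilde C$ onto $f(C)$, and its degree there equals $\deg(f|_C)$ because $\pi_2$ agrees with $f\circ\pi_1$ over a generic point of $\widetilde C$; thus $\pi_{2*}\widetilde C=(\deg f|_C)\,f(C)$. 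If $C\in\cE_f$, then $\widetilde C$ is $\pi_2$-contracted and this term is $0$, consistent with the convention $\deg f|_C=0$. Setting $D:=\pi_{2*}R$ gives an effective divisor supported inside $f(I_f\cap C)$, since $\pi_2$ carries $\cE_{\pi_1}\cap\pi_1^{-1}(p)$ into $f(p)$ for each $p\in I_f$.

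The routine part of all this is the component-and-multiplicity bookkeeping in the pullback formula; the step I expect to require the most care is the precise description of the remainder $D=\pi_{2*}R$ in the pushforward formula — in particular verifying that its support is exactly $f(I_f\cap C)$, which amounts to checking that the exceptional components actually met by $\widetilde C$ are not all contracted by $\pi_2$ and together cover $f(p)$ for each $p\in I_f\cap C$. I would expect this to be read off from the structure of $f(p)$ together with Proposition \ref{pushpull2}, or else handled by choosing the desingularization of $\Gamma$ appropriately; it is the one point where one must be careful rather than merely formal.
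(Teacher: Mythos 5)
The paper states Proposition \ref{curves} without proof (it is quoted as a standard fact, essentially from \cite{DF}), so there is no argument of the authors' to compare against; your graph-theoretic derivation via $f^*C=\pi_{1*}\pi_2^*C$ and $f_*C=\pi_{2*}\pi_1^*C$ is the standard route and is correct. Two small points are worth writing out rather than asserting: the identification of the vanishing order of $\pi_2^*C$ along $Z$ with the local degree $\mu_j$ follows from the local normal form $(x,y)\mapsto(x^{\mu},y)$ of $f$ at a generic point of $C_j$; and the support claim for the pushforward remainder needs no appeal to Proposition \ref{pushpull2} or to a special desingularization --- since $\supp\pi_1^*C=\pi_1^{-1}(C)\supset\pi_1^{-1}(p)$ for every $p\in I_f\cap C$, every component of $\pi_1^{-1}(p)$ occurs in $R$ with positive multiplicity, and the images under $\pi_2$ of the non-contracted ones are exactly the components of the curve $f(p)=\pi_2(\pi_1^{-1}(p))$, giving $\supp\pi_{2*}R=f(I_f\cap C)$ (note the statement's ``$f(I_f\cap D)$'' is a typo for $f(I_f\cap C)$, as you implicitly read it).
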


\subsection{Spectral analysis of $f^*$ and $f_*$}
Let $\norm{\cdot}$ be any norm on $\HR(X)$, and let
$$
r_1(f) \eqdef \lim_{n\to\infty} \norm{(f^*)^n}^{1/n}
$$
be the spectral radius of $f^*$. In general $r_1(f)\geq \lambda_1(f)$ with equality if $f$ is 1-stable.

\begin{thm}[\cite{DF}]
\label{spectral}  Suppose $r_1(f)^2 > \lambda_2(f)$.
Then $r_1(f)$ is a simple root of the characteristic polynomial of $f^*$ (resp $f_*$), and the corresponding eigenspace is generated by a nef class $\alpha^+$ (resp $\alpha^-$), and $\pair{\alpha^+}{\alpha^-}>0$.
The subspace  $(\alpha^-)^\perp \eqdef \{\beta\in\HR(X):\pair{\beta}{\alpha^-} = 0\}$ is the unique $f^*$ invariant subspace complementary to $\R\alpha^+$, and there is a constant $C>0$ such that  for every $\beta\in  (\alpha^-)^\perp$
we have
$$
\norm{(f^*)^n\beta} \leq C\lambda_2^{n/2}\norm{\beta} \text{ for all } n\in\N. 
$$
The corresponding result holds for $f_*$.
\end{thm}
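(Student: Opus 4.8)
The plan is to deduce the whole spectral picture from three inputs: a Perron--Frobenius argument on the nef cone, the push--pull formula of Proposition~\ref{pushpull2}, and the Hodge index theorem. First I would produce the eigenclasses. Since $f^*$ preserves the strict closed convex cone $\nef(X)$, which has nonempty interior, a standard Perron--Frobenius argument for operators preserving such a cone gives a nonzero nef class $\alpha^+$ with $f^*\alpha^+ = r_1(f)\,\alpha^+$. Running the identical argument for $f_*$, which also preserves $\nef(X)$, and using that $f_*$ is adjoint to $f^*$ with respect to intersection and hence has the same characteristic polynomial (in particular the same spectral radius), yields a nonzero nef class $\alpha^-$ with $f_*\alpha^- = r_1(f)\,\alpha^-$.

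Next I would extract positivity of the intersection pairing. Read at the level of cohomology classes, Proposition~\ref{pushpull2} states that $f_*f^*\beta = \lambda_2\beta + \sum_i\pair{\beta}{F_i}F_i$ for every $\beta\in\HR(X)$, with the $F_i$ effective; pairing with $\beta$ and using adjointness gives the identity
$$
\pair{f^*\beta}{f^*\beta} = \lambda_2\pair{\beta}{\beta} + \sum_i\pair{\beta}{F_i}^2 .
$$
Taking $\beta = \alpha^+$ gives $(r_1^2-\lambda_2)(\alpha^+)^2\geq 0$, and $r_1^2>\lambda_2$ forces $(\alpha^+)^2\geq 0$; the symmetric push--pull identity for $f^*f_*$ (\cite[Theorem 3.3]{DF}) gives $(\alpha^-)^2\geq 0$ in the same way. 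By the Hodge index theorem the intersection form on $\HR(X)$ has signature $(1,\dim\HR(X)-1)$, so a nonzero nef class with nonnegative self-intersection lies in the closure of the positive cone, and any two such classes pair nonnegatively; hence $\pair{\alpha^+}{\alpha^-}\geq 0$. Were this zero, $\alpha^+$ and $\alpha^-$ would be proportional null classes, so $\alpha^-$ would be an eigenclass of both $f^*$ and $f_*$ for $r_1$; feeding that into the two push--pull identities then forces $\alpha^-=0$, a contradiction. Thus $\pair{\alpha^+}{\alpha^-}>0$.

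Then I would set up the splitting and prove the decay estimate, which is the crux. Since $\pair{\alpha^+}{\alpha^-}>0$, the hyperplane $(\alpha^-)^\perp$ is a complement to $\R\alpha^+$, and it is $f^*$-invariant because $\pair{f^*\beta}{\alpha^-} = \pair{\beta}{f_*\alpha^-} = r_1\pair{\beta}{\alpha^-}$. Granting the bound $\norm{(f^*)^n\beta}\leq C\lambda_2^{n/2}\norm{\beta}$ on $(\alpha^-)^\perp$, the remaining assertions follow by standard spectral-gap linear algebra: the bound shows $f^*|_{(\alpha^-)^\perp}$ has spectral radius at most $\sqrt{\lambda_2}<r_1$, whence $r_1$ is a simple root of the characteristic polynomial, and uniqueness of the $f^*$-invariant complement follows by projecting any such complement onto $\R\alpha^+$ along $(\alpha^-)^\perp$. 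For the bound itself, note that by Hodge index the intersection form is negative semidefinite on $(\alpha^-)^\perp$, with radical $\{0\}$ if $(\alpha^-)^2>0$ and $\R\alpha^-$ if $(\alpha^-)^2=0$. The displayed identity shows that for $\beta\in(\alpha^-)^\perp$ one has $0\geq\pair{f^*\beta}{f^*\beta}\geq\lambda_2\pair{\beta}{\beta}$, the first inequality because $f^*\beta$ again lies in $(\alpha^-)^\perp$, so $f^*$ multiplies the seminorm $\beta\mapsto(-\pair{\beta}{\beta})^{1/2}$ by at most $\sqrt{\lambda_2}$. If $(\alpha^-)^2>0$ this seminorm is a genuine norm equivalent to $\norm{\cdot}$, and iterating finishes it. If $(\alpha^-)^2=0$, one computes from the $f^*f_*$ identity that $f^*\alpha^- = (\lambda_2/r_1)\alpha^-$, so $f^*$ is block triangular on $(\alpha^-)^\perp$, with the one-dimensional block $\lambda_2/r_1$ along the radical $\R\alpha^-$ and a block of seminorm at most $\sqrt{\lambda_2}$ on the negative-definite quotient; since $\lambda_2/r_1<\sqrt{\lambda_2}$, summing a geometric series again gives $\norm{(f^*)^n\beta}\leq C\lambda_2^{n/2}\norm{\beta}$. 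The statement for $f_*$ follows by interchanging $f^*$ and $f_*$ throughout.

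I expect the main obstacle to be the degenerate case $(\alpha^-)^2=0$: there the intersection form no longer provides a norm on $(\alpha^-)^\perp$, so one must identify the $f^*$-invariant radical $\R\alpha^-$, pin down the eigenvalue $\lambda_2/r_1$ on it, and verify the strict inequality $\lambda_2/r_1<\sqrt{\lambda_2}$, which is precisely what yields the clean rate $\lambda_2^{n/2}$ with no polynomial correction.
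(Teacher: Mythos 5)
Your argument is correct, and it follows essentially the same route as the source the paper relies on: the paper gives no proof of its own but cites \cite[Theorem 5.1]{DF}, whose argument is precisely this combination of a Perron--Frobenius eigenvector in the nef cone, the push--pull identity $\pair{f^*\beta}{f^*\beta}=\lambda_2\pair{\beta}{\beta}+Q(\beta,\beta)$ with $Q\ge 0$ (Proposition \ref{pushpull1}), and the Hodge index theorem on $(\alpha^-)^\perp$. The one point the paper flags as needing to be added to \cite{DF} --- that eigenvalues of modulus exactly $\sqrt{\lambda_2}$ contribute no polynomial growth --- is exactly what your operator-norm bound on the negative-(semi)definite complement delivers, including the degenerate case $(\alpha^-)^2=0$ where the triangular block $\lambda_2/r_1<\sqrt{\lambda_2}$ and the geometric series give the clean rate; so your write-up correctly supplies the detail the paper leaves to the reader.
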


For convenience, we normalize the invariant classes $\alpha^+, \alpha^- \in \nef(X)$ and the distinguished K\"ahler form $\omega_X$ so that $\pair{\alpha^+}{\alpha^-} = \pair{\alpha^+}{\omega_X} = \pair{\alpha^-}{\omega_X} = 1$.  
This is essentially \cite[Theorem 5.1]{DF}, where it is shown that $r_1(f)$ is a simple root of the characteristic polynomial and that all other roots have magnitude no greater than $\sqrt{\lambda_2}$.  It suffices for establishing Theorem \ref{spectral} to show further that  eigenspaces
 associated to  a root with magnitude equal to $\sqrt{\lambda_2}$ is generated by eigenvectors.  
The arguments from \cite{DF} are easily modified to do this.  An alternative approach to the second assertion in  the theorem  may be found in the more recent paper \cite{BFJ}, where it is shown that we can bypass  1-stability to  obtain interesting information about the cohomological behaviour of meromorphic maps.

\subsection{Positive currents with bounded potentials}\label{subs:bdd}
We now prove Theorem \ref{theo:bounded} in the following slightly more general form:

\begin{thm}\label{thm:bounded} Let $f$ be a meromorphic map such that  $\lambda_2(f) < r_1(f)^2$.
Then the invariant class $\alpha^+$ is represented by positive closed $(1,1)$ currents with bounded potential.  If $f$ has small topological degree then the same is true of $\alpha^-$.
\end{thm}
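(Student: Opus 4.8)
The idea is to produce the bounded potential for $\alpha^+$ by a fixed-point argument on potentials, exploiting the contraction coming from $\lambda_2 < r_1^2$. Fix a smooth closed $(1,1)$ form $\theta^+$ representing $\alpha^+$. Since $f^*\alpha^+ = r_1\alpha^+$ in cohomology, the form $\frac1{r_1} f^*\theta^+$ is again a representative of $\alpha^+$, hence $\frac1{r_1} f^*\theta^+ = \theta^+ + dd^c\gamma$ for some fixed function $\gamma$; one should note here that $f^*\theta^+$ is not smooth but, as recalled in \S\ref{subs:bdd}, is an $L^1$ form that is smooth away from $I_f$, and in fact $\gamma$ is bounded because $\theta^+$ is smooth and $\pi_1$ is a modification (the local potential of $\pi_2^*\theta^+$ pushed forward by $\pi_1$ stays bounded). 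Then iterate: if $T = \theta^+ + dd^c u$ is any positive closed current representing $\alpha^+$, one has $\frac1{r_1}f^* T = \theta^+ + dd^c\big(\frac1{r_1} u\circ f + \gamma\big)$, so the operator $u \mapsto \Lambda(u) \eqdef \frac1{r_1}\, u\circ f + \gamma$ sends $\theta^+$-psh functions to $\theta^+$-psh functions. Starting from $u_0 \equiv 0$ and setting $u_n = \Lambda^n(u_0)$, we get $u_n = \sum_{k=0}^{n-1} r_1^{-k}\, \gamma\circ f^k$, and since $\gamma$ is bounded and $r_1 > 1$ (because $r_1^2 > \lambda_2 \ge 1$), the series converges uniformly to a bounded function $u_\infty$. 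The candidate current is $T^+ \eqdef \theta^+ + dd^c u_\infty$.

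The content then is to check that $T^+$ is genuinely positive and closed, i.e. that $u_\infty$ is $\theta^+$-psh. Each $u_n$ is $\theta^+$-psh by construction (composition of a $\theta^+$-psh function with $f$, suitably rescaled, plus $\gamma$, stays in the class — this is exactly the content of the statement in \S\ref{subs:bdd} that $f^*$ takes positive closed currents to positive closed currents and varies continuously). A uniform limit of a convergent sequence of $\theta^+$-psh functions is $\theta^+$-psh (semicontinuity is automatic for a uniform limit, and positivity of $\theta^+ + dd^c u_n$ passes to the limit weakly), so $u_\infty$ is $\theta^+$-psh and bounded, giving the desired representative. Finally $f^* T^+ = r_1 T^+$ follows because $\Lambda(u_\infty) = u_\infty$ by construction, though for Theorem \ref{thm:bounded} as stated we only need the existence of a bounded-potential representative.

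For $\alpha^-$ under the hypothesis $\lambda_2 < \lambda_1$, the argument is formally the same with $f_*$ in place of $f^*$ and $\alpha^-$, $r_1$ replaced by the analogous eigenvalue $\lambda_1$ from Theorem \ref{spectral} (here small topological degree guarantees $r_1(f) = \lambda_1$ on the $f_*$ side and the spectral gap $\lambda_1^2 > \lambda_2$). The one point requiring care is that $f_*$ of a smooth form is less regular than $f^*$ of one: as recalled above, $f_*\theta^-$ is only continuous away from $I_f^-$ rather than smooth. But continuity is all we need — the associated potential correction $\gamma^-$ is still bounded — so the same telescoping series produces a bounded $u_\infty^-$ with $\theta^- + dd^c u_\infty^-$ positive and closed.

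**Main obstacle.** The delicate step is verifying that the correction term $\gamma$ (resp. $\gamma^-$) is bounded, rather than merely $L^1$. This amounts to a local analysis near the indeterminacy points: one must see that pushing forward by the modification $\pi_1$ (resp. $\pi_2$) the local potential of $\pi_2^*\theta^+$ does not introduce logarithmic poles. Because $\theta^+$ is smooth and $\pi_1$ is a proper modification with the graph $\Gamma$ smooth, a local potential $\varphi$ of $\pi_2^*\theta^+$ near $\cE_{\pi_1}$ is smooth, hence bounded; its direct image $\pi_{1*}\varphi$ over a neighborhood of $I_f$ is then bounded above and below by $\sup\varphi$ and $\inf\varphi$ on the relevant fiber. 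Making this uniform over all of $X$ — and checking it survives iteration, i.e. that $\sup_X |\gamma\circ f^k|$ does not grow — is the technical heart of the argument; everything else is the soft fixed-point/telescoping bookkeeping sketched above.
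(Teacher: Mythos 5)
Your argument breaks down at its key claim, namely that the correction term $\gamma$ defined by $\frac{1}{r_1}f^*\theta^+=\theta^+ +dd^c\gamma$ is bounded. It is not: it is only bounded \emph{above} (this is Lemma \ref{negative2}, and it already requires the nefness of $\alpha^+$), while at any non-spurious indeterminacy point $p\in I_f$ it has a logarithmic pole, since by Proposition \ref{pullback lelong 2} the Lelong number of $f^*\theta^+$ at $p$ is comparable to $\pair{\alpha^+}{f(p)}$, which is generally positive. The flaw in your local analysis is the assumption that $\pi_2^*\theta^+$ admits a smooth potential $\varphi$ on a neighborhood of the whole fiber $\pi_1^{-1}(p)$: such a potential exists only if $\pair{\pi_2^*\theta^+}{C}=\pair{\alpha^+}{\pi_{2*}C}=0$ for every component $C$ of that fiber, i.e.\ only at spurious points in the sense of Definition \ref{def:spurious}. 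Otherwise the push-pull formula gives $\pi_1^*f^*\theta^+=\pi_2^*\theta^+ +E$ with $E$ a nonzero effective divisor supported on $\cE_{\pi_1}$, and the divisor $E$ is exactly the logarithmic singularity of the potential of $f^*\theta^+$ at $p$. A quick sanity check confirms something must be wrong: were $\gamma$ bounded, your telescoping series would show that the \emph{canonical} invariant current $T^+=\theta^++dd^cg^+$ has bounded potentials, whereas the paper shows $T^+$ has strictly positive Lelong numbers at points whose orbit meets a non-spurious indeterminacy point. The same objection applies to $\gamma^-$: continuity away from $I_f^-$ does not prevent $\gamma^-$ from tending to $-\infty$ at $I_f^-$, and it does so whenever $\pair{\alpha^-}{C}>0$ for some contracted curve $C\subset\cE_f$.

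The missing idea, which is the heart of the paper's proof (Theorem \ref{bounded to bounded}), is that one must \emph{change the representative at each step} rather than literally pull back a fixed one. Near $I_f$ the potential of $f^*T$ does dive to $-\infty$, but one glues in the constant $-R$ via $\max\{V_T\circ f+V_{f^*\theta_T}+\rho,\,-R\}$ on a neighborhood of $I_f$; the maximum of two psh functions is psh, so this produces a \emph{different} positive current in the class $f^*\alpha$ whose potential oscillation exceeds that of $T$ by only an additive term $C\norm[H^{1,1}]{\alpha}$. Iterating from $\omega_X$ gives $\norm[bdd]{(f^*)^n\omega_X}\leq\norm[bdd]{\omega_X}+C\sum_{j<n}\norm[H^{1,1]}{(f^*)^j\omega_X}=O(r_1^n)$, and dividing by $r_1^n$ and extracting a weak limit (compactness of currents with uniformly bounded normalized potentials) yields a bounded-potential representative of $\alpha^+$. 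Note that this limit need not be $T^+$ itself, and that the spectral gap $r_1^2>\lambda_2$ enters only through the convergence in cohomology of $r_1^{-n}(f^*)^n\omega_X$ to $\alpha^+$, not through any contraction on potentials.
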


The remainder of this subsection is devoted to the proof. 

\begin{lem}
\label{negative1}
Let $Y,Z$ be compact complex surfaces and $\pi:Y\to Z$ be a proper modification. Let $\eta$ be a smooth closed $(1,1)$ form such that $\pair{\eta}{C}\geq 0$ for every curve $C\subset\cE_\pi$.  Then potentials for $\pi_*\eta$ are bounded above.
\end{lem}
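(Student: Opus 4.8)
The plan is to convert the hypothesis $\pair{\eta}{C}\ge 0$ into a sign condition on the ``exceptional part'' of $\eta$, and then push everything forward through $\pi$. Write $C_1,\dots,C_k$ for the irreducible components of $\cE_\pi$ and $M=(C_i\cdot C_j)_{i,j}$ for their intersection matrix. I will use three classical facts. First (Grauert, Mumford), $M$ is negative definite; since moreover $C_i\cdot C_j\ge 0$ whenever $i\ne j$, the symmetric positive definite matrix $-M$ has nonpositive off-diagonal entries, and any such matrix has a nonnegative inverse, so every entry of $M\inverse$ is $\le 0$. Second, for a modification of smooth surfaces (a composition of point blow-ups) one has $\HR(Y)=\pi^*\HR(Z)\oplus\bigoplus_i\R\,C_i$, the two summands are orthogonal for the intersection form, $\pi_*C_i=0$, and $\pi_*\pi^*=\id$ on $\HR(Z)$. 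Third, on $Y$ (which is K\"ahler in all our applications), two cohomologous smooth closed $(1,1)$-forms differ by $dd^c$ of a smooth function. If $Y$ or $Z$ is singular, first reduce to the smooth case by resolving.

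First I would decompose the class of $\eta$ in $\HR(Y)$ as $\pi^*\beta+\sum_i a_i\,C_i$, with $\beta\in\HR(Z)$ and $a_i\in\R$ uniquely determined. Pairing with $C_j$ and using $\pair{\pi^*\beta}{C_j}=\pair{\beta}{\pi_*C_j}=0$ gives $b=Ma$, where $b_j\eqdef\pair{\eta}{C_j}\ge 0$; hence $a=M\inverse b$, so $a_i\le 0$ for every $i$. Next I choose a smooth closed $(1,1)$-form $\tilde\beta$ on $Z$ representing $\beta$, and for each $i$ a smooth closed $(1,1)$-form $\gamma_i$ on $Y$ together with a $\gamma_i$-plurisubharmonic function $g_i$ with $C_i=\gamma_i+dd^c g_i$ as currents; recall that each $g_i$ is bounded above on $Y$ and tends to $-\infty$ along $C_i$. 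By the third fact there is $h\in C^\infty(Y)$ with $\eta-\pi^*\tilde\beta-\sum_i a_i\gamma_i=dd^c h$, whence
\[
\eta=\pi^*\tilde\beta+\sum_i a_i\,C_i+dd^c\psi,\qquad\psi\eqdef h-\sum_i a_i g_i .
\]
Since $h$ is bounded, $-a_i\ge 0$, and each $g_i$ is bounded above, the function $\psi$ is bounded above on $Y$; it also belongs to $L^1(Y)$.

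Finally I would apply $\pi_*$. From $\pi_*C_i=0$, $\pi_*\pi^*\tilde\beta=\tilde\beta$, and the fact that $\pi_*$ commutes with $dd^c$, we get $\pi_*\eta=\tilde\beta+dd^c(\pi_*\psi)$, so $\pi_*\psi$ is a potential for $\pi_*\eta$ relative to the smooth form $\tilde\beta$. Since $\pi$ restricts to a biholomorphism off the finite set $\pi(\cE_\pi)$, the $L^1$ function $\pi_*\psi$ agrees almost everywhere with $\psi\circ\pi\inverse$, which is bounded above; and any other potential of $\pi_*\eta$ differs from this one by a constant (or a fixed smooth function), hence is also bounded above. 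This proves the lemma. The one genuinely substantive step is the linear algebra: it is the combination of negative definiteness of $M$ with nonpositivity of the entries of $M\inverse$ that converts $\pair{\eta}{C}\ge 0$ into $a_i\le 0$, and this sign is exactly what makes the possibly very negative terms $-a_ig_i$ harmless from above. Everything else is bookkeeping about how $\pi_*$ acts on forms, currents, and their potentials.
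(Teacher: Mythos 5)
Your proof is correct, but it takes a more self-contained route than the paper. The paper's argument is two lines: write $\pi_*\eta=\eta'+dd^cu$, invoke the push--pull formula (Proposition \ref{pushpull2}) applied to the modification to get $\pi^*\pi_*\eta=\eta+D$ with $D$ \emph{effective} precisely because $\pair{\eta}{C}\geq 0$ on $\cE_\pi$, and conclude that $u\circ\pi$ is quasi-plurisubharmonic, hence bounded above, hence so is $u$. You instead work upstairs: you decompose $\{\eta\}=\pi^*\beta+\sum a_iC_i$, prove $a_i\leq 0$ by hand from negative definiteness of the intersection matrix together with the Stieltjes/M-matrix fact that $M^{-1}\leq 0$ entrywise, build an explicit bounded-above potential $\psi=h-\sum a_ig_i$ for $\eta$ relative to $\pi^*\tilde\beta+\sum a_i\gamma_i$, and push it down. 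The two arguments have the same mathematical core: your linear-algebra step is exactly the proof that the correction divisor $D=-\sum a_iC_i$ in the push--pull formula is effective (intersecting $\pi^*\pi_*\eta-\eta=\sum c_iC_i$ with $C_j$ gives $-b=Mc$, i.e.\ $c=-M^{-1}b\geq 0$), so you have essentially unpacked the black box the paper cites. What your version buys is independence from Proposition \ref{pushpull2} and an explicit potential; what it costs is length, plus two small caveats you rightly flag: the $dd^c$-lemma step needs $Y$ K\"ahler (true in all applications here, though the lemma is stated for compact complex surfaces), and the reduction of a singular $Y$ or $Z$ to the smooth case via resolution is asserted rather than carried out.
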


\begin{proof}(see also the proof of \cite[Theorem 2.4]{DG})
We write $\pi_*\eta = \eta' + dd^c u$ for $\eta'$ a smooth closed $(1,1)$ form and $u\in L^1(X)$.  By hypothesis and Proposition \ref{pushpull2}  applied to $\pi^{-1}$, we have
$$
\pi^*\eta' + dd^c u\circ\pi = \pi^*\pi_* \eta = \eta + D,
$$
where $D$ is an effective divisor.  Thus $u\circ \pi$ is quasiplurisubharmonic and (in particular) bounded above on $Y$.  It follows that $u$ is bounded above on $Z$.
\end{proof}

\begin{lem}
\label{negative2}
Let $\theta$ be a smooth closed $(1,1)$ form on $X$ such that $\pair{\theta}{C} \geq 0$ for every curve $C\subset \cE_f$.
Then any potential for $f_*\theta$ is bounded above.  Similarly, if $\pair{\theta}{C} \geq 0$ for every curve $C\subset \cE_f^-$, then any potential for $f^*\theta$ is bounded above.
\end{lem}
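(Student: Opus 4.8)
The plan is to reduce the statement about $f_*\theta$ (resp. $f^*\theta$) on $X$ to the statement about a pushforward under a single proper modification, so that Lemma \ref{negative1} applies directly. Recall that by definition $f = \pi_2\circ\pi_1^{-1}$ where $\pi_1,\pi_2:\Gamma\to X$ are the two projections from the (smooth) graph, and $\pi_1$ is a proper modification. Thus $f_*\theta = \pi_{2*}\pi_1^*\theta$, a composition of a smooth pullback with a pushforward under the modification $\pi_2$ (which is itself a proper modification, since $\Gamma$ was the minimal desingularization of an irreducible subvariety of $X\times X$ and $f$ is dominating). The form $\pi_1^*\theta$ is smooth and closed on $\Gamma$, so if I can verify that $\pair{\pi_1^*\theta}{C}\geq 0$ for every curve $C\subset\cE_{\pi_2}$, then Lemma \ref{negative1} gives that potentials for $\pi_{2*}(\pi_1^*\theta) = f_*\theta$ are bounded above, which is exactly the conclusion. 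The symmetric claim for $f^*\theta = \pi_{1*}\pi_2^*\theta$ follows by interchanging the roles of $\pi_1$ and $\pi_2$.

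First I would identify the exceptional curves of $\pi_2$. A curve $C\subset\cE_{\pi_2}$ is contracted by $\pi_2$; since $\pi_1$ is a modification, generically $C$ is not contracted by $\pi_1$, so $\pi_1(C)$ is a curve in $X$, and this curve is contracted by $f$, i.e. $\pi_1(C)\subset\cE_f$. (The finitely many $C$ contracted by both $\pi_1$ and $\pi_2$ sit over points of $X$ and contribute classes with zero intersection against any pullback $\pi_1^*\theta$, so they cause no trouble.) Hence for $C\subset\cE_{\pi_2}$ with $\pi_1(C)$ a curve, the projection formula gives
$$
\pair{\pi_1^*\theta}{C} = \pair{\theta}{\pi_{1*}C} = (\deg \pi_1|_C)\,\pair{\theta}{\pi_1(C)} \geq 0,
$$
using the hypothesis $\pair{\theta}{C'}\geq 0$ for every curve $C'\subset\cE_f$ together with $\pi_1(C)\subset\cE_f$. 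For the curves contracted by both projections the intersection number is $0$, and these two cases together verify the hypothesis of Lemma \ref{negative1} for the form $\pi_1^*\theta$ and the modification $\pi_2$.

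The main technical point to be careful about — the step I expect to be the mild obstacle — is the bookkeeping for curves in $\cE_{\pi_2}$ that are also in $\cE_{\pi_1}$: one must be sure these contribute nonpositively (in fact trivially) and do not spoil the positivity needed to invoke Lemma \ref{negative1}. This is handled by noting that $\pi_1^*\theta$ restricted to a fiber-type curve over a point has zero degree, so $\pair{\pi_1^*\theta}{C}=0$ for such $C$; equivalently, one can absorb this into the statement that $\pi_1^*\theta$ is nef on the curves of interest because it is a pullback of a smooth form whose relevant intersection numbers are controlled. Once this is settled, everything else is a direct translation: apply Lemma \ref{negative1} with $(Y,Z,\pi,\eta) = (\Gamma, X, \pi_2, \pi_1^*\theta)$ to get the bound for $f_*\theta$, and with $(Y,Z,\pi,\eta) = (\Gamma, X, \pi_1, \pi_2^*\theta)$ and the hypothesis on $\cE_f^-$ to get the bound for $f^*\theta$.
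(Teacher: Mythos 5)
Your treatment of $f^*\theta$ is exactly the paper's argument: check that $\pair{\pi_2^*\theta}{C}\geq 0$ for every irreducible $C\subset\cE_{\pi_1}$ (because $\pi_2(C)$ is either a point or a curve in $\cE_f^-$), then apply Lemma \ref{negative1} with $\pi=\pi_1$, $\eta=\pi_2^*\theta$. That half is fine.

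The $f_*\theta$ half has a genuine gap. You assert that $\pi_2:\Gamma\to X$ "is itself a proper modification," but this is false whenever $f$ is not bimeromorphic: the topological degree of $\pi_2$ equals $\lambda_2(f)$, the number of preimages of a generic point, and the whole point of this paper is to allow $\lambda_2>1$. Only $\pi_1$ is required to be a modification; $\pi_2$ is merely required to be surjective, and for $\lambda_2>1$ it is a generically $\lambda_2$-to-$1$ map. Lemma \ref{negative1} is stated (and proved, via the push--pull formula $\pi^*\pi_*\eta=\eta+D$) only for proper modifications, so you cannot invoke it with $(Y,Z,\pi,\eta)=(\Gamma,X,\pi_2,\pi_1^*\theta)$. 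This is precisely why the paper's proof of the pushforward case is longer: it uses the factorization $f=\pi\circ h$, where $\pi:Y\to X$ is a genuine modification and $h:X\to Y$ is a meromorphic map with $\cE_h=\emptyset$, so that $f_*\theta=\pi_*h_*\theta$. One then shows by a local argument (writing $\theta=dd^c\psi$ near the points to which $\pi_X(\cE_{\pi_Y})$ collapses, using $\cE_{\pi_Y}\subset\cE_{\pi_X}$ since $h$ contracts no curves) that $h_*\theta=\theta'+dd^c u$ with $u$ bounded and $\pair{\theta'}{C}\geq 0$ for $C\subset\cE_\pi$, and only then applies Lemma \ref{negative1} to the modification $\pi$. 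Your argument would need to be replaced by something of this kind; as written it only covers the case $\lambda_2=1$.
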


\begin{proof}
Consider first $f^*\theta = \pi_{1*}\pi_2^*\theta$.  For each irreducible $C\subset\cE_{\pi_1}$, we have that $\pi_2(C)$ is either trivial or an irreducible curve in $\cE_f^-$.  Hence $\pair{\pi_2^*\theta}{C}\geq 0$.  The assertion thus follows from
 Lemma \ref{negative1} applied to  $\pi = \pi_1$ and $\eta=\pi_2^*\theta$.

Now consider $f_*\theta$.  We recall (see e.g. the paragraph before Lemma 2.4 in \cite{BFJ}) that there exists a modification $\pi:Y\to X$ that lifts $f = \pi\circ h$ to a meromorphic map $h:X\to Y$ with $\cE_h = \emptyset$ and that $f_*\theta = \pi_*h_*\theta$.  We claim that $h_*\theta = \theta' + dd^c u$, where $u$ is a bounded function and $\theta'$ is a smooth form satisfying $\pair{\theta'}{C} \geq 0$ for all $C\subset\cE_\pi$.
Given the claim, we can apply Lemma \ref{negative1} with $\eta = \theta'$, obtaining
$$
f_*\theta = \pi_*\theta' + dd^c u\circ \pi^{-1}.
$$
Hence $f_*\theta$ has potentials that are bounded above.

It remains to prove the claim.  Let $\Gamma_h$ be the minimal desingularization of the graph of $h$, and $\pi_X:\Gamma_h\to X$, $\pi_Y:\Gamma_h\to Y$ be projections onto domain and range.  Since $h$ collapses no curves, we have $\cE_{\pi_Y}\subset\cE_{\pi_X}$.  In particular, for each connected component $C\subset\cE_{\pi_Y}$, the image $\pi_X(C) = p$ is a point.  We write 
$\theta = dd^c \psi$ on a neighborhood $U\ni p$ and obtain that $\pi_X^*\theta = dd^c \psi\circ \pi_X$ is $dd^c$-exact on a neighborhood of $C$.  Therefore, if $q\in Y$ is any point---even a point in the image of $\cE_{\pi_Y}$, there is a neighborhood $V_q\ni q$ such that $\pi_X^*\theta = dd^c \varphi_j$ is $dd^c$-exact on each connected component $V_j$ of $\pi_Y^{-1}(V_q)$.  This gives us that
$$
h_*\theta = \pi_{Y*} \pi_X^*\theta = dd^c \sum_j \pi_{Y*} \varphi_j
$$
has bounded potentials near $q$.  Since $q$ is arbitrary, the claim is established.
\end{proof}

Now let $h^{1,1} = \dim\HR(X)$, and fix smooth closed $(1,1)$ forms $\theta_1,\dots,\theta_{h^{1,1}}$ whose cohomology classes form a basis for $\HR(X)$.  Then for each positive closed $(1,1)$ current $T$ on $X$, we have a unique decomposition 
$$
T = \theta_T + dd^c V_T
$$
where $\theta_T \in \Theta \eqdef \oplus_{j=1}^n \R\,\theta_j$ and $V_T \in L^1_0(X)\eqdef \{\psi\in L^1(X):\int \psi\,\omega_X^2 = 0\}$.
Using the weak topology on the set of positive closed $(1,1)$ currents, we have that both $\theta_T$ and $V_T$ depend continuously on $T$.  As the dependence is also linear, the decomposition extends naturally to any difference $T_1-T_2$ of positive closed $(1,1)$ currents.  In particular, it extends to all smooth closed $(1,1)$ forms on $X$ and to their images under pushforward and pullback by meromorphic maps.

We give $H_\R^{1,1}(X)$ the norm $\norm[H^{1,1}]{\sum c_j\theta_j} \eqdef \max |c_j|$.  The following is essentially a restatement of \cite[Lemma 2.2]{BD}.

\begin{prop}
\label{uniform upper bound}
There is a constant $M$ such that $V_{f^*\theta},V_{f_*\theta} \leq M\norm[H^{1,1}]{\theta}$ for every $\theta\in \Theta$ representing a nef class.
\end{prop}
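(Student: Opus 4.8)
The plan is to derive the Proposition from a single estimate about pushing a smooth form forward by a \emph{modification}, applied to the two standard factorizations $f^{*}=\pi_{1*}\circ\pi_{2}^{*}$ and (as in the proof of Lemma~\ref{negative2}) $f_{*}=\pi_{*}\circ h_{*}$, where $h\colon X\to Y$ has $\cE_{h}=\emptyset$ and $\pi\colon Y\to X$ is a modification. The estimate is: \emph{if $p\colon W\to X$ is a modification and $\eta$ a smooth closed $(1,1)$ form on $W$ with $\pair{\eta}{C}\ge 0$ for every curve $C\subset\cE_{p}$, then $V_{p_{*}\eta}\le M_{p}\,\norm[C^{0}(W)]{\eta}$ for a constant $M_{p}$ depending only on $p$, $W$ and the fixed data $(X,\omega_{X},\Theta)$.} This is essentially \cite[Lemma~2.2]{BD}, and I would prove it as follows. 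Writing $p_{*}\eta=\theta_{p_{*}\eta}+dd^{c}V_{p_{*}\eta}$ and pulling back as in the proof of Lemma~\ref{negative1} (using Proposition~\ref{pushpull2} for $p^{-1}$, which needs $\pair{\eta}{C}\ge 0$ on $\cE_{p}$ to know the resulting divisor is effective), one gets $p^{*}\theta_{p_{*}\eta}+dd^{c}(V_{p_{*}\eta}\circ p)=\eta+D$ with $D\ge 0$; hence $u\eqdef V_{p_{*}\eta}\circ p$ satisfies $dd^{c}u\ge\eta-p^{*}\theta_{p_{*}\eta}\ge-\Lambda\,\omega_{W}$ for a fixed Kähler form $\omega_{W}$ on $W$ and some $\Lambda\le C\,\norm[C^{0}(W)]{\eta}$, with $C$ fixed (using that $p$ is fixed, that $C^{0}$-norms dominate $H^{1,1}$-norms on cohomology classes, and that $p_{*}$ is bounded on cohomology, so $\norm[C^{0}]{\theta_{p_{*}\eta}}\lesssim\norm[C^{0}(W)]{\eta}$). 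Thus $u$ is $\Lambda\omega_{W}$-plurisubharmonic.

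The delicate point is that $V_{p_{*}\eta}$ is normalized by $\int_{X}V_{p_{*}\eta}\,\omega_{X}^{2}=0$, a condition on $X$, whereas $u$ lives on $W$; moreover $V_{p_{*}\eta}$ itself is generally \emph{not} quasi-plurisubharmonic on $X$ (pushing a positive current through a contracted curve produces a logarithmic pole that survives in $V_{p_{*}\eta}$), so one cannot simply subtract off a fixed ``reference potential.'' I would instead integrate $u$ against $p^{*}(\omega_{X}^{2})=(p^{*}\omega_{X})^{2}$: since $p$ is a modification, $\int_{W}u\;p^{*}(\omega_{X}^{2})=\int_{X}V_{p_{*}\eta}\,\omega_{X}^{2}=0$ and $\int_{W}p^{*}(\omega_{X}^{2})=\int_{X}\omega_{X}^{2}$. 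Write $u=\Lambda\tilde u+\sup_{W}u$ with $\tilde u$ $\omega_{W}$-plurisubharmonic and normalized by $\sup\tilde u=0$; by the standard $L^{1}$-compactness of such functions, $\norm[L^{1}(\omega_{W}^{2})]{\tilde u}\le C_{W}$. Using $\sup_{W}u=\sup_{X}V_{p_{*}\eta}$ (as $p$ is onto) and $p^{*}(\omega_{X}^{2})\le\kappa\,\omega_{W}^{2}$ with $\kappa\eqdef\sup_{W}\bigl(p^{*}(\omega_{X}^{2})/\omega_{W}^{2}\bigr)<\infty$, I obtain
\[
\Bigl(\sup\nolimits_{X}V_{p_{*}\eta}\Bigr)\int_{X}\omega_{X}^{2}=-\Lambda\int_{W}\tilde u\;p^{*}(\omega_{X}^{2})\le\Lambda\,\kappa\,C_{W},
\]
which gives the estimate since $\Lambda\lesssim\norm[C^{0}(W)]{\eta}$. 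The idea is that one must pull the reference volume back to $W$, where it has bounded density, rather than push $\omega_{W}^{2}$ down to $X$, where it does not.

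Granting the estimate, the Proposition follows. For $f^{*}\theta$, apply it with $p=\pi_{1}$ and $\eta=\pi_{2}^{*}\theta$: the hypothesis $\pair{\pi_{2}^{*}\theta}{C}=\pair{\theta}{\pi_{2*}C}\ge 0$ for $C\subset\cE_{\pi_{1}}$ holds because $\pi_{2}(C)$ is either trivial or a curve in $\cE^{-}_{f}$ and $[\theta]$ is nef, and $\norm[C^{0}]{\pi_{2}^{*}\theta}\lesssim\norm[H^{1,1}]{\theta}$ since $\pi_{2}$ is fixed. For $f_{*}\theta$, first promote the claim in the proof of Lemma~\ref{negative2} to a quantitative statement: in the fixed decomposition on $Y$, $h_{*}\theta=\theta^{Y}_{\theta}+dd^{c}u_{\theta}$ with $\theta^{Y}_{\theta}$ smooth (in the fixed finite-dimensional space, hence depending linearly on $\theta$ with $\norm[C^{0}]{\theta^{Y}_{\theta}}\lesssim\norm[H^{1,1}]{\theta}$) and $\norm[\infty]{u_{\theta}}\lesssim\norm[H^{1,1}]{\theta}$; this is visible from that proof, where $u_{\theta}$ is built from local potentials of $\pi_{X}^{*}\theta$ (whose sup-norms are $\lesssim\norm[H^{1,1}]{\theta}$) pushed down finitely many sheets. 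Then $f_{*}\theta=\pi_{*}\theta^{Y}_{\theta}+dd^{c}(u_{\theta}\circ\pi^{-1})$, so $V_{f_{*}\theta}=V_{\pi_{*}\theta^{Y}_{\theta}}+u_{\theta}\circ\pi^{-1}-\const$ with $\abs{\const}\lesssim\norm[H^{1,1}]{\theta}$; apply the estimate with $p=\pi$, $\eta=\theta^{Y}_{\theta}$ (whose class $h_{*}[\theta]$ is nef, so $\pair{\theta^{Y}_{\theta}}{C}\ge 0$ on $\cE_{\pi}$) to bound the first term, and use $\norm[\infty]{u_{\theta}\circ\pi^{-1}}=\norm[\infty]{u_{\theta}}\lesssim\norm[H^{1,1}]{\theta}$ for the rest.

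The main obstacle is the modification estimate, and within it the normalization issue flagged above: because $V_{p_{*}\eta}$ can take the value $-\infty$ on $X$, precisely to offset the poles created by $p_{*}$, no subtraction trick bounds $\sup_{X}V_{p_{*}\eta}$, and the device of testing the genuinely quasi-plurisubharmonic pullback $u$ on $W$ against $p^{*}(\omega_{X}^{2})$ is what resolves it. The remaining work — the quantitative form of the claim for $h_{*}\theta$, and checking that every comparison constant depends only on the fixed data — is routine bookkeeping.
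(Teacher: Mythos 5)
Your proof is correct, but it takes a genuinely different route from the paper's. The paper keeps Lemma \ref{negative2} purely qualitative and gets uniformity from finite-dimensional convex geometry: after rescaling into the compact slice $N\cap H$ of the nef cone, it encloses $N$ in the polyhedral cone $\tilde N$ cut out by the finitely many conditions $\pair{\theta}{C}\geq 0$ for $C\subset\cE_f^-$, chooses finitely many $\eta_1,\dots,\eta_m\in\tilde N\cap H$ whose convex hull contains $N\cap H$, bounds each $V_{f^*\eta_j}$ by Lemma \ref{negative2}, and concludes by convexity of $\theta\mapsto\sup V_{f^*\theta}$ (homogeneity then gives the linear dependence on $\norm[H^{1,1}]{\theta}$ for free). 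You instead make Lemma \ref{negative1} quantitative: the key observation that $V_{p_*\eta}$ need not be quasi-plurisubharmonic on $X$ but that $V_{p_*\eta}\circ p$ is $\Lambda\omega_W$-plurisubharmonic upstairs with $\Lambda$ controlled by $\norm[C^0]{\eta}$, combined with transporting the mean-zero normalization to $W$ via $p^*(\omega_X^2)$ and the $L^1$-compactness of normalized quasi-plurisubharmonic functions, is a sound and correctly executed way around the very difficulty the paper flags (that $\theta$ is not positive, so compactness of positive currents cannot be invoked on $X$). The trade-off: your argument yields an effective constant and does not rely on the polyhedrality of $\tilde N$, but it is longer and requires the quantitative bookkeeping you defer --- in particular the uniform bound $\norm[\infty]{u_\theta}\lesssim\norm[H^{1,1}]{\theta}$ in the factorization $f_*=\pi_*\circ h_*$, which needs the local potentials in the paper's proof of the claim in Lemma \ref{negative2} to be chosen linearly in $\theta$ over a fixed finite cover --- whereas the paper's convexity trick makes all of that unnecessary at the cost of being non-effective.
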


The difficult point here is that the form $\theta$ is not itself positive.  So despite the positivity of the class and the normalization of potentials, we cannot directly apply compactness theorems for positive closed $(1,1)$ currents.

\begin{proof}
We work only with pullbacks, the proof being identical for pushforwards.  Let $H \eqdef \{\theta\in\Theta: \pair{\theta}{\omega_X} = 1\}$ and $N = \{\theta\in\Theta:\theta \text{ represents a nef class}\}$.  Then $N\cap H$ is a compact convex subset of $\Theta$ that avoids $0$.  Since any $\theta\in\Theta$ representing a nef class may be rescaled to give an element in $N\cap H$, it suffices to find $M$ satisfying $M\geq V_{f^*\theta}$ for all $\theta\in N\cap H$.

Let $\tilde N = \{\theta\in\Theta:\pair{\theta}{C} \geq 0\text{ for every irreducible } C\subset\cE^-_f\}$.  Then $\tilde N$ is defined by finitely many linear inequalities and contains $N$.  Hence we can find \emph{finitely many} elements $\eta_1,\dots,\eta_m\in \tilde N\cap H$ whose (compact) convex hull contains $N\cap H$.  By Lemma \ref{negative2}, we have $M$ such that 
$V_{f^*\eta_j} \leq M$ for $j=1,\dots,m$.  Since the function $\theta \mapsto \sup V_{f^*\theta}$ is convex on $\tilde N\cap H$, we have $V_{f^*\theta}\leq M$ for every $\theta$ in the convex hull of $\eta_1,\dots,\eta_m$.
\end{proof}

For any class $\alpha\in\psef(X)$, we set
$$
\norm[bdd]{\alpha} = \inf
\{\sup V_T - \inf V_T:T\geq 0\text{ represents } \alpha \}\leq \infty, 
$$
and we let 
$$
\bdd(X) \eqdef \{\norm[bdd]{\alpha}<\infty\}
$$ 
be the convex cone of classes represented by positive closed currents with bounded potentials.  While $\norm[bdd]{\alpha}$ depends on our choice of $\Theta$, the cone $\bdd(X)$ does not. To our knowledge, this cone has not been previously considered.  

\begin{prop}
\label{whereisbdd}
For any K\"ahler surface $X$, we have $H^{1,1}_{kahler}(X)\subset \bdd(X) \subset \nef(X)$.  There exist $X$ for which both inclusions are strict.  Hence $\bdd(X)$ is neither open nor closed in general.
\end{prop}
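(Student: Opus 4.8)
The plan is to prove the two inclusions, both of which are soft, then exhibit surfaces on which each inclusion is strict, and finally deduce ``neither open nor closed'' formally; the real content is in the second strictness example. For $H^{1,1}_{kahler}(X)\subseteq\bdd(X)$: a K\"ahler class $\alpha$ is represented by a K\"ahler form $\omega$, and in the decomposition $\omega=\theta_\omega+dd^c V_\omega$ fixed above one has $dd^c V_\omega=\omega-\theta_\omega$ smooth, so $V_\omega$ is smooth by elliptic regularity and in particular bounded; hence $\norm[bdd]{\alpha}\leq\sup V_\omega-\inf V_\omega<\infty$. For $\bdd(X)\subseteq\nef(X)$: let $\alpha\in\bdd(X)$ be represented by a positive closed current $T=\theta+dd^c u$ with $u$ bounded. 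For any positive closed $(1,1)$ current $S$ on $X$ the Bedford--Taylor product $T\wedge S\eqdef\theta\wedge S+dd^c(uS)$ is a well-defined \emph{positive} measure whose total mass equals the intersection pairing $\pair{\alpha}{[S]}$; positivity forces $\pair{\alpha}{[S]}\geq0$, and since every class of $\psef(X)$ arises as such an $[S]$, this is precisely the statement $\alpha\in\nef(X)$. (Concretely, taking $S=[C]$ and restricting $T$ to the normalization of the curve $C$ gives $\pair{\alpha}{C}\geq0$.)

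For strictness of $H^{1,1}_{kahler}(X)\subsetneq\bdd(X)$, let $\pi:X\to X_0$ be any nontrivial modification of compact K\"ahler surfaces (for instance the blow-up of $\mathbf{P}^2$ at a point), and let $\omega_0$ be a K\"ahler form on $X_0$. Writing $\omega_0=dd^c\varphi_0$ locally with $\varphi_0$ smooth shows that $\pi^*\omega_0$ is a positive closed current with locally bounded potentials, so $[\pi^*\omega_0]\in\bdd(X)$; but $\pair{[\pi^*\omega_0]}{E}=\pair{\omega_0}{\pi_*E}=0$ for every curve $E$ contracted by $\pi$, so $[\pi^*\omega_0]$ lies on $\bdry\nef(X)$ and is not a K\"ahler class.

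For strictness of $\bdd(X)\subsetneq\nef(X)$ one needs a compact K\"ahler surface carrying a nef class $\alpha$ with $\pair{\alpha}{\alpha}=0$ all of whose positive closed representatives are singular. A concrete candidate is the blow-up $X$ of $\mathbf{P}^2$ at nine points lying on a smooth cubic $C_0$, chosen so that the class of $\sum p_i-3H|_{C_0}$ is non-torsion in $\mathrm{Pic}^0(C_0)$: then $\alpha=-K_X$ is nef with $\alpha^2=0$ and is represented by the strict transform $\widetilde{C_0}$, a rigid elliptic curve of ``generic Ueda type''; a K3 surface with anisotropic Picard lattice of rank $\geq2$ (so that $\bdry\nef(X)$ carries irrational isotropic rays) is an alternative. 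The mechanism is the same: if such an $\alpha$ were in $\bdd(X)$, represented by $T$ with bounded potential, then the Bedford--Taylor measure $T\wedge T$ would be positive of total mass $\pair{\alpha}{\alpha}=0$, hence $T\wedge T=0$; a rigidity statement -- in the first case, uniqueness of the positive closed current representing a nef class whose null locus is a rigid curve of this type -- then forces $T$ to be a current of integration along a curve, contradicting boundedness of its potential.

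The final assertion is then formal. Since $\bdd(X)\subseteq\nef(X)$ and, by the first example, $\bdd(X)$ meets $\bdry\nef(X)$, no neighborhood of the relevant class lies in $\bdd(X)$, so $\bdd(X)$ is not open. Since $\nef(X)^{\circ}=H^{1,1}_{kahler}(X)\subseteq\bdd(X)\subseteq\nef(X)$, passing to closures gives $\overline{\bdd(X)}=\nef(X)$, which strictly contains $\bdd(X)$ by the second example, so $\bdd(X)$ is not closed. The main obstacle is precisely that second example: exhibiting a surface and a nef class whose representability by a current with bounded potentials genuinely fails, together with the rigidity argument that excludes every positive closed representative except a singular one. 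The remaining steps are routine.
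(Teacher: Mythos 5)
Your proofs of the two inclusions and your first strictness example are correct and essentially match the paper: K\"ahler classes have smooth potentials; a bounded-potential representative admits Bedford--Taylor wedge products against arbitrary positive closed currents, forcing nonnegative intersection with every pseudoeffective class; and the pullback of a K\"ahler form under a nontrivial modification is a smooth semipositive representative of a nef, non-K\"ahler class. (The paper extracts both strict inclusions from a single surface, but that is cosmetic.)

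The genuine gap is the second strictness example, which you yourself flag as the real content. What you give is a candidate plus an unproven ``rigidity statement,'' and the candidate as stated does not work. For the blow-up of $\cp^2$ at nine points on a cubic with non-torsion normal bundle ``of generic Ueda type,'' the Arnold--Ueda linearization results say precisely that a Diophantine (i.e.\ generic) normal bundle forces the anticanonical curve to have a fundamental system of pseudoflat neighborhoods, so $-K_X$ is semipositive and hence lies in $\bdd(X)$ --- the opposite of what you need. The problematic cases are the Liouville-type normal bundles, and there the failure of bounded-potential representability is a delicate matter, not something you can derive from a ``uniqueness of the positive representative'' principle, which is not a theorem you can cite; your K3 alternative has the same status. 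The paper sidesteps all of this by quoting an established example (Demailly--Peternell--Schneider, Example 1.7): the ruled surface $X=\mathbf{P}(E)$ over an elliptic curve $C$, with $E$ the nonsplit extension of $\mathcal{O}$ by $\mathcal{O}$. There the tautological class is nef, but the \emph{only} positive closed current representing it is the current of integration over the distinguished section, whose potentials have logarithmic singularities; hence the class is not in $\bdd(X)$, while the pullback of a K\"ahler form from $C$ simultaneously witnesses strictness of the first inclusion on the same surface. Replace your candidate by this (or another documented) example and the rest of your argument, including the formal deduction that $\bdd(X)$ is neither open nor closed, goes through.
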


\begin{proof}
K\"ahler forms have smooth local potentials, so K\"ahler classes belong to $\bdd(X)$ by definition.  On the other hand, if $V_T$ is bounded for a given $T$, then it is well-known \cite{BT} that $T\wedge S$ is a well-defined positive measure for any other positive closed current on $X$.  In particular $\pair{T}{S} \geq 0$,
which implies that $T$ represents a nef class.

Finally, \cite[Example 1.7]{DPS} exhibits a $\cp^1$ bundle $X$ over an elliptic curve $C$ for which $\bdd(X)\neq \nef(X)$.  Moreover, the pullback to $X$ of any K\"ahler form on $C$ is smooth and positive and represents a class with zero self-intersection.  This shows that $\bdd(X)$ is larger than the interior of $\nef(X)$.
\end{proof}

\begin{thm} 
\label{bounded to bounded}
There is a constant $C>0$ such that
$$
\norm[bdd]{f^*\alpha} \leq \norm[bdd]{\alpha} + C\norm[H^{1,1}]{\alpha},\quad
\norm[bdd]{f_*\alpha} \leq \lambda_2\norm[bdd]{\alpha} + C\norm[H^{1,1}]{\alpha}.
$$
Thus $f^*$ and $f_*$ preserve $\bdd(X)$.
\end{thm}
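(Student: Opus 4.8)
The plan is to reduce the whole statement, via the identities
\[
f^*T = f^*\theta_\alpha + dd^c(V_T\circ f),\qquad f_*T = f_*\theta_\alpha + dd^c\bigl(\pi_{2*}(V_T\circ\pi_1)\bigr),
\]
valid for any positive closed $T$ representing a class $\alpha$, where $\theta_\alpha\in\Theta$ is the representative of $\alpha$, to an analysis of the single forms $f^*\theta_\alpha$ and $f_*\theta_\alpha$. There is nothing to prove unless $\norm[bdd]{\alpha}<\infty$, and then $\alpha\in\bdd(X)\subset\nef(X)$ by Proposition \ref{whereisbdd}, so $\theta_\alpha$ represents a nef class and Proposition \ref{uniform upper bound} gives $V_{f^*\theta_\alpha},V_{f_*\theta_\alpha}\le M\norm[H^{1,1}]{\alpha}$. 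Since $f^*\theta_\alpha$ is smooth off the finite set $I_f$ and $f_*\theta_\alpha$ is continuous off the finite set $I^-_f$, and since $\alpha\mapsto f^*\theta_\alpha$ (resp. $\alpha\mapsto f_*\theta_\alpha$) is linear, the potential $V_{f^*\theta_\alpha}$ (resp. $V_{f_*\theta_\alpha}$) is bounded below on each compact subset of $X\setminus I_f$ (resp. $X\setminus I^-_f$), with lower bound a constant times $\norm[H^{1,1}]{\alpha}$. Consequently, writing $\theta\in\Theta$ for the representative of $f^*\alpha$, the potential $\chi:=V_{f^*\theta_\alpha}+V_T\circ f$ of $f^*T$ is $\theta$-plurisubharmonic, bounded above by $M\norm[H^{1,1}]{\alpha}+\sup V_T$, bounded below away from $I_f$, but possibly tending to $-\infty$ along $I_f$; in particular $f^*T$ itself need not have bounded potential (already false when $T$ is a K\"ahler form, since $f^*$ of a K\"ahler form acquires logarithmic poles at $I_f$), so one must construct a \emph{different} representative of $f^*\alpha$.

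To build it I perform a local surgery at each $p\in I_f$. Choose mutually disjoint coordinate balls $B_p'\subset\subset B_p$ centered at the points of $I_f$, small enough that $\overline{B_p}\cap I_f=\{p\}$, and on $B_p$ set $u_p:=-\phi_p$, where $\phi_p$ is a local potential of $\theta$ with $|\phi_p|\le C\norm[H^{1,1}]{\alpha}$; then $\theta+dd^c u_p=0$, so $u_p$ is a bounded $\theta$-plurisubharmonic function on $B_p$. Because $\chi$ is bounded below on the compact sets $\overline{B_p\setminus B_p'}$, for $M$ large enough (of size at most $C\norm[H^{1,1}]{\alpha}+(\sup V_T-\inf V_T)+O(1)$) we have $\chi>u_p-M$ there; hence $\hat\chi$, defined to be $\max(\chi,u_p-M)$ on each $B_p$ and $\chi$ on $X\setminus\bigcup_p B_p'$, is a globally well-defined $\theta$-plurisubharmonic function on $X$, and it is now bounded (above by $\sup\chi$, below by $\inf\chi$ on the complement of $\bigcup_p B_p$ together with $\min_p\inf_{B_p}(u_p-M)$). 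Thus $\theta+dd^c\hat\chi\ge 0$ is a positive closed $(1,1)$ current representing $f^*\alpha$ with bounded potential, which already shows $f^*\alpha\in\bdd(X)$; the same construction at the points of $I^-_f$ handles $f_*\alpha$.

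For the quantitative bound, normalize $V_T$ by an additive constant so that $\sup V_T=b\ge 0\ge -a=\inf V_T$, $a+b=\sup V_T-\inf V_T$; then $\sup\hat\chi\le C\norm[H^{1,1}]{\alpha}+b$ and $\inf\hat\chi\ge -C\norm[H^{1,1}]{\alpha}-a-O(1)$, so the oscillation of $\hat\chi$ is at most $(\sup V_T-\inf V_T)+C\norm[H^{1,1}]{\alpha}+O(1)$. Taking the infimum over $T$ gives $\norm[bdd]{f^*\alpha}\le\norm[bdd]{\alpha}+C\norm[H^{1,1}]{\alpha}+O(1)$, and the harmless additive term is removed by applying this to $t\alpha$ and letting $t\to\infty$, using that $\norm[bdd]{\cdot}$, $\norm[H^{1,1}]{\cdot}$ and $f^*$ are homogeneous of degree one. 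The inequality for $f_*$ is proved identically from the second identity above: the singular set is $I^-_f$ instead of $I_f$, and the function $\pi_{2*}(V_T\circ\pi_1)$ equals, over a generic point, the sum of the $\lambda_2$ values of $V_T$ on the fibre of $f$, so its oscillation is at most $\lambda_2(\sup V_T-\inf V_T)$ — and that is precisely what puts the factor $\lambda_2$ in front of $\norm[bdd]{\alpha}$. Finiteness of both right-hand sides then shows $f^*$ and $f_*$ preserve $\bdd(X)$.

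The genuine obstacle is the surgery of the second paragraph: the naive representative $f^*T$ really does fail to have bounded potential, so a new current must be produced, and what makes this affordable is the interplay between the smoothness of $\theta$ off a finite set (which supplies the bounded $\theta$-plurisubharmonic comparison functions $u_p$) and the nef\,/\,boundedness-above information on $\chi$ coming from Proposition \ref{uniform upper bound}. I expect the delicate point to be the bookkeeping that keeps the coefficient of $\norm[bdd]{\alpha}$ equal to $1$ (resp. $\lambda_2$) rather than $2$ (resp. $2\lambda_2$), for which the splitting $a+b$ of the oscillation and the homogeneity trick are essential.
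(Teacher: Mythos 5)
Your proof is correct and follows essentially the same route as the paper's: the same decomposition $f^*T=\theta_{f^*T}+dd^c(V_{f^*\theta_T}+V_T\circ f)$, the same appeal to Proposition \ref{uniform upper bound} for the upper bound, and the same max-gluing with a bounded local potential of the reference form near the finite set $I_f$ (resp.\ $I_f^-$) to truncate the poles, with the factor $\lambda_2$ in the pushforward case coming from summing $V_T$ over fibers. The only cosmetic differences are that you work on disjoint balls rather than a single neighborhood $U'\supset I_f$, and that you dispose of an additive constant by homogeneity where the paper instead chooses the truncation level $R$ explicitly so that no such constant appears.
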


\begin{proof}
We deal only with $f^*\alpha$.  The only difference in the pushforward case comes from the fact that for functions
$V$ bounded above on $X$, one has $\sup f_*V \leq \lambda_2 \sup V$.  
Let $T$ be a positive closed current representing $\alpha$ such that $\sup V_T - \inf V_T < \infty$.  Then
$$
f^* T = f^*\theta_T + dd^c V_T\circ f = \theta_{f^* T} + dd^c (V_{f^*\theta_T}  + V_T\circ f).
$$
Note that $V_{f^*\theta_T}$ is smooth off $I_f$.

Let $U \subset\subset U'\subset X$ be open neighborhoods of $I_f$ small enough that each form $\theta_j$ can be expressed as $dd^c \rho_j$ for some smooth $\rho_j:U'\to [0,1]$.  Writing $\theta_{f^*T} = \sum c_j\theta_j$, we let $\rho\eqdef \sum c_j\rho_j$.  Then $V_T\circ f + V_{f^*\theta_T} + \rho$ is a potential for $f^*T$ on $U'$.  So for $R>0$ large enough, the function 
$$
u := \left\{\begin{array}{lcl} V_T\circ f + V_{f^*\theta_T} &\text{on} & X-U \\
                               \max\{V_T\circ f + V_{f^*\theta_T},-R-\rho\} & \text{on} & U'
            \end{array}\right.
$$
is well-defined and bounded.  Indeed, paying more careful attention, one finds that
$$
-R = \inf_X V_T + \inf_{X\setminus U} V_{f^*\theta_T} + \inf_{U'}\rho.
$$
suffices here.  The current $S := \theta_{f^*T} + dd^c u$ represents $f^*\alpha$ and agrees with $f^*T$ outside $U$.  Since $\max\{V_T\circ f + V_{f^*\theta_T} + \rho, -R\}$ is a potential for $S$ on $U'$, we see that $S\geq 0$ on all of $X$.  Hence $f^*\alpha \in H^{1,1}_{bdd}(X)$, with $\norm[bdd]{f^*\alpha} \leq \sup_X u - \inf_X u$.

Now 
$$
\sup_X u \leq \sup_X V_T + \sup_X V_{f^*\theta_T} + \sup_{U'} |\rho| \leq \sup_X V_T + C\norm[H^{1,1}]{\alpha},
$$
where $\sup_X V_{f^*\theta_T}$ is controlled by Propositions \ref{uniform upper bound} and \ref{whereisbdd}, and 
$\sup_{U'} |\rho|$ is controlled by the facts that $0\leq \rho_j\leq 1$ and 
$|c_j| \leq \norm[H^{1,1}]{f^*\alpha} \leq C\norm[H^{1,1}]{\alpha}$.

In the other direction, our choice of $R$ gives
$$
\inf_X u = \inf_{U'} u \geq \inf_{U'} (-R - \rho) \geq \inf_X V_T + \inf_{X\setminus U} V_{f^*\theta_T} - 2\sup_{U'}|\rho|.
$$
The final term is estimated as above.  Writing $\theta_T = \sum_j b_j \theta_j$, we  
control the middle term by 
$
\inf_{X\setminus U} V_{f^*\theta_T} \geq -\sum_j |b_j|\max_{X\setminus U} |V_{f^*\theta_j}| 
                                      \geq -C\norm[H^{1,1}]{\alpha}.
$
Thus we arrive at
$$
\norm[bdd]{f^*\alpha} \leq \sup_X u - \inf_X u \leq (\sup_X V_T-\inf_X V_T) + C\norm[H^{1,1}]{\alpha}.
$$
The proof is ended by taking the infimum of the right side over all $T\geq 0$ representing $\alpha$.
\end{proof}

\begin{proof}[Proof of Theorem \ref{thm:bounded}]
Recall that we have normalized so that $\frac{(f^*)^n\omega_X}{r_1(f)^n}$ tends toward $\theta^+$ in cohomology.
From Theorem \ref{bounded to bounded}, one has 
$$
\norm[bdd]{(f^*)^n\omega_X} \leq \norm[bdd]{\omega_X} + C\sum_{j=0}^{n-1}\norm[H^{1,1}]{(f^*)^j\omega_X}< M r_1(f)^n
$$
for some $M$ independent of $n$.  Dividing through by $r_1(f)^n$ and appealing to compactness of the set of positive closed currents $T = \theta_T + dd^c V_T$ with $|V_T| \leq  M$, we conclude that $\theta^+ \in H^{1,1}_{bdd}(X)$.  The proof for $\alpha^-$ is similar.
\end{proof}

\section{The canonical $f^*$-invariant current}\label{sec:T+}
We now construct and analyze the invariant current $T^+$.  There are of course many precedents (see e.g. \cite{Sib, Fav2, DG}) for this.  The novelty here concerns the level of generality in which we are working. 

\subsection{Construction of $T^+$}\label{subs:cv+}

Recall from Theorems \ref{spectral} and \ref{thm:bounded} that when $f$ is $1$-stable and $\lambda_1^2 > \lambda_2$, there is a unique (normalized) class $\alpha^+\in H_{bdd}^{1,1}(X)$ such that $f^*\alpha^+=\lambda_1\alpha^+$.

\begin{thm}\label{thm:cv+}
Suppose that $f$ is $1$-stable and that $\lambda_1^2 > \lambda_2$.  Then there is a positive closed $(1,1)$ current $T^+$ representing $\alpha^+$ such that $f^*T^+ = \lambda_1 T^+$ and for any smooth form $\theta^+$ representing $\alpha^+$, we have weak convergence
$$
\lim_{n\to\infty} \lambda_1^{-n}f^{n*}\theta^+ \to T^+.
$$
The latter holds more generally for (non-smooth) representatives with bounded local potentials. 
\end{thm}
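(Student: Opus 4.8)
The plan is to prove the stated convergence first for the \emph{positive} closed current $S^+$ with bounded potential furnished by Theorem~\ref{thm:bounded}, and then to upgrade it to arbitrary representatives with bounded local potentials by a soft device. Indeed, if $R$ and $R'$ both represent $\alpha^+$ and have bounded local potentials, then $R-R'=dd^c\psi$ with $\psi$ globally bounded, so $1$-stability gives $\lambda_1^{-n}f^{n*}R-\lambda_1^{-n}f^{n*}R'=\lambda_1^{-n}dd^c(\psi\circ f^n)$, which tends weakly to $0$ because $\norm[\infty]{\psi\circ f^n}\le\norm[\infty]{\psi}$. Since a smooth form in particular has bounded local potentials, once convergence is known for $S^+$ it follows for every such representative, hence for every smooth $\theta^+$.

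So fix a smooth form $\theta\in\Theta$ in the class $\alpha^+$ and write $f^*\theta=\lambda_1\theta+dd^c\gamma$ with $\gamma=V_{f^*\theta}\in L^1(X)$. As $\alpha^+$ is nef, $\pair{\theta}{C}\ge0$ for every curve, so Lemma~\ref{negative2} makes $\gamma$ bounded above; and since $f^*\theta=\pi_{1*}\pi_2^*\theta$ with $\pi_2^*\theta$ smooth and $\pi_1$ a modification contracting $\cE_{\pi_1}$ onto $I_f$, the only obstruction to boundedness of $\gamma$ is an analytic (logarithmic-type) singularity of controlled multiplicity along the finite set $I_f$. Writing $S^+=\theta+dd^cu$ with $u$ bounded and putting $L(v):=\lambda_1^{-1}\gamma+\lambda_1^{-1}(v\circ f)$, $1$-stability yields
\begin{equation*}
R_n:=\lambda_1^{-n}f^{n*}S^+=\theta+dd^cg_n,\qquad g_n:=L^n(u)=\lambda_1^{-n}u\circ f^n+\sum_{k=0}^{n-1}\lambda_1^{-(k+1)}\gamma\circ f^k,
\end{equation*}
with each $R_n\ge0$ in the class $\alpha^+$; in particular each $g_n$ is $\theta$-psh and the masses $\pair{\alpha^+}{[\omega_X]}$ of the $R_n$ are constant. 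The formula and $\gamma\le\sup\gamma<\infty$ show $g_n$ is uniformly bounded above, and since $g_{n+1}-g_n=\lambda_1^{-n}(g_1-g_0)\circ f^n$ with $g_1-g_0=L(u)-u$ bounded above by some $C\ge0$, the shifted sequence $h_n:=g_n+C\sum_{k\ge n}\lambda_1^{-k}$ is a \emph{decreasing} sequence of $\theta$-psh functions.

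It then remains to rule out that $h_n$ decreases to $-\infty$; this is the heart of the matter, and it is exactly where the hypothesis $\lambda_2<\lambda_1$ (together with $1$-stability) is used. I would establish the uniform lower bound $\int_Xg_n\,\omega_X^2\ge-C'$ as follows. The singularities of the $k$-th term $\lambda_1^{-(k+1)}\gamma\circ f^k$ of $g_n$ sit over $f^{-k}(I_f)$, which by $1$-stability is contained in $I_{f^n}$ and has cardinality $O(\lambda_2^{\,k})$; each such point carries a logarithmic-type pole of multiplicity $O(1)$, so it contributes $O(\lambda_1^{-k})$ to the negative $L^1$-mass, with the distortion by $Df^k$ costing only a further polynomial factor in $k$. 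A careful accounting therefore bounds the total negative $L^1$-mass of $g_n$ by $\lesssim\sum_k(k+1)(\lambda_2/\lambda_1)^k<\infty$, finite precisely because the topological degree is small. (When $\lambda_2=1$ this sum is trivially convergent, which is why the bimeromorphic case of the theorem is classical; handling $\lambda_2>1$ is the new point.) Granting $\int_Xg_n\,\omega_X^2\ge-C'$ and $g_n$ bounded above, $\{h_n\}$ is bounded in $L^1$; being decreasing it converges in $L^1$ to a $\theta$-psh function $h_\infty$, so $g_n\to h_\infty$ and $R_n\to T^+:=\theta+dd^ch_\infty$, a positive closed current in $\alpha^+$. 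Continuity of $f^*$ on currents and $f^*R_n=\lambda_1R_{n+1}$ give $f^*T^+=\lambda_1T^+$. Finally, if $T'=\theta+dd^cv$ is another positive closed current in $\alpha^+$ with $f^*T'=\lambda_1T'$, then $v$ plus a constant is a fixed point of $L$, so $g_n-(v+\mathrm{const})=\lambda_1^{-n}(u-v-\mathrm{const})\circ f^n\ge-O(\lambda_1^{-n})$; letting $n\to\infty$ and using $h_n\downarrow h_\infty$ shows $h_\infty\ge v$ up to an additive constant, i.e.\ $T^+$ has minimal singularities among all such invariant currents.

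To summarize: the reduction to $S^+$, the telescoping, the monotonicity of $h_n$, the identification of $T^+$ and its $f^*$-invariance, and the minimal-singularities statement are all formal manipulations of quasi-plurisubharmonic potentials; the one genuinely hard step — and the one I expect to require the most care — is the uniform lower bound $\int_Xg_n\,\omega_X^2\ge-C'$, which simultaneously uses small topological degree (to sum over the preimages of $I_f$) and $1$-stability (to identify those preimages and to prevent the singularities of $\gamma$ from accumulating under composition).
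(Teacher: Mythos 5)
Your overall architecture matches the paper's: both proofs hinge on the bounded-potential representative of $\alpha^+$ from Theorem \ref{thm:bounded}, write $\lambda_1^{-n}f^{n*}\theta^+=\theta^++dd^c g_n$ with $g_n=\sum_{j<n}\lambda_1^{-j}\gamma\circ f^j$, use boundedness of $u$ to show $\gamma$ is bounded above (hence the sequence is essentially decreasing), and obtain positivity and invariance of the limit by comparing with $\lambda_1^{-n}f^{n*}\omega^+\geq 0$; your reduction from arbitrary bounded-potential representatives to one fixed representative, and your minimal-singularities coda, are also exactly the paper's (the latter is its Remark \ref{minsing}). The divergence is at the step you yourself flag as the heart of the matter: the paper does not prove the lower bound by your method — it invokes ``a (by now standard) argument of Sibony'' and omits the details — and the argument you substitute there does not work as written.

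Two concrete problems. First, the theorem's hypothesis is $\lambda_1^2>\lambda_2$, not $\lambda_2<\lambda_1$; the paper stresses elsewhere that $T^+$ exists even for maps of large topological degree. Your bound $\sum_k(k+1)(\lambda_2/\lambda_1)^k$ converges only when $\lambda_2<\lambda_1$, so even granting your estimates you would prove a strictly weaker statement than the one asserted. Second, and more seriously, the accounting itself is unjustified: the negative $L^1$-mass contributed near a point $p\in f^{-k}(I_f)$ is not $O(1)$ times a polynomial. It is governed by the \L ojasiewicz exponent and constant of $f^k$ at $p$ (equivalently, by the Lelong number of $f^{k*}$ of a current with log poles on $I_f$), and these local multiplicities can grow exponentially in $k$; what is controlled a priori is their \emph{sum}, via the cohomological mass $\norm[H^{1,1}]{f^{k*}\omega_X}\sim\lambda_1^k$, which after multiplying by $\lambda_1^{-k}$ gives only $O(1)$ per term and a divergent series. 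In other words, ``number of preimages $\times$ $O(1)$ per pole'' double-counts in the wrong direction, and the distortion constants are exactly the quantities one cannot control orbitwise — which is why Sibony's argument (and the alternative proof in \cite{DG}) proceeds by compactness of normalized potentials of the positive currents $\lambda_1^{-n}f^{n*}\omega^+$ in the fixed class $\alpha^+$ rather than by pointwise estimates along $\bigcup f^{-k}(I_f)$. To repair your proof you would either need to supply that compactness argument or prove a genuine, uniform version of the volume/\L ojasiewicz estimates you assert.
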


This theorem is proven with a different argument in \cite{DG}.  Here we give only those details of the proof that are different and/or important for the sequel.  An advantage to the present approach is that it works equally well for pushforwards (see Theorem \ref{thm:cv-}).

\begin{proof}
By the $dd^c$-lemma, $\lambda_1^{-1} f^* \theta^+=\theta^+ +dd^c \gamma^+$,
where $\gamma^+ \in L^1(X)$ is uniquely determined by the normalization $\int_X \gamma^+ \om_X^2=0$.
We pull this equation back by $f^{n-1}$ and get 
\begin{equation}\label{eq:gn+}
\frac{(f^n)^* \theta^+}{\lambda_1^n}=\theta^++dd^c g_n^+,\;
\text{ where } g_n^+=\sum_{j=0}^{n-1} \frac{1}{\lambda_1^j} \gamma^+ \circ f^j.
\end{equation}
We claim that the sequence $(g_n^+)$ converges.  The main point is that $\gamma^+$ is bounded above, so that the sequence is essentially decreasing.  Given the claim, convergence follows from a (by now standard) argument of Sibony \cite{Sib}, whose
details we omit.  On the level of currents, we obtain $\lim_{n\to\infty} \lambda_1^{-n} f^{n*} \theta^+ 
= T^+$, where 
$
T^+ \eqdef \theta^+ + dd^c g^+,
$
is a priori a difference of positive  closed $(1,1)$ current, and represents $\alpha^+$.

To prove the claim, we apply Theorem \ref{thm:bounded} to get a positive representative $\om^+ =\theta^+ +dd^c u \geq 0$ for $\alpha^+$ with potential $u \in L^{\infty}(X)$.  Thus
$$
\unsur{\lambda_1}f^*\om^+ = \theta^+ + dd^c \left(\gamma^+ + \unsur{\lambda_1} u\circ f \right).
$$
Since $f^*\om^+$ is positive, it follows that $\gamma^+ + \lambda_1^{-1} u\circ f $ is bounded above.  Since $u$ is bounded, we conclude that $\gamma^+$ itself is bounded above.

Furthermore, we see that 
$$
\lim_{n\to\infty} \lambda_1^{-n} f^{n*}\omega^+ = \lim_{n\to\infty} \lambda_1^{-n}(f^{n*}\theta^+ + dd^c u\circ f^n) = T^+ + dd^c 0.
$$
from which we infer that $T^+$ is positive.  From continuity of $f^*$ on positive closed $(1,1)$ currents, we finally conclude that 
 $f^* T^+ = \lambda T^+$.
\end{proof}

\begin{rem}
\label{minsing}
 It easily follows from the second part of the proof that $T^+$ has {\em minimal singularities among invariant currents}: that is, let $S$ be a positive closed current satisfying $f^*S=\lambda_1 S$,  rescaled so that
$S$ is cohomologous to $\alpha^+$.   Hence $S = \theta^+ + dd^c\psi$ for $\psi \leq 0$.  From invariance and our construction of $g^+$ it follows that $\psi \leq g^+$.  As Forn{\ae}ss and Sibony \cite{FoSi} have observed, this implies that $T^+$ is extremal among $f^*$-invariant currents, which is a form of ergodicity.
\end{rem}

\subsection{Lelong numbers of $T^+$}

It is important for us have a good  control on singularities, i.e. Lelong numbers, of $T^+$.  
The first proposition gives some information about how Lelong numbers of a positive closed current transform under pullback.

\begin{prop}[Theorem 2 and Proposition 5 in \cite{Fav}]
\label{pullback lelong 1}
Let $T$ be a positive closed $(1,1)$ current on $X$.  Then there is a constant $C>0$ such that $p\in X\setminus I_f$ implies that 
\begin{equation}\label{eq:favre}
 \nu(T,f(p)) \leq \nu(f^*T,p) \leq C\nu(T,f(p)). 
\end{equation}
If also $p\notin \cE_f$, then $C\leq \lambda_2(f)$ may be taken to be the local topological degree of $f$ at $p$.  
\end{prop}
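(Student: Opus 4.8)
The statement to establish is the two-sided inequality \eqref{eq:favre} relating $\nu(f^*T,p)$ to $\nu(T,f(p))$, with the sharpened constant $C=\lambda_2(f)$ valid off the exceptional locus. The approach is entirely local: work on the graph $\Gamma$ with its projections $\pi_1,\pi_2:\Gamma\to X$, write $f^*T=\pi_{1*}\pi_2^*T$, and track Lelong numbers through the two elementary operations involved, namely pullback by a holomorphic map ($\pi_2^*$) and pushforward by a modification ($\pi_{1*}$).

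First I would record the behavior of Lelong numbers under $\pi_2^*$. Since $p\notin I_f$, the point $p$ has a unique preimage $\hat p=\pi_1^{-1}(p)\in\Gamma$, and near $\hat p$ the map $\pi_1$ is a biholomorphism, so $\nu(f^*T,p)=\nu(\pi_2^*T,\hat p)$. Now $\pi_2$ is holomorphic, so $\pi_2^*T$ is a genuine pullback of a positive closed $(1,1)$ current by a holomorphic map. The comparison of $\nu(\pi_2^*T,\hat p)$ with $\nu(T,\pi_2(\hat p))=\nu(T,f(p))$ is exactly the content of Favre's cited results (Theorem 2 and Proposition 5 of \cite{Fav}): a holomorphic map never decreases the Lelong number — using that $T$ restricted to a generic holomorphic disk through $f(p)$ pulls back, giving the lower bound $\nu(T,f(p))\le\nu(\pi_2^*T,\hat p)$ — and increases it by at most a multiplicative constant depending only on the local geometry of $\pi_2$ at $\hat p$. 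When additionally $p\notin\cE_f$, the map $f$ (equivalently $\pi_2$ near $\hat p$, since $\pi_1$ is a local biholomorphism there) is a finite holomorphic map of local topological degree $\mu_p\le\lambda_2(f)$, and the standard estimate gives $\nu(\pi_2^*T,\hat p)\le\mu_p\,\nu(T,f(p))$; this is where the constant $\lambda_2(f)$ enters.

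Then I would deal with $\pi_{1*}$. Pushforward by the modification $\pi_1$ can only increase Lelong numbers at the collapsed point, and in any case on the neighborhood of $p$ where $\pi_1$ is biholomorphic (which is the relevant neighborhood since $p\notin I_f=\pi_1(\cE_{\pi_1})$) it is an isomorphism of currents, so no loss or gain occurs there. Combining the two steps yields \eqref{eq:favre} with $C$ as claimed. The genuinely substantive input is the holomorphic-pullback estimate, and the only real point to be careful about is checking that $p\notin I_f$ indeed forces $\hat p$ to be a single point where $\pi_1$ is a local biholomorphism (so that the identification $\nu(f^*T,p)=\nu(\pi_2^*T,\hat p)$ is legitimate), and that $p\notin\cE_f$ forces $\pi_2$ to be finite — hence of well-defined local degree — near $\hat p$. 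Since the hard analytic work is already packaged in \cite{Fav}, the main obstacle here is essentially bookkeeping: correctly reducing the meromorphic statement to the holomorphic one on $\Gamma$ and identifying which constant survives in each regime.
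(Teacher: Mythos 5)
The paper offers no proof of this proposition at all --- it is stated as a direct citation of Theorem 2 and Proposition 5 of \cite{Fav} --- and your reduction (localize at $p\notin I_f$, where $\pi_1$ is a biholomorphism near $\hat p=\pi_1^{-1}(p)$, so that $f^*T=\pi_{1*}\pi_2^*T$ agrees near $p$ with the pullback of $T$ by the holomorphic germ of $f$, then invoke Favre's estimates for dominant germs, with the local topological degree as the constant at points off $\cE_f$) is exactly the intended content of that citation. The one point to watch is that the constant $C$ in \eqref{eq:favre} must be uniform over all $p\in X\setminus I_f$, which a germ-by-germ constant ``depending on the local geometry of $\pi_2$ at $\hat p$'' does not by itself provide; this uniformity is part of the global form of Favre's Theorem 2 for meromorphic maps of compact manifolds, so citing that version (rather than only the pointwise germ estimate) closes the argument.
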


The argument for the following result is due to Favre \cite{Fav2}.  We include it for convenience.

\begin{thm}
Assume that $f$ is $1$-stable and has small topological degree. Suppose $p\in X$ is such that $f^n(p)\notin I_f$ for every $n\in\N$.  Then the Lelong number $\nu(T^+,p)$ of $T^+$ vanishes at $p$.  In particular $T^+$ does not charge curves.
\end{thm}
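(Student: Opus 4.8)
The plan is to follow Favre's approach, exploiting the invariance relation $f^*T^+ = \lambda_1 T^+$ together with the transformation law for Lelong numbers under pullback (Proposition \ref{pullback lelong 1}). Set $\nu_n \eqdef \nu(T^+, f^n(p))$ for $n \geq 0$, all of which are well-defined finite numbers since $f^n(p) \notin I_f$ by hypothesis. The idea is to derive a recursion that forces $\nu_0 = 0$.

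First I would record the key inequality. Applying the invariance $\lambda_1 T^+ = f^*T^+$ and then the left inequality in \eqref{eq:favre} at the point $p$ (legitimate since $p \notin I_f$), we get
$$
\lambda_1 \nu(T^+, p) = \nu(f^*T^+, p) \geq \nu(T^+, f(p)) = \nu_1.
$$
Thus $\nu_0 \geq \lambda_1^{-1}\nu_1$, and iterating along the (indeterminacy-avoiding) orbit of $p$ yields $\nu_0 \geq \lambda_1^{-n}\nu_n$ for every $n$. Since $T^+$ is a fixed positive closed current on the compact surface $X$, there is a uniform bound $\nu_n \leq \nu_{\max} < \infty$ on all Lelong numbers of $T^+$ (indeed, the set of points with Lelong number $\geq c$ is finite for each $c > 0$, so $\sup_x \nu(T^+,x) < \infty$). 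Hence $\nu_0 \geq \lambda_1^{-n}\nu_n$ does \emph{not} by itself give a contradiction, and I need to push in the other direction.

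For that I would use the small topological degree hypothesis and the upper bound. The orbit $p, f(p), f^2(p), \dots$ need not be infinite, but the crucial point is to compare $\nu_0$ with $\nu_n$ in a way that uses the constant $\lambda_2$ from the right-hand inequality in \eqref{eq:favre} rather than $\lambda_1$. Concretely, if $p \notin \cE_f$ one has $\nu(f^*T^+, p) \leq \lambda_2 \nu(T^+, f(p))$; combined with invariance this gives $\lambda_1 \nu_0 \leq \lambda_2 \nu_1$, i.e. $\nu_1 \geq (\lambda_1/\lambda_2)\nu_0$ with $\lambda_1/\lambda_2 > 1$. The difficulty — and this is the main obstacle — is that the forward orbit of $p$ may meet $\cE_f$ (or ramification curves), so one cannot always use the local-topological-degree constant; one must instead argue that the points where $T^+$ has positive Lelong number form a set on which the dynamics is sufficiently constrained. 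The standard resolution (following \cite{Fav2}) is: let $\beta \eqdef \sup\{\nu(T^+,q) : q \in X\}$, which is attained on the finite set $E = \{\nu(T^+, \cdot) = \beta\}$, consider the subset of $E$ lying off $I_f$, and use $\nu_0 \geq \lambda_1^{-1}\nu(T^+, f(p))$ to see that if $\nu(T^+, p) = \beta$ then $\nu(T^+, f(p)) \leq \lambda_1 \beta$, which is automatic; the real leverage comes from iterating the reverse estimate and using that $T^+$ cannot have infinitely many points of a fixed positive Lelong number, forcing the orbit to be eventually periodic and then deriving $\beta \leq \lambda_1^{-N}\cdot(\text{bounded})$ for arbitrarily large $N$ along the periodic cycle, hence $\beta$ restricted to this invariant part is $0$.

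Finally, once $\nu(T^+, p) = 0$ for all $p$ whose forward orbit avoids $I_f$, the last sentence follows: if $T^+$ charged an irreducible curve $C$, i.e. $T^+ \geq c[C]$ with $c > 0$, then $\nu(T^+, q) \geq c$ at every $q \in C$. Since $I_f$ is finite and each $f(p)$ for $p \in I_f$ is a curve, the set of points of $C$ whose entire forward orbit avoids $I_f$ is $C$ minus a countable set (at each step only finitely many points of the relevant curve can land in the finite set $I_f$, using $1$-stability so that $I_{f^n}$ is a finite set), hence nonempty — contradicting $\nu(T^+,\cdot) = 0$ there. Thus $T^+$ charges no curve.
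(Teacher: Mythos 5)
Your first step is exactly the paper's argument: iterating the sharp upper bound $\nu(f^*T^+,q)\le \lambda_2\,\nu(T^+,f(q))$ (valid for $q\notin \cE_f\cup I_f$) together with $f^*T^+=\lambda_1T^+$ gives $\nu(T^+,p)\le(\lambda_2/\lambda_1)^n\nu(T^+,f^n(p))$, and uniform boundedness of Lelong numbers finishes that case; as the paper notes, finitely many visits of the orbit to $\cE_f$ are also harmless, since finitely many uses of the weaker constant $C$ from Proposition \ref{pullback lelong 1} do not change the limit. The genuine gap is the case where $f^n(p)\in\cE_f$ for infinitely many $n$, and your proposed resolution of it fails in two ways. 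First, the mechanism you invoke for preperiodicity --- that ``$T^+$ cannot have infinitely many points of a fixed positive Lelong number'' --- is false in general (by Siu's theorem the superlevel sets $\{\nu(T^+,\cdot)\ge c\}$ are analytic and may contain curves) and is in any case circular, since the finiteness of these sets is essentially the non-charging statement the theorem is meant to establish. The correct, purely combinatorial reason is that $f^n(p)\in\cE_f\setminus I_f$ forces $f^{n+1}(p)$ into the \emph{finite} set $I_f^-=f(\cE_f)$; infinitely many visits to $\cE_f$ therefore make $p$ preperiodic. Second, even granting preperiodicity, your claimed estimate ``$\beta\le\lambda_1^{-N}\cdot(\text{bounded})$ along the periodic cycle'' does not close the argument: at points of $\cE_f$ the only available constant in Proposition \ref{pullback lelong 1} is the generic $C$, which may exceed $\lambda_1$, so iterating around a cycle of length $k$ only yields $\nu\le (C/\lambda_1)^{Nk}\nu$, which is vacuous when $C\ge\lambda_1$. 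The paper abandons Lelong-number bookkeeping entirely at this point: for a preperiodic $p$ whose orbit avoids $I_f$, the potential $g^+=\sum_{j\ge0}\lambda_1^{-j}\gamma^+\circ f^j$ is finite at $p$, because $\gamma^+$ is finite off $I_f$ and takes only finitely many values along the (pre)periodic orbit; a finite potential forces $\nu(T^+,p)=0$.

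Two smaller remarks. Your deduction of ``$T^+$ charges no curve'' from the pointwise statement is correct: under $1$-stability the points of a curve $C$ whose forward orbit meets $I_f$ form the countable set $C\cap\bigcup_n I_{f^n}$, so a generic point of $C$ satisfies the hypothesis. And the uniform bound $\sup_x\nu(T^+,x)<\infty$ is true, but its justification is a mass/cohomology estimate, not finiteness of the superlevel sets.
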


\begin{proof}
Suppose additionally that $f^n(p) \notin\cE_f$ for any $n \in \N$.  Then Proposition \ref{pullback lelong 1} gives 
$$
\nu(T^+,p) = \frac{1}{\lambda_1^n}\nu(f^{n*} T^+,p) \leq  \left( \frac{\lambda_2}{\lambda_1} \right)^n \nu(T^+,f^n(p)).
$$ 
The Lelong numbers of $T^+$ are moreover uniformly bounded above by a constant depending only the cohomology class $\alpha^+$.  Since $\lambda_1>\lambda_2$, we conclude that $\nu(T^+,p)=0$.  Indeed the weaker upper bound in \eqref{eq:favre} implies the same even if $f^n(p) \in \cE_f$ for finitely many $n\in\N$.

On the other hand $f^n(p)\in \cE_f\setminus I_f$ implies that $f^{n+1}(p)$ lies in the finite set $I_f^-$.  So if
$f^n(p)\in\cE_f$ for infinitely many $n$, it follows that $p$ is preperiodic.  Since $\gamma^+$ is finite away from $I_f$, it follows that $g^+$ is finite at $p$.  So $\nu(T^+,p)=0$.
\end{proof}

The pullback $f^*T$ of a positive closed $(1,1)$ current $T$ tends to have non-zero Lelong numbers at points in $I_f$, even if $T$ itself is smooth.  In order to strengthen the convergence in Theorem \ref{thm:cv+}, we need a precise version of this assertion.

\begin{prop} 
\label{pullback lelong 2}
There is a constant $c>0$ such that for any positive closed $(1,1)$ current $T$ that does not charge $\cE^-_f$ and any
$p\in I_f$,
$$
c^{-1}\pair{T}{f(p)} \leq \nu(f^*T,p) \leq c \pair{T}{f(p)}.
$$
\end{prop}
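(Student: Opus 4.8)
Here is my plan for proving Proposition~\ref{pullback lelong 2}.

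The idea is to reduce the statement, via the graph $\Gamma$ of $f$, to a local computation about how Lelong numbers behave under the modification $\pi_1:\Gamma\to X$ near a connected component of $\cE_{\pi_1}$. Recall $f^*T=\pi_{1*}\pi_2^*T$. Fix $p\in I_f$ and let $E\subset\cE_{\pi_1}$ be the (connected, possibly reducible) curve with $\pi_1(E)=p$; then $\pi_2(E)=f(p)$ as sets, and $\pi_2$ restricted to a neighborhood of $E$ is a holomorphic map onto a neighborhood of $f(p)$ in $X$. First I would reduce to the case where $T=\theta+dd^c u$ has a \emph{bounded} local potential $u$ away from finitely many bad points: the upper bound on Lelong numbers is needed at $p$ only, and since $T$ does not charge $\cE^-_f\supset f(I_f)$, the restriction of $T$ to a neighborhood of $f(p)$ can be compared with its ``smooth part'' plus the contribution of the (at most countably many, but locally finitely many with mass) isolated Lelong-number points on $f(p)$; the dominant contribution to $\nu(f^*T,p)$ comes from the diffuse part of $T$ along $f(p)$, which is what the intersection number $\pair{T}{f(p)}$ measures.

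The heart of the matter is then the two inequalities, and both follow from a careful bookkeeping of masses. For the \emph{upper bound}: $\nu(f^*T,p)=\nu(\pi_{1*}\pi_2^*T,p)$ is controlled by the mass of $\pi_2^*T$ carried on $\pi_1^{-1}(U)$ for a small ball $U\ni p$, which in cohomological terms is roughly $\pair{\pi_2^*T}{\sum a_i E_i}$ where $E=\sum a_i E_i$ is the cycle $\pi_1^*(\text{point near }p)$; and $\pair{\pi_2^*T}{E_i}=\pair{T}{\pi_{2*}E_i}$, which is a non-negative combination of $\pair{T}{C}$ over the irreducible components $C$ of $f(p)$, hence bounded by $c\,\pair{T}{f(p)}$ since each such $\pair{T}{C}\geq 0$ (using that $T$ does not charge $\cE^-_f$, so these intersections are non-negative — compare the discussion following Proposition~\ref{pushpull2}). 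Here one needs the standard fact that for a positive closed $(1,1)$ current $S$ on a ball, $\nu(\pi_*S,p)$ is comparable to the mass of $S$ near the exceptional fiber, with constants depending only on the modification $\pi$; the lower bound $\nu(f^*T,p)\geq c^{-1}\pair{T}{f(p)}$ runs the same way but in the reverse direction, using that at least one component $E_i$ of $E$ dominates (surjects onto) each component of $f(p)$ via $\pi_2$, so that $\pair{\pi_2^*T}{E_i}\geq c^{-1}\pair{T}{C}$ for that component $C$.

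The main obstacle I anticipate is making the comparison ``$\nu(\pi_{1*}\pi_2^*T,p)\asymp$ mass of $\pi_2^*T$ near $E$ $\asymp$ cohomological pairing with the fiber cycle'' genuinely uniform in $T$ — i.e. getting a single constant $c$ that works for \emph{all} positive closed currents $T$ not charging $\cE^-_f$. The cohomological pairing is clean, but relating it to the Lelong number of the pushforward requires knowing that the potential of $\pi_2^*T$, restricted to the fibers of $\pi_1$ near $E$, has logarithmic behavior with coefficient controlled from both sides by the local intersection data; the subtlety is that a priori $T$ could have a large Lelong number concentrated at a point of $f(p)$, which inflates $\nu(f^*T,p)$ beyond what $\pair{T}{f(p)}$ sees, unless one also knows (as one does, from $T$ not charging $\cE_f^-$ together with Siu-type bounds) that such point masses are already accounted for. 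I expect this to be handled by first treating $\theta$-smooth currents, where the estimate is classical and follows from resolving indeterminacy and computing on $\Gamma$ with $\pi_2^*\theta$ smooth, and then extending to general $T$ by adding back the diffuse-plus-isolated decomposition and invoking semicontinuity of Lelong numbers under the weak convergence used to approximate $T$.
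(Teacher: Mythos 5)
You have the right framework (pass to the graph and reduce to the intersection numbers $\pair{\pi_2^*T}{E_i}=\pair{T}{\pi_{2*}E_i}$ with the components of the fiber $\pi_1^{-1}(p)$), but the step you yourself flag as the main obstacle is genuinely missing, and the fix you propose does not close it. First, the ``standard fact'' that $\nu(\pi_{1*}S,p)$ is comparable to the mass of $S$ near the exceptional fiber is not the right statement: the trace mass of $S$ on $\pi_1^{-1}(U)$ tends to $0$ as $U$ shrinks to $\{p\}$, and in any case it sees the diffuse part of $S$, which does not concentrate at $p$ under pushforward. What one actually uses is the push-pull formula on the graph, $\pi_1^*\pi_{1*}(\pi_2^*T)=\pi_2^*T+E(T)$, where $E(T)$ is an effective divisor supported on $\cE_{\pi_1}$ whose coefficients depend linearly on the numbers $\pair{\pi_2^*T}{C}$, combined with the fact (Proposition \ref{pullback lelong 1} applied to $\pi_1$) that $\nu(f^*T,p)$ is comparable to the generic Lelong number of $\pi_1^*f^*T$ along a suitable irreducible component of $\pi_1^{-1}(p)$. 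The paper factors $\pi_1=\tilde\pi_1\circ\sigma$ with $\sigma$ the last point blowup and takes that component to be $E_\sigma$; minimality of the desingularized graph forces $E_\sigma\not\subset\cE_{\pi_2}$, hence $\pi_2^*T$ does not charge $E_\sigma$, so the generic Lelong number along $E_\sigma$ is exactly the coefficient of $E_\sigma$ in $E(T)$ --- a universal non-negative combination of the $\pair{T}{C'}$ over components $C'$ of $f(p)$. This is what makes the estimate uniform in $T$ and immune to the point-mass scenario you worry about: generic Lelong numbers along a curve ignore isolated Lelong numbers, and the coefficients are cohomological pairings. Your proposal never identifies which component computes $\nu(f^*T,p)$, nor why $\pi_2^*T$ cannot charge it; the latter is where minimality of $\Gamma$ enters and is not automatic.

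Second, the proposed reduction to smooth currents via approximation and ``semicontinuity of Lelong numbers'' fails for the upper bound. Lelong numbers are upper semicontinuous under weak convergence, so $\nu(f^*T,p)\geq\limsup_j\nu(f^*T_j,p)$, and the bound $\nu(f^*T_j,p)\leq c\pair{T_j}{f(p)}$ for approximants gives no control of $\nu(f^*T,p)$ from above (it only salvages the lower bound). Moreover, on a general K\"ahler surface one cannot approximate a positive closed current by smooth positive forms in its class without loss of positivity. The argument must therefore be run directly on the given current $T$, which is exactly what the push-pull/minimality mechanism accomplishes.
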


\begin{proof} 
Throughout the proof we will use $\simeq$ to denote equality up to a positive multiple that depends only on $f$.

Fixing $p\in I_f$, we factor the projection $\pi_1:\Gamma\to X$ from the graph of $f$ onto its domain as $\pi_1 = \tilde\pi_1\circ\sigma$ where $\sigma$ is an ordinary point blowup with exceptional curve $E_\sigma\subset \pi_1^{-1}(p)\subset \Gamma$.  Since $\Gamma$ is the \emph{minimal} desingularization of the graph of $f$, it follows that $E_\sigma\not\subset E_{\pi_2}$.  Otherwise we could replace $\Gamma$ with $\sigma(\Gamma)$, $\pi_1$ with $\tilde\pi_1$ and $\pi_2$ with $\pi_2\circ\sigma^{-1}$ and obtain a `smaller' desingularization of the graph.  Hence $\pi_2^* T$ does not charge $E_\sigma$.  

Applying Proposition \ref{pushpull2} to $\pi_1$ and $f^*T$ tells us that
$$
\pi_1^* f^*T = \pi_1^*\pi_{1*}\pi_2^* T = \pi_2^* T + E(T),
$$
where $E(T)$ is an effective divisor supported on $\cE_{\pi_1}$ and depending linearly on the intersection numbers 
$\pair{\pi_2^* T}{C}$, with $C\subset \cE_{\pi_1}$ irreducible.  In addition, because $T$ is positive and does not charge $f(p)$, it follows that $\pair{\pi_2^* T}{C} = \pair{T}{\pi_{2*}C}\geq 0$ for all $C\subset \pi_1^{-1}(p)$.  Therefore we may apply the last assertion in the Proposition \ref{pushpull2} together with the fact that $\pi_2^* T$ does not charge $E_\sigma$ to obtain 
$$
\pi_1^*f^*T|_{E_\sigma} = E(T)|_{E_\sigma} \simeq \pair{\pi_2^*T}{\pi_1^{-1}(p)}E_\sigma = \pair{T}{\pi_{2*}\pi_1^{-1}(p)} E_\sigma
\simeq \pair{T}{f(p)} E_\sigma
$$
So taking a generic point $q\in E_\sigma$, we have
$$
\pair{T}{f(p)} \simeq \nu(\pi_1^* f^* T,q) \simeq \nu(f^*T,p).
$$ 
The righthand equivalence comes from applying Proposition \ref{pullback lelong 1} with $\pi_1$ in place of $f$.
\end{proof}

\begin{defn}\label{def:spurious}
An indeterminacy point $p\in I_f$ is \emph{spurious} if $\pair{\alpha^+}{f(p)} = 0$.
\end{defn}

The possibility of spurious indeterminacy points is a  source of technical difficulties in the sequel (in particular Theorem \ref{thm:cvkahlerpullb} and also \cite{part2}). If  $\lambda_2=1$, we can always remove spurious indeterminacy points, without affecting 1-stability, by performing a modification $X\to \check X$ (see \cite[Proposition 4.1]{BD}). Notice also that if $\alpha^+$ is K\"ahler, there are no spurious indeterminacy points.

It will be useful later to have the following consequence of the previous two results.

\begin{prop} 
\label{small lelong}
Suppose that $f$ is 1-stable and has small topological degree.  Then
given $\varepsilon>0$, there exists an integer $N\in\N$ such that for any positive closed $(1,1)$ form $\omega$, any 
$n\geq N$ and any $p\in X$, we have 
$$
\nu(\lambda_1^{-n}f^{n*}\omega,p) < \varepsilon\norm[H^{1,1}]{\omega}
$$ 
unless $f^j(p)$ is a non-spurious point in $I_f$ for some $j\leq N$.
\end{prop}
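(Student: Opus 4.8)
The plan is to combine the decomposition of $\lambda_1^{-n}f^{n*}\omega$ into a ``cohomological'' part plus explicit divisorial contributions with the Lelong number estimates from Propositions \ref{pullback lelong 1} and \ref{pullback lelong 2}, together with the spectral gap in Theorem \ref{spectral}. The basic dichotomy is the same one used in the proof that $T^+$ does not charge curves: for a point $p\in X$ one tracks the orbit $p, f(p), \dots, f^{n-1}(p)$ and asks whether it ever hits the indeterminacy set $I_f$. If the orbit avoids $I_f$ entirely for the first $n$ steps, then iterating the right-hand inequality in \eqref{eq:favre} (being slightly careful about the finitely many times $f^j(p)\in\cE_f$, exactly as in the cited proof) gives $\nu(\lambda_1^{-n}f^{n*}\omega,p)\le (\lambda_2/\lambda_1)^{n-j_0}\cdot C'\nu(\omega,f^{n}(p))$ for a bounded correction, and since $\omega$ is a smooth (or bounded-potential) positive form its Lelong numbers vanish, so in fact this contribution is $0$ once $n$ exceeds the bounded number of excursions into $\cE_f$; in any case it is $\le\varepsilon\norm[H^{1,1}]{\omega}$ for $n\ge N$.

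The substantive case is when $f^j(p)\in I_f$ for some $j$. Write $j$ minimal with this property, and split $f^{n*}\omega = f^{(n-j-1)*}\bigl(f^{(j+1)*}\omega\bigr)$, noting that $p':=f^j(p)\in I_f$. By Proposition \ref{pullback lelong 2}, $\nu(f^*S,p')\simeq\pair{S}{f(p')}$ for any positive closed current $S$ not charging $\cE_f^-$; applying this with $S=\lambda_1^{-k}f^{k*}\omega$ (which for $k\ge 1$ does not charge $\cE_f^-$ because $f^*\omega$ is an $L^1$ form — here one may need to first push the starting index up by one, harmless since we are free to enlarge $N$) and then iterating \eqref{eq:favre} forward along $p, f(p),\dots,p'$ reduces the estimate at $p$ to a bounded multiple of $\lambda_1^{-(n-j)}\pair{f^{(n-j-1)*}\omega}{f(p')}$. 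Now $\langle\lambda_1^{-m}f^{m*}\omega,\,C\rangle$ for a curve $C$ is controlled by the spectral decomposition: writing $\omega = c\,\alpha^+ + \beta$ with $\beta\in(\alpha^-)^\perp$ and $|c|,\norm{\beta}\lesssim\norm[H^{1,1}]{\omega}$, Theorem \ref{spectral} gives $\lambda_1^{-m}f^{m*}\omega \to c\,\alpha^+$ at geometric rate $(\sqrt{\lambda_2}/\lambda_1)^m$. Hence $\lambda_1^{-(n-j)}\pair{f^{(n-j-1)*}\omega}{f(p')}$ is within $O\bigl((\sqrt{\lambda_2}/\lambda_1)^{n-j}\bigr)\norm[H^{1,1}]{\omega}$ of $c\,\pair{\alpha^+}{f(p')}$. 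If $p'$ is a spurious indeterminacy point, $\pair{\alpha^+}{f(p')}=0$ and the whole expression is $\le\varepsilon\norm[H^{1,1}]{\omega}$ as soon as $n-j\ge N$. If $p'$ is non-spurious, that is exactly the excluded case in the statement — provided $j\le N$. So the remaining worry is a non-spurious hit at time $j>N$: but then $n-j$ could be small and the geometric estimate useless; one handles this by reindexing, applying the argument to the first index in $\{0,\dots,N\}$ at which the orbit meets $I_f$ and absorbing everything before it into bounded constants, while if the orbit avoids $I_f$ for the first $N$ steps we are back (by the cocycle/iteration relation) in a situation governed by the first case up to a bounded factor.

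Concretely the proof is organized as: (1) fix $\varepsilon$, use Theorem \ref{spectral} to pick $m_0$ with $\pair{\lambda_1^{-m}f^{m*}\omega}{C}\le(\varepsilon/2c_0)\norm[H^{1,1}]{\omega}$ for all $m\ge m_0$ and all the finitely many relevant curves $C\subset f(I_f)$, plus the bound $\pair{\alpha^+}{C}$ on the spurious ones, where $c_0$ is the constant from Proposition \ref{pullback lelong 2}; (2) use Proposition \ref{pullback lelong 1} to fix a uniform constant bounding the distortion of Lelong numbers over one step, hence over at most $m_0$ steps, including the degenerate $\cE_f$ cases; (3) set $N$ larger than $m_0$ and than the number of possible consecutive $\cE_f$ excursions; (4) run the dichotomy above. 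The main obstacle is the bookkeeping in step (4) around orbits that enter $I_f$ \emph{late} (at time between $N$ and $n$), where the geometric decay $\left(\sqrt{\lambda_2}/\lambda_1\right)^{n-j}$ has not yet kicked in; the way out is that before that late hit the orbit has been avoiding $I_f$, so the earlier segment only contributes a \emph{bounded} multiplicative distortion (Proposition \ref{pullback lelong 1}, since $f^j(p)\in\cE_f$ only boundedly often unless $p$ is preperiodic, in which case $g^+$ is finite at $p$ and $\nu=0$), reducing to the estimate on $\lambda_1^{-(n-j)}f^{(n-j)*}\omega$ — and that is controlled by case one or by the $m\ge m_0$ spectral bound, whichever applies. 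Choosing $N$ at the outset to dominate all these finitely many bounded quantities closes the argument.
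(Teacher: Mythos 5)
Your argument is correct and is essentially the paper's own proof: the same dichotomy on the first time $k$ that the orbit of $p$ meets $I_f$, the same combination of Propositions \ref{pullback lelong 1} and \ref{pullback lelong 2} (using $1$-stability to rule out earlier visits to $\cE_f$) to reduce $\nu(\lambda_1^{-n}f^{n*}\omega,p)$ to $(\lambda_2/\lambda_1)^k$ times $c\,\lambda_1^{-(n-k)}\pair{f^{(n-k-1)*}\omega}{f^{k+1}(p)}$, and the same spectral splitting $\omega=c\alpha^+ +\beta$ from Theorem \ref{spectral} to dispose of spurious hits with $k\leq N$. The only difference is expository: the paper handles a late first hit ($k>N$) in one line via the prefactor $(\lambda_2/\lambda_1)^k<\varepsilon$ together with the crude cohomological bound $\pair{f^{(n-k-1)*}\omega}{f^{k+1}(p)}\leq C\lambda_1^{n-k}\norm[H^{1,1}]{\omega}$, which is exactly what your ``reindexing/bounded factor'' step amounts to.
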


\begin{proof}
Fix $p\in X$ and $n\in\N$.  If $p\notin I_{f^{n-1}}$, then it is immediate from Proposition \ref{pullback lelong 1} that $\nu(f^{n*}\omega,p) = 0$.  Otherwise, there is a smallest $k\in\{0,\dots,n-1\}$ such that $f^k p \in I_f$.  Since $f$ is 1-stable it follows that $f^j p \notin \cE_f$ for any $j<k$. Hence Propositions \ref{pullback lelong 1}
and \ref{pullback lelong 2} give 
$$
\nu(f^{n*}\omega,p) \leq \lambda_2^k \nu(f^{(n-k)*}\omega,f^k(p)) \leq \lambda_2^k\pair{f^{(n-k-1)*}\omega}{f^{k+1}(p)}
\leq C\lambda_2^k \lambda_1^{n-k} \norm[H^{1,1}]{\omega}
$$
where $C$ is a constant that does not depend on $p$, $n$ or $\omega$.  Dividing by $\lambda_1^n$ shows that if $p\notin I_{f^N}$ for $N\in\N$ large enough, then $\nu(\lambda_1^{-n}f^{n*}\omega,p) <\varepsilon$.

If  $f^k(p)\in I_f$ is spurious, then we may write the cohomology class of $\omega$ as $c\alpha^+ + \beta$
where $c\geq 0$, $\pair{\alpha^-}{\beta} = 0$, and $c,\norm[H^{1,1}]{\beta}\leq c'\norm[H^{1,1}]{\omega}$.
So from Theorem \ref{spectral}, we find that
$$
\nu(f^{n*}\omega,p)\leq \lambda_2^k\pair{f^{(n-k-1)*}\omega}{f^{k+1}(p)} = \lambda_2^k\pair{f^{(n-k-1)*}\beta}{f^{k+1}(p)}
                   \leq c''\lambda_2^{k+(n-k-1)/2} \norm[H^{1,1}]{\omega}.
$$
Dividing by $\lambda_1^n$ and taking $n\geq N$ large enough gives again that $\nu(\lambda_1^{-n}f^{n*}\omega,p) <\varepsilon$.
\end{proof}

\subsection{Pullbacks of K\"ahler forms}

We study here the convergence of normalized pull-backs of arbitrary closed (1,1) forms.  If the class $\alpha^+$ is K\"ahler (or more generally if there are no spurious indeterminacy points),
 the following result is much easier to prove.  At this level of generality, however, it is new. 
 Our argument depends in particular on the information about Lelong numbers in Proposition \ref{small lelong} and on some volume estimates 
from \cite{G2}.  

\begin{thm}\label{thm:cvkahlerpullb} Assume that $f$ is 1-stable with small topological degree.
Let $\omega$ be any K\"ahler form on $X$. Then
$$
\lim_{n\to\infty} \lambda_1^{-n} f^{n*} \omega = \pair{\omega}{\alpha^-} T^+,
$$
\end{thm}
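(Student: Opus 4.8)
The plan is to control the cohomology of the approximants $\lambda_1^{-n}f^{n*}\omega$ via the spectral picture, to confine their singularities to a countable set using Proposition~\ref{small lelong}, and to control their masses via the volume estimates of \cite{G2}; together these should identify every cluster value of the sequence with $\pair{\omega}{\alpha^-}T^+$. Concretely, I would first set $c\eqdef\pair{\omega}{\alpha^-}$ and decompose $[\omega]=c\alpha^+ +\beta$ along the $f^*$-invariant splitting $\HR(X)=\R\alpha^+\oplus(\alpha^-)^\perp$ of Theorem~\ref{spectral}. Since $f$ is $1$-stable we have $r_1(f)=\lambda_1$, and $\lambda_2<\lambda_1^2$, so $\norm[H^{1,1}]{\lambda_1^{-n}(f^*)^n\beta}\to0$; hence the cohomology classes of $\lambda_1^{-n}f^{n*}\omega$ converge to $c\alpha^+$. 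These currents then have uniformly bounded mass, so every weak-$*$ cluster value $S$ is a positive closed $(1,1)$ current with $[S]=c\alpha^+=[cT^+]$, and it suffices to prove that every such $S$ equals $cT^+$; after rescaling $\omega$ we may take $c=1$.

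Next I would record two facts about a cluster value $S$. Choosing a smooth representative $\theta^+$ of $\alpha^+$, one has $\theta^+\le A\,\omega$ pointwise for some $A>0$, so by positivity of pullback and Theorem~\ref{thm:cv+}, $S\ge A^{-1}\lim_n\lambda_1^{-n}f^{n*}\theta^+=A^{-1}T^+$; in particular $S$ is comparable to $T^+$ from below. Moreover, by Proposition~\ref{small lelong} together with upper semicontinuity of Lelong numbers under weak convergence, $\nu(S,p)=0$ for every $p$ lying outside the countable set $\mathcal Z$ of points whose forward orbit meets a non-spurious indeterminacy point---and, by the analysis earlier in this section, $\mathcal Z$ is also where the singularities of $T^+$ are confined.

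The heart of the argument is to prove that $\lambda_1^{-n}f^{n*}\omega\to T^+$ locally on $X\setminus\mathcal Z$. On a small ball $U$ about a point $p\notin\mathcal Z$ I would rerun the Sibony-type potential argument used in Theorem~\ref{thm:cv+}: writing $\lambda_1^{-n}f^{n*}\omega=\theta^+ +dd^c h_n$ and comparing $h_n$ with the convergent potentials $g_n^+$ from \eqref{eq:gn+}, the discrepancy $h_n-g_n^+$ is a potential for the signed form $\lambda_1^{-n}f^{n*}(\omega-\theta^+)$, whose Lelong numbers on $U$ are $o(1)$ by Proposition~\ref{small lelong}. To upgrade this pointwise smallness into genuine $L^1_{\mathrm{loc}}$ control of the potentials on $U$---and hence into convergence of the currents there, with limit cohomologous to $\alpha^+$ on $U$---I would invoke the volume estimates of \cite{G2}: the masses of $\lambda_1^{-n}f^{n*}\omega$ do not concentrate on sets of small Lebesgue measure, uniformly in $n$, which forbids persistent negative spikes in the $h_n$. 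Combined with the global cohomological normalization from the first step, this forces the local limit to be exactly $T^+$.

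Finally, since $S$ and $T^+$ then agree on the dense open set $X\setminus\mathcal Z$, the difference of their (consistently normalized) potentials is an $L^1(X)$ function that is pluriharmonic off the countable set $\mathcal Z$; by a standard removable-singularity argument (Hartogs' extension theorem handles the local extensions) it is pluriharmonic on all of $X$, hence constant, and $S=T^+$. I expect the genuine obstacle to be concentrated in the previous paragraph: because $\alpha^+$ may sit on the boundary of the nef cone and spurious indeterminacy points may occur, the potentials $h_n$ are \emph{not} essentially monotone as they are in the model case of Theorem~\ref{thm:cv+}, and it is precisely here that the quantitative Lelong-number bounds of Proposition~\ref{small lelong} together with the delicate volume estimates of \cite{G2} become indispensable.
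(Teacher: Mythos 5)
Your setup coincides with the paper's: normalize so that $\pair{\omega}{\alpha^-}=1$, use Theorem \ref{spectral} to get convergence in cohomology, and reduce to showing every cluster value equals $T^+$. You have also correctly named the two tools (Proposition \ref{small lelong} and the volume estimates of \cite{G2}) and correctly located the difficulty. But the paragraph you yourself call ``the heart of the argument'' is where the proof actually lives, and as written it does not go through. Two problems. First, Proposition \ref{small lelong} controls Lelong numbers of the positive currents $\lambda_1^{-n}f^{n*}\omega$, i.e.\ it rules out strong \emph{pointwise} concentration of the potentials $h_n$; it says nothing about $h_n$ drifting to $-\infty$ on sets of positive measure, which is exactly what must be excluded to get $L^1$ convergence. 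Second, the statement you attribute to \cite{G2} --- that the masses of $\lambda_1^{-n}f^{n*}\omega$ do not concentrate on sets of small Lebesgue measure, uniformly in $n$ --- is not what those estimates say, and such uniform non-concentration is essentially equivalent to the conclusion you are trying to reach. What \cite{G2} provides is a lower bound $\Vol f^k(A)\ge\exp(-D\lambda_1^k/\Vol A)$ on volumes of \emph{images}, which is only usable after one has arranged to view the potential at time $n+k$ as a potential at time $n$ transported by $f^k$.

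That is the missing idea: a two-index decomposition $m=n+k$. One writes $T_n=\theta^++\eta_n+dd^cw_n$ with $\sup w_n=0$ and $\norm[H^{1,1}]{\eta_n}\to0$, then pulls this bound back through $f^k$ (using $1$-stability) to obtain $T_{n+k}\le(1+c_n)\theta^++c_nc_k\omega+dd^c\bigl(g_k^++c_nw_k+\lambda_1^{-k}w_n\circ f^k\bigr)$. The whole game is then to show $\lambda_1^{-k}w_n\circ f^k\to0$ in $L^1$ along a suitable diagonal $(n_j,k_j)$. For that one splits $w_n=\chi w_n+(1-\chi)w_n$ with $\chi$ a cutoff near the finite set of points where $T^+$ itself has positive Lelong number (precisely the points whose orbit hits a non-spurious indeterminacy point, by Propositions \ref{pullback lelong 1} and \ref{pullback lelong 2}); there $w_n$ is dominated by a multiple of $g^+$ and $\lambda_1^{-k}g^+\circ f^k\to0$. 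Away from those points Proposition \ref{small lelong} gives Lelong numbers less than $1/j$, whence Kiselman's estimate $\Vol\{(1-\chi)w_n\le-t\}\le Be^{-tj}$, and only then does the image-volume inequality of \cite{G2} convert this into $\Vol\{\lambda_1^{-k}((1-\chi)w_n)\circ f^k\le-t\}\le\kappa/(jt-\tau\lambda_1^{-k})$. Without this transport-plus-splitting mechanism, ``small Lelong numbers plus volume estimates imply $L^1$ control'' is an assertion, not an argument. (Your lower bound $S\ge A^{-1}T^+$ is correct but plays no role; and once you have $S\le T^+$ with $[S]=[T^+]$ you are done immediately, with no need for the removable-singularity step, since $T^+-S$ is then a positive closed current in the zero class.)
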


We remark that the conclusion of Theorem \ref{thm:cvkahlerpullb} applies more generally to differences of K\"ahler forms and hence to any smooth closed real (1,1) form.  

\medskip
\noindent{\it Proof.}  We assume with no loss of generality that $\pair{\omega}{\alpha^-} = 1$, so that the cohomology class of $\lambda_1^{-n} f^{n*} \omega$ tends to that of $T^+$.  We recall the notation $\theta^+, g^+$ from the proof of Theorem \ref{thm:cv+}.  For each $n\in\N$ we write
$$
T_n = \theta^+ + \eta_n + dd^c w_n,
$$
where $w_n\in L^1(X)$ is normalized so that $\sup_X w_n = 0$, and $\eta_n$ is a smooth closed $(1,1)$ form with $\pair{\eta_n}{\alpha^-} = 0$. Theorem \ref{spectral}  imply that $\norm[H^{1,1}]{\eta_n} \to 0$ as $n\to\infty$, so 
we may assume that $-c_n\omega \leq \eta_n \leq c_n\omega$, where $c_n>0$ decreases to zero as $n\to\infty$.   Since the $w_n$ are $\theta^+ + c_0\omega$-plurisubharmonic and normalized, we see that $(w_n)_{n\in\N}$ is relatively compact in $L^1(X)$.  

Now we introduce a second index $k\in\N$ and estimate
$$
T_{n+k} \leq \frac{1}{\lambda_1^k}(f^{k*}\theta^+ + c_n f^{k*}\omega + dd^c w_n\circ f^k).
$$
Since $\norm[H^{1,1}]{\lambda^{-k}_1 f^{k*}\omega^+}\leq C$ uniformly in $k$, we can replace $c_n$ by $Cc_n$ to get 
$$
T_{n+k} \leq (1+c_n) \theta^+ + c_nc_k \omega + dd^c(g_k^+ + c_n w_k + \lambda_1^{-k} w_n\circ f^k).
$$
Setting $u_{n,k} = g_k^+ + c_n w_k + \lambda_1^{-k}w_n\circ f^k$, we claim that $\{u_{n,k}:n,k\in\N\}$ is a relatively compact family of functions.  Each $u_{n,k}$ is negative and $\theta^+ + c_0\omega$-plurisubharmonic, so it suffices to show that there is no sequence $(u_{n_j,k_j})_{j\in\N}$ tending uniformly to $-\infty$ on $X$.  

We will do this by finding $M\in\R$ such that 
$
\Vol\{u_{n,k}<-M\} < \Vol(X)
$
for all $n,k\in\N$.  We have already seen that $g_k^+ \to g^+$ and that $(w_k)_{k\in\N}$ is relatively compact in $L^1$, so $\Vol\{g_k^+<-M_1\}<\Vol(X)/3$ and $\Vol\{w_k<-M_2\} < \Vol(X)/3$ and for some $M_1,M_2\in\R$.  Setting $w_{n,k} \eqdef \lambda_1^{-k}w_n\circ f^k$, and taking $M_3 = t$ large enough in  next lemma, we find that $\Vol\{w_{n,k} < -M_3\}<\Vol(X)/3$ for all $n,k\in\N$.  Thus $M = M_1 + c_0M_2 + M_3$ suits our need.

\begin{lem}
\label{volume bound}
There exist constants $\kappa,\tau$ such that for any $t>0$
$$
\Vol\{p\in X: w_{n,k}(p)\leq -t\} \leq \frac{\kappa}{t - \tau\lambda_1^{-k}}.
$$
\end{lem}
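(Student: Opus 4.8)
The plan is to obtain the bound from a Chebyshev-type inequality combined with the volume estimates of \cite{G2}. Since $\sup_X w_n=0$ we have $w_n\le 0$, hence also $w_{n,k}=\lambda_1^{-k}\,w_n\circ f^k\le 0$. Fix $\tau\ge 0$; on $\{w_{n,k}\le -t\}$ with $t>\tau\lambda_1^{-k}$ the function $-w_{n,k}-\tau\lambda_1^{-k}$ is $\ge t-\tau\lambda_1^{-k}>0$, so
$$\big(t-\tau\lambda_1^{-k}\big)\,\Vol\{w_{n,k}\le -t\}\ \le\ \int_X\big(-w_{n,k}-\tau\lambda_1^{-k}\big)^+\,\om_X^2\ =\ \lambda_1^{-k}\int_X\big(-w_n\circ f^k-\tau\big)^+\,\om_X^2,$$
using $-w_{n,k}-\tau\lambda_1^{-k}=\lambda_1^{-k}(-w_n\circ f^k-\tau)$. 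Hence it suffices to find $\kappa,\tau$, independent of $n,k$, with $\int_X(-w_n\circ f^k-\tau)^+\,\om_X^2\le\kappa\,\lambda_1^{k}$. (If \cite{G2} gives the clean bound $\int_X(-w_n\circ f^k)\,\om_X^2\le\kappa\lambda_1^k$, one can take $\tau=0$ and the $\tau\lambda_1^{-k}$ in the statement is superfluous; a positive cut-off level $\tau$ is kept only in case the estimate holds up to an additive error.)

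Next I would change variables: as $I_{f^k}$ is $\om_X^2$-negligible and $f^k$ dominant, $\int_X(-w_n\circ f^k-\tau)^+\,\om_X^2=\int_X(-w_n-\tau)^+\,d\big[(f^k)_*\om_X^2\big]$, extending to the unbounded integrand by monotone convergence. Since $(-w_n-\tau)^+\le -w_n$, it is enough to bound $\int_X(-w_n)\,d\big[(f^k)_*\om_X^2\big]$. Recall from the proof of Theorem \ref{thm:cvkahlerpullb} that $(w_n)_{n\in\N}$ is a relatively compact family of $(\theta^++c_0\om)$-plurisubharmonic functions normalized by $\sup_X w_n=0$. The volume estimates of \cite{G2} apply to exactly this situation and provide a constant $C$, depending only on $f$ and the reference forms, such that the fixed-mass pushforwards $(f^k)_*\om_X^2$ integrate every such normalized quasi-plurisubharmonic function with a loss no larger than $C\lambda_1^{k}$; feeding this in and dividing by $\lambda_1^{k}$ closes the argument.

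The step I expect to be delicate is the appeal to \cite{G2}: one must transcribe those estimates into the exact shape $\int_X(-w_n\circ f^k)\,\om_X^2=O(\lambda_1^{k})$, \emph{uniformly} in the iterate $k$ and over the compact family $\{w_n\}$, and check that it is precisely the gap $\lambda_2<\lambda_1$ that forbids a faster growth rate (a faster rate being what one would see if volume were spread chiefly through iterated preimages of points). The Chebyshev reduction and the change of variables are routine.
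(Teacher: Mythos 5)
Your Chebyshev reduction is fine as far as it goes, but it shifts the entire difficulty onto the claim that $\int_X(-w_n\circ f^k)\,\om_X^2\le\kappa\lambda_1^k$ uniformly in $n$ and $k$, and that claim is neither proved nor correctly attributed. The estimate of \cite{G2} is a statement about volumes of \emph{images of Borel sets}, of the form $\Vol f^k(A)\ge\exp(-D\lambda_1^k/\Vol A)$; it is not a statement that the pushforward measures $(f^k)_*\om_X^2$ integrate normalized quasi-plurisubharmonic functions with loss $O(\lambda_1^k)$. Worse, your desired $L^1$ bound cannot be recovered from the \cite{G2} estimate by integrating the distribution function: combining that estimate with the uniform exponential integrability of the $w_n$ yields only $\Vol\{w_n\circ f^k\le -s\}\le D\lambda_1^k/(As-\log B)$, and the $1/s$ tail is not integrable in $s$. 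So the step you flag as ``delicate'' is a genuine gap: the inequality you need is true (it amounts to the relative compactness of $\{w_{n,k}\}$ in $L^1$, which the paper deduces only \emph{after} this lemma), but it requires a separate potential-theoretic argument --- e.g. comparing $w_n\circ f^k$ with a quasi-potential of $f^{k*}\Theta$ for a fixed K\"ahler form $\Theta$ and using $\norm[H^{1,1}]{(f^k)^*\Theta}=O(\lambda_1^k)$ via $1$-stability --- none of which is supplied, and none of which comes from \cite{G2}.

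The paper's proof avoids any $L^1$ estimate and runs in the opposite direction. Relative compactness of the non-positive quasi-psh family $(w_n)$ gives a uniform Skoda-type bound $\int e^{-Aw_n}\,\om_X^2<B$, hence the \emph{exponential} decay $\Vol\{w_n\le-s\}\le Be^{-As}$. Since $f^k$ maps $\Omega_{n,k}(t)=\{w_{n,k}\le-t\}$ into $\{w_n\le -t\lambda_1^k\}$, the image $f^k\Omega_{n,k}(t)$ has volume at most $Be^{-At\lambda_1^k}$, while \cite{G2} bounds that same volume below by $\exp(-D\lambda_1^k/\Vol\Omega_{n,k}(t))$. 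Comparing the two bounds and solving for $\Vol\Omega_{n,k}(t)$ gives exactly $\kappa/(t-\tau\lambda_1^{-k})$ with $\kappa=D/A$ and $\tau=(\log B)/A$. The exponential decay of $\Vol\{w_n\le-s\}$ is what makes this work against the merely exponential lower bound for image volumes; your argument never invokes it.
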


\begin{proof}
Since the non-positive $c\omega$-plurisubharmonic functions $w_n$ form a relatively compact sequence, it follows (see e.g. \cite{Z}) that there are constants $A,B\geq 0$ such that $\int e^{-Aw_n} \,\omega_X^2 < B$ for all $n\in\N$.  Hence, 
$$
\Vol\{w_n \leq -t\} \leq Be^{-At}.
$$
Thus, if $\Omega_{n,k}(t) = \{p\in X:w_{n,k} \leq -t\}$ we have from \cite{G2} that
$$
Be^{-A\lambda_1^k} \geq \Vol\{w_n \leq -t\lambda_1^k\} = \Vol f^k\Omega_{k,n}(t) \geq \exp(-D\lambda_1^k/\Vol\Omega_{n,k}(t)),
$$
where the constant $D$ depends only on $f$).  Rearranging completes the proof.
\end{proof}

Note that the above discussion implies that the family $\{w_{n,k}:n,k\in\N\}$ is relatively compact in 
$L^1(X)$; i.e. $w_{n,k} = u_{n,k} - (g_k + w_k)$ is a difference of functions from relatively compact families.

Suppose now that $T = \lim_{j\to\infty} T_{m_j}$ is a limit point of the sequence of interest.  We will complete the proof of Theorem \ref{thm:cvkahlerpullb} by showing that $T\leq T^+$.  Refining the given subsequence, we may assume that 
$m_j = n_j + k_j$, where $(n_j)$ and $(k_j)$ increase to infinity as quickly as we please.  By compactness, we may also assume that $w_{n_j,k_j} \to W\in L^1(X)$.  Thus
$$
\frac{f^{m_j*}\omega}{\lambda_1^{m_j}} = \frac{f^{(n_j+k_j)*}\omega }{\lambda_1^{n_j+k_j}} 
	\leq (\theta^+ + dd^c g^+_{k_j}) + c_{n_j} \omega + dd^c(c_{n_j} w_{k_j} + w_{n_j,k_j})
        \to T^+ + dd^c W,
$$
since $(w_k)$ is relatively compact and $c_n\to 0$ as $n\to\infty$.  Since by our normalization, $\lambda_1^{-m_j} f^{m_j*}\omega$ converges to $T^+$ in cohomology, the proof of Theorem \ref{thm:cvkahlerpullb}
is therefore concluded by 

\begin{lem}
 If $n_j, k_j$ are chosen appropriately, then for every $t>0$
$$
\lim_{j\to\infty}\Vol\{w_{n_j,k_j}\leq -t\} = 0.
$$
\end{lem}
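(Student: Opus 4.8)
The plan is to estimate $\Vol\{w_{n,k}\le -t\}$ directly, using the structural results already established. Since $t\mapsto\Vol\{w_{n,k}\le -t\}$ is decreasing, it suffices to treat $t=1/\ell$, $\ell\in\N$, and to diagonalise at the end, so fix $t>0$. Recall $\{w_{n,k}\le -t\}=f^{-k}\bigl(\{w_n\le -t\lambda_1^k\}\bigr)$ and that $f^k$ maps this set into $\{w_n\le -t\lambda_1^k\}$. I would cut $X$ into a \emph{fixed} small neighbourhood $B_{r_0}$ of the \emph{finite} set $F_N=\{q:f^i(q)\in I_f\text{ is non-spurious for some }i\le N\}$ — finite because $1$-stability forces each $f^{-i}(I_f)$ to be finite — and its complement, and bound the two resulting pieces of $f^{-k}\bigl(\{w_n\le -t\lambda_1^k\}\bigr)$ separately.

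Away from $B_{r_0}$: given $\varepsilon>0$, Proposition \ref{small lelong} supplies $N$ with $\nu(T_n,\cdot)<\varepsilon$ on $X\setminus B_{r_0}$ for all $n\ge N$; Skoda's integrability theorem together with the relative compactness of $\{w_n\}$ in $L^1$ (noted in the proof of Theorem \ref{thm:cvkahlerpullb}) then gives a bound, uniform in $n\ge N$, on $\int_{X\setminus B_{r_0}}e^{-(c/\varepsilon)w_n}\,\omega_X^2$, whence $\Vol\bigl(\{w_n\le -s\}\setminus B_{r_0}\bigr)\le C_{r_0}\,e^{-cs/\varepsilon}$. Plugging $s=t\lambda_1^k$ into the volume estimate of \cite{G2}, exactly as in the proof of Lemma \ref{volume bound}, bounds $\Vol\bigl(f^{-k}(\{w_n\le -t\lambda_1^k\}\setminus B_{r_0})\bigr)$ by $D\lambda_1^k/\bigl((c/\varepsilon)t\lambda_1^k-\log C_{r_0}\bigr)\le 2D\varepsilon/(ct)$ once $k$ is large compared with $r_0$ and $\varepsilon$. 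This contribution is $O(\varepsilon)$, hence harmless once we let $\varepsilon\to 0$ along the subsequence.

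The piece inside $B_{r_0}$ is the crux. At a point $p\in F_N$ the Lelong number of $T_n$ does not tend to $0$ with $n$, but by Proposition \ref{pullback lelong 2} and the structure of pullbacks of smooth forms, $w_n$ has in $B(p,r_0)$ an essentially isolated logarithmic pole at $p$ of weight $\nu_p>0$ bounded uniformly in $n\ge N$ (and bounded above by a constant not depending on $N$), plus a locally bounded term; hence, after shrinking $r_0$, $\{w_n\le -t\lambda_1^k\}\cap B(p,r_0)\subseteq B\bigl(p,e^{-ct\lambda_1^k}\bigr)$ with $c=c(\nu_p)>0$ bounded below independently of $p$ and $n$. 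The set to be measured is thus contained in $f^{-k}\bigl(B(p,e^{-ct\lambda_1^k})\bigr)$, and here the volume estimates of \cite{G2} are used once more: since $f^k$ is at most $\lambda_2^k$-to-one, the preimage of a ball of radius $\rho$ has volume at most $e^{Ck}\rho^{\kappa/\lambda_2^k}$, so with $\rho=e^{-ct\lambda_1^k}$ this is at most $e^{Ck}e^{-c\kappa t(\lambda_1/\lambda_2)^k}$. Because $\lambda_1>\lambda_2$ the double exponential dominates, and summing over the at most $\#I_f\cdot\lambda_2^{N+1}$ points of $F_N$ still leaves a bound tending to $0$ as $k\to\infty$ for every fixed $t$ — indeed for $N=N_j\to\infty$ as well, provided $k_j$ grows fast enough, which is permitted since we only require $n_j\le k_j$, $n_j\ge N_j$, and may refine $m_j$. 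Adding the two estimates and diagonalising over $t=1/\ell$ then produces the desired subsequence $(n_j,k_j)$. The entire difficulty is concentrated in this last step: near the non-spurious indeterminacy points the singularities of $w_n$ survive in the limit, so the logarithmically weak volume-distortion bound of \cite{G2} closes the argument only because $\lambda_2<\lambda_1$ and because we are free to take $k_j\gg n_j$.
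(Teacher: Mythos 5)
Your decomposition into a neighbourhood of the finitely many ``persistent'' points and its complement is the right one, and your treatment of the complement (small Lelong numbers via Proposition \ref{small lelong}, uniform Skoda integrability, then the volume estimate of \cite{G2} exactly as in Lemma \ref{volume bound}) matches the paper's handling of $(1-\chi)w_{n_j}$. The gap is in the piece near the bad points, and it is fatal as written. The volume bound you invoke there, $\Vol\bigl(f^{-k}(B(p,\rho))\bigr)\leq e^{Ck}\rho^{\kappa/\lambda_2^k}$, does not follow from ``$f^k$ is $\lambda_2^k$-to-one'' and is not what \cite{G2} provides: preimages of small balls around points of $f^k(\cE_{f^k})$ contain whole neighbourhoods of contracted curves, and the contraction rate is governed by $\lambda_1^k$, not $\lambda_2^k$. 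The only available estimate is the logarithmic one already used in Lemma \ref{volume bound}, namely $\Vol(f^{-k}E)\leq D\lambda_1^k/\log(1/\Vol E)$; feeding in $\Vol E\lesssim \rho^4$ with $\rho=e^{-ct\lambda_1^k}$ yields only $O(1/(ct))$ --- bounded, but not small. So the ``double exponential beats exponential'' step collapses, and with it the whole contribution from $B_{r_0}$. (A secondary, also unresolved, issue is your claim that $w_n$ admits a two-sided logarithmic bound at $p$ with constants uniform in $n$: a positive Lelong number gives only the upper bound $w_n\leq\nu\log|z-p|+O(1)$, and the lower bound you need for the inclusion $\{w_n\leq -s\}\cap B(p,r_0)\subset B(p,e^{-cs})$ has constants that a priori degenerate with $n$, since the tower of blowups over $p$ in the graph of $f^n$ deepens with $n$.)

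The idea you are missing is that near the bad points one should not use volume estimates at all. The paper first shows, via Propositions \ref{pullback lelong 1} and \ref{pullback lelong 2}, that $T^+$ itself has a \emph{strictly positive} Lelong number at every point whose orbit hits a non-spurious indeterminacy point; hence, after localizing with a cutoff $\chi$, one gets a comparison $0\geq\chi w_{n_j}\geq c(n_j)\,g^+$ against the fixed quasipotential $g^+$ of $T^+$ (the constant may depend on $n_j$, which is harmless). Since $\lambda_1^{-k}g^+\circ f^k=g^+-g^+_k\to 0$ in $L^1(X)$ --- it is the tail of the convergent series defining $g^+$ --- the normalized pullbacks of $\chi w_{n_j}$ tend to $0$ in $L^1$ for $k_j$ large, which gives the required volume decay on this piece for free. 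You should replace your ball-preimage argument by this comparison with $g^+$; the rest of your proof then goes through.
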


\begin{proof}
Fix $j\in\N$.  By Proposition \ref{small lelong}, there exists $n_j\in\N$ such that $\nu(\lambda^{-n_j}f^{n_j*}\omega,p) < 1/j$ unless $f^{\ell}(p)$ is a non-spurious point in $I_f$ for some $\ell<n_j$.  Let us denote the finite set of exceptional $p$ by $I_j$.  For $p\in I_j$, we claim that $\nu(g^+,p) \eqdef \nu(T^+,p) >0$.  Indeed, if $f^\ell(p)$ is a non-spurious point of indeterminacy, then Propositions \ref{pullback lelong 1} and \ref{pullback lelong 2} give us
$$
\nu(T^+,p) = \lambda_1^{-\ell} \nu(f^{\ell*} T^+,p) \geq \nu(T^+,f^\ell(p)) \geq C\pair{T^+}{f^{\ell+1}(p)} > 0
$$

Now let $\chi:X\to[0,1]$ be a smooth function equal to $1$ near $I_j$ and vanishing in a neighborhood of each point in $I_{f^{n_j}-1} - I_j$.  Then $0\geq \chi w_{n_j} \geq c g^+$ for some $c = c(n_j) \geq 0$.  In particular, 
$(\chi w_{n_j})\circ f^k \to 0$ in $L^1(X)$ as $k\to\infty$.  So for $k_j$ large enough, we can assume that $\Vol\{(\chi w_{n_j})\circ f^{k_j}\leq -j^{-1/2}\} \leq j^{-1/2}$.

On the other hand, the Lelong numbers at singularities of $(1-\chi) w_{n_j}$ are all less than $1/j$.  Hence we may refine the initial volume estimate in Lemma \ref{volume bound} to read
$$
\Vol\{(1-\chi)w_{n_j} \leq -t\} \leq Be^{-tj}
$$
for a constant $B$ depending on   $n_j$ \cite{K}.
Proceeding as before, we arrive at the estimate
$$
\Vol\{\lambda_1^{-k_j}((1-\chi)w_{n_j})\circ f^{k_j} \leq -t\} \leq \frac{\kappa}{jt-\tau_n\lambda_1^{-k_j}}.
$$
with $\kappa$ independent of $j$.  Taking $t=j^{-1/2}$ and suitably increasing $k_j$, we have
$$
\Vol\{\lambda_1^{-k_j}((1-\chi)w_{n_j})\circ f^{k_j} \leq -j^{-1/2}\} \leq 2\kappa j^{-1/2}.
$$
We now put our estimates together to find
$$
\lim_{j\to\infty} \Vol\{w_{n_j,k_j} \leq -j^{-1/2}\} \leq Cj^{-1/2}
$$
for some $C$ and all $j\in\N$.  Letting $j\to\infty$ completes the proof.
\end{proof}

The following consequence will be useful for proving the laminarity of $T^+$. 

\begin{cor}\label{cor:cv}
Assume, under the hypotheses of Theorem \ref{thm:cvkahlerpullb}, that $X$ is projective with fixed embedding $X\hookrightarrow \cp^N$.  Then for Lebesgue a.e. hyperplane section $L$, we have
$$
\frac{1}{\lambda_1^n}(f^n)^* [L] \rightarrow c T^+,
$$
where $c$ depends only on the embedding. 
\end{cor}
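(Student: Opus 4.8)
The plan is to deduce the statement from Theorem~\ref{thm:cvkahlerpullb} by comparing the current of integration $[L]$ with the restriction to $X$ of the Fubini--Study form. Write $\om_{FS}$ for the Fubini--Study form on $\cp^N$; its restriction $\om_{FS}|_X$ is a K\"ahler form on $X$. Since any two hyperplane sections of $X$ are linearly equivalent, every $[L]$ is cohomologous to $\om_{FS}|_X$, so by the $dd^c$-lemma we may write $[L] = \om_{FS}|_X + dd^c u_L$ with $u_L\le 0$ an $\om_{FS}|_X$-plurisubharmonic function having a logarithmic pole along $L$; explicitly, if $L=\{\ell=0\}$ for a linear form of unit norm, then $u_L(x)=\log|\ell(\hat x)|$ for the representative $\hat x$ of $x$ with $\|\hat x\|=1$. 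Applying Theorem~\ref{thm:cvkahlerpullb} to the K\"ahler form $\om_{FS}|_X$ gives $\lambda_1^{-n}f^{n*}(\om_{FS}|_X)\to \pair{\om_{FS}}{\alpha^-}\,T^+\eqdef cT^+$, where $c>0$ since $\om_{FS}|_X$ is K\"ahler and $\alpha^-$ is a nonzero nef class, and where $c$ is the same for all $L$ (it depends only on the polarization class $[\om_{FS}|_X]$). As $f^{n*}[L]=f^{n*}(\om_{FS}|_X)+dd^c(u_L\circ f^n)$, it suffices to show that for Lebesgue a.e. $L$ one has $\lambda_1^{-n}(u_L\circ f^n)\to 0$ in $L^1(X)$, for then $\lambda_1^{-n}dd^c(u_L\circ f^n)\to 0$ weakly and the corollary follows.

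The key to this is a uniform mass bound. Set $\mu_n\eqdef (f^n)_*\om_X^2$, a positive measure on $X$. Pulling back by the modification $\pi_1\colon\Gamma_{f^n}\to X$ and pushing forward by $\pi_2$, one gets the change-of-variables identity
$$
\int_X |u_L\circ f^n|\,\om_X^2 = \int_X |u_L|\,d\mu_n,
$$
which makes sense because $u_L$ is integrable for every smooth measure and $\pi_1$ is an isomorphism off a set of measure zero. Moreover $\mass{\mu_n}=\int_X\om_X^2$ for every $n$, since pushing forward a $(2,2)$-current preserves total mass and $\pi_1$ is a modification. Now integrate over the space $\check\cp^N$ of hyperplanes equipped with its $U(N+1)$-invariant probability measure. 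By Tonelli (everything is nonnegative),
$$
\int_{\check\cp^N}\Bigl(\int_X |u_L\circ f^n|\,\om_X^2\Bigr)\,dL
= \int_X\Bigl(\int_{\check\cp^N} |u_L(x)|\,dL\Bigr)\,d\mu_n(x).
$$
For fixed $x$ the inner integral equals a finite constant $C_0$ independent of $x$, by the unitary symmetry of $\cp^N$ and the integrability of logarithmic singularities. Hence the double integral equals $C_0\int_X\om_X^2$, a bound independent of $n$.

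Therefore
$$
\int_{\check\cp^N}\Bigl(\sum_{n\ge 0}\lambda_1^{-n}\int_X |u_L\circ f^n|\,\om_X^2\Bigr)\,dL
= \frac{\lambda_1}{\lambda_1-1}\,C_0\int_X\om_X^2 < \infty,
$$
using $\lambda_1>\lambda_2\ge 1$. Consequently, for a.e. $L$ the series $\sum_n\lambda_1^{-n}\int_X|u_L\circ f^n|\,\om_X^2$ converges, so its general term tends to $0$; that is, $\lambda_1^{-n}(u_L\circ f^n)\to 0$ in $L^1(X)$ for a.e. $L$. Combining with the convergence $\lambda_1^{-n}f^{n*}(\om_{FS}|_X)\to cT^+$ yields $\lambda_1^{-n}f^{n*}[L]\to cT^+$ for a.e. $L$, as claimed.

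I do not expect a serious obstacle here: given Theorem~\ref{thm:cvkahlerpullb}, the only delicate points are bookkeeping ones, namely verifying that $u_L\circ f^n$ is a genuine $L^1$ potential for $f^{n*}[L]$ (so that the splitting $f^{n*}[L]=f^{n*}(\om_{FS}|_X)+dd^c(u_L\circ f^n)$ is legitimate) and justifying the change of variables $\int_X|u_L\circ f^n|\,\om_X^2=\int_X|u_L|\,d\mu_n$ across the indeterminacy set of $f^n$; both reduce to the fact that $\pi_1\colon\Gamma_{f^n}\to X$ is a modification. If there is a ``hard part'', it is the conceptual step of averaging over $L$ and exploiting the uniform bound $\mass{\mu_n}=\int_X\om_X^2$, which is what turns a family of statements about singular potentials into an almost-everywhere convergence result.
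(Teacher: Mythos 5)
Your proof is correct and follows essentially the same route as the paper: decompose $[L]=\om_{FS}|_X+dd^c u_L$, apply Theorem \ref{thm:cvkahlerpullb} to the K\"ahler form $\om_{FS}|_X$, and show $\lambda_1^{-n}u_L\circ f^n\to 0$ in $L^1$ for a.e.\ $L$ by integrating over the dual projective space and using the unitary invariance of $\int|u_L(x)|\,dL$. The paper compresses this last step into a one-line appeal to Fubini; your version, with the explicit mass identity $\int_X|u_L\circ f^n|\,\om_X^2=\int_X|u_L|\,d(f^n_*\om_X^2)$ and the summable geometric series, is just the fleshed-out form of the same argument.
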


\begin{proof} We have the Crofton formula for the Fubini-Study form $\om_{\rm FS}$ on 
$\cp^N$: $\om_{\rm FS}=\int_{\check{\mathbb{P}}^N}  [H]\, dv(H)$, where $dv$ denotes Fubini-Study
volume on the dual of $\mathbb{P}^n$ (see \cite{Cir}).  So if $\om_{\rm FS}\rest{X}$ denotes the K\"ahler
form induced by $\om_{FS}$ on $X$, we can restrict to get $\om_{\rm FS}\rest{X}=\int [H \cap X]\, dv$.

For each hyperplane $H$, we have $[H] - \om_{\rm FS} =dd^c\psi_H$, where $\psi_H(p) = \psi(H,p) \leq 0$.  Thus 
$[H\cap X] - \om_{\rm FS}\rest{X} = dd^c(\psi_H|_X)$ as long as $X$ is not contained in $H$.  Since $\lambda_1^{-n}(f^n)^*(\om_{\rm FS}\rest{X})\rightarrow T^+$, it is enough to prove for a.e. $H$ that 
$\lambda_1^{-n} \psi_H\circ f^n\rightarrow 0$ in $L^1(X)$. This follows from Fubini's Theorem and the fact that 
$\int \psi_H \,dv$ is independent of $H$.  The reader is referred to \cite[Theorem 1.10.1]{Sib} for  more results in this direction.
\end{proof}

\subsection{Laminarity of $T^+$}\label{subs:lamin}

The geometric structure of the invariant currents will play a central role \cite{part2, part3} in the fine study of the 
ergodic properties of our mappings.  Recall that a positive (1,1) current is {\em laminar} if it can be written as an integral of a family of holomorphic disks in which members have no isolated intersections.  A current $T$ is {\em uniformly laminar} if the disks form a lamination of some open subset of $X$, and $T$ is a foliation cycle associated to this lamination.  One can show that any laminar current is an increasing limit of uniformly laminar currents. A laminar current on $X$ is {\em strongly approximable} if it is a limit of compact subvarieties with controlled geometry (see \cite{Du1} for precise details). Strong approximability implies a certain quantitative estimate on the `rate' of approximation by uniformly laminar currents, and this will be important for \cite{part2, part3}. 
We refer the reader to \cite{BLS, Du1, Du5, Du4} for more details about  laminarity and its consequences.

\begin{thm}\label{thm:lamin}
Assume $X$ is projective and $f$ is $1$-stable with small topological degree $\lambda_2<\lambda_1$. Then $T^+$ is a strongly approximable laminar current. 
\end{thm}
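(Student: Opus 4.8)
The plan is to deduce strong approximability of $T^+$ from Corollary \ref{cor:cv}, which expresses $T^+$ as a limit of normalized pullbacks $\lambda_1^{-n}(f^n)^*[L]$ of a generic hyperplane section, combined with the structure theorems for laminar currents in \cite{Du1, BLS}. The key geometric fact I would invoke is that a positive closed $(1,1)$ current which is a limit of integration currents over a sequence of curves $C_n$ whose genus (or more precisely, whose total curvature / number of singular points, suitably normalized by area) grows sub-exponentially compared to the area is automatically strongly approximable laminar. So the heart of the matter is a genus/area estimate for the curves $f^{-n}(L)$.

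First I would set $C_n \eqdef f^{-n}(L) = (f^n)^*L$ for a generic $L$, so that by Corollary \ref{cor:cv} we have $\lambda_1^{-n}[C_n] \to cT^+$ and in particular $\mathrm{Area}(C_n) = \pair{C_n}{\omega_X} \asymp \lambda_1^n$. The curves $C_n$ are genuine algebraic curves in $X$ (here is where projectivity and the embedding $X\hookrightarrow\cp^N$ enter, and also where one must be slightly careful, as flagged in the remark after the definition of $\Gamma$, about desingularizing $\Gamma$ versus the minimal model). Next I would estimate the genus of $C_n$: writing $C_n = f^{-1}(C_{n-1})$, the Riemann--Hurwitz formula applied to $f|_{C_n} : C_n \to C_{n-1}$ gives $2g(C_n) - 2 = \lambda_2(2g(C_{n-1})-2) + (\text{ramification contribution})$, where the generic degree of $f$ restricted to $C_n$ is $\lambda_2$ and the ramification is controlled by the intersection of $C_{n-1}$ with the critical curve $\cC_f$ together with the passage of $C_{n-1}$ through indeterminacy points. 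The intersection $\pair{C_{n-1}}{\cC_f}$ is at most $C\|C_{n-1}\|_{H^{1,1}} \asymp \lambda_1^{n-1}$, and the contribution of indeterminacy points is similarly $O(\lambda_1^{n-1})$ by the same cohomological bound. Hence $g(C_n) = O(\lambda_1^n)$ — that is, genus grows at the \emph{same} rate as area, not faster.

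The crucial point is that this is not quite enough: strong approximability (in the sense of \cite{Du1}) requires the genus to be $o(\lambda_1^{n})$ after subtracting off the "trivial" part, or more precisely one needs that the isoperimetric/geometric defect of the pieces of $C_n$ decays. The standard remedy, following \cite{Du1, Du4}, is to observe that what actually controls laminarity is not the full genus but the number of singular points and the deviation from being a union of graphs over a fixed projection, and that these are governed by $\pair{C_n}{\cC_f} + (\text{self-intersection defect})$. Here I would use that $\pair{C_n}{C_n} = \lambda_2^n \pair{L}{L} + \text{lower order} = O(\lambda_2^n)$ (using $\lambda_2 < \lambda_1$), so the self-intersection, hence the number of double points in a generic projection, grows only like $\lambda_2^n = o(\lambda_1^n) = o(\mathrm{Area}(C_n))$. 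This exponential gap $\lambda_2 < \lambda_1$ is exactly what makes the geometric estimate of \cite{Du1} applicable: the curves $C_n$, rescaled by $\lambda_1^{-n}$, satisfy the hypotheses of the structure theorem, whence the limit $T^+$ is a strongly approximable laminar current.

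\textbf{The main obstacle} I anticipate is the bookkeeping in the genus/intersection estimate — in particular making sure that the ramification of $f^n$ along $C_n$ coming from indeterminacy points (where $f^n$ behaves badly, and where a naive Riemann--Hurwitz count fails) is correctly dominated by $\lambda_2^n$ rather than $\lambda_1^n$. Since $f$ is $1$-stable, $I_{f^n} = \bigcup_{j=0}^{n-1} f^{-j}I_f$, and each pass of the curve near an indeterminacy point contributes to the genus; one must verify, using the push-pull formula (Proposition \ref{pushpull2}) and the fact that $L$ is generic, that the total such contribution is $O(\lambda_2^n)$ and not merely $O(\lambda_1^n)$. A secondary technical point, already signalled in the excerpt, is the discrepancy between the minimal desingularization of $\Gamma_f$ (used throughout) and the model on which one runs the Riemann--Hurwitz argument; handling $f^n$ rather than $f$ requires choosing an appropriate sequence of graphs, and this is where the parenthetical warning "see, however, the proof Theorem \ref{thm:lamin}" is cashed out. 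Once these two points are settled, invoking the geometric intersection inequality of \cite{Du1} finishes the proof.
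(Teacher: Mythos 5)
Your overall strategy---use Corollary \ref{cor:cv} to write $T^+$ as a limit of $\lambda_1^{-n}[C_n]$ with $C_n=f^{-n}(L)$, then verify the geometric criterion of \cite{Du1}---is the same as the paper's, and your Riemann--Hurwitz induction for the genus is in the spirit of \cite[Lemma 4.4]{Du1}. But there are two genuine problems. First, you misstate the criterion and then "repair" it with a false estimate. The criterion from \cite{Du1} is that $\mathrm{genus}(C_n)+\sum_{x\in\mathrm{Sing}(C_n)}n_x(C_n)=O(\lambda_1^n)$, i.e.\ $O(\mathrm{degree})$, not $o(\lambda_1^n)$; so your computation $g(C_n)=O(\lambda_1^n)$ is already exactly what is needed for the genus term, and the "this is not quite enough" detour is based on a misreading. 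Worse, the fix you propose rests on the claim $\pair{C_n}{C_n}=\lambda_2^n\pair{L}{L}+\text{lower order}$, which is false: by Proposition \ref{pushpull1}, $\pair{f^*\alpha}{f^*\alpha}=\lambda_2\,\alpha^2+Q(\alpha,\alpha)$ with $Q\geq 0$ and generally positive, and cohomologically $[C_n]\sim c\lambda_1^n\alpha^+$, so $C_n^2\sim c^2\lambda_1^{2n}(\alpha^+)^2$ is of order $\lambda_1^{2n}$ whenever $(\alpha^+)^2>0$. The curves $C_n$ really do have on the order of $\lambda_1^{2n}$ intersections with themselves; laminarity is \emph{not} obtained by showing these are few.

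Second, the term you therefore still owe is the singularity count $\sum_x n_x(C_n)=O(\lambda_1^n)$, and this is where the actual work of the theorem lies. The mechanism is not self-intersection but resolution: with $\widehat C_n=\pi_{2,n}^*L$ smooth and irreducible by Bertini, the map $\pi_{1,n}:\widehat C_n\to C_n$ is a resolution of singularities (a composition of point blowdowns), so $\sum_x n_x(C_n)\leq\pair{\widehat C_n}{\cE_{\pi_{1,n}}}$, and one must bound this intersection by $C\lambda_1^n$. In \cite{Du1} this step uses the extra hypothesis (H) on the position of $I(f^\infty)$ relative to the singularities of the graph; removing (H) is the main new content here. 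The paper does it by writing $\pi_2^{-1}(C_{n-1})=\eta(\widehat C_n)+D_n$ with $D_n$ supported on $\cE_{\pi_2}$, and proving $D_n\leq C\lambda_1^n\,\cE_{\pi_2}$ via the multiplicity bound $\mathrm{mult}_p(C_{n-1})\leq\pair{C_{n-1}}{S}=O(\lambda_1^n)$ together with Favre's Lelong number comparison (Proposition \ref{pullback lelong 1}). You flag the graph/desingularization issue as a "secondary technical point" to be settled, but it is the heart of the proof, and your proposal does not supply an argument for it.
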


\begin{proof}
The theorem was proved in \cite[Prop. 4.2]{Du1} under two additional assumptions. First, the surface $X$ was supposed to be
rational, which was only a matter of convenience: it is enough (see \cite{Ca}) to replace pencils of lines by pencils of hyperplane sections everywhere, and to project along these pencils to treat the case of general $X$.  More seriously,
there was an extra hypothesis (H) on the relative positions of the total indeterminacy set $I(f^\infty)$ and the singularities of the graph of $f:X\rightarrow X$.  Here, following \cite[Theorem 1]{Du1} closely,
we explain how to remove this assumption.

\medskip

We have to prove the following: let $L$ be a generic hyperplane section of $X$, and $C_n=f^{-n}(L)$, then 
\begin{equation}\label{eq:genus}
\mathrm{genus}(C_n)+ \sum_{x\in \mathrm{Sing}(C_n)} n_x(C_n) =
O(\lambda_1^n).
\end{equation} 
Here genus means geometric genus, and $n_x(C_n)$ is the number of local irreducible components of $C_n$ at $x$. The two terms on the left side are estimated separately and by induction in \cite[Lemmas 4.3 and 4.4]{Du1}.  We show how to adapt the proofs to the general case. 

We let $\Gamma$ be the desingularized graph of $f$, endowed with the two
natural projections $\pi_i:\Gamma\rightarrow X$. By induction, we define $\Gamma^n$ to
be the minimal smooth surface such that all descending arrows in the following diagram are holomorphic:
\begin{center}~
\xymatrix@dr@+1pc{\Gamma^n \ar[r]^{\omega} \ar[d]_{\eta} 
               \ar@/^2pc/[rr]^{\pi_{2,n}} 
     \ar@/_2pc/[dd]_{\pi_{1,n}}
    &\Gamma^{n-1} \ar[r]|{\pi_{2,n-1}}
         \ar[d]|{\pi_{1,n-1}} & X \\
          \Gamma \ar[r]^{\pi_2} \ar[d]_{\pi_1} & X \ar@{-->}[ur]_{f^{n-1}}\\
          X \ar@{-->}[ur]_{f}}
\end{center}
Notice that $\pi_1$, $\eta$ and $\pi_{1,n}$ are compositions of point blowups, while the topological degree of $\pi_{2,n}$ is
$\lambda_2^n$.  Recall that $\cE_h$ denotes the exceptional locus of a non-degenerate holomorphic map $h:Y\to Z$ between surfaces and that we regard $\cE_h$ alternately as a {\em reduced} effective divisor.

We choose the hyperplane section $L$ according to the genericity assumptions (G1) and (G3) in \cite{Du1}.  That is, first of all we apply Corollary \ref{cor:cv} to choose $L$ so that $\lambda_1^{-n}[(f^n)^*(L)]$ converges to a positive multiple of $T^+$.
Secondly, we take $L$ to miss the finite set $\pi_{2,n}(\cE_{\pi_{2,n}})$ for every $n\in\N$.
Bertini's theorem moreover allows us to assume that $\widehat{C}_n\eqdef \pi_{2,n}^* L$ is smooth, reduced and irreducible.

As $\pi_{1,n}$ is a composition of point blow-ups, $\pi_{1,n}:~ \widehat{C}_n \rightarrow C_n$ is a resolution of
singularities. Hence 
$$
\sum_{x\in Sing(C_n)} n_x(C_n) \leq \# \widehat{C}_n \cap \mathcal{E}_{\pi_{1,n}}
   \leq \pair{\widehat{C}_n}{\cE_{\pi_{1,n}}}
$$ 
and the geometric genus of $C_n$ is the usual genus of $\widehat{C}_n$. 

\medskip

In \cite[Lemma 4.3]{Du1}, the assumption (H) is invoked only to prove the estimate 
$$
\pair{\eta^* \cE_{\pi_1}}{\widehat{C}_n} = \pair{\cE_{\pi_1}}{\eta(\widehat{C}_n)}\leq C^{st}\lambda_1^n.
$$
To get rid of this dependence, we observe that $\eta(\widehat{C}_n)$ is an irreducible curve which 
projects to $C_n$ by $\pi_1$ and to $C_{n-1}$ by $\pi_2$. In particular $\eta(\widehat{C}_n)$ is the
proper transform of $C_{n-1}$ under $\pi_2$, so 
$$
\pi_2^{-1} (C_{n-1}) = \eta(\widehat{C}_n) + D_n,
$$ 
where $D_n$ is an effective divisor supported on $\cE_{\pi_2}$.  We claim that $D_n \leq C^{st} \lambda_1^n \cE_{\pi_2}$. Granting this momentarily, we deduce
\begin{align*}
\pair{\cE_{\pi_1}}{\eta(\widehat{C}_n)} &=
 \pair{\cE_{\pi_1}}{\pi_2^*(C_{n-1})}-\pair{\cE_{\pi_1}}{D_n} \\
&\leq C^{st} \left( \norm[H^{1,1}]{C_{n-1}} 
+ \lambda_1^n 
\right) \leq C^{st}\lambda_1^n,
\end{align*}
by Theorem \ref{spectral}.

To prove our claim about $D_n$, we recall that the multiplicity $\mathrm{mult}_{p} (C_{n-1})$ of $C_{n-1}$ at any
point $p \in \pi_2(\mathcal{E}(\pi_2))$ is the number of intersection points near $p$ of $C_{n-1}$ with a generic hyperplane section $S$. Thus
$$
\mathrm{mult}_{p} (C_{n-1}) \leq \pair{C_{n-1}}{S} \leq C^{st}\norm[H^{1,1}]{C_{n-1}} = O(\lambda_1^n).
$$
Furthermore, the multiplicity of an irreducible component $V\subset\cE_{\pi_2}$ in $D_n$ is just the Lelong number of 
$D_n$ at a generic point in $V$.  Hence Favre's estimate on Lelong numbers (Proposition \ref{pullback lelong 1}) tells us that
this multiplicity is bounded above by $C^{st}\mathrm{mult}_{\pi_2(V)} C_{n-1}$.  This proves the claim.

The argument for adapting \cite[Lemma 4.4]{Du1} is similar. The assumption $(H)$ is used to prove that 
$\pair{\eta^*R_{\pi_2}}{\widehat{C_n}} = O(\lambda_1^n)$, where $R_{\pi_2}$ is the
ramification divisor of $\pi_2$. As before, $\pair{\eta^*R_{\pi_2}}{\widehat{C_n}} =
\pair{R_{\pi_2}}{\eta(\widehat{C_n}}$, with $\eta(\widehat{C}_n)= \pi_2^{-1} (C_n) - D_n$, where $D_n$ is an
effective divisor supported on $\cE_{\pi_2}$.  The desired control then follows from a cohomological computation similar to the one above.
\end{proof}

\section{The canonical current $T^-$}\label{sec:T-}
\label{subs:cv-}

If $f$ is bimeromorphic, then by applying Theorem \ref{thm:cv+} to $f^{-1}$ one obtains an $f_*$ invariant current $T^-$ with properties analogous to $T^+$.  We show in this section that $T^-$ exists under the weaker hypothesis that $f$ has small topological degree.  Thus we assume throughout that the meromorphic map $f$ is $1$-stable with 
$\lambda_2<\lambda_1$.

\subsection{Construction of $T^-$}

\begin{thm}\label{thm:cv-}
Let $\theta^-$ be a smooth closed (1,1)-form, or more generally a closed $(1,1)$ current with bounded potentials,  representing the class $\alpha^-$.  Then the sequence $\lambda_1^{-n} f^n_* \theta^-$ converges weakly to
a positive closed (1,1)-current $T^- = \lambda_1^{-1}f_*T^-$ that is independent of $\theta^-$. 
\end{thm}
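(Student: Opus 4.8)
The plan is to run the argument of Theorem~\ref{thm:cv+} almost verbatim, with $f_*$ in place of $f^*$; the one structural simplification is that here we iterate the \emph{operator} $f_*$, so no stability hypothesis is needed to make sense of $f_*^n\theta^-$. Fix a smooth representative $\theta^-$ of $\alpha^-$. Since $f_*\alpha^- = \lambda_1\alpha^-$, the $dd^c$-lemma yields $\unsur{\lambda_1}f_*\theta^- = \theta^- + dd^c\gamma^-$ with $\gamma^- \in L^1(X)$ normalized by $\int_X\gamma^-\,\om_X^2 = 0$. Applying $\lambda_1^{-(n-1)}f_*^{n-1}$ to this identity (pushforward commutes with $dd^c$) and telescoping gives
$$
\lambda_1^{-n}f_*^n\theta^- = \theta^- + dd^c g_n^-, \qquad g_n^- = \sum_{j=0}^{n-1}\lambda_1^{-j}f_*^j\gamma^-,
$$
so the whole statement reduces to the convergence of $(g_n^-)$ in $L^1(X)$ together with positivity and invariance of the limit.

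To get convergence I would show that $(g_n^-)$ is essentially decreasing and then quote the by-now-standard compactness argument of Sibony \cite{Sib}, exactly as for $T^+$. The key input is that $\gamma^-$ is bounded above. Here Theorem~\ref{thm:bounded} (this is where $\lambda_2<\lambda_1$ first enters) provides a \emph{positive} representative $\om^- = \theta^- + dd^c u^- \geq 0$ of $\alpha^-$ with $u^- \in L^\infty(X)$, whence
$$
\unsur{\lambda_1}f_*\om^- = \theta^- + dd^c\Bigl(\gamma^- + \unsur{\lambda_1}f_*u^-\Bigr) \geq 0
$$
forces $\gamma^- + \lambda_1^{-1}f_*u^-$ to be $\theta^-$-plurisubharmonic, hence bounded above; since $\|f_*u^-\|_{L^\infty}\leq \lambda_2\|u^-\|_{L^\infty}$, so is $\gamma^-$. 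This is the one genuine difference from the pullback case: whereas $\gamma^+\circ f^j$ inherits the same upper bound as $\gamma^+$, the pushforwards only satisfy $f_*^j\gamma^- \leq \lambda_2^j M$ when $\gamma^-\leq M$, $M\geq0$. Because $\lambda_2<\lambda_1$, the correction series $\sum_j (\lambda_2/\lambda_1)^j M$ still converges, so $g_n^-$ minus this convergent series of constants is an honestly decreasing sequence of quasi-plurisubharmonic functions, and Sibony's argument yields $g_n^-\to g^-$ in $L^1(X)$. Set $T^- \eqdef \theta^- + dd^c g^-$, a priori a difference of positive closed $(1,1)$ currents representing $\alpha^-$.

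It then remains to check that $T^-\geq 0$, that $f_*T^- = \lambda_1 T^-$, and that the limit does not depend on $\theta^-$. For positivity, run the construction starting from $\om^-$: each $\lambda_1^{-n}f_*^n\om^-$ is positive closed (pushforward preserves positivity), and
$$
\lambda_1^{-n}f_*^n\om^- = \lambda_1^{-n}f_*^n\theta^- + dd^c\bigl(\lambda_1^{-n}f_*^n u^-\bigr),
$$
where $\|\lambda_1^{-n}f_*^n u^-\|_{L^\infty}\leq (\lambda_2/\lambda_1)^n\|u^-\|_{L^\infty}\to 0$; hence the right-hand side converges to $T^-$, so $T^-\geq 0$. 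Applying $f_*$ and using weak continuity of $f_*$ on positive closed $(1,1)$ currents gives $f_*T^- = \lim_n \lambda_1^{-n}f_*^{n+1}\om^- = \lambda_1 T^-$. Finally, any two representatives of $\alpha^-$ that are smooth or have bounded potentials differ by $dd^c\varphi$ with $\varphi$ bounded, and $\lambda_1^{-n}f_*^n(dd^c\varphi) = dd^c(\lambda_1^{-n}f_*^n\varphi)\to 0$ by the same sup-norm estimate; this gives both the independence of the representative and the asserted generalization to currents with bounded potentials.

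The step I expect to be the main obstacle is making the monotonicity (Sibony) argument go through for pushforwards: unlike pullback-by-precomposition, $f_*$ does not preserve sup-norms, and the whole construction hinges on the estimate $\|f_*\psi\|_{L^\infty}\leq \lambda_2\|\psi\|_{L^\infty}$ together with the \emph{strict} inequality $\lambda_2<\lambda_1$, which is precisely what makes all the relevant series converge geometrically. Small topological degree is moreover exactly what makes Theorem~\ref{thm:bounded} available for $\alpha^-$ in the first place, so — in contrast to the situation for $T^+$ — it is indispensable throughout.
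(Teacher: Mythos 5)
Your proposal is correct and follows essentially the same route as the paper: the telescoping potential identity $\lambda_1^{-n}f_*^n\theta^- = \theta^- + dd^c g_n^-$, the upper bound on $\gamma^-$ via the bounded-potential representative of $\alpha^-$ from Theorem \ref{thm:bounded}, Sibony's monotonicity argument driven by the estimate $\sup f_*^n\psi \leq \lambda_2^n\sup\psi$ combined with $\lambda_2<\lambda_1$, and positivity/invariance/independence deduced by running the construction from $\om^-$ and using $\|\lambda_1^{-n}f_*^n u^-\|_{L^\infty}\to 0$. The paper's only additional ingredient is a small lemma (continuity of $g_*\psi$ away from $I_g^-$) recording that $\gamma^-$ is continuous off $I_f^-$, which is not essential to the convergence statement itself.
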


This theorem has been already observed   in some special (non-invertible) cases, e.g. \cite[Theorem 5.1]{G4}.  In this generality it is new.  We begin with a technical observation about pushing forward.

\begin{lem}
\label{lem:pushcts}
Let $g:X\to Y$ be a dominating meromorphic map between compact complex surfaces. Let $U\subset Y\setminus I_g^-$ and $W\supset g^{-1}(U)$ be open sets. If $\psi\in L^1(X)$ is continuous on $W$, then $g_*\psi$ is continuous on $U$.  
Similarly, if $\omega$ is a closed $(1,1)$ current with continuous local potentials on $W$, then $g_*\omega$ has continuous local potentials on $U$.
\end{lem}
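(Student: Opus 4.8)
The plan is to treat the two claims in turn, reducing the statement about currents with continuous local potentials to the statement about continuity of $g_*\psi$. For that reduction I would write $\omega=\alpha+dd^c\psi$ with $\alpha$ a smooth closed $(1,1)$ form on $X$ and $\psi\in L^1(X)$; comparing the continuous local potentials of $\omega$ on $W$ with the smooth local potentials of $\alpha$ shows that $\psi$ is continuous on $W$. Since $\pi_1^*$ and $\pi_{2*}$ both commute with $dd^c$, one has $g_*\omega=g_*\alpha+dd^c(g_*\psi)$, and $g_*\alpha$ is a continuous closed $(1,1)$ form on $Y\setminus I_g^-$ — this is the statement for smooth data recorded just after \eqref{ppdef} — hence has continuous local potentials on $U$. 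Combined with continuity of $g_*\psi$ on $U$, this gives continuous local potentials for $g_*\omega$ on $U$.

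For the function statement I would work directly with the desingularized graph $\Gamma$ of $g$, with projections $\pi_1:\Gamma\to X$ (a modification) and $\pi_2:\Gamma\to Y$, so that $g_*\psi=\pi_{2*}\phi$ with $\phi\eqdef\psi\circ\pi_1$. Fix $q\in U$. The first observation is that $\pi_1(\pi_2^{-1}(q))=g^{-1}(q)\subset g^{-1}(U)\subset W$, so $\phi$ is continuous on the open set $\pi_1^{-1}(W)\supset\pi_2^{-1}(q)$; note moreover that $\phi$ is constant on each fiber $\pi_1^{-1}(p)\subset\pi_1^{-1}(W)$. The second observation concerns the curves $V\subset\Gamma$ contracted by $\pi_2$: either $\pi_1(V)$ is a curve $C$, in which case $g$ contracts $C$ and $\pi_2(V)\in g(\cE_g)=I_g^-$, or $\pi_1(V)$ is a point of $I_g$ and $V\subset\pi_1^{-1}(\pi_1(V))$. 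Because $q\notin I_g^-$, only contracted curves of the second kind can lie in $\pi_2^{-1}(q)$.

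I would then shrink $U$ to a neighbourhood $U_0\ni q$ missing the finitely many images of all the other contracted curves and apply Stein factorization to $\pi_2\colon\pi_2^{-1}(U_0)\to U_0$, obtaining $\pi_2=\rho\circ\mu$ with $\mu$ a proper modification contracting exactly the (finitely many) curves of $\pi_2^{-1}(q)$ and $\rho$ a finite branched cover onto $U_0$. By the second observation each $\mu$-contracted curve lies in a single fiber of $\pi_1$, so $\phi$ is constant along it and $\mu_*\phi$ extends continuously across the corresponding point; thus $\mu_*\phi$ is continuous near $\rho^{-1}(q)$. Finally, pushforward under the finite branched cover $\rho$ preserves continuity of functions — the fiber of $\rho$, counted with multiplicity, varies continuously with the base point, the same principle underlying the continuity of $g_*\alpha$ quoted above — so $g_*\psi=\rho_*(\mu_*\phi)$ is continuous near $q$.

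The main obstacle is precisely this passage from the generically finite, but possibly curve-contracting, map $\pi_2$ to the genuinely finite map $\rho$: pushing a continuous function forward under a contraction need not preserve continuity, and what rescues the argument is that the hypothesis $q\notin I_g^-$ confines the offending contracted curves to fibers of the modification $\pi_1$, along which $\psi\circ\pi_1$ is automatically constant.
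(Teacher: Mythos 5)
Your proof is correct, and for the main (function) statement it is essentially the paper's argument: both work on the desingularized graph and exploit that, because $q\notin I_g^-$, every curve of $\pi_2^{-1}(q)$ is contracted by $\pi_1$, so $\psi\circ\pi_1$ is constant on each connected component of the fiber and the fiber sum varies continuously. Your Stein factorization into $\rho\circ\mu$ is a more careful packaging of the step the paper dispatches in one line ("it follows that $\sum_{C}(\pi_1^*\psi)(C)$ is continuous at $p$"), and it is a welcome clarification. For the statement about currents the routes differ slightly: you decompose globally as $\omega=\alpha+dd^c\psi$ and invoke the continuity of $g_*\alpha$ off $I_g^-$ (plus the tacit elliptic-regularity fact that a continuous closed $(1,1)$-form has continuous local potentials), whereas the paper localizes near the finite fiber $g^{-1}(p)$, writes $\omega=dd^c v$ with $v$ continuous on a small neighborhood of that fiber, and applies the function statement to $v$ directly. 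Both reductions are valid; the paper's avoids the regularity remark, yours avoids having to shrink neighborhoods around the fiber.
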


\begin{proof}
Let $\Gamma$ be the desingularized graph of $g$ and $\pi_1,\pi_2$ the projections onto $X$ and $Y$.  Then $\pi_1^*\psi$ is continuous on $\pi_1^{-1}(W)$.  Since $I_g\cap U= \emptyset$, we may shrink $W$ so that $W\cap \cE_g =\emptyset$.  Hence for any $p\in U$, we obtain that $\pi_1^*\psi$ is constant on each connected component $C$ of $\pi_2^{-1}(p)$.  It follows then that
$$
g_*\psi(p) = (\pi_{2*}\pi_1^*\psi)(p) = \sum_{C\subset \pi_1^{-1}(p)} (\pi_1^*\psi)(C)
$$
is continuous at $p$.  

To get the corresponding result for $\omega$, it suffices to fix $p\in Y\setminus I_g$ and let $U$ be a neighborhood of $p$.  In particular, $g^{-1}(p)$ is finite, so by choosing $U$ and then $V$ small enough, we may assume that $\omega|_V = dd^c v$ for some continuous function $v$ on $X$.  Now we get what we need from
$$
g_*\omega|_{U} = (g_*dd^c v)|_U = (dd^c g_* v)|_U.
$$
\end{proof}

The rest of the proof of Theorem \ref{thm:cv-} is similar to that of Theorem \ref{thm:cv+}, so we only sketch it. 
Let $\theta^-$ be a smooth representative of $\alpha^-$.  We can write 
$$
\frac{1}{\lambda_1}f_* \theta^-=\theta^-+dd^c \gamma^-, 
$$
with $\gamma^-\in L^1(X)$.  Lemma \ref{lem:pushcts} implies that $\gamma^-$ is continuous away from $I_f^-$. 
Since the class $\alpha^-$ is represented by a positive current with bounded potentials,  $\gamma^- $ is bounded from above, and it is no loss of generality to assume that $\gamma^-\leq 0$.
 The sequence $g_n^- \eqdef \sum_{j=0}^{n-1} \lambda_1^{-j}f^j_*\gamma^-$ is therefore  decreasing. 
To conclude that the sequence $g_n^-$ converges, we need to prove that it is bounded from below by a $L^1$ function. 
For this, as in Theorem \ref{thm:cv+}, we apply Sibony's argument from \cite{Sib}. This is where we use the assumption of small topological degree,  
for it implies for any constant $C>0$ that $f_* C = \lambda_2 C < \lambda_1 C$.  So if $u:X\to\R$ is
bounded above by $C$, then $\lambda_1^{-n} f^n_* u$ is  bounded by $\frac{\lambda_2^n}{\lambda_1^n} C < C$. 
See \cite[Theorem 5.1]{G4} for more details.

Let $g^- = \lim g_n^-$, and  $T^- = \theta^- + dd^c g^-$.   The positivity of $T^-$, its invariance 
and independence from $\theta^-$ are shown as in Theorem \ref{thm:cv+}.
\qed

One can argue as in Remark \ref{minsing} that $T^-$ has minimal singularities among invariant currents.  It is unclear to us, however, how bad these singularities might be.  For instance, we do not know whether there exists a map $f$ for which $T^-$ charges a curve.

\subsection{Convergence towards $T^-$}

\begin{thm} \label{thm:cvkahlerpushf}
Let $\omega$ be a K\"ahler form on $X$. Then
$$
\frac{1}{\lambda_1^n} f^n_* \omega \longrightarrow c T^-,
\text{ where } c=\pair{ \omega } {\alpha^+}=\int_X \omega \wedge T^+.
$$
\end{thm}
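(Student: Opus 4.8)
The plan is to run the proof of Theorem~\ref{thm:cvkahlerpullb} in the pushforward direction, taking advantage of the fact that renormalizing $f_*$ by $\lambda_1>\lambda_2$ makes the error contributions \emph{decay} rather than merely stay bounded; this should let me avoid the Lelong-number analysis (Proposition~\ref{small lelong}) that the pullback case required. First I would settle the cohomological part: writing $[\omega]=c\,\alpha^-+\beta$ with $\pair{\beta}{\alpha^+}=0$ and pairing with $\alpha^+$ gives $c=\pair{\omega}{\alpha^+}$, and since $T^+$ is a closed positive current representing $\alpha^+$ and $\omega$ is closed, $c=\int_X\omega\wedge T^+$ as well. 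Because $f$ is $1$-stable, $(f^n)_*=(f_*)^n$, so Theorem~\ref{spectral} applied to $f_*$ yields $\lambda_1^{-n}[f^n_*\omega]=c\,\alpha^-+\lambda_1^{-n}(f_*)^n\beta\to c\,\alpha^-$. What remains is to upgrade this to convergence of currents.

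Next I would set $T_n\eqdef\lambda_1^{-n}f^n_*\omega$ and decompose $T_n=c\,\theta^-+\eta_n+dd^c w_n$, where $\theta^-$ is a fixed smooth representative of $\alpha^-$, $\eta_n$ is a smooth closed form representing $\lambda_1^{-n}(f_*)^n\beta$ (so $\norm[H^{1,1}]{\eta_n}\to0$; in particular $-c_n\omega\le\eta_n\le c_n\omega$ for some $c_n\downarrow0$), and $w_n\le0$ is normalized by $\sup_X w_n=0$. Just as in Theorem~\ref{thm:cvkahlerpullb}, positivity of $T_n$ forces the $w_n$ to be uniformly quasi-plurisubharmonic, hence relatively compact and uniformly exponentially integrable in $L^1(X)$. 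Then, using that $f_*$ preserves the order on $(1,1)$ currents, that $c\lambda_1^{-k}f^k_*\theta^-\to cT^-$ by Theorem~\ref{thm:cv-}, and writing $c\lambda_1^{-k}f^k_*\theta^-=cT^-+dd^c\big(c(g^-_k-g^-)\big)$ with $g^-_k\downarrow g^-$ the potentials from the construction of $T^-$, I would apply $\lambda_1^{-k}f^k_*$ to the inequality $T_n\le c\theta^-+c_n\omega+dd^c w_n$ to get
$$
T_{n+k}\ \le\ cT^-\ +\ c_n\,\lambda_1^{-k}f^k_*\omega\ +\ dd^c\!\big(c(g^-_k-g^-)+\lambda_1^{-k}f^k_*w_n\big),
$$
where the middle term has mass $O(c_n)$ uniformly in $k$.

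The heart of the matter is to prove that $\lambda_1^{-k}f^k_*w_n\to0$ in $L^1(X)$ as $k\to\infty$, uniformly in $n$. Here the renormalization works for us. Since $|f^k_*v|\le\lambda_2^k\norm[L^\infty]{v}$ for bounded $v$, the truncation $\max(w_n,-M)$ contributes at most $(\lambda_2/\lambda_1)^kM$ in sup-norm after renormalization; and for the remaining (singular) part, using $\int_X f^k_*g\,\omega_X^2=\int_X g\,(f^k)^*\omega_X^2$,
$$
\norm[L^1]{\lambda_1^{-k}f^k_*w_n}\ =\ \lambda_1^{-k}\!\int_X|w_n|\,(f^k)^*\omega_X^2\ =\ \big(\tfrac{\lambda_2}{\lambda_1}\big)^{k}\!\int_X|w_n|\,d\mu_k,\qquad \mu_k\eqdef\lambda_2^{-k}(f^k)^*\omega_X^2.
$$
This is exactly where I would invoke the volume estimates of \cite{G2} --- the pushforward counterpart of the estimate used in Lemma~\ref{volume bound} --- to ensure that the probability measures $\mu_k/\Vol(X)$ are uniformly ``moderate'' against the relatively compact family $\{w_n\}$, so that $\int_X|w_n|\,d\mu_k$ stays bounded (or at worst grows sub-exponentially in $k$), uniformly in $n$; since $\lambda_2<\lambda_1$, the claimed uniform decay follows.

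To conclude, I would write each $m\in\N$ as $m=n_m+k_m$ with $n_m,k_m\to\infty$ (say $n_m=\lfloor m/2\rfloor$). In the displayed bound for $T_{n_m+k_m}$ the first term tends to $cT^-$, the mass of the middle term is $O(c_{n_m})\to0$, and $c(g^-_{k_m}-g^-)+\lambda_1^{-k_m}f^{k_m}_*w_{n_m}\to0$ in $L^1(X)$; hence every weak limit point $S$ of $(T_m)$ satisfies $S\le cT^-$. Since $cT^--S\ge0$ is a closed positive current whose cohomology class ($c\alpha^--c\alpha^-=0$) vanishes, it has zero mass, so $S=cT^-$ and $T_m\to cT^-$, which is the assertion. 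The genuinely delicate point is the uniform control of $\lambda_1^{-k}f^k_*w_n$: a priori the measures $(f^k)^*\omega_X^2$ could concentrate near the poles of the $w_n$ at an exponential rate, so one really needs the quantitative estimates of \cite{G2} to see that the contraction factor $(\lambda_2/\lambda_1)^k$ wins. Everything else is soft; and, in contrast to the pullback case, no analysis of Lelong numbers at (spurious) indeterminacy points intervenes, because pushing forward regularizes and Theorem~\ref{spectral} handles the transverse direction directly.
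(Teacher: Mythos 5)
Your cohomological bookkeeping, the compactness of the normalized potentials $w_n$, and the final ``zero mass'' argument are all fine, but the heart of your proof --- the uniform decay of $\lambda_1^{-k} f^k_* w_n$ in $L^1(X)$ --- is precisely the step that does not go through, and it is the reason the paper abandons this route (``we certainly cannot use the same proof, for we do not have volume estimates for pushforwards''). Your adjointness identity converts the problem into bounding $\int_X |w_n|\, d\mu_k$ with $\mu_k = \lambda_2^{-k}(f^k)^*\omega_X^2$, but these normalized preimage measures are exactly the objects one cannot control for maps of small topological degree. The estimates of \cite{G2} invoked in Lemma \ref{volume bound} bound $\Vol (f^k(\Omega))$ from below in terms of $\Vol \Omega$; they say nothing about how $(f^k)^*\omega_X^2$ distributes its mass, and there is no ``pushforward counterpart'': nothing prevents a positive fraction of the $\lambda_2^k$ preimages of a typical point from clustering super-exponentially fast near the poles of the $w_n$ (e.g.\ near a totally invariant point whose local inverse branches are strongly contracting), in which case $\int |w_n|\,d\mu_k$ grows like $d^k$ for some $d$ unrelated to $\lambda_1/\lambda_2$ and the contraction factor $(\lambda_2/\lambda_1)^k$ need not win. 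Uniform integrability of quasi-plurisubharmonic functions against the measures $\mu_k$ is the central analytic difficulty in the equidistribution theory for maps of \emph{large} topological degree and is simply not available here. (A secondary point: passing from the bound on $T_n$ to one on $T_{n+k}$ uses $f^{n+k}_*\omega = f^k_* f^n_*\omega$ at the level of currents, which requires justification beyond cohomological $1$-stability.)

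The paper's actual proof sidesteps all of this by duality. It first upgrades Theorem \ref{thm:cvkahlerpullb} to non-closed forms: Lemma \ref{lem:cvtestpb} shows that $\lambda_1^{-n}(f^n)^*(\chi\omega)\to \bigl(\int\chi\,\omega\wedge T^-\bigr)\,T^+$ for any test function $\chi$, the new ingredients being a Cauchy--Schwarz estimate $\mass{\partial S_n}=O((\lambda_2/\lambda_1)^{n/2})$ showing that cluster values are closed, and an invariance argument collapsing the cluster set to $\{T^+\}$. Theorem \ref{thm:cvkahlerpushf} is then the two-line pairing
$$
\left\langle \tfrac{1}{\lambda_1^n}f^n_*\omega,\ \chi\omega'\right\rangle
=\left\langle \omega,\ \tfrac{1}{\lambda_1^n}(f^n)^*(\chi\omega')\right\rangle
\longrightarrow \langle\omega, c'T^+\rangle=\langle cT^-,\chi\omega'\rangle .
$$
If you wish to retain a direct pushforward argument, you must supply a uniform (or at least subexponential) bound on $\int_X|w_n|\,d\mu_k$, which is not contained in \cite{G2} or anywhere in this paper.
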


Although this result is analogous to Theorem \ref{thm:cvkahlerpullb}, we certainly cannot use
the same proof, for we do not have volume estimates for pushforwards.  We work instead by duality, using a stronger version of Theorem \ref{thm:cvkahlerpullb} that applies to all smooth real, not necessarily  closed, (1,1) forms.

\begin{lem} \label{lem:cvtestpb}
Let $\chi$ be a smooth test function on $X$. Then
$$
\frac{1}{\lambda_1^n}(f^n)^*(\chi \omega) \longrightarrow c T^+,
\text{ where } c=\int \chi \omega \wedge T^-.
$$
\end{lem}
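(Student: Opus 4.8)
The plan is to follow the architecture of the proof of Theorem \ref{thm:cvkahlerpullb}, but to pin the limit down by \emph{duality} with $T^-$ rather than by volume estimates alone. First I would reduce to $\chi\geq 0$: the number $t\eqdef\int\chi\om\wedge T^-$ is linear in $\chi$, and for constant $\chi$ the assertion is exactly Theorem \ref{thm:cvkahlerpullb}, so subtracting $(\min\chi)\,\om$ changes nothing. Set $R_n\eqdef\lambda_1^{-n}(f^n)^*(\chi\om)$ and $C\eqdef\sup\chi$. Then $0\leq\chi\om\leq C\om$ gives $0\leq R_n\leq C\,\lambda_1^{-n}(f^n)^*\om$, and since $\lambda_1^{-n}(f^n)^*\om\to\pair{\om}{\alpha^-}T^+$ by Theorem \ref{thm:cvkahlerpullb}, the sequence $(R_n)$ has uniformly bounded mass and is relatively compact in the weak topology.

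Second, I would control the failure of closedness. Since $\om$ is closed, $dd^c(\chi\om)=dd^c\chi\wedge\om$ is a smooth signed $(2,2)$-form of total mass $0$; writing it $\mu_+-\mu_-$ with $\mu_\pm\geq 0$ of equal mass $m$ and using that $(f^n)^*$ multiplies the mass of a positive top-degree form by $\lambda_2^n$, one gets $\mass{dd^c R_n}\leq 2m\,(\lambda_2/\lambda_1)^n\to 0$. Hence every weak limit $R$ of $(R_n)$ is a positive \emph{closed} $(1,1)$ current with $0\leq R\leq C\pair{\om}{\alpha^-}T^+$. To identify its cohomology class I would take a bounded-potential representative $\om^-$ of $\alpha^-$ (Theorem \ref{thm:bounded}) and combine the projection formula $\pair{(f^n)^*(\chi\om)}{\om^-}=\pair{\chi\om}{(f^n)_*\om^-}$ with Theorem \ref{thm:cv-}, which gives $\lambda_1^{-n}(f^n)_*\om^-\to T^-$; this yields $\int R_n\wedge\om^-=\int\chi\om\wedge\bigl(\lambda_1^{-n}(f^n)_*\om^-\bigr)\to\int\chi\om\wedge T^-=t$. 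Since $\om^-$ has bounded potential and $\mass{dd^c R_n}\to 0$, the left-hand side also converges to $\int R\wedge\om^-$ along any subsequence defining $R$; therefore $\pair{[R]}{\alpha^-}=t$ for \emph{every} weak limit.

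The remaining, and hardest, step is to pass from ``$\pair{[R]}{\alpha^-}=t$'' to ``$R=tT^+$''; the difficulty is exactly that $\chi\om$, and hence $R_n$, is not closed, so the machinery of Theorem \ref{thm:cvkahlerpullb} does not apply verbatim. I would use the relation $f^*R_n=\lambda_1 R_{n+1}$: the operator $F\eqdef\lambda_1^{-1}f^*$ maps the compact limit set $\mathcal{L}$ of $(R_n)$ \emph{onto} itself, every $R'\in\mathcal{L}$ is a positive closed current with $\pair{[R']}{\alpha^-}=t$ whose potential (in a fixed decomposition) belongs to a relatively compact family of normalised quasiplurisubharmonic functions, and therefore, by Theorem \ref{thm:cvkahlerpullb} for the smooth part together with the volume estimates of \cite{G2} (as in Lemma \ref{volume bound}) for the potential, $F^mR'\to tT^+$ uniformly over $R'\in\mathcal{L}$. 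Writing an arbitrary $R\in\mathcal{L}$ as $R=F^mR'_m$ with $R'_m\in\mathcal{L}$ and letting $m\to\infty$ forces $R=tT^+$, so $\mathcal{L}=\{tT^+\}$ and $\lambda_1^{-n}(f^n)^*(\chi\om)\to\bigl(\int\chi\om\wedge T^-\bigr)T^+$. (Alternatively one can run a two-index estimate $R_{p+q}=\lambda_1^{-q}(f^q)^*R_p$ with $p,q\to\infty$ and verify that the $\lambda_1^q$-amplification of the small, almost-$dd^c$-closed remainder $R_p-R^0$, $R^0\in\mathcal{L}$, remains negligible, using the same mass and volume bounds.) Throughout, the estimate $\mass{dd^c R_n}\to 0$ is what lets one transport cohomological information to the limit and justify the continuity of $f^*$ along $(R_n)$.
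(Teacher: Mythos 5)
Your overall architecture (compactness of $R_n$, closedness of limit points, complete invariance of the limit set under $\lambda_1^{-1}f^*$, and uniform convergence of normalized pullbacks on that set forcing it to be a singleton) matches the paper's proof. But there is a genuine gap in the closedness step. You bound $\mass{dd^c R_n}$ and conclude that every weak limit $R$ is a \emph{closed} positive $(1,1)$ current. This inference fails: a positive $(1,1)$ current $R$ with $dd^c R=0$ need not satisfy $dR=0$ (such ``positive pluriharmonic'' currents are a well-studied class in their own right; locally, $h(z)\,i\,dz_1\wedge d\bar z_1$ with $h\geq 0$ harmonic in $z_2$ but non-constant is $dd^c$-closed and not closed, and the sandwich $0\leq R\leq C\,T^+$ does not rule this out). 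What must be controlled is $\partial R_n$ itself. The paper does this with the Cauchy--Schwarz trick: testing $\partial S_n$ against a smooth $(0,1)$-form $\alpha$, one writes $\partial(\chi\omega)=\partial\chi\wedge\omega$, pulls back, and estimates
$$
\abs{\pair{\partial S_n}{\overline\alpha}}\leq \lambda_1^{-n}\pair{\partial(\chi\circ f^n)\wedge\overline{\partial(\chi\circ f^n)}}{f^{n*}\omega}^{1/2}\pair{\alpha\wedge\overline\alpha}{f^{n*}\omega}^{1/2}=O\bigl((\lambda_2/\lambda_1)^{n/2}\bigr),
$$
using that $\int \partial(\chi\circ f^n)\wedge\overline{\partial(\chi\circ f^n)}\wedge f^{n*}\omega=\lambda_2^n\int\partial\chi\wedge\overline{\partial\chi}\wedge\omega$ and that $f^{n*}\omega$ has mass $O(\lambda_1^n)$. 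This is where positivity of $f^{n*}\omega$ is used in an essential way; your mass count for $dd^c(\chi\omega)$ alone cannot substitute for it. The same gap resurfaces at the end, where you invoke ``$\mass{dd^c R_n}\to 0$'' to justify continuity of $f^*$ along the sequence and to transport cohomological data to the limit.

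Two smaller remarks. First, your identification of the cohomology class by pairing with a bounded-potential representative $\om^-$ of $\alpha^-$ and using $\lambda_1^{-n}f^n_*\om^-\to T^-$ is a reasonable alternative to the paper (which pins the class down more implicitly via Theorem \ref{spectral} and the squeeze $0\leq T\leq cT^+$), but passing to the limit in $\int R_n\wedge\om^-$ requires care since $\om^-$ is not smooth and wedge products are not weakly continuous; this should be argued, e.g., via the uniform bound $R_n\leq C\lambda_1^{-n}f^{n*}\omega$ and quasi-continuity. Second, in the final step the paper does not need the volume estimates of \cite{G2}: since each $T\in\mathcal{S}$ satisfies $0\leq T\leq cT^+$, its normalized potential is sandwiched, $c\tilde g^+-M'\leq u_T\leq M$, and $\lambda_1^{-n}\tilde g^+\circ f^n\to 0$ in $L^1(X)$ already gives the required uniform convergence $\lambda_1^{-n}f^{n*}T\to cT^+$ over $\mathcal{S}$. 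With the $\partial$-estimate restored, your argument closes.
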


\begin{proof}
We follow the now standard approach from \cite{Sib} (see also \cite{bs3, fs98}).  Let $\mathcal{S}$ denote the  set of cluster points of the (relatively compact) sequence $S_n:=\lambda_1^{-n}(f^n)^*(\chi \omega)\geq 0$.  We can assume without loss of generality that $0 \leq \chi \leq 1$ and $\int \omega \wedge T^-=1$.  Then from Theorem \ref{thm:cvkahlerpullb}, it follows for any $T\in\mathcal{S}$ that $0\leq T\leq cT^+$.

We next argue that elements $T\in \cS$ are closed.  Since $S_n$ is real, it suffices to estimate the mass $\mass{\partial S_n}$ of $\partial S_n$.  Fixing a smooth $(0,1)$ form $\alpha$, we use the Cauchy-Schwarz inequality to estimate
\begin{eqnarray*}
\left| \langle \partial S_n, \overline{\alpha} \rangle \right|
&\leq &\frac{1}{\lambda_1^n}
\langle \partial \chi \circ f^n \wedge \overline{\partial} \chi \circ f^n, f^{n*}\omega \rangle^{1/2}
\langle \alpha \wedge \overline{\alpha}, f^{n*}\omega \rangle^{1/2}  \\
&=& \frac{\lambda_2^{n/2}}{\lambda_1^n} 
\langle \partial \chi \wedge \overline{\partial} \chi , \omega \rangle^{1/2}
\langle \alpha \wedge \overline{\alpha}, f^{n*}\omega \rangle^{1/2} 
\leq ||\alpha|| \left( \frac{\lambda_2}{\lambda_1} \right)^{n/2} 
\langle \partial \chi \wedge \overline{\partial} \chi , \omega \rangle^{1/2}.
\end{eqnarray*}
Thus $\mass{\partial S_n}=O((\lambda_2/\lambda_1)^{n/2}) \rightarrow 0$.

Now if $T = \lim_{j\to\infty} S_{n_j} \in \mathcal{S}$ is the limit of some subsequence, then after refining the subsequence, we may also assume that $S =  \lim_{j\to\infty} S_{n_j+1}\in \mathcal{S}$ exists.  Since $T,S\leq cT^+$ do not charge the critical set of $f$, we have that $\lambda_1^{-1}f^*T = S$.  Similarly, we can refine to arrange that $S = \lim_{j\to\infty} S_{n_j-1}\in \mathcal{S}$ exists, and then $T= \lambda_1^{-1}f^*S$.  We infer that $f^*\mathcal{S} = \lambda_1^{-1}\mathcal{S}$.

Finally, for each $T\in\mathcal{S}$, we write $T=\theta^+ + dd^c u_T$, $cT^+ - T = (c-1) \theta^+ + dd^c v_T$ where by hypothesis both $u_T$ and $v_T$ are qpsh, and we normalize so that $\int u_T \omega_X^2 = \int v_T \omega_X^2 = 0$.  Since $\mathcal{S}$ is compact we have $M\geq 0$ such that $u_T,v_T \leq M$ for all $T\in\mathcal{S}$.  So if $\tilde g^+$ is the quasipotential for $T^+$ with $\omega_X^2$ mean zero, we obtain that
$$
M\geq u_T = c\tilde g^+ - v_T \geq c\tilde g^+ - M'.
$$
As $\lambda_1^{-n}\tilde g^+\circ f^n \to 0$ in $L^1(X)$, we infer that $\lambda_1^{-n}u_T\circ f^n \to 0$ uniformly in $T$.  That is,
$$
\lambda_1^{-n}f^{n*} T = \lambda_1^{-n}f^{n*}\theta^+ + \lambda_1^{-n} dd^c u_T\circ f^n \to T^+ 
$$
uniformly on $\mathcal{S}$.  Together with complete invariance of $\mathcal{S}$, this implies $\mathcal{S}=\{T^+\}$.
\end{proof}

\medskip
\noindent{\it Proof of Theorem \ref{thm:cvkahlerpushf}.}
It suffices to prove convergence on a generating family of test forms,
e.g. forms of the type $\theta=\chi \omega'$, where $\chi$ is a test function and 
$\omega'$ is a K\"ahler. By Lemma \ref{lem:cvtestpb}
$$
\left \langle \frac{1}{\lambda_1^n} f^n_* \omega, \theta \right \rangle
=\left \langle  \omega, \frac{1}{\lambda_1^n} (f^n)^*\theta \right \rangle
\longrightarrow \langle \omega, c' T^+ \rangle
=\langle cT^-,\theta \rangle,
$$
where $c=\int \omega \wedge T^+=\pair{ \omega }{ \alpha^+}$, as desired.
\qed\medskip

As with $T^+$, it follows that $T^-$ is well-approximated by divisors.

\begin{cor}\label{cor:cv-}
Assume $X$ is projective.
Given any projective embedding of $X$, if $L$ is a generic hyperplane section, 
$\lambda_1^{-n} (f^n)_*L\rightarrow c T^-$, where $c$ depends only on the embedding.
\end{cor}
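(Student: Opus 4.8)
The plan is to run the argument of Corollary \ref{cor:cv} with pushforwards in place of pullbacks: the pullback convergence used there is replaced by Theorem \ref{thm:cvkahlerpushf}, and the role played there by $\lambda_1>1$ is played here by the small topological degree hypothesis $\lambda_2<\lambda_1$. Fix the projective embedding $X\hookrightarrow\cp^N$ with induced K\"ahler form $\om_{\rm FS}\rest{X}$, and recall the Crofton formula $\om_{\rm FS}\rest{X}=\int_{\check{\cp}^N}[H\cap X]\,dv(H)$, where $dv$ is a probability measure. For each hyperplane $H$ not containing $X$ write $[H]-\om_{\rm FS}=dd^c\psi_H$ with $\psi_H\le 0$, so that $[H\cap X]-\om_{\rm FS}\rest{X}=dd^c(\psi_H\rest{X})$ with $\psi_H\rest{X}\in L^1(X)$. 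Applying Theorem \ref{thm:cvkahlerpushf} to the K\"ahler form $\om_{\rm FS}\rest{X}$ gives $\lambda_1^{-n}(f^n)_*(\om_{\rm FS}\rest{X})\to cT^-$ with $c=\int_X\om_{\rm FS}\rest{X}\wedge T^+$, a number depending only on the embedding. Since $[H\cap X]=[\om_{\rm FS}\rest{X}]$ in cohomology and pushforward commutes with $dd^c$, one has $\lambda_1^{-n}(f^n)_*[H\cap X]=\lambda_1^{-n}(f^n)_*(\om_{\rm FS}\rest{X})+dd^c\bigl(\lambda_1^{-n}(f^n)_*(\psi_H\rest{X})\bigr)$, so everything reduces to showing that for $v$-a.e. $H$,
$$
\lambda_1^{-n}(f^n)_*(\psi_H\rest{X})\longrightarrow 0 \quad\text{in } L^1(X).
$$

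For the key estimate, note $(f^n)_*(\psi_H\rest{X})\le 0$, so that $\norm[L^1(X)]{(f^n)_*(\psi_H\rest{X})}=-\int_X(f^n)_*(\psi_H\rest{X})\,\om_X^2=-\int_X\psi_H\rest{X}\cdot(f^n)^*(\om_X^2)$, by adjointness of $(f^n)_*$ and $(f^n)^*$. Integrating over $H$, applying Tonelli's theorem (the integrand $-\psi_H$ being nonnegative), and using that $H\mapsto\psi_H(p)$ has $dv$-integral a pluriharmonic, hence constant, function of $p$ — say $\int_{\check{\cp}^N}\psi_H(p)\,dv(H)=-c_0$ with $c_0\ge 0$ — we obtain
$$
\int_{\check{\cp}^N}\norm[L^1(X)]{(f^n)_*(\psi_H\rest{X})}\,dv(H)=c_0\int_X(f^n)^*(\om_X^2)=c_0\,\lambda_2^{\,n}\!\int_X\om_X^2.
$$

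Dividing by $\lambda_1^n$ and summing in $n$ gives $\sum_{n\ge0}\lambda_1^{-n}\int_{\check{\cp}^N}\norm[L^1(X)]{(f^n)_*(\psi_H\rest{X})}\,dv(H)=c_0\,\Vol(X)\sum_{n\ge0}(\lambda_2/\lambda_1)^n<\infty$ precisely because $\lambda_2<\lambda_1$; this is the one spot where small topological degree is used, in exact parallel with the use of $\lambda_1>1$ in Corollary \ref{cor:cv}. By Tonelli once more, for $v$-a.e. $H$ the scalar series $\sum_n\lambda_1^{-n}\norm[L^1(X)]{(f^n)_*(\psi_H\rest{X})}$ converges, hence its terms tend to $0$, which is what was needed; therefore $\lambda_1^{-n}(f^n)_*[H\cap X]\to cT^-$ for generic $H$. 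Bertini's theorem additionally lets us take $H\cap X$ smooth and irreducible, although only $X\not\subset H$ is used above.

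I expect no serious obstacle here — the argument is a soft duality-plus-Fubini manipulation once Theorem \ref{thm:cvkahlerpushf} is in hand. The one point requiring care is the precise meaning of the pairing $\int_X\psi_H\rest{X}\cdot(f^n)^*(\om_X^2)$, since $\psi_H\rest{X}$ is only quasi-plurisubharmonic (and $-\infty$ along $X\cap H$) while $(f^n)^*(\om_X^2)$ is merely a positive $(2,2)$ form with $L^1$ coefficients; this is handled by truncating $\psi_H$ to $\max(\psi_H,-k)$, applying the identity $\int_X(f^n)_*\varphi\cdot\eta=\int_X\varphi\cdot(f^n)^*\eta$ for bounded $\varphi\le 0$ and smooth positive $\eta$, and then letting $k\to\infty$ by monotone convergence on both sides.
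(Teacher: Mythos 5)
Your proof is correct and is exactly the argument the paper intends: Corollary \ref{cor:cv-} is stated without proof as the pushforward analogue of Corollary \ref{cor:cv}, i.e.\ Crofton plus Fubini/Tonelli reduces everything to showing $\lambda_1^{-n}(f^n)_*(\psi_H\rest{X})\to 0$ in $L^1$ for a.e.\ $H$. Your accounting of the one real difference --- the $L^1$ norm of the pushforward grows like $\lambda_2^n$ via the duality $\int (f^n)_*\varphi\cdot\om_X^2=\int\varphi\cdot(f^n)^*(\om_X^2)$, so summability requires $\lambda_2<\lambda_1$ rather than merely $\lambda_1>1$ --- is precisely the point that makes the adaptation work.
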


\subsection{$T^-$ is woven} \label{subs:woven}

Woven currents were introduced by T.C. Dinh \cite{Dinh}.  They appear in a dynamical context in \cite{DTh1}.
The definitions of {\em uniformly woven} and {\em woven} currents are similar to the laminar case, except 
that there is no restriction on the way that members of the underlying family of disks may intersect each other. 
Accordingly, we define a {\em web} to be an arbitrary union of subvarieties of some given open set.

Heuristically, one should not expect $T^-$ to be a laminar current. That is, as we explore further in \cite{part3}, the disks appearing in the woven structure of $T^-$ should be (pieces of) unstable manifolds corresponding to some invariant measure.  It is well known that for noninvertible mappings, there is no well-defined notion of unstable manifold of a point. Rather, through any point $p$ there is an unstable manifolds for each history of $p$ (i.e. each infinite backward orbit starting at $p$). So in the absence of special circumstances, $\lambda_2>1$ should imply the existence of an infinite ``bouquet'' of unstable manifolds through almost every point.

\begin{thm}\label{thm:woven}
Assume $X$ is projective.
Then  $T^-$ is a  strongly approximable woven current.
\end{thm}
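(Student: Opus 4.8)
The plan is to realize $T^-$ as a normalized limit of the curves $C_n\eqdef f^n(L)$, for $L$ a generic hyperplane section of $X$, and then to invoke the criterion of Dinh \cite{Dinh} which produces a strongly approximable woven current out of any sequence of curves whose area and geometric genus grow no faster than $\lambda_1^n$. This runs parallel to the proof of Theorem \ref{thm:lamin}: as there, the passage from pencils of lines on a rational surface to pencils of hyperplane sections on a general projective $X$ is the device from \cite{Ca}. The essential new observation is that, because a web imposes no condition on how its member subvarieties meet one another, we need \emph{only} a bound on the genus of $C_n$ and not on the number of local branches at its singular points; this is precisely what makes $T^-$ merely woven rather than laminar, since $C_n$, being the image of a curve under a $\lambda_2^n$-to-one map, typically carries on the order of $\lambda_1^{2n}$ self-intersection points.

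First I would fix $L$ generic, in the sense of \cite{Du1} together with Corollary \ref{cor:cv-} (exactly as in the choices made in the proof of Theorem \ref{thm:lamin}): so that $\lambda_1^{-n}(f^n)_*L\to cT^-$ for some $c>0$, that $L$ avoids the sets $I_{f^n}$ and the centers of the point blowups entering the desingularized graph of $f^n$ for every $n$, and that each $f^n|_L$ is generically injective -- the latter holding for generic $L$ because the pairs $(x,x')\in L\times L$, $x\neq x'$, with $f^n(x)=f^n(x')$ are finite in number. With such an $L$ one has $(f^n)_*L=C_n$ as a reduced irreducible curve, so $\lambda_1^{-n}[C_n]\to cT^-$; in particular $\Area(C_n)=\pair{[C_n]}{\omega_X}=O(\lambda_1^n)$.

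Next comes the genus estimate. Let $\Gamma^{(n)}$ be the desingularized graph of $f^n$, with projections $p_1,p_2\colon\Gamma^{(n)}\to X$, recalling that $p_1$ is a composition of point blowups. Since $L$ misses the centers of these blowups, its total transform coincides with its strict transform $\widehat L_n$, and since the strict transform of a smooth curve under a point blowup is again smooth and isomorphic to it, $\widehat L_n\cong L$ is smooth irreducible of genus $g(L)$. As $p_2$ maps $\widehat L_n$ onto $C_n$, passing to normalizations gives a finite surjective morphism $\widehat L_n\to\widetilde C_n$ of smooth projective curves; Riemann--Hurwitz then yields $\mathrm{genus}(C_n)=g(\widetilde C_n)\leq g(\widehat L_n)=g(L)$. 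Thus $\mathrm{genus}(C_n)$ is in fact bounded independently of $n$, a fortiori $O(\lambda_1^n)$.

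With $\Area(C_n)=O(\lambda_1^n)$, $\mathrm{genus}(C_n)=O(\lambda_1^n)$, and $\lambda_1^{-n}[C_n]\to cT^-$, Dinh's woven-current criterion \cite{Dinh} (see also \cite{DTh1}) gives that $T^-$ is a strongly approximable woven current. The technical heart of the argument is the genus bound, but as just indicated it is actually easy here; so the point requiring the most care is the genericity bookkeeping for $L$ -- ensuring simultaneously, for all $n$, that $(f^n)_*L$ is the reduced irreducible curve $f^n(L)$ and that $\lambda_1^{-n}[C_n]$ converges to a positive multiple of $T^-$ -- together with checking that the $C_n$ fall precisely under the hypotheses of Dinh's woven analogue of the strong approximability results of \cite{Du1}.
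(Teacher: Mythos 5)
Your proposal follows the same route as the paper's proof: push forward a generic hyperplane section $L$, show that $\mathrm{genus}(f^n(L))$ stays bounded (a fortiori $O(\lambda_1^n)$), and feed this together with Corollary \ref{cor:cv-} into Dinh's woven analogue of the strong approximability criterion of \cite{Du1} (cf.\ \cite{Du6}). Your Riemann--Hurwitz argument for the genus bound is a mild variant of the paper's, which instead observes that once $f^n|_L$ has generic degree $1$ the curve $L$ is itself a resolution of singularities of $f^n(L)$, so the geometric genus equals $g(L)$ exactly; your version has the minor advantage of not needing the degree to be $1$ for the genus estimate itself.

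The one place where your write-up has a real hole is the generic injectivity of $f^n|_L$, which you do rely on in the form you state it (to conclude that $(f^n)_*L$ is the \emph{reduced} curve $C_n$, hence that $\lambda_1^{-n}[C_n]\to cT^-$ rather than a limit with $n$-dependent multiplicities). Your justification --- that for generic $L$ the pairs $(x,x')\in L\times L$, $x\neq x'$, with $f^n(x)=f^n(x')$ are finite in number --- is precisely the statement to be proved, not a reason for it: when $\lambda_2>1$ the coincidence locus $\overline{\{(x,x'):x\neq x',\ f^n(x)=f^n(x')\}}$ is a surface in $X\times X$, $L\times L$ is also a surface, and the relevant slice of the coincidence locus moves with $L$, so finiteness of the intersection is not a formal consequence of genericity. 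The paper supplies the missing step by a Fubini argument on the incidence variety $\{(p,L):p\in L\}$: for a.e.\ $p$ the set $f^{-n}(f^n(p))\setminus\{p\}$ is finite, hence avoided by a.e.\ hyperplane section through $p$ for every $n$; integrating over $p$ then gives that for a.e.\ $L$ and a.e.\ $p\in L$ one has $f^{-n}(f^n(p))\cap L=\{p\}$, which is exactly generic injectivity of every $f^n|_L$. With that step inserted, your proof is complete and coincides with the paper's.
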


\begin{proof} The result will follow from the following general criterion \cite{Dinh}.
 Let $C_n$ be a sequence of curves on a projective manifold, such that 
${\rm genus}(C_n)=O(\deg (C_n))$, where ${\rm genus}$ denotes the geometric genus. 
Then any cluster value
of the sequence $(\deg(C_n))^{-1}[C_n]$ is a woven current. The proof is just 
rewriting  the criterion  of \cite{Du1} by replacing ``laminar'' by ``woven'' 
everywhere  (see  \cite[Proposition 5.8]{Du6} for more details on this approach, 
and also \cite{Dinh}), and projecting along 
linear pencils. 

\medskip

From Corollary \ref{cor:cv-},  we have that $\lambda_1^{-n}f^n_*L\rightarrow cT^-$ for almost any hyperplane section $L\subset X$.  Hence for a.e. $p\in X$, the convergence holds for a.e. $L\ni p$.  Choose such a generic $p$.

For each $n\geq 0$, among the hyperplane sections 
through $p$, only finitely many of them meet $f^{-n}(f^n(p))\setminus\set{p}$. 
Thus we get that for a generic $L$ through $p$, and every $n\geq 0$, 
$f^n\rest{L}:L\rightarrow f^n(L)$ has generic degree 1. In particular, $f^n\rest{L}$ is a 
resolution of singularities of $f^n(L)$, so the geometric genus of $f^n(L)$ is constant. Also 
$f^n_*L$ is reduced and irreducible, ie. $f^n_*L = f^n(L)$. From this it follows that $T^-$ 
is woven.
\end{proof}

\section{Rational and irrational examples}\label{sec:examples}

\subsection{Maps on rational surfacees}
Self-maps with small topological degree are abundant on rational surfaces.
The `Cremona group' of birational maps of  $\cp^2$ is itself enormous.   One can get non-invertible examples by composition $f=f_1\circ f_2\circ T$ where $f_1$ is a birational map with $\lambda_1(f_1)$ large, $f_2$ is a holomorphic map with $\lambda_2(f_2) > 1$ small, and $T$ is a suitably generic automorphism.  Indeed the
sets $I_f = (f_2\circ T)^{-1}(I_{f_1})$ and $I_f^- = I_{f_1}^-$ are finite, so for $T$ outside a countable union of subvarieties in $\text{Aut}(\cp^2)$, one has $f^n(I_f^-)\cap I_f = \emptyset$ for all $n\in\N$.  Hence $f$ is $1$-stable and 
$$
\lambda_1(f) = \lambda_1(f_1)\lambda_1(f_2) > \lambda_1(f_2)^2  = \lambda_2(f_2) = \lambda_2(f).
$$

Beyond these generic examples, we have some specific maps of particular interest.

\subsubsection{Polynomial maps of $\C^2$}
Any polynomial map $f:\C^2\self$ can, by extension, be regarded as a meromorphic self-map on $\cp^2$.  As the following example shows, some of these can be seen explicitly to be $1$-stable and of small topological degree.

Let $f:\cp^2\self$ be the map given on $\C^2 = \{[x:y:1]\in\cp^2\}$ by $f(x,y) = (y,
 Q(x,y))$, where $Q$ is a degree $d>1$ polynomial such that the coefficient of $y^d$ is non-zero whereas that of $x^d$ vanishes. It is clear that $I_{f}=[1:0:0]$ and $f(L_\infty \setminus I_f ) = [0:1:0]$ which is fixed. Hence (see the remark following Definition \ref{as}) we see that $f$ is $1$-stable.  The pullback of a  non-vertical line $L$ is a curve of degree $d$  so   $\lambda_1(f) = d$. On the other hand it is clear that the topological degree $\lambda_2(f)$ is $d_x$, where $d_x<d$ is the highest power of $x$ appearing in $Q$.  Thus $f$ has small topological degree.
\medskip

In a much deeper fashion, Favre and Jonsson \cite{FJ2} have recently shown that if $\lambda_1(f) > \lambda_2(f)^2$ there always exists a modification $\pi:X\to\cp^2$ with $X$ smooth and $\pi(\cE_\pi) \cap\C^2 = \emptyset$, such that $V_\infty = X\setminus \pi^{-1}(\C^2)$ is mapped by $f^k$ to a single point $p\in V_\infty\setminus I_{f^k}$.  Thus $f^{k+n}(I^-_{f^k})\cap V_\infty = \{p\}$ for every $n\geq 0$, and since $I_{f^k} \subset V_\infty$, it follows that $f^k$ is $1$-stable.  More precisely, one has $(f^{k+n})^* = (f^*)^n (f^k)^*$ for every $n\geq 0$,
so that the image of $(f^k)^*$ is contained in an $f^*$ invariant subspace $H \subset H^{1,1}(X,\R)$ on which one has ordinary $1$-stability $(f^*)^n = (f^n)^*$.  If, moreover, $\alpha^+ = \lambda_1^{-k} (f^k)^*\alpha^+$ is the invariant nef class for $(f^k)^*$, then we have automatically that $\alpha\in H$.  In particular, $f^*\alpha^+/\lambda_1 \in H$ is another $(f^k)^*$ invariant nef class.  By uniqueness, we infer $f^*\alpha^+ = \lambda_1\alpha^+$; i.e. $\alpha^+$ is invariant under pullback by a \emph{single} iterate of $f$.  From here our construction of the $f^*$ invariant current $T^+$ goes through as above, and one sees easily that this is the same as the invariant current constructed for $f^k$.  Assuming now that $f$ has small topological degree, Theorem \ref{thm:cvkahlerpullb} applies to $T^+$ (as least if we substitute $f^k$ for $f$).  Consequently, $T^+$ is laminar.  After applying similar consideration to pushforwards by $f$ and by $f^k$, we end up with a woven current $T^-$ invariant under pushforward by (a single iterate of) $f$.  In summary, thanks to the stability result of \cite{FJ2} all the key results (i.e. the ones needed in the sequels \cite{part2, part3} of this paper) apply to any polynomial mapping $f:\C^2\to\C^2$ once we choose a good compactification of $\C^2$.  

In fact, Favre and Jonsson further show that when a polynomial has small topological degree, one may construct a continuous affine potential $g^+(x,y)$ for the invariant current $T^+ = T^+|_{\C^2}$.  Hence we can always define the probability measure $\mu = T^+\wedge T^-$.  Nevertheless it is difficult in general to control the potential of $T^-$, so studying this measure 
(even showing that it is invariant) is problematic. We will solve this problem with ideas developed in \cite{part2}. 

Adding some assumptions on $f$ can make the situation much easier: if the line at infinity is repelling in some sense, $T^+\wedge T^-$ has compact support in $\C^2$ and its ergodic properties are studied in \cite{G4}. If $f$ is merely proper, then, with notation as in Section \ref{sec:T-}, the function $\gamma^-$ is locally bounded outside the superattracting point $p$, from which we conclude that $T^-$ has locally bounded potential.   Thus $T^+\wedge T^-$  does not charge the indeterminacy set, and can easily be proved to be invariant and mixing. It can also be proved using the techniques of \cite{ Du5, Du4} that the wedge product is geometric, so in this case the reader can directly jump to  \cite{part3} for the finer  dynamical  properties of $\mu$.

\subsubsection{The secant method}

Two term recurrences based on rational  functions also furnish interesting examples.  An entertaining instance of this is the so-called `secant method' applied to find roots of a polynomial $P:\C\to\C$, with $d = \deg P > 1$: one begins with two guesses $x,y\in \C$ at a root of $P$ and seeks to improve these guesses by finding the unique point $(R(x,y),0)$ (specifically, $R(x,y)=\frac{yP(x)-xP(y)}{P(x)-P(y)}$) on the line through $(x,P(x))$ and $(y,P(y))$.  This gives a rational map $f:\C^2\to\C^2$ of the form $(x,y)\mapsto (y,R(x,y))$ which may then be iterated with the hope of converging to $(z,z)$ for some root $z$ of $P$.  Extending $f$ to $\cp^2$ gives a map for which $[0:1:0] \in I_f\cap I_f^-$, implying that $f$ is not 1-stable.  The extension to a meromorphic map $f:\cp^1\times \cp^1\self$, 
on the other hand, turns out to be $1$-stable as long as $P$ has no repeated roots.  To see this, one finds easily that
the irreducible components of $\cE_f$ are lines $\{y=z\}$ where $P(z)=0$, which map to points $(z,z)$ that are fixed (and not indeterminate) for $f$.  Therefore $I_{f^n}\cap I_f^- = \emptyset$ for every $n\in\N$.

The topological degree $\lambda_2(f)$ is the degree of $R(x,y)$ as a rational function of $x$.  This is actually equal to $d-1$, since the given formula for $R$ has a factor of $x-y$ in both numerator and denominator.
In particular, $f$ is not invertible as soon as $d\geq 3$.
The vector space $H^{1,1}_\R(X)$ is two dimensional, generated by the fundamental classes of generic vertical and horizontal lines $\{y = C^{st}\}$ and $\{x=C^{st}\}$.  These pull back to a vertical line and a curve of `bidegree' $(d-1,d-1)$, respectively.  Therefore, $\lambda_1(f) = \frac12\left(d-1 + \sqrt{(d-1)(d+3)}\right)$ is the largest eigenvalue of the matrix $\left[\begin{matrix} 0 & d-1 \\ 1 & d-1\end{matrix}\right]$.  Hence $f$ has small topological degree. 

\subsubsection{Blaschke products} 

A Blaschke product in two variables is a mapping of the form 
$$f(z,w) = \left( \theta_1 \prod_{i=1}^m \frac{z-a_i}{1-\overline a_i z} \prod_{i=1}^n \frac{w-b_i}{1-\overline b_i w},
 \theta_2 \prod_{i=1}^p \frac{z-c_i}{1-\overline c_i z} \prod_{i=1}^q \frac{w-d_i}{1-\overline d_i w}
 \right),$$ where $\abs{\theta_1}=\abs{\theta_2}=1$ and the complex numbers $a_1, \ldots, d_q$ have modulus less than 1. 
This class of mappings has been recently studied by Pujals and Roeder \cite{pujals-roeder}, who show in particular that 
$\lambda_1(f)$ is the spectral radius of the matrix $\left(\begin{smallmatrix} m&n \\p&q\end{smallmatrix}\right)$,
and exhibit families of Blaschke product with small topological degree.
 It is worth mentioning that this is done without constructing a 1-stable model. 
In particular it is unclear whether our results hold for these mappings.

\subsection{Maps on irrational surfaces}

Examples on irrational surfaces are much less common.  We begin by narrowing down the possibilities, showing first that maps with small topological degree do not preserve fibrations.

\begin{lem} 
\label{fibration}
Suppose that $f$ preserves a fibration $\pi:X\dashrightarrow B$ over a compact Riemann surface $B$.  
Then 
$$
\lambda_1(f) = \max\{d_g,d\} \leq d\cdot d_g = \lambda_2(f).
$$ 
where $d_g$ is the degree of the induced map $g:B\to B$ and $d$ is degree of the restriction $f:F\to f(F)$ to a generic fiber of $\pi$.
\end{lem}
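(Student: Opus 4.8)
The plan is to obtain $\lambda_1(f)=\max\{d,d_g\}$ by exhibiting $d$ and $d_g$ as eigenvalues of $f^*$ (equivalently of $f_*$) carried by the class of a fiber, and by bounding every \emph{other} eigenvalue of $f^*$ by $\sqrt{\lambda_2}$; the inequality $\max\{d,d_g\}\le d\,d_g=\lambda_2$ is then automatic since $d,d_g\ge 1$. After a bimeromorphic modification, which changes none of $\lambda_1,\lambda_2,d,d_g$, we may assume $\pi\colon X\to B$ is a morphism; let $F\in\HR(X)$ be the class of a generic fiber, so $F\ne 0$, $F^2=0$, and $F$ is nef (indeed effective).

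The topological degree is a direct count: a generic point $x\in X$ lies over a generic $b=\pi(x)\in B$, so $g^{-1}(b)$ consists of $d_g$ reduced points, each fiber $\pi^{-1}(b')$ with $g(b')=b$ is disjoint from $I_f$ and maps onto $\pi^{-1}(b)$ by a degree-$d$ map which, for $x$ generic, has $d$ simple preimages of $x$; hence $\lambda_2(f)=d\,d_g$. Next, using Proposition~\ref{curves} together with the fact that a generic fiber avoids $I_f$ and the curves of $\cE_f$, one reads off $f^*F=d_g F$ and $f_*F=d\,F$ in cohomology, and running the same computation with $g^n$ in place of $g$ (still generically unramified) gives, with no $1$-stability hypothesis, $(f^n)^*F=d_g^{\,n}F$ and $(f^n)_*F=d^{\,n}F$. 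Since $\omega_X$ lies in the interior of $\nef(X)$ we have $C\omega_X-F\in\nef(X)$ for $C$ large; as $(f^n)^*$ and $(f^n)_*$ preserve $\nef(X)$ and nef classes pair nonnegatively, pairing $(f^n)^*(C\omega_X-F)$ and $(f^n)_*(C\omega_X-F)$ with $\omega_X$ yields $\langle(f^n)^*\omega_X,\omega_X\rangle\ge C^{-1}d_g^{\,n}\langle F,\omega_X\rangle$ and $\langle\omega_X,(f^n)_*\omega_X\rangle\ge C^{-1}d^{\,n}\langle F,\omega_X\rangle$. As $\lambda_1(f)=\lim_n\langle(f^n)^*\omega_X,\omega_X\rangle^{1/n}=\lim_n\langle\omega_X,(f^n)_*\omega_X\rangle^{1/n}$ by adjointness and $\langle F,\omega_X\rangle>0$, we conclude $\lambda_1(f)\ge\max\{d,d_g\}$.

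For the reverse inequality I would analyze the structure of $f^*$ on $\HR(X)$. Because $f_*F=d\,F$, the hyperplane $F^\perp$ is $f^*$-invariant; it contains $F$, and by the Hodge index theorem the intersection form $\langle\cdot,\cdot\rangle$ is negative definite on $\bar V:=F^\perp/\R F$. Choosing $\Sigma$ with $\langle\Sigma,F\rangle=1$ splits $\HR(X)=\R\Sigma\oplus F^\perp$, and in this splitting $f^*$ is block triangular with diagonal blocks the scalar $d$ (since $\langle f^*\Sigma,F\rangle=\langle\Sigma,f_*F\rangle=d$) and $A:=f^*|_{F^\perp}$; in turn $A$ is block triangular with diagonal blocks the scalar $d_g$ (on $\R F$) and the induced operator $\bar A$ on $\bar V$. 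Hence the eigenvalues of $f^*$ are $d$, $d_g$, and those of $\bar A$. To bound $\bar A$, observe that for $p\in I_f$ the image $f(p)$ is a curve sent by $\pi$ to a single point, so $\cE_f^-=f(I_f)$ is a union of fibers; therefore in the push--pull formula $f_*f^*\alpha=\lambda_2\alpha+\sum_i\langle\alpha,F_i\rangle F_i$ of Proposition~\ref{pushpull2} the effective divisors $F_i$ all satisfy $\langle F_i,F\rangle=0$. Consequently the quadratic form $\alpha\mapsto\langle f^*\alpha,f^*\alpha\rangle=\langle\alpha,f_*f^*\alpha\rangle=\lambda_2\langle\alpha,\alpha\rangle+\sum_i\langle\alpha,F_i\rangle^2$ descends to $\bar V$, where it reads $\|\bar A\bar\alpha\|^2\le\lambda_2\|\bar\alpha\|^2$ for the Euclidean norm $\|\cdot\|^2:=-\langle\cdot,\cdot\rangle$. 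Thus the spectral radius of $\bar A$ is at most $\sqrt{\lambda_2}=\sqrt{d\,d_g}\le\max\{d,d_g\}$, so $r_1(f)=\max\{d,d_g\}$ and therefore $\lambda_1(f)\le r_1(f)=\max\{d,d_g\}$.

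The main obstacle is the third paragraph: one must identify the $f^*$-invariant flag $\R F\subset F^\perp\subset\HR(X)$ attached to the fibration and then check the genuinely delicate point that the correction divisors $F_i$ in the push--pull formula are \emph{vertical} (contained in fibers), since this is exactly what allows the Hodge-index estimate to descend to the negative-definite quotient $\bar V$ and forces the spectral radius of $\bar A$ to be at most $\sqrt{\lambda_2}$. The two lower bounds and the computation $\lambda_2=d\,d_g$ are routine degree counts once the fiber class is tracked.
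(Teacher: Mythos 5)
Your proof is correct, but it follows a genuinely different route from the paper's. The paper's argument is a short dichotomy: since $F^2=0$ and $\alpha^+$ is nef, the Hodge index theorem forces either $\alpha^+\in\R F$ or $\pair{\alpha^+}{F}>0$; in the first case $(f^n)^*\alpha^+=d_g^n\alpha^+$ gives $\lambda_1=d_g$, and in the second case pairing $(f^n)^*\alpha^+$ with $F$ (which misses $I_{f^n}$) gives $\pair{(f^n)^*\alpha^+}{F}=d^n\pair{\alpha^+}{F}$, hence $\lambda_1=d$; the count $\lambda_2=d\cdot d_g$ is then done exactly as you do it. So the paper leans on the existence of the invariant nef class $\alpha^+$ and extracts only the dominant eigenvalue, whereas you dispense with $\alpha^+$ entirely and instead exhibit the full invariant flag $\R F\subset F^\perp\subset\HR(X)$, identify the diagonal blocks $d$ and $d_g$, and control the remaining block on $F^\perp/\R F$ by $\sqrt{\lambda_2}$ using the push--pull formula together with the key geometric observation that $\cE^-_f$ is vertical (which is also what makes the form descend to the negative-definite quotient, since it gives $\pair{f^*\alpha}{f^*F}=0$ for $\alpha\in F^\perp$). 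Your version costs more work but buys more: it gives $\lambda_1=r_1=\max\{d,d_g\}$ with an explicit two-sided argument and a complete spectral picture, without invoking Theorem \ref{spectral}, and it makes precise the equality with the maximum, which the paper's two cases only deliver implicitly (each case yields one of $d,d_g$, and one must observe that it is necessarily the larger one). The only points worth writing out carefully in a final version are the ones you already flag: that generic fibers and their forward orbits avoid $I_{f^n}$ and $\cE_{f^n}$ for all $n$, so that $(f^n)^*F=d_g^nF$ and $(f^n)_*F=d^nF$ hold without any $1$-stability hypothesis.
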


\begin{proof}
The dynamical degrees are bimeromorphic invariants, so we may assume by blowing up points on $X$ that $\pi$ is holomorphic.  Since $F^2 = 0$ and $\alpha^+$ is nef, it follows from the Hodge index theorem that either $\alpha^+$ is a positive multiple of $F$ or $\pair{\alpha^+}{F} > 0$.

In the first case, we have $(f^n)^*\alpha^+ = d_g^n\alpha^+$ for all $n\in\N$.  Thus $\lambda_1(f) = d_g \leq \lambda_2(f)$.
In the second case, since $F$ is disjoint from $I_{f^n}$ for all $n\in\N$, we have
$$
\pair{(f^n)^*\alpha^+}{F} = \pair{\alpha^+}{(f^n)_*F} = \pair{\alpha^+}{(f_*)^nF} = d^n \pair{\alpha^+}{F}.
$$
Hence $\lambda_1(f) = d$.  Finally, for a generic fiber $F$ and generic $p\in F$, the inverse image $f^{-1}(F)$ has $d_g$ irreducible components and $f^{-1}(p)$ contains $d$ points in each.  Thus, $\lambda_2(f) = d\cdot d_g$. 
\end{proof}

Next we extend results of \cite{Ca, DF}.

\begin{thm}
Let $X$ be a compact K\"ahler surface and $f:X\self$ a meromorphic map of small topological degree.  Then either $X$ is rational or $\kod(X) = 0$.
\end{thm}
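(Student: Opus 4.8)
The plan is to run through the Enriques--Kodaira classification according to the value of $\kod(X)$. Since $\kod(X)$, rationality of $X$, and the dynamical degrees are all bimeromorphic invariants, I may replace $X$ by a minimal model and conjugate $f$ by the contraction to the minimal model (the conjugate is again a dominating meromorphic self-map with the same $\lambda_1,\lambda_2$), and so assume $X$ is minimal. Note that small topological degree forces $\lambda_1>\lambda_2\ge 1$, hence $\lambda_1>1$. The strategy is then: in every case other than ``$X$ rational'' and ``$\kod(X)=0$'' I will exhibit an $f$-invariant fibration and invoke Lemma~\ref{fibration} to conclude $\lambda_1\le\lambda_2$, a contradiction --- the sole exception being $\kod(X)=2$, which is handled separately.

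When $\kod(X)=2$, I would use rigidity of general type surfaces: a dominating meromorphic self-map of a surface of general type has topological degree $1$, hence is bimeromorphic, and a bimeromorphic self-map of a minimal surface of general type is an automorphism of finite order, so $\lambda_1=1$; this contradicts $\lambda_1>1$. (One may instead simply cite the relevant arguments of \cite{Ca,DF}.)

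When $\kod(X)=1$, the Iitaka fibration $\pi:X\to B$ onto a curve is canonically attached to $X$ (it is given by $|mK_X|$ for suitable $m$), so the dominating map $f$ induces a rational --- hence holomorphic --- self-map $g:B\to B$ with $\pi\circ f=g\circ\pi$, i.e. $f$ preserves $\pi$. Lemma~\ref{fibration} then gives $\lambda_1=\max\{d_g,d\}\le d\cdot d_g=\lambda_2$, contradicting small topological degree. When $\kod(X)=-\infty$, the classification says $X$ is either rational --- in which case we are done --- or ruled over a curve $B$ with $g(B)\ge 1$. In the latter case the Albanese map $a\colon X\to\mathrm{Alb}(X)$ has one-dimensional image; since the fibers of the ruling are rational, $a$ factors as $X\xrightarrow{\ \pi\ }B\hookrightarrow\mathrm{Alb}(X)$ with $\pi$ the ruling, so $\pi$ is recovered from $a$ by Stein factorization. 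Functoriality of the Albanese construction then produces $g:B\to B$ with $\pi\circ f=g\circ\pi$, so once more $f$ preserves a fibration and Lemma~\ref{fibration} gives a contradiction.

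As $\kod(X)\in\{-\infty,0,1,2\}$, the only surviving cases are that $X$ is rational or $\kod(X)=0$, which is the assertion. The delicate points are the two fibration arguments: one must verify that the intrinsically defined fibration --- the Iitaka fibration when $\kod(X)=1$, the ruling realized via the Albanese map when $\kod(X)=-\infty$ --- really is invariant under the merely \emph{meromorphic} map $f$, which is exactly where functoriality of these canonical constructions enters. The general-type case is the other place where extra input (rigidity of dominant self-maps, rather than Lemma~\ref{fibration}) is needed, and I expect it to be the least routine part to pin down precisely.
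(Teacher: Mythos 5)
Your proof is correct and follows essentially the same route as the paper: invariant fibrations (Iitaka, respectively Albanese/ruling) plus Lemma \ref{fibration} for $\kod(X)=1$ and irrational $\kod(X)=-\infty$, and pluricanonical rigidity for $\kod(X)=2$. The only cosmetic difference is in the general type case, where the paper concludes $\lambda_1=\lambda_2=1$ by noting that $f$ is bimeromorphically conjugate to the restriction of a projective-linear map on $\Phi_m(X)$, whereas you invoke the equivalent finiteness of the birational automorphism group of a minimal surface of general type.
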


\begin{proof}
Recall that the Kodaira dimension $\kod(X)$ of $X$ is the dimension, for large $m$, of the image $\Phi_m(X)\subset \cp^{N_m}$ of $X$ under the `pluricanonical map' $\Phi_m:X\to\cp^{N_m}$ determined by sections of the $m$th power of the canonical bundle $K_X$ on $X$.  Necessarily, $f$ preserves the fibers of $\Phi_m$.   Indeed, if $s$ is a holomorphic section of $K_X^m$, then $f^*s$ is a meromorphic section of the same bundle, holomorphic away from $I_f$.  By Hartog's Theorem, it follows that $f^*s$ is holomorphic on all of $X$.

So if $\kod(X) =1$, it follows immediately from Lemma \ref{fibration} that $X$ does not support maps of small topological degree.  If $\kod(X) = -\infty$ and $X$ is irrational, then we can \cite[page 244]{BPV} apply a bimeromorphic transformation to assume that $X = \cp^1\times B$ for some compact curve $B$ with positive genus.  In this case the projection of $X$ onto $B$ is the Albanese fibration 
\cite[page 46]{BPV}, which must also be preserved by $f$.  Lemma \ref{fibration} again implies that $f$ cannot have small topological degree.

Suppose finally that $\kod(X) = 2$. In this case, $f$ induces a linear map $f^*:H^0(X,K_X^m) \to H^0(X,K_X^m)$ for all $m$, and for
$m>>0$, the restriction of (the projectivization of) this map to the image $\Phi_m(X)$ is bimeromorphically conjugate to $f$.  Thus 
$\lambda_2(f) = \lambda_1(f) = 1$.
\end{proof}

Since $\lambda_1(f)$ and $\lambda_2(f)$ are invariant under birational conjugacy, the next result combined with the previous one, allows us to limit attention to \emph{minimal} irrational surfaces.

\begin{prop} If $f:X\to X$ is a meromorphic and $X$ is a minimal surface with $\kod(X) = 0$, then $f$ is 1-stable.
\end{prop}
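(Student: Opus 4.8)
The plan is to prove the stronger statement that $f$ contracts no curve at all, i.e.\ that $\cE_f=\emptyset$. Since $I_f^-=f(\cE_f)$ by definition, this gives $I_f^-=\emptyset$, hence $I_{f^n}\cap I_f^-=\emptyset$ for every $n\in\N$, and $1$-stability then follows immediately from the criterion recorded just after Definition \ref{as}. So everything reduces to showing $\cE_f=\emptyset$.

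I would obtain this from the canonical bundle. Because $X$ is a \emph{minimal} surface with $\kod(X)=0$, the Enriques--Kodaira classification tells us that $X$ is a complex torus, a K3, an Enriques, or a bielliptic surface; in each of these cases $K_X$ is torsion in $\mathrm{Pic}(X)$, so its class in $\HR(X)$ vanishes. Now pass to the desingularized graph $\Gamma$, with its projections $\pi_1$ (a modification, hence a composition of point blow-ups) and $\pi_2$ (generically finite, of degree $\lambda_2$). Writing $K_\Gamma=\pi_1^*K_X+K_{\Gamma/X}$ with $K_{\Gamma/X}\geq 0$ supported on the $\pi_1$-exceptional locus, and $K_\Gamma=\pi_2^*K_X+R_{\pi_2}$ with $R_{\pi_2}\geq 0$ the ramification divisor of $\pi_2$, one computes
$$
f^*K_X \;=\; \pi_{1*}\pi_2^*K_X \;=\; \pi_{1*}\bigl(K_\Gamma-R_{\pi_2}\bigr)\;=\;K_X-R,\qquad R\eqdef\pi_{1*}R_{\pi_2}\geq 0,
$$
using $\pi_{1*}\pi_1^*=\id$ and that $\pi_{1*}$ kills exceptional divisors. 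Passing to classes in $\HR(X)$ and using $[K_X]=0$ (hence also $f^*[K_X]=0$) gives $[R]=0$; but an effective divisor whose class has zero intersection with a K\"ahler form must be the zero divisor, so $R=0$.

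Finally, $R=0$ forces $\cE_f=\emptyset$: if some irreducible curve $C$ had $f(C)$ a point, its strict transform $\widehat C\subset\Gamma$ would be an irreducible curve contracted by $\pi_2$, and a contracted curve always appears with positive multiplicity in the ramification divisor (in suitable local coordinates the Jacobian determinant of $\pi_2$ is divisible by a local equation of $\widehat C$), so $R_{\pi_2}\geq m\,\widehat C$ for some $m\geq 1$; since $\widehat C$ dominates $C$ it is not $\pi_1$-exceptional, whence $R=\pi_{1*}R_{\pi_2}\geq m\,C>0$, contradicting $R=0$. The points needing the most care are the Riemann--Hurwitz bookkeeping for the merely generically finite map $\pi_2$ and the claim that contracted curves genuinely contribute to $R_{\pi_2}$; note also that the classification is invoked only to conclude $[K_X]=0$ in real cohomology (for the non-projective cases, K3 surfaces and complex tori, one in fact has $K_X\cong\mathcal{O}_X$). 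One could alternatively organize the proof by cases: tori and bielliptic surfaces contain no rational curves, so on them $f$ has empty indeterminacy locus and is trivially $1$-stable, while on K3 and Enriques surfaces the same canonical-bundle computation shows that $f$ is unramified in codimension one, so $\cE_f=\emptyset$ (and in fact $\lambda_2=1$, so that $f$ is an automorphism).
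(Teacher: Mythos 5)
Your argument is correct and follows essentially the same route as the paper: the classification gives $12K_X=0$, the Hurwitz formula then forces the critical (ramification) divisor $K_X - f^*K_X$ to vanish, hence $\cE_f=\emptyset$, and $1$-stability follows from the standard criterion. You have simply spelled out the Riemann--Hurwitz bookkeeping on the graph $\Gamma$ and the fact that contracted curves contribute to the ramification divisor, details the paper leaves implicit.
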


\begin{proof}
It follows from the  classification of compact complex surfaces \cite[page 244]{BPV} that $12 K_X = 0$.  Therefore from Hurwitz formula, we find that the critical divisor $C_f = K_X - f^*K_X = 0$ vanishes.  In particular $\cE_f = \emptyset$, and according to the usual criterion \cite{FoSi} we have that $f$ is 1-stable.
\end{proof}

Surface classification tells us that a minimal surface $X$ with $\kod(X) =0$ is a torus, a K3 surface, or a finite quotient of one of these. The so-called `covering trick' implies  that a meromorphic map on the base surface is necessarily induced by a map on the cover (see e.g. \cite{CantatKummer}).  So we need look only at the case of tori
and K3 surfaces.

\subsubsection{Examples on Tori.}
If $X = \C^2 / \Lambda$ is a torus, then every meromorphic map $f:X\self$ is holomorphic, and more specifically, induced by an affine map $F(z) = Az + v$ of $\C^2$ for which the lattice $\Lambda$ is forward invariant.  Since $f^*(dz_1\wedge dz_2) = (\det A)\, dz_1\wedge dz_2$, we have that $\lambda_2(f) = |\det A|^2$.  The closed $(1,1)$ forms on $X$ are generated by wedge products $dz_i\wedge d\bar z_j$, $1\leq i,j\leq 2$ of (global) $(1,0)$ and $(0,1)$ forms.   Hence $\lambda_1(f) = |r_1(A)|^2$ is the square of magnitude of the spectral radius of $A$.  For generic lattices $\Lambda$, the only possibilities for $A$ are $k\cdot I$ for some $k\in\Z$, and for $f$ arising from such $A$, one therefore has $\lambda_1(f) = k^2 \leq k^4 = \lambda_2(f)$.

Therefore tori which admit meromorphic maps with small topological degree are rare.  E. Ghys and A. Verjovsky \cite{GV} have classified the examples with $\lambda_1(f) > \lambda_2(f) = 1$.  Here we provide an elementary non-invertible example.

\begin{eg}
Let $\Lambda = \Z[i]\times \Z[i]$.  Consider $A = \left[\begin{matrix} 0 & 1 \\ 2 & d\end{matrix}\right]$, where $d\geq 2$.  Then
$A\Lambda \subset\Lambda$ and the map $f$ induced by $z\mapsto Az$ satisfies 
$$
\lambda_1(f) = \left(\frac{d+\sqrt{d^2 + 8}}{2}\right)^2 > 4 = \lambda_2(f).
$$
\end{eg}

Such examples are  Anosov, and Lebesgue measure $\mu_f$ is the unique invariant measure of maximal entropy.  Since $\det Df = A$ is constant, the Lyapunov exponents with respect to $\mu_f$ satisfy
$$
\chi^+(\mu_f) = \frac12\log\lambda_1(f) \text{ and } \chi^-(\mu_f) = -\frac12\log \lambda_1(f)/\lambda_2(f).
$$
We will show in \cite{part3} that given $\lambda_1$ and $\lambda_2$, these exponents are as small as generally possible.

\medskip

\subsubsection{Examples on K3 surfaces.}
A bimeromorphic self-map of a K3 surface is automatically an automorphism, because the absence of exceptional curves for $f^{-1}$ implies that $f$ has no points of indeterminacy and vice versa.  Cantat \cite{Ca} and \cite{Mc} have given several dynamically interesting examples of K3 automorphisms. 

On the other hand since K3 surfaces are simply connected, there are no non-invertible holomorphic maps of K3 surfaces with $\lambda_2\geq 2$.  There are nevertheless some meromorphic examples.  For instance, if $g:\C^2/\Lambda\self$ is a meromorphic map of a torus satisfying $\lambda_1(g) > \lambda_2(g)$, then one obtains \cite[\S V.16]{BPV} a quotient K3 (i.e. Kummer) surface from $\C^2/\Lambda$ by identifying points $z\mapsto -z$ and desingularizing.  The map $g$ descends to a map $f:X\self$ with $\lambda_j(f) = \lambda_j(g)$, $j=1,2$.  Observe that $I_f$ is the set of points mapped by $f$ to one of the sixteen `nodal' curves that result from desingularizing and $f$ maps each nodal curve to another nodal curve.  In particular, one can verify that a given map of a Kummer surface does not similarly descend from a torus map by checking that the set of nodal curves is not forward invariant.

\begin{eg}
\label{K3}
 If $S$ is a Riemann surface of genus two, then the Jacobian of $S$ is a two-dimensional complex torus $A$.  We let $X$ be, as above, the associated Kummer surface.  Fixing any non-invertible map $g:A\self$ (e.g. multiplication by two), we have as before an induced non-invertible meromorphic map $h:X\self$.  J. Keum \cite{Ke} has shown that for generic $S$ there exist automorphisms $\psi:X\self$ that do not preserve the set of nodal curves. Composing with an automorphism coming from the torus if necessary, we can assume that $\lambda_1(\psi)>1$.
 Therefore, $f\eqdef \psi^p\circ h^q$ is a `non-toroidal' non-invertible map as soon as $p,q\geq 1$.  Clearly $\lambda_2(f) = \lambda_2(h)^q$ and $\lambda_1(f) \leq \lambda_1(\psi)^p\lambda_1(h)^q$.  

If $\lambda_2(h) > \lambda_1(h)$, it follows for $q>>p$ that $\lambda_2(f) > \lambda_1(f)$, too.  If $q<<p$, then we claim conversely
that $f$ has small topological degree\footnote{or rather, in this case, large 1st dynamical degree!}.  To see this, note that 
by the Hodge Index Theorem, if $\alpha$ and $\beta$ are nef classes such that $\alpha^2\geq 0$ and $\beta^2\geq 0$, we have that 
 $\pair{\alpha}{\beta}^2 \geq \alpha^2\beta^2$.  So taking advantage of the fact that pullback and pushforward preserve nef classes, we estimate
\begin{eqnarray*}
\left(\int f^{n*}\omega_X\wedge \omega_X\right)^2
            & = & \left(\int \psi^{pn*}\omega_X\wedge h^{qn}_*\omega_X\right)^2 
            \geq \int (\psi^{pn*}\omega_X)^2 \int (h^{qn}_*\omega_X)^2 \\
            & \geq & \int (\psi^{pn*}\omega_X)^2 \int h^{qn}_*(\omega_X^2) 
              \geq C\lambda_1(\psi)^{2np},
\end{eqnarray*}
for some $C>0$ and all $n\in\N$.  Taking $n$th roots and letting $n\to\infty$ proves that 
$\lambda_1(f) \geq \lambda_1(\psi)^p$.  Hence $\lambda_1(f)>\lambda_2(f)$ for $q$ fixed and $p$ large enough.
\end{eg}

\subsection{Further properties of maps on irrational surfaces.}

For the remainder of this section, we assume that $f:X\to X$ is a meromorphic map with small topological degree on a torus or a K3 surface $X$.
In both cases, there is a holomorphic two form $\eta$ on $X$ that is unique once it is scaled so that the associated volume form $\nu \eqdef i\eta\wedge\bar\eta$ has unit total volume.  We have

\begin{prop}
\label{2form}
We have $f^*\eta = t\eta$ where $|t|^2 = \lambda_2(f)$.  Hence $\nu$ is an invariant probability measure with constant Jacobian $\lambda_2(f)$.
\end{prop}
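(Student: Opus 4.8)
The plan rests on the triviality of the canonical bundle: for $X$ a complex torus or a K3 surface one has $h^{2,0}(X)=1$, so $H^0(X,K_X)=\C\,\eta$ is one-dimensional and $\eta$ is nowhere vanishing. The first step is to show that $f^*\eta$ again lies in $H^0(X,K_X)$. On $X\setminus I_f$ the map $f$ is holomorphic and, as recalled in Section \ref{sec:merom}, there $f^*\eta=\pi_{1*}\pi_2^*\eta$ agrees with the ordinary pullback of $\eta$ by the smooth map $f|_{X\setminus I_f}$; hence $f^*\eta$ is a holomorphic $2$-form on $X\setminus I_f$. Since $I_f$ is finite, Hartogs' extension theorem (the same mechanism used above in the argument that $f$ preserves pluricanonical fibrations) lets $f^*\eta$ extend holomorphically across $I_f$. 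Because $f$ is dominating, $f^*\eta\not\equiv0$, so by one-dimensionality there is a unique $t\in\C^{*}$ with $f^*\eta=t\,\eta$.

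Next I would pin down $|t|$ by computing $f^*\nu$ in two ways. On the one hand, since pullback of forms by the holomorphic map $f|_{X\setminus I_f}$ commutes with wedge products and with conjugation, one has there $f^*\nu=f^*(i\eta\wedge\bar\eta)=i\,(f^*\eta)\wedge\overline{(f^*\eta)}=|t|^2\,i\eta\wedge\bar\eta=|t|^2\nu$; since $f^*\nu$ is an $L^1$ top-degree form (in particular a measure without atoms) and $I_f$ is finite, this is an identity of measures on all of $X$. On the other hand $\int_X f^*\nu=\int_X\pi_{1*}\pi_2^*\nu=\int_\Gamma\pi_2^*\nu=\deg(\pi_2)\cdot\int_X\nu=\lambda_2(f)$, since $\pi_2:\Gamma\to X$ is a surjective holomorphic map of degree $\lambda_2(f)$. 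Comparing total masses gives $|t|^2=\lambda_2(f)$, hence $f^*\nu=\lambda_2(f)\,\nu$.

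The relation $f^*\nu=\lambda_2\,\nu$ is exactly the statement that $f$ has constant Jacobian $\lambda_2$ with respect to $\nu$. That $\nu$ is in addition invariant, $f_*\nu=\nu$, follows by applying $f_*$ to $\nu=\lambda_2^{-1}f^*\nu$ together with the identity $f_*f^*\nu=\lambda_2\,\nu$; the latter holds because $f_*f^*=\pi_{2*}(\pi_1^*\pi_{1*})\pi_2^*$ and $\pi_1^*\pi_{1*}$ fixes the smooth form $\pi_2^*\nu$ (the two agree off the curve $\cE_{\pi_1}$, which has measure zero), so $f_*f^*\nu=\pi_{2*}\pi_2^*\nu=\deg(\pi_2)\,\nu$. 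Alternatively one may simply invoke $f_*f^*=\lambda_2\cdot\mathrm{id}$ on $H^4(X,\R)$, noting that none of the measures occurring here charges a curve.

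I do not expect a serious obstacle here: the proposition is essentially formal once $K_X$ is known to be trivial. The only point requiring care is the bookkeeping along $I_f$ and along the exceptional curve of the graph --- that $f^*\eta$ extends \emph{holomorphically} rather than merely boundedly, that $f^*\nu$ has no atom over a point of $I_f$ so that $f^*\nu=|t|^2\nu$ is an equality of measures and not just of forms on a Zariski-open set, and that $\pi_1^*\pi_{1*}$ leaves $\pi_2^*\nu$ unchanged. Each of these is immediate from facts already recorded in Section \ref{sec:merom}: the relevant loci have measure zero, and pushforwards of smooth forms under modifications have $L^1$ coefficients. (On a torus $I_f=\emptyset$ and $f$ is honestly holomorphic, so the whole discussion simplifies there.)
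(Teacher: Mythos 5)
Your proof is correct; the paper states Proposition \ref{2form} without proof, and your argument (Hartogs extension of $f^*\eta$ across the finite set $I_f$, one-dimensionality of $H^{2,0}$ for tori and K3 surfaces, and the mass computation $\int_X f^*\nu=\deg(\pi_2)\int_X\nu=\lambda_2$) is exactly the standard one the authors clearly have in mind --- they use the same Hartogs mechanism for pluricanonical sections a few paragraphs earlier. The points you flag as needing care (no atoms over $I_f$, $f_*f^*=\lambda_2\,\mathrm{id}$ on measures not charging curves) are handled correctly, so nothing is missing.
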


Proposition \ref{2form} implies that $\norm{Df^n(x)}$ is unbounded as $n\to\infty$ at every point $x\in X$. 
Recall that the \emph{Fatou set} of a meromorphic map is the largest open set on which its iterates form a normal family.   

\begin{cor} 
If $\lambda_2(f) \geq 2$, then the Fatou set of $f$ is empty.  That is, there is no open set on which iterates of $f$ form a normal family.
\end{cor}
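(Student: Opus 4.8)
The plan is to argue by contradiction, playing the derivative growth recorded just after Proposition~\ref{2form} against the local boundedness of derivatives that a nonempty Fatou set would force. So suppose the Fatou set $\mathcal{F}$ of $f$ were nonempty. As an open set on which all iterates form a normal family, $\mathcal{F}$ avoids every indeterminacy set $I_{f^n}$ (the $f^n$ are not even defined there), so each $f^n$ restricts to a genuine holomorphic map on $\mathcal{F}$. Fix $x_0\in\mathcal{F}$ and open sets $x_0\in U\subset\subset U'\subset\mathcal{F}$ with $\overline{U}$ compact. By normality, after passing to a subsequence we may assume $f^{n_j}\to g$ uniformly on $\overline{U}$; since $X$ is a compact complex manifold, the limit $g\colon U'\to X$ is holomorphic and $X$-valued. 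As $X$ is covered by finitely many holomorphic coordinate charts and $f^{n_j}(\overline U)$ eventually lies in a fixed compact set, Cauchy's estimates upgrade the uniform convergence $f^{n_j}\to g$ to uniform convergence $Df^{n_j}\to Dg$ on $U$. In particular $\sup_j\norm{Df^{n_j}(x_0)}<\infty$, which contradicts the fact that $\norm{Df^n(x_0)}\to\infty$ as $n\to\infty$. Hence $\mathcal{F}=\emptyset$.

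For completeness I would recall why the derivatives blow up, as this is where the hypothesis $\lambda_2(f)\geq 2$ (equivalently $\lambda_2(f)>1$) enters. Iterating the identity $f^*\eta=t\eta$ of Proposition~\ref{2form} gives $(f^n)^*\eta=t^n\eta$ with $|t|=\sqrt{\lambda_2(f)}$. On a complex torus or a K3 surface the holomorphic $2$-form $\eta$ is nowhere vanishing, so $|\eta|$ measured against a fixed metric is bounded between two positive constants. Thus, at any $x$ lying off the critical loci of all the iterates $f^n$, comparing the two sides of $(f^n)^*\eta=t^n\eta$ in local coordinates yields $|\det Df^n(x)|\geq c\,\lambda_2(f)^{n/2}$ for a fixed $c>0$; and since $\norm{A}^2\geq|\det A|$ for any $\C$-linear endomorphism $A$ of $\C^2$, we get $\norm{Df^n(x)}\geq c^{1/2}\lambda_2(f)^{n/4}\to\infty$. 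Such $x$ are dense in $\mathcal{F}$, so for the argument above it suffices to choose $x_0$ among them; the statement for every $x\in X$ needs one further short argument which I would not reproduce.

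The proof is essentially routine once Proposition~\ref{2form} is in hand — the genuinely substantive input is that prior result and the constant-Jacobian mechanism behind it. The only points that deserve a little care are the meaning of a normal family of maps valued in the compact manifold $X$ (so that a convergent subsequence has a holomorphic, $X$-valued limit rather than one escaping to a boundary) and the elementary passage from locally uniform convergence of holomorphic maps to locally uniform convergence of their first derivatives. If one insists on establishing the derivative blow-up from scratch instead of quoting the remark following Proposition~\ref{2form}, the only mildly delicate issue is the behaviour at points lying on critical loci, which, as noted, is sidestepped by a generic choice of $x_0$.
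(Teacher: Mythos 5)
Your proof is correct and is essentially the argument the paper intends: the corollary is presented as an immediate consequence of the remark that $\norm{Df^n(x)}$ is unbounded, which the paper in turn attributes to the constant-Jacobian identity of the preceding proposition exactly as you derive it (nowhere-vanishing $\eta$, $(f^n)^*\eta=t^n\eta$ with $|t|^2=\lambda_2$, and $\norm{A}^2\geq|\det A|$). The normality-versus-derivative-growth contradiction, including the harmless reduction to a generic base point off the critical loci, is the paper's (implicit) reasoning.
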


Since $f$ has constant Jacobian with respect to the reference measure $\nu$ we get:

\begin{cor}
\label{lyaps}
If $\mu$ is any invariant probability measure on $X$ such that $\log\norm{Df}$ is $\mu$-integrable, then the 
Lyapunov exponents $\chi^-(\mu)\leq \chi^+(\mu)$ satisfy
$$
\chi^-(\mu) + \chi^+(\mu) = \frac12\log\lambda_2(f).
$$
\end{cor}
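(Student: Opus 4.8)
The plan is to combine Proposition \ref{2form} with the multiplicative ergodic theorem of Oseledec. On both a torus and a K3 surface the holomorphic two-form $\eta$ is nowhere vanishing, so $\nu = i\eta\wedge\bar\eta$ is a \emph{smooth positive} volume form on $X$, and Proposition \ref{2form} says exactly that $f$ has constant Jacobian relative to $\nu$, i.e.\ $f^*\nu = \lambda_2(f)\,\nu$ as volume forms. Hence the logarithm of the pointwise Jacobian of $f$ with respect to $\nu$ is the \emph{constant} function $\log\lambda_2(f)$, at every point where $f$ is a local diffeomorphism. Note that the hypothesis $\log\norm{Df}\in L^1(\mu)$ presupposes that $f$, and therefore every iterate $f^n$, is differentiable at $\mu$-a.e.\ point (so in particular $\mu$ gives no mass to the indeterminacy set and its preimages), which is what is needed to apply Oseledec's theorem to the derivative cocycle and to make $\log\abs{\det Df}$ integrable.

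First I would invoke the standard fact that the sum of the Lyapunov exponents of $(f,\mu)$, counted with their \emph{real} multiplicities, equals $\int_X \log\abs{\det Df}\,d\mu$ computed against any fixed smooth reference volume form. Since the logarithm of the ratio of two smooth volume forms is bounded and is a coboundary for the $\mu$-preserving transformation $f$, the value of this integral does not depend on the choice of smooth volume form. Evaluating it against $\nu$ and using that the Jacobian is constant, the sum of the real Lyapunov exponents of $(f,\mu)$ is $\log\lambda_2(f)$.

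Next I would use that $Df_x$ is complex linear: each Oseledec subspace is then a complex subspace, and a complex exponent of complex multiplicity $k$ is a real exponent of real multiplicity $2k$. On the complex surface $X$ this means the sum of the real Lyapunov exponents equals $2\bigl(\chi^+(\mu)+\chi^-(\mu)\bigr)$. Comparing with the previous step gives $2\bigl(\chi^+(\mu)+\chi^-(\mu)\bigr) = \log\lambda_2(f)$, which is the asserted identity. If $\mu$ is not ergodic, the same reasoning applies pointwise to the $\mu$-a.e.\ defined exponents $\chi^\pm(x)$, which satisfy $\chi^+(x)+\chi^-(x) = \tfrac12\log\lambda_2(f)$, and one integrates.

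The argument is essentially routine once Proposition \ref{2form} is available; the only place that requires care is keeping track of the factors of two. One must distinguish the complex Jacobian determinant of $Df_x$, whose modulus is governed by the scalar $t$ with $\abs{t}^2=\lambda_2(f)$, from the real Jacobian $\abs{\det_\R Df_x} = \abs{\det_\C Df_x}^2$ that actually enters the sum of the real Lyapunov exponents, and one must check that the sum of the \emph{complex} exponents counted once is precisely $\chi^+(\mu)+\chi^-(\mu)$. Apart from this bookkeeping there is no genuine obstacle.
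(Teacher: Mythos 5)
Your proof is correct and follows essentially the same route as the paper: Proposition \ref{2form} gives constant Jacobian $\lambda_2(f)$ with respect to the invariant volume $\nu$, Oseledec identifies $\lim\frac1n\log\det Df^n$ with the sum of the real Lyapunov exponents, and the complex linearity of $Df$ doubles each multiplicity to yield $2(\chi^++\chi^-)=\log\lambda_2(f)$. Your extra bookkeeping (real versus complex Jacobian, the non-ergodic case, integrability) is sound and simply makes explicit what the paper leaves implicit.
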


\begin{proof}
Proposition \ref{2form} implies that for every $x$ 
$$
\frac1n \log\det Df^n(x) \approx \log\lambda_2(f).
$$
Each Lyapunov exponent has real multiplicity $2$ for $f$, so the Oseledec theorem tells us that the left side of this 
inequality tends to $2\chi^+(\mu_f) + 2\chi^-(\mu_f)$ as $n\to\infty$.
\end{proof}

We explained earlier that one never has positive closed invariant currents $T = \lambda_1^{-1} f^* T$ that are more regular than $T^+$.
It is natural to wonder whether there are other $f^*$-invariant currents at all. This is an 
interesting and delicate problem in general (see \cite{FJ1, G2} and the references therein).
Thanks to the invariant holomorphic 2-form, the answer is straightforward in the present context.
Namely we have the following result:

\begin{thm}[see also Lemma 2.7 in \cite{CantatKummer}]
Assume $X$ is a minimal surface of Kodaira dimension zero.  Let $S$ be any 
positive closed (1,1)-current on $X$. Then
$$
\frac{1}{\lambda_1^n} f^{n*} S \rightarrow c T^+\text{, with } 
c=\set{S} \cdot \alpha^-.
$$
In particular $T^+$ is the only $f^*$-invariant current.
\end{thm}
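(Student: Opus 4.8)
The plan is to deduce this from Theorem \ref{thm:cvkahlerpullb} by controlling the potential of $S$ with the invariant volume form $\nu=i\eta\wedge\bar\eta$. Recall that $f$ is automatically $1$-stable here, so $f^{n*}$ agrees with $(f^*)^n$ on closed currents and $\norm[H^{1,1}]{f^{n*}\om_X}=O(\lambda_1^n)$; recall also from Proposition \ref{2form} that $\nu$ is a smooth, strictly positive, $f$-invariant probability measure, so that $f^n_*\nu=\nu$ and $\nu$ is comparable to $\om_X^2$. (For a general minimal surface with $\kod(X)=0$ one first reduces to the torus or K3 case by the covering trick of Section \ref{sec:examples}, or uses a power of the canonical bundle directly.)

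First I would write $S=\Theta+dd^c\varphi$, with $\Theta$ a smooth closed real $(1,1)$-form representing $\set{S}$ and $\varphi$ quasi-plurisubharmonic, normalized so that $\sup_X\varphi=0$; then $\varphi\le 0$, and since $\varphi\in L^1(\om_X^2)$ while $\nu$ has bounded density with respect to $\om_X^2$, the constant $M\eqdef\int_X(-\varphi)\,d\nu$ is finite. From the definition of current pullback,
$$
\frac{1}{\lambda_1^n}f^{n*}S=\frac{1}{\lambda_1^n}f^{n*}\Theta+dd^c\psi_n,\qquad \psi_n\eqdef\frac{1}{\lambda_1^n}\,\varphi\circ f^n\le 0,
$$
so it suffices to show that $dd^c\psi_n\to 0$ and that $\lambda_1^{-n}f^{n*}\Theta\to c\,T^+$ with $c=\pair{\set{S}}{\alpha^-}$. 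The latter is immediate from Theorem \ref{thm:cvkahlerpullb}, which applies to any smooth closed real $(1,1)$-form.

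For the former, the key point is that, since the indeterminacy set of $f^n$ is finite hence $\nu$-negligible and $\nu$ is $f$-invariant,
$$
\int_X(-\psi_n)\,d\nu=\frac{1}{\lambda_1^n}\int_X(-\varphi)\circ f^n\,d\nu=\frac{1}{\lambda_1^n}\int_X(-\varphi)\,d\nu=\frac{M}{\lambda_1^n}\longrightarrow 0,
$$
because $\lambda_1>\lambda_2\ge 1$. Since $\psi_n\le 0$ and $\nu$ is comparable to $\om_X^2$, this forces $\psi_n\to 0$ in $L^1(X)$, whence $dd^c\psi_n\to 0$ and $\lambda_1^{-n}f^{n*}S\to cT^+$. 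For the last assertion, if $S\ge 0$ is closed with $f^*S=\lambda_1S$, then $\set{S}$ is a nonnegative multiple of $\alpha^+$ by Theorem \ref{spectral}, and $1$-stability gives $f^{n*}S=\lambda_1^nS$; applying the convergence just proved yields $S=\pair{\set{S}}{\alpha^-}T^+$, a positive multiple of $T^+$ as soon as $S\neq 0$ (here $\pair{\alpha^+}{\alpha^-}=1>0$).

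The only genuinely delicate point is the middle equality in the last display, i.e. that $\int_X(-\varphi)\circ f^n\,d\nu=\int_X(-\varphi)\,d\nu$ for the lower-semicontinuous, $\nu$-integrable function $-\varphi$; I would obtain it by approximating $-\varphi$ from below by bounded continuous functions and invoking $f^n_*\nu=\nu$ with monotone convergence. Everything else is soft: the whole force of the statement is that a smooth invariant volume form replaces the subtle volume estimates of \cite{G2} needed for general $X$.
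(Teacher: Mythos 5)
Your proof is correct, but it takes a genuinely different route from the paper's. The paper reruns the whole two-index argument of Theorem \ref{thm:cvkahlerpullb} for the current $S$ itself, the only change being that the exponential volume estimate $\Vol f^k(A)\geq \exp(-D\lambda_1^k/\Vol A)$ from \cite{G2} is replaced by the monotonicity $\Vol f^k(A)\geq \Vol A$, which follows from $f_*\1_A\leq \lambda_2^n\1_{f^n(A)}$ paired against the invariant volume form; with that much stronger input the sublevel-set estimates for $\lambda_1^{-k}w_n\circ f^k$ become trivial, whence the paper's ``we conclude easily.'' You instead use Theorem \ref{thm:cvkahlerpullb} as a black box for the smooth representative $\Theta$ of $\set{S}$ (legitimate, by the remark following that theorem extending it to differences of K\"ahler forms) and dispose of the potential $\varphi$ in one line via $\int(-\varphi)\circ f^n\,d\nu=\int(-\varphi)\,d\nu$ and $\lambda_1>1$, exploiting $f^n_*\nu=\nu$ directly rather than through volume estimates of sublevel sets. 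Both arguments rest on the same structural fact (the invariant volume form with constant Jacobian, using $\cE_f=\emptyset$ so that pullbacks of potentials compose cleanly), but yours is shorter and bypasses the compactness bookkeeping entirely; it also makes transparent why no hypothesis on the singularities of $S$ is needed. Your parenthetical reduction from general minimal surfaces of Kodaira dimension zero to the torus/K3 case (or the use of a trivializing section of $K_X^{12}$) correctly addresses the mismatch between the theorem's stated hypothesis and the subsection's standing assumption, and your treatment of the change of variables for the lower-semicontinuous $-\varphi$ via monotone approximation is the right way to justify the only delicate equality.
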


\begin{proof} 
The proof is very similar to that of Theorem \ref{thm:cvkahlerpullb},
except that we compute volumes with respect to the invariant  volume form.
Observe that $f_* \1_A \leq \lambda_2(f) \1_{f(A)}$ for any Borel set $A$. Therefore
$
\mathrm{Vol}(f^n(A)) \geq \lambda_2 (f)^{-n} \langle f^n_* \1_{A},\nu \rangle
=\mathrm{Vol}(A).
$
With these very strong volume estimates at hand, we conclude easily.
\end{proof}

The forward invariant current also behaves better.

\begin{prop}\label{prop:kodairaT-}
If $\kod(X) =0$ and $f$ has small topological degree, then $T^-$ has continuous potentials.
\end{prop}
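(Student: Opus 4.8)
The plan is to rerun the construction of $T^-$ from Theorem~\ref{thm:cv-}, but tracking \emph{continuity} rather than mere $L^1$ control, and to exploit the fact that a minimal surface of Kodaira dimension zero forces $f$ to have empty exceptional locus. Recall (this is exactly what underlies the $1$-stability of $f$ in this setting) that $12K_X=0$, so the Hurwitz formula gives $C_f=K_X-f^*K_X=0$; hence $f$ contracts no curve. Since a map that contracts no curve sends curves to curves, the same conclusion applies to every iterate $f^n$ — itself a meromorphic self-map of the same surface — so $\cE_{f^n}=\emptyset$ and therefore $I^-_{f^n}=f^n(\cE_{f^n})=\emptyset$ for all $n\geq 1$. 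This is the crux: the one locus across which the quasipotential of $T^-$ could have been singular simply is not there.

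Concretely, fix a smooth representative $\theta^-$ of $\alpha^-$ and write $\lambda_1^{-1}f_*\theta^-=\theta^-+dd^c\gamma^-$ as in the proof of Theorem~\ref{thm:cv-}. By Lemma~\ref{lem:pushcts}, $\gamma^-$ is continuous on $X\setminus I^-_f=X$, hence bounded since $X$ is compact. The potential of $T^-$ is $g^-=\sum_{j\geq 0}\lambda_1^{-j}f^j_*\gamma^-$. Each summand is continuous: apply Lemma~\ref{lem:pushcts} to the iterate $g=f^j$ with $U=W=X$, which is legitimate precisely because $I^-_{f^j}=\emptyset$. For the size of the summands, recall from the proof of Theorem~\ref{thm:cv-} that $f_*$ is positivity preserving and acts on constants by $f_*C=\lambda_2 C$; iterating, $\sup_X|f^j_*\gamma^-|\leq \lambda_2^j\,\sup_X|\gamma^-|$, so $\sup_X|\lambda_1^{-j}f^j_*\gamma^-|\leq (\lambda_2/\lambda_1)^j\sup_X|\gamma^-|$. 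Since $\lambda_2<\lambda_1$ these bounds are summable, and the Weierstrass $M$-test shows the series defining $g^-$ converges uniformly, so $g^-$ is continuous on $X$. Writing $\theta^-=dd^c\rho$ locally with $\rho$ smooth then exhibits $\rho+g^-$ as a continuous local potential for $T^-=\theta^-+dd^c g^-$, which is what we want.

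There is no genuinely hard step: the whole argument is powered by the single observation that $\kod(X)=0$ together with minimality kills $\cE_f$, and with it the set $I^-_f$ on which Lemma~\ref{lem:pushcts} could fail to propagate continuity. The only point meriting a line of care is that $\cE_{f^n}$ — not merely $\cE_f$ — is empty, so that the lemma may be invoked for every iterate; as noted, this is immediate because a map contracting no curve sends curves to curves. Put another way, Sibony's telescoping series, which in general converges only in $L^1$, here converges uniformly because once there is no indeterminacy of $f^{-1}$ to contend with, pushforward contracts the sup norm by the fixed factor $\lambda_2/\lambda_1<1$ at each step.
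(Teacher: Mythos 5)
Your proof is correct and follows essentially the same route as the paper's: the vanishing of the critical divisor (from $12K_X=0$ and Hurwitz) makes $\gamma^-$ continuous everywhere, and $\lambda_2<\lambda_1$ gives uniform convergence of the series $\sum_{j\ge 0}\lambda_1^{-j}f^j_*\gamma^-$ via the bound $\sup|f^j_*\gamma^-|\le\lambda_2^j\sup|\gamma^-|$. You merely supply more detail (e.g.\ checking $\cE_{f^j}=\emptyset$ so that Lemma \ref{lem:pushcts} applies to each iterate) than the paper's two-line sketch.
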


\begin{proof} 
Since $f$ is non ramified, $f_*$ sends continuous functions to continuous functions. In particular, $\gamma^-$ is 
continuous.  Since $\lambda_1 >\lambda_2$,  the sequence $\sum_{j \geq 0} \lambda_1^{-j} (f^j)_* \gamma^-$  converges uniformly on $X$.
\end{proof}

As we will explore further in \cite{part2}, it follows that the wedge product $\mu = T^+\wedge T^-$ is a well-defined invariant probability measure, which is also the `geometric product' of the laminar/woven currents $T^{+/-}$.

\section{When $(\alpha^+)^2=0$}
\label{sec:self}

We have seen above that things can be more complicated when the invariant class $\alpha^+$ lies in the boundary of the nef cone.  In this section, we explore the extreme version of this phenomena that occurs when the self-intersection $(\alpha^+)^2$ vanishes.  This was done for bimeromorphic maps $f:X\self$ in \cite{DF}, where it was proved that $(\alpha^+)^2 = 0$ implies that $f$ is bimeromorphically conjugate to an automorphism of a (smooth) surface.  Here we obtain a similar result, except that the new surface can be singular.  We are grateful to Charles
Favre for many helpful comments about this section and, as a postscript, we refer the reader to his recent preprint \cite{Fav08} extending the results we discuss here.

Note that, besides the vanishing of $(\alpha^+)^2$ our main assumption throughout this section is that 
$\lambda_1^2 > \lambda_2$.  We do not assume that $f$ has small topological degree or even, until the end, that $f$ is $1$-stable.  Hence it is necessary at the beginning to allow that the spectral radius $r_1$ for $f^*$ might be larger than $\lambda_1$.  We then have $f^*\alpha^+ = r_1\alpha^+$.

We will need the following consequence of the pushpull formula from \cite{DF}.

\begin{prop}
\label{pushpull1}
There exists a non-negative quadratic form $Q$ on $\HR(X)$ such that for all $\alpha,\beta\in \HR(X)$,
$$
\pair{f^*\alpha}{f^*\beta} = \lambda_2\pair{\alpha}{\beta} + Q(\alpha,\beta).
$$
Moreover $Q(\alpha,\alpha) = 0$ if and only if $\pair{\alpha}{C} = 0$ for every irreducible $C\subset\cE^-_f$.
\end{prop}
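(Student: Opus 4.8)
The plan is to compute $\pair{f^*\alpha}{f^*\beta}$ by lifting everything to the desingularized graph $\Gamma$, with its two projections $\pi_1,\pi_2:\Gamma\to X$, where $\pi_1$ is a modification and $\pi_2$ is generically finite of degree $\lambda_2$. Two standard facts carry most of the load. First, the projection formula gives $\pair{\pi_2^*\alpha}{\pi_2^*\beta}_\Gamma=\lambda_2\pair{\alpha}{\beta}_X$. Second, since $\pi_1$ is a composition of point blow-ups, there is an orthogonal splitting $\HR(\Gamma)=\pi_1^*\HR(X)\oplus N$, in which $N$ is spanned by the irreducible components of $\cE_{\pi_1}$ and the intersection form restricted to $N$ is negative definite.

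Using adjointness and the fact that $\pi_1^*$ is an isometry onto its image, one checks that $\pair{\pi_2^*\alpha}{\pi_1^*\gamma}=\pair{\alpha}{f_*\gamma}=\pair{f^*\alpha}{\gamma}=\pair{\pi_1^*f^*\alpha}{\pi_1^*\gamma}$ for every $\gamma\in\HR(X)$, so $\pi_1^*(f^*\alpha)$ is precisely the $\pi_1^*\HR(X)$-component of $\pi_2^*\alpha$. Writing $\pi_2^*\alpha=\pi_1^*(f^*\alpha)+R_\alpha$ with $R_\alpha\in N$ (and $\alpha\mapsto R_\alpha$ linear), I pair the two such decompositions; the cross terms vanish by orthogonality, and I obtain
$$
\lambda_2\pair{\alpha}{\beta}=\pair{\pi_2^*\alpha}{\pi_2^*\beta}_\Gamma=\pair{f^*\alpha}{f^*\beta}_X+\pair{R_\alpha}{R_\beta}_\Gamma .
$$
Thus $Q(\alpha,\beta):=-\pair{R_\alpha}{R_\beta}_\Gamma$ is the desired form; it is visibly symmetric and bilinear, and it is non-negative because $R$ takes values in the negative definite subspace $N$.

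For the rigidity clause, $Q(\alpha,\alpha)=0$ iff $R_\alpha=0$, iff $\pi_2^*\alpha\in\pi_1^*\HR(X)$, iff $\pair{\pi_2^*\alpha}{C}=0$ for every irreducible component $C$ of $\cE_{\pi_1}$. Now $\pair{\pi_2^*\alpha}{C}=\pair{\alpha}{\pi_{2*}C}$, and $\pi_{2*}C$ is either $0$ (when $\pi_2$ contracts $C$) or a positive multiple of the irreducible curve $\pi_2(C)$; since $\pi_2(\cE_{\pi_1})=f(I_f)=\cE^-_f$ set-theoretically, the curves $\pi_2(C)$ exhaust the irreducible components of $\cE^-_f$, so this condition is equivalent to $\pair{\alpha}{V}=0$ for every irreducible $V\subset\cE^-_f$. (Alternatively, the statement can be extracted from the cohomological version of Proposition \ref{pushpull2}, at the cost of unwinding the explicit description of the divisors $F_i$ appearing there.)

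The one ingredient that is not purely formal is the second standard fact: negative definiteness of the intersection form on the exceptional components of a surface modification and the resulting orthogonal decomposition of $\HR(\Gamma)$. This is classical — one may argue by induction on the number of blow-ups or invoke the Grauert/Mumford contractibility criterion — so I expect the write-up to be brief; the rest is bookkeeping that identifies $\pi_2(\cE_{\pi_1})$ with $\cE^-_f$ and tracks which exceptional components $\pi_2$ contracts.
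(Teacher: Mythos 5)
Your argument is correct. The paper itself gives no proof of this proposition --- it is quoted as a consequence of the push--pull formula of Diller--Favre \cite[Theorem 3.3]{DF} --- and your graph decomposition $\pi_2^*\alpha = \pi_1^*(f^*\alpha) + R_\alpha$ with $R_\alpha$ in the negative-definite exceptional part of $\HR(\Gamma)$ is precisely the mechanism behind that cited formula, so this is a faithful self-contained derivation rather than a genuinely different route. The only points requiring care --- that the irreducible components of $\cE_{\pi_1}$ span the orthogonal complement of $\pi_1^*\HR(X)$ with nondegenerate (indeed negative definite) intersection form, and that the non-contracted components of $\cE_{\pi_1}$ push forward onto exactly the components of $\cE_f^- = f(I_f)$ --- are both correctly identified and handled in your write-up.
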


Our next result will allow us to ignore the distinction between $\lambda_1$ and $r_1$ when $(\alpha^+)^2=0$.  

\begin{prop}
\label{zero}
Suppose that $r_1^2 > \lambda_2$.  Then the following are equivalent.
\begin{enumerate}
\item $(\alpha^+)^2 = 0$.
\item $\pair{\alpha^+}{C} = 0$ for every $C\in \cE^-_f$.
\item $f_*\alpha^+ = \frac{\lambda_2}{r_1} \alpha^+$.
\item For any proper modification $\pi:\hat X \to X$, we have $\hat f^*\hat\alpha^+ = r_1 \hat\alpha^+$, where $\hat\alpha^+ = \pi^*\alpha^+$ and $\hat f$ is the map induced by $f$ on $\hat X$.
\end{enumerate}
In any case, we have that $(f^*)^n\alpha^+ = (f^n)^*\alpha^+$ for every $n\in\N$.  In particular, $\lambda_1 = r_1$.
\end{prop}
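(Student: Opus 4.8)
The plan is to run the cycle $(1)\Rightarrow(2)\Rightarrow(3)\Rightarrow(1)$, then handle $(4)$, and finally deduce the ``in any case'' statement from $(2)$. The equivalence of the first three conditions is formal. For $(1)\Leftrightarrow(2)$ I would feed $\alpha=\beta=\alpha^+$ into the push-pull formula of Proposition \ref{pushpull1}: since $f^*\alpha^+=r_1\alpha^+$ it reads $(r_1^2-\lambda_2)(\alpha^+)^2=Q(\alpha^+,\alpha^+)\ge 0$, so $r_1^2>\lambda_2$ forces $(\alpha^+)^2\ge 0$, with equality exactly when $Q(\alpha^+,\alpha^+)=0$, i.e. (by the last sentence of Proposition \ref{pushpull1}) when $\pair{\alpha^+}{C}=0$ for every irreducible $C\subset\cE^-_f$. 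For $(2)\Rightarrow(3)$ I would apply the cohomological form of Proposition \ref{pushpull2}, $f_*f^*\alpha^+=\lambda_2\alpha^++\sum\pair{\alpha^+}{F_i}F_i$ with the $F_i$ effective and supported on $\cE^-_f$; under $(2)$ every term of the sum dies, so $r_1 f_*\alpha^+=f_*f^*\alpha^+=\lambda_2\alpha^+$. For $(3)\Rightarrow(1)$ I would pair $f^*\alpha^+=r_1\alpha^+$ with $\alpha^+$ and use adjointness, $r_1(\alpha^+)^2=\pair{\alpha^+}{f_*\alpha^+}=\tfrac{\lambda_2}{r_1}(\alpha^+)^2$, whence $(\alpha^+)^2=0$.

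For $(4)$, fix a modification $\pi:\hat X\to X$, set $\hat\alpha^+=\pi^*\alpha^+$ (nef, with $(\hat\alpha^+)^2=(\alpha^+)^2$), let $\hat f$ be the lift, and put $E=\hat f^*\hat\alpha^+-r_1\hat\alpha^+$. Pushing down, $\pi_*\hat f^*\pi^*=f^*$ (functoriality of pullback under the morphism $\pi$ together with $\pi_*\pi^*=\id$, the discrepancies lying in $\ker\pi_*$), so $\pi_*E=f^*\alpha^+-r_1\alpha^+=0$ and $E$ belongs to the span of the $\pi$-exceptional curves, on which the intersection form is negative definite. Since $\pair{\hat\alpha^+}{E}=\pair{\alpha^+}{\pi_*E}=0$ and $\hat f^*\hat\alpha^+$ is nef, $(E)^2=(\hat f^*\hat\alpha^+)^2-r_1^2(\hat\alpha^+)^2\ge-r_1^2(\alpha^+)^2$, which is $\ge 0$ once $(1)$ holds; hence $E=0$, i.e. $(4)$. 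For $(4)\Rightarrow(2)$ I would argue contrapositively: if $(2)$ fails, pick an irreducible $C_0\subset\cE^-_f=f(I_f)$ with $\pair{\alpha^+}{C_0}>0$, choose $p\in I_f$ with $C_0\subseteq f(p)$, and take $\pi$ a modification over $p$ large enough that $\hat f$ maps some exceptional component $E'$ onto the strict transform $\widehat{C_0}$. Then Proposition \ref{curves} and adjointness give $\pair{\hat f^*\hat\alpha^+}{E'}=\pair{\hat\alpha^+}{\hat f_*E'}\ge\pair{\hat\alpha^+}{\widehat{C_0}}=\pair{\alpha^+}{C_0}>0$, whereas $(4)$ would force $\pair{\hat f^*\hat\alpha^+}{E'}=r_1\pair{\alpha^+}{\pi_*E'}=0$; contradiction.

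For the final assertion I would first prove a lemma: if $(2)$ holds, then $\pair{\alpha^+}{C'}=0$ for every irreducible curve $C'$ in the forward orbit $\bigcup_{j\ge 0}f^j(\cE^-_f)$ (strict transforms). This is an induction on $j$: given $C'\subseteq f(C'')$ with $C''$ in the orbit, Proposition \ref{curves} writes $f_*C''=(\deg f|_{C''})f(C'')+D$ with $D$ effective and supported on $\cE^-_f$; adjointness and $f^*\alpha^+=r_1\alpha^+$ then give $0=r_1\pair{\alpha^+}{C''}=\pair{\alpha^+}{f_*C''}=(\deg f|_{C''})\pair{\alpha^+}{f(C'')}$, and nefness finishes. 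Next I would prove $(f^n)^*\alpha^+=r_1^n\alpha^+$ by induction on $n$: writing $f^n=f^{n-1}\circ f$, the composition formula of \cite{DF} gives $(f^n)^*\alpha^+=f^*\big((f^{n-1})^*\alpha^+\big)-R_n$ with $R_n\ge 0$ supported on curves $V\subset\cE_f$ such that $f(V)\in I_{f^{n-1}}$, the multiplicity of $V$ being a positive multiple of the Lelong number $\nu\big((f^{n-1})^*T,f(V)\big)$ of the pullback of a bounded-potential representative $T$ of $\alpha^+$ (Theorem \ref{thm:bounded}). Proposition \ref{pullback lelong 2} applied to $f^{n-1}$ bounds this Lelong number by $c\,\pair{T}{f^{n-1}(f(V))}=c\,\pair{\alpha^+}{f^{n-1}(f(V))}$, and since any indeterminacy point of $f^{n-1}$ is carried by $f^{n-1}$ to a curve in $\bigcup_j f^j(\cE^-_f)$, the lemma makes this vanish; hence $R_n=0$ and, by the inductive hypothesis, $(f^n)^*\alpha^+=f^*(r_1^{n-1}\alpha^+)=r_1^n\alpha^+=(f^*)^n\alpha^+$. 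Finally $\norm{(f^n)^*|_{H^2}}\ge\norm{(f^n)^*\alpha^+}/\norm{\alpha^+}=r_1^n$, so $\lambda_1\ge r_1$, and as $r_1\ge\lambda_1$ always, $\lambda_1=r_1$.

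The hard part will be the bookkeeping in the last step: pinning down exactly which curves support the correction divisor $R_n$ (equivalently the $\pi$-exceptional class $E$ in part $(4)$) and verifying that their multiplicities are governed by intersections against curves in $\cE^-_f$, so that condition $(2)$ annihilates them. The rest — the push-pull identities, the Hodge-index/negative-definiteness argument, and the mild functoriality of $\pi_*$ under modifications — is routine.
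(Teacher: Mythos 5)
Your proof is correct, and for the equivalence of (1), (2), (3) it coincides with the paper's: the same application of Proposition \ref{pushpull1} with $\alpha=\beta=\alpha^+$ gives $(r_1^2-\lambda_2)(\alpha^+)^2=Q(\alpha^+,\alpha^+)$, and the same use of the cohomological Proposition \ref{pushpull2} handles (3). Where you diverge is in (4) and in the final assertion. For $(1)\Rightarrow(4)$ the paper applies the push-pull formula to the modification $\pi$ itself, writing $r_1\hat\alpha^+=\pi^*\pi_*\hat f^*\hat\alpha^+=\hat f^*\hat\alpha^+ + E$ with $E$ effective on $\cE_\pi$ and vanishing exactly when $\pair{\hat f^*\hat\alpha^+}{\cE_\pi}=0$, which (2) supplies; your route instead notes $\pi_*E=0$, $\pair{\hat\alpha^+}{E}=0$, computes $E^2=(\hat f^*\hat\alpha^+)^2\ge 0$ and invokes negative definiteness of $\ker\pi_*$. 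Both work; yours avoids knowing that $E$ is effective, at the cost of the Hodge-theoretic input, while the paper's version also delivers the converse direction (if (3) fails, the blowup at a single $p\in I_f$ with $\pair{\alpha^+}{f(p)}>0$ already violates (4)), which is a bit leaner than your resolution-of-indeterminacy construction. For the final assertion the underlying mechanism is identical --- condition (2), propagated along forward images, kills the intersection numbers that control (via Propositions \ref{pullback lelong 1} and \ref{pullback lelong 2}) the discrepancy between $(f^*)^n$ and $(f^n)^*$ --- but the paper organizes it more cleanly: take a positive representative $\theta^+$ of $\alpha^+$ with bounded potentials, observe that $(f^*)^n\theta^+-(f^n)^*\theta^+$ is an effective divisor, and show by induction that \emph{all} Lelong numbers of $(f^*)^n\theta^+$ vanish, which forces that divisor to be zero without ever naming its multiplicities.

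The one soft spot is exactly the one you flag: you quote a ``composition formula'' in which the multiplicity of $V$ in $R_n$ is a positive multiple of $\nu\bigl((f^{n-1})^*T,f(V)\bigr)$, and this is not available as a black box in the form you use it. It is, however, recoverable from the paper's own tools: since the discrepancy is an effective divisor and $(f^n)^*T\ge 0$, the multiplicity along $V$ is at most the Lelong number of $f^*\bigl((f^{n-1})^*T\bigr)$ at a generic point of $V$, which Proposition \ref{pullback lelong 1} bounds by a constant times $\nu\bigl((f^{n-1})^*T,f(V)\bigr)$, and Proposition \ref{pullback lelong 2} then converts this into the intersection number your lemma annihilates. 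Your auxiliary lemma (nefness plus Proposition \ref{curves} propagates $\pair{\alpha^+}{\cdot}=0$ along $\bigcup_j f^j(\cE^-_f)$) is sound and does the same job as the paper's inductive hypothesis; just make sure you justify that every curve in the total image $f^{n-1}(q)$ of an indeterminacy point lies in that forward orbit, since the proposition does not assume $1$-stability.
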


\noindent 
The condition (4) gives, in the case where $X$ is rational, the connection with \cite{BFJ}.  In that paper, the authors prove existence of an invariant class $\tilde\alpha^+$ for the action of $\lambda_1^{-1}f^*$ on `$L^2$-cohomology classes of the Riemann-Zariski space'.  There is a natural projection of $\tilde\alpha^+$ into $H^{1,1}_\R(X)$ and indeed into $H^{1,1}_\R(\hat X)$ where $\pi:\hat X \to X$ is proper modification of $X$.  In general, however, the image of $\tilde\alpha^+$ is not (a multiple of) $\alpha^+$.  Condition (4) says that the image actually is $\alpha^+$ and that, more generally, the projection of $\tilde\alpha^+$ into $H^{1,1}_\R(\hat X)$ is $\pi^*\alpha^+$ for any modification $\hat X$.  In the language of \cite{BFJ}, `$\tilde\alpha^+$ is Cartier and determined in $X$'.  The other results in this section confirm to some extent the expectation \cite[Remark 3.9]{BFJ} that meromorphic maps with  Cartier eigenclasses should have some rigidity properties.

Though we give a separate proof here, the final conclusion may also be seen to proceed more or less immediately from (4) and the work in \cite{BFJ}.

\begin{proof}
Proposition \ref{pushpull1} and invariance of $\alpha^+$ tell us that
$$
r_1^2 (\alpha^+)^2 = (f^*\alpha^+)^2 \geq \lambda_2(\alpha^+)^2
$$ 
with equality if and only if $\pair{\alpha^+}{C} = 0$ for all curves $C\subset \cE^-$.  Since by assumption $r_1^2 > \lambda_2$, this gives the equivalence of (1) and (2).  The equivalence of the (2) and (3) follows from Proposition \ref{pushpull2} (for cohomology classes): 
$$
r_1 f_*\alpha^+ = f_*f^*\alpha^+ = \lambda_2 \alpha^+ + E^-(\alpha^+),
$$
where $E^-(\alpha^+) = 0$ if and only if $\pair{\alpha^+}{C}=0$ for every component $C$ of $\cE^-_f$.

To prove equivalence of (3) and (4), we begin with the equality $f^* = \pi_*\hat f^*\pi^*$ (this follows from e.g. \cite[Proposition 1.13]{DF}).  Hence by Proposition \ref{pushpull2}
$$
r_1\hat\alpha^+ = \pi^*f^*\alpha^+ = \pi^*\pi_*\hat f^*\hat\alpha^+ = \hat f^*\alpha^+ + E,
$$
where $E$ is an effective divisor supported on $\cE_\pi$ that vanishes if and only if $\pair{\hat f^*\hat\alpha^+}{\cE_\pi} = 0$.  Since $\pair{\hat f^*\hat\alpha^+}{\cE_\pi} = \pair{\alpha^+}{\pi_*\hat \cE_\pi}$ and $\supp\hat \pi_*f_*\cE_\pi\subset \cE_f^-$, we infer that when (3) holds, $E = 0$.  That is (3) implies (4).  On the other hand, if (3) fails, then from Proposition \ref{pushpull1} and $r_1^2 > \lambda_2$, we deduce that $\pair{\alpha^+}{f(p)} > 0$ for some $p\in I_f$.  Thus (4) fails in the case where $\pi$ is the blowup of $X$ at $p$.
Equivalence of (1)-(4) is now established.

To get equivalence of $r_1$ and $\lambda_1$, let $\theta^+$ be a positive representative of $\alpha^+$ with bounded potentials.  Then $(f^n)^*\theta^+$ is a positive closed current, and the difference $(f^*)^n\theta^+ - (f^n)^*\theta^+$ is a current of integration over an effective divisor.  Hence it suffices to show that the Lelong numbers $\nu((f^*)^n\theta^+,p)$ vanish for every $n\in\N$ and $p\in X$.  We do this inductively.

The case $n=0$ is immediate.  Assuming $\nu((f^*)^{n-1}\theta^+,p) \equiv 0$ on $X$, we have from (2) in Proposition \ref{zero} that $\pair{\alpha^+}{f(p)} = 0$ for every $p\in I_f$.   Thus since $(f^*)^{n-1}\theta^+$ represents $r_1^{n-1}\alpha^+$, Proposition \ref{pullback lelong 2} tells us that the Lelong numbers of  $(f^*)^n\theta^+$ also vanish everywhere on $X$. 
\end{proof}

%
%
%
%
%
%

The following version of the next result was pointed out to us by Charles Favre.

\begin{prop} 
\label{ineffective}
Suppose that $\alpha^+$ is cohomologous to an effective divisor $D$ such that $D^2 = 0$.  Then
 $\lambda_1(f)$ and $\lambda_2/\lambda_1$ are integers.  In particular, $f$ does not have small topological degree.
\end{prop}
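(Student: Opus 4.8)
The plan is to deduce rationality of the invariant class from the existence of the effective representative, and then to read off integrality of the dynamical degrees from the fact that $f^*$ and $f_*$ act by integer matrices on $H^2(X,\Z)$.

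First, since $(\alpha^+)^2 = D^2 = 0$ and the standing hypothesis $\lambda_1^2 > \lambda_2$ forces $r_1^2\geq\lambda_1^2>\lambda_2$, Proposition \ref{zero} applies and gives $\lambda_1 = r_1$, $f^*\alpha^+ = \lambda_1\alpha^+$, and $f_*\alpha^+ = (\lambda_2/\lambda_1)\alpha^+$. Next I would write $D = \sum_{i=1}^r a_i C_i$ with $a_i > 0$ and the $C_i$ distinct irreducible curves. Because $\alpha^+ = [D]$ is nef, $\pair{D}{C_i}\geq 0$ for each $i$; since $0 = D^2 = \sum_i a_i\pair{D}{C_i}$ with all $a_i>0$, we get $\pair{D}{C_i} = 0$ for every $i$. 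Put $W = \operatorname{span}_\R\{[C_1],\dots,[C_r]\}\subset\HR(X)$. Then $\alpha^+\in W$ (it is $\sum a_i[C_i]$) and $\alpha^+\in W^\perp$, so $\alpha^+\in W\cap W^\perp$. The subspace $W\cap W^\perp$ is totally isotropic for the intersection form, hence at most one-dimensional by the Hodge index theorem (the form on $\HR(X)$ has signature $(1,h^{1,1}-1)$); since it contains $\alpha^+\neq 0$, it is exactly $\R\alpha^+$.

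Now I would bring in the integer matrix $M = (\pair{C_i}{C_j})_{i,j}$ and the $\Q$-rational linear map $\Phi:\R^r\to\HR(X)$, $e_i\mapsto [C_i]$. A direct check shows $v\in\ker M \iff \Phi(v)\in W^\perp$, so $\Phi(\ker M) = W\cap W^\perp = \R\alpha^+$. As $\ker M$ is a rational subspace of $\R^r$ and is nonzero (it contains $(a_1,\dots,a_r)$), it contains a rational vector $v$ with $\beta\eqdef\Phi(v)\neq 0$; this $\beta$ lies in $\operatorname{NS}_\Q(X)$ and in $\R\alpha^+$, so $\beta = t\alpha^+$ for some $t\in\R\setminus\{0\}$. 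Clearing denominators, some positive integer multiple $\gamma$ of $\beta$ is a nonzero class in $H^2(X,\Z)$ (modulo torsion), and $f^*\gamma = \lambda_1\gamma$. Since $f^* = (\pi_1)_*(\pi_2)^*$ preserves integral cohomology, $f^*\gamma$ is again integral, so $\lambda_1\gamma_{i_0}\in\Z$ for any nonvanishing coordinate $\gamma_{i_0}$, whence $\lambda_1\in\Q$. As $\lambda_1 = r_1$ is an eigenvalue of the integer matrix $f^*|_{H^2(X,\Z)}$, it is an algebraic integer, and a rational algebraic integer is an integer; thus $\lambda_1\in\Z$. The same argument applied to $f_*\gamma = (\lambda_2/\lambda_1)\gamma$ yields $\lambda_2/\lambda_1\in\Z$. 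Both numbers are positive, hence $\geq 1$, so $\lambda_2 = \lambda_1\cdot(\lambda_2/\lambda_1)\geq\lambda_1$, i.e. $f$ does not have small topological degree.

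The step I expect to be most delicate is the passage from "$\alpha^+$ is represented by an effective divisor of self-intersection zero" to "$\alpha^+$ is proportional to a rational class": everything hinges on $W\cap W^\perp$ being exactly one-dimensional, so that the rational subspace $\ker M$ surjects onto the whole line $\R\alpha^+$ rather than into a possibly larger isotropic subspace, and this is precisely where the Hodge index theorem enters. One might instead hope to show that some multiple of $D$ is a fibre of a fibration preserved by $f$ and then invoke Lemma \ref{fibration} directly; but a general nef effective divisor of self-intersection zero need not be semiample, so the cohomological argument above seems the safer route.
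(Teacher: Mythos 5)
Your proof is correct and follows essentially the same route as the paper's: the components of the nef divisor $D$ are orthogonal to $\alpha^+$, the Hodge index theorem pins down the isotropic line as exactly $\R\alpha^+$, integrality of the intersection matrix makes $\alpha^+$ proportional to an integral class, and integrality of $f^*$ and $f_*$ on $H^2(X,\Z)$ then forces the eigenvalues $\lambda_1$ and $\lambda_2/\lambda_1$ to be (rational algebraic, hence honest) integers. The paper phrases the middle step slightly differently, taking the span of \emph{all} curves orthogonal to $\alpha^+$ rather than just the components of $D$, but the content is identical; your write-up merely fills in the details the paper leaves implicit.
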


\begin{proof}
Suppose $\alpha^+$ is cohomologous to $D$ and $D^2 = 0$.  Let $H\subset H^{1,1}_\R(X)$ be the subspace spanned by curves $C\subset X$ with $\pair{\alpha^+}{C} = 0$.  Then by the Hodge Index Theorem, $\alpha^+$ spans the kernel of the restriction of the intersection pairing to $H$.  Since the pairing is integral, it follows that after rescaling $\alpha^+ \in H^{1,1}_\R(X) \cap H^2(X,\Z)$.   Both $f^*$ and $f_*$ preserve integral classes, so it follows that $\lambda_1$ and (by Proposition \ref{zero}) $\lambda_2/\lambda_1$ are integers.
\end{proof}

In order to state and prove the next several results we establish some useful notation.  Suppose that $S = \bigcup_{j\in\N} C_j$ is 
a countable union of irreducible curves in $X$.  The given decomposition into irreducibles is the only one possible, so it makes sense to call the curves $C_j$ `the' irreducible components of $S$ and let $\div(S)$ denote the set of all divisors of the form $\sum_{j\in\N} a_j C_j$, $a_j \neq 0$ for only finitely many $j$.  As before, we let $\norm{\cdot}$ be any norm on $\HR(X)$.  We will say that the intersection form is \emph{negative definite on $S$} if there exists $C>0$ such that $\pair{D}{D} \leq -C\norm{D}^2$ for all $D\in\div(S)$.  It is a classical observation of Zariski \cite[page 111]{BPV} that negative definiteness is implied by an apparently weaker condition.

\begin{prop}
\label{zariski}
 Suppose $D^2<0$ for every non-zero effective $D\in\div(S)$.  Then $S$ has only finitely many irreducible components, and the intersection form is negative definite on $S$.
\end{prop}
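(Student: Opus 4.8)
The plan is to reduce the statement to one elementary geometric fact: on a compact complex surface, two effective $\R$-divisors with no common component meet non-negatively (this follows by bilinearity from the pointwise non-negativity of intersections of distinct irreducible curves). First I would record the consequence that $D^2<0$ for \emph{every} nonzero $D\in\div(S)$, not merely the effective ones. Given such a $D$, write $D=D_+-D_-$ with $D_\pm$ effective $\R$-divisors supported on $S$ and having disjoint supports; at least one of them is nonzero. From $\pair{D_+}{D_-}\geq 0$ we get $D^2=D_+^2-2\pair{D_+}{D_-}+D_-^2\leq D_+^2+D_-^2$, and since each nonzero term among $D_+,D_-$ has strictly negative self-intersection by hypothesis while the other (if present) contributes $0$, we conclude $D^2<0$.

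Next I would deduce finiteness of the set of components. The map sending a divisor to its class in $\HR(X)$ is linear on $\div(S)$, and the previous step shows it is \emph{injective}: a nonzero $D$ cannot be cohomologous to $0$, for then $D^2$ would vanish. But $\div(S)$ is, by its very definition, the free real vector space on the irreducible components of $S$, while $\HR(X)$ is finite dimensional; hence $S$ has at most $\dim\HR(X)$ irreducible components.

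Finally, negative definiteness follows formally. We now know $\div(S)$ is a finite dimensional real vector space carrying the quadratic form $D\mapsto D^2$, which is strictly negative on every nonzero vector; any such form dominates $-C\norm{\cdot}^2$ for some $C>0$, by compactness of the unit sphere together with homogeneity of degree two. Reading $\norm{D}$ as the norm of the class of $D$ — a genuine norm on $\div(S)$ thanks to the injectivity just established — this is exactly the asserted estimate $D^2\leq-C\norm{D}^2$ for all $D\in\div(S)$.

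I do not expect a real obstacle, since this is the classical lemma of Zariski; the only delicate points are bookkeeping ones. One must work throughout with $\R$-divisors, following the paper's convention; one must observe that the positive and negative parts $D_\pm$ are again supported on $S$, so that the hypothesis applies to them; and one must invoke the non-negativity of $\pair{A}{B}$ for effective $A,B$ with no common component, which is the sole input from surface geometry and holds on any compact complex surface.
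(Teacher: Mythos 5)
Your argument is correct, and it is the standard proof of this classical fact; the paper itself offers no proof, deferring to the reference \cite[page 111]{BPV}, whose argument is essentially the one you give (decompose $D=D_+-D_-$, use $\pair{D_+}{D_-}\geq 0$ to get negativity of $D^2$ on all of $\div(S)$, deduce injectivity of the class map into the finite-dimensional space $\HR(X)$ and hence finiteness, then conclude definiteness by compactness of the unit sphere). All the delicate points you flag — working with $\R$-divisors, noting $D_\pm$ are again supported on $S$, and interpreting $\norm{D}$ as the norm of the cohomology class — are handled correctly.
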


We  will apply Proposition \ref{zariski} to the sets 
$$
\cE^-_\infty = \bigcup_{n\in\N} \cE^-_{f^n},\qquad \cE^+_\infty = \bigcup_{n\in\N} \cE_{f^n}.
$$
By Proposition \ref{curves}, $\div(\cE^-_\infty)$ is $f_*$ invariant and $\div(\cE^+_\infty)$ is $f^*$ invariant.  It can happen, as on irrational surfaces that $\cE^+_\infty$ is empty while $\cE^-_\infty$ is not, but except under very special circumstances, the reverse situation never occurs:

\begin{prop} 
\label{i before e}
Suppose that either $(\alpha^+)^2>0$ or that $\alpha^+$ is not the cohomology class of an effective divisor.
Then for every $C\subset \cE_f$, there exists $n\in\N$ such that either $C$ or $f(C)$ is contained in $\supp (f_*)^n\cE^-_f$.
\end{prop}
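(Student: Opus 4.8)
The plan is to argue by contradiction. Assume $C\subset\cE_f$ is irreducible and that for every $n\ge 0$ neither $C$ nor the point $q\eqdef f(C)$ lies in $\mathcal S_n\eqdef\supp(f_*)^n\cE^-_f$; write $\mathcal S\eqdef\bigcup_{n\ge 0}\mathcal S_n$, so in particular $C$ and $q$ avoid $\mathcal S_0=\cE^-_f$.

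The first step is to deduce that $f$ has no indeterminacy on $C$. Work in the desingularized graph $\Gamma$ with projections $\pi_1,\pi_2$, and let $\widetilde C\subset\Gamma$ be the strict transform of $C$ under $\pi_1$, so that $\pi_1(\widetilde C)=C$. Since $C\in\cE_f$, the curve $\widetilde C$ is contracted by $\pi_2$, i.e. $\pi_2(\widetilde C)=\{q\}$. If there were a point $p\in C\cap I_f$, pick $\widetilde p\in\widetilde C$ over $p$; then $\widetilde p$ lies on the $\pi_1$-exceptional fiber over $p$, so pushing forward by $\pi_2$ gives $q=\pi_2(\widetilde p)\in f(p)\subset\cE^-_f=\mathcal S_0$, a contradiction. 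Hence $C\cap I_f=\emptyset$, so $f_*C=0$ by Proposition \ref{curves}, and therefore $r_1\pair{\alpha^+}{C}=\pair{f^*\alpha^+}{C}=\pair{\alpha^+}{f_*C}=0$, that is $\pair{\alpha^+}{C}=0$. (Only $f^*\alpha^+=r_1\alpha^+$ and adjointness enter here; no use of $1$-stability or small topological degree is made.)

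It remains to rule out the surviving configuration: $C\in\cE_f$, $C\cap I_f=\emptyset$, $\pair{\alpha^+}{C}=0$, with $C$ and $q$ avoiding $\mathcal S$ under all iterates. The idea is to run the preceding argument backwards along $f$. Let $\mathcal W$ be the smallest collection of irreducible curves that contains $C$ and is closed both under strict transform by $f$ (when the transform is a curve) and under passing to irreducible components of $f^*(\cdot)$. Iterating the graph argument above --- and using the standing hypothesis that none of the curves produced ever enters $\mathcal S\supset\cE^-_f\supset I^-_f$ --- one checks by induction that each $C'\in\mathcal W$ satisfies $\pair{\alpha^+}{C'}=0$ and meets neither $I_f$ nor $\cE^-_f$. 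It follows that the off-diagonal intersection numbers within $\mathcal W$ are nonnegative, that the quadratic form $Q$ of Proposition \ref{pushpull1} vanishes on the span $V$ of $\mathcal W$ (so $\pair{f^*\alpha}{f^*\beta}=\lambda_2\pair{\alpha}{\beta}$ there), and that $V$ is stable under both $f^*$ and $f_*$. By the Hodge index theorem the intersection form is negative semidefinite on $V$ with radical contained in $\R\alpha^+$. If $(\alpha^+)^2>0$ the radical is trivial, the form is negative definite on $\mathcal W$, Proposition \ref{zariski} makes $\mathcal W$ finite, and then $f^*|_V$ and $f_*|_V$ are mutually adjoint bijections of the non-degenerately paired space $V$ preserving the effective cone --- contradicting $f_*[C]=0$ with $[C]\neq 0$. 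If $(\alpha^+)^2=0$, the form on $V$ is either still negative definite (again forcing $\mathcal W$ finite, and the same contradiction) or else has radical $\R\alpha^+$; in the latter case, chasing the $f^*$-action on $\div(\mathcal W)$ one produces an effective divisor supported on $\mathcal W$ whose class is a positive multiple of $\alpha^+$, so $\alpha^+$ is effective. In every case we contradict the hypothesis.

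The main obstacle is the inductive control of the backward web $\mathcal W$ in the absence of $1$-stability: one must show the strict transforms and pullbacks never meet $\cE^-_f$ (so that Propositions \ref{pushpull2} and \ref{pushpull1} apply with all the relevant pairings nonnegative) and that the multiplicities occurring under $f^*$ do not destroy finiteness; and, in the degenerate case $(\alpha^+)^2=0$ with $\alpha^+\in V$, one must actually exhibit the effective representative of a multiple of $\alpha^+$, which is delicate precisely when $\mathcal W$ is infinite. A smaller but recurring subtlety --- already visible in the first step --- is the possibility that $q\in I_f$, or more generally that some curve of $\mathcal W$ is itself contracted by $f$; these cases are handled by replacing the offending point by the curve $f(q)\subset\cE^-_f$ and following it under $f_*$, which is the reason the statement records membership in $\supp(f_*)^n\cE^-_f$ rather than just in $\cE^-_f$.
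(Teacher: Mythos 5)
Your first step is correct and nicely argued: if $f(C)\notin\cE^-_f$ then $C\cap I_f=\emptyset$, hence $f_*C=0$ and $\pair{\alpha^+}{C}=0$. But this gives information about one curve only, and the rest of the argument --- the two-sided web $\mathcal W$ --- rests on an inductive claim that the hypothesis does not support. The standing assumption constrains only $C$ and the point $f(C)$; it says nothing about the preimage curves $C'$ with $f(C')=C$, which may perfectly well meet $I_f$ or $\cE^-_f$. Without that, the induction breaks in both directions: for a preimage $C_1$ of $C$ one only gets $r_1\pair{\alpha^+}{C_1}=\pair{\alpha^+}{f_*C_1}=\pair{\alpha^+}{D}\geq 0$ with $D$ supported on $f(I_f\cap C_1)$, which need not vanish; and for components of $f^*C'$ one would need $\pair{f_*\alpha^+}{C'}=0$, but $f_*\alpha^+$ is proportional to $\alpha^+$ only when $(\alpha^+)^2=0$ (Proposition \ref{zero}), so this step fails precisely in the case $(\alpha^+)^2>0$. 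The degenerate-case endgame (exhibiting an effective divisor in $\div(\mathcal W)$ representing a multiple of $\alpha^+$) is also only asserted. You flag all of this yourself as ``the main obstacle,'' and it is: the proof is not complete.

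The missing idea is that you do not need to control where the backward curves meet $I_f$ or $\cE^-_f$ at all. Since $f$ is dominating, choose any backward orbit of strict transforms $C_0=C$, $f(C_{n+1})=C_n$; these are pairwise distinct because $f(C)$ is a point and hence not one of them. Now push \emph{forward} $n+1$ times: $(f_*)^{n+1}C_n$ is a divisor whose support is connected, contains the point $f(C)$, and is contained in $\{f(C)\}\cup\supp(f_*)^n\cE^-_f$. Since by hypothesis $f(C)$ never lies in $\supp(f_*)^n\cE^-_f$, the divisor must vanish, and adjointness gives $r_1^{n+1}\pair{\alpha^+}{C_n}=\pair{\alpha^+}{(f_*)^{n+1}C_n}=0$ for every $n$. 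The hypothesis ($(\alpha^+)^2>0$, or $\alpha^+$ not effective) together with the Hodge index theorem then forces $D^2<0$ for every nonzero effective $D$ supported on $\bigcup_n C_n$, and Proposition \ref{zariski} bounds the number of such curves --- contradicting their infinitude. This replaces your entire web construction and both case analyses with a single connectedness argument, and it uses the hypothesis exactly where it is available, namely at the point $f(C)$.
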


\begin{proof} Let $C\subset \cE_f$ irreducible.
Suppose for all $n\in\N$ that $C\not\subset\cE^-_{f^n}$.  Then since $f$ is dominating, we can find a sequence of curves $C_n\subset X$ such that $C=C_0$ and for all $n\geq 0$, $f(C_{n+1}) = C_n$.  Because $f(C)$ is a point and in particular not equal to any of the curves in this sequence, we see that all the $C_n$ are distinct.

Now suppose further that $f(C)\not\subset\cE^-_{f^n}$ for any $n\in\N$.  Then since $f^{n+1}_* C_n$ is a divisor with connected support containing the point $f(C)$ and contained in $f^{n+1}(C_n)\cup \cE^-_{f^n} = f(C) \cup \cE^-_{f^n}$, we deduce that $(f_*)^{n+1} C_n = 0$
for every $n\in\N$.  Hence 
$$
\lambda_1^{n+1}\pair{\alpha^+}{C_n} = \pair{(f^*)^{n+1}\alpha^+}{C_n}= \pair{\alpha^+}{(f_*)^{n+1} C_n} = 0.
$$
for all $n\in\N$.  The hypothesis of the proposition and the Hodge Index Theorem imply that $C_n^2 < 0$ for
all $n$.  There are infinitely many $C_n$, so this contradicts Proposition \ref{zariski}.
\end{proof}

From Proposition \ref{curves} we immediately get

\begin{cor}
\label{i before e again}
Let $D\subset \div(\cE^+_\infty)$ be given.  Then $\supp (f_*)^n D\subset \cE^-_\infty\cup \cE^+_\infty$ for all $n\in\N$.
Under the hypotheses of Proposition \ref{i before e}, we have further that there exists a curve $C\subset \cE^-_\infty$ and an integer $n\in\N$ such that $\supp D \subset\supp (f^*)^n C$ 
\end{cor}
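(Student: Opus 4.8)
The plan is to handle the two assertions separately, the first directly from Proposition \ref{curves}, the second by combining Propositions \ref{curves} and \ref{i before e}. For the first, by $\R$-linearity of $f_*$ and induction on $n$ it is enough to show $\supp f_*\Gamma\subset\cE^+_\infty\cup\cE^-_\infty$ for every irreducible curve $\Gamma\subset\cE^+_\infty\cup\cE^-_\infty$. If $\Gamma\subset\cE^-_\infty$ this is the $f_*$-invariance of $\div(\cE^-_\infty)$ noted after Proposition \ref{curves}. If $\Gamma\subset\cE^+_\infty$, Proposition \ref{curves} gives $f_*\Gamma=(\deg f|_\Gamma)f(\Gamma)+D'$ with $\supp D'\subset\cE^-_f\subset\cE^-_\infty$; if $\Gamma\subset\cE_f$ the first term vanishes, and if $m\ge2$ is minimal with $\Gamma\subset\cE_{f^m}$ then $f(\Gamma)$ is a curve with $f^{m-1}(f(\Gamma))=f^m(\Gamma)$ a point, so $f(\Gamma)\subset\cE_{f^{m-1}}\subset\cE^+_\infty$. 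In all cases $\supp f_*\Gamma\subset\cE^+_\infty\cup\cE^-_\infty$, closing the induction.

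For the second assertion the core step is: for each irreducible $\Gamma\subset\cE^+_\infty$ there are an irreducible curve $C_\Gamma\subset\cE^-_\infty$ and an integer $\ell_\Gamma\ge 1$ with $\Gamma\subset\supp(f^*)^{\ell_\Gamma}C_\Gamma$. To prove it, let $\ell\ge1$ be minimal with $f^\ell(\Gamma)$ zero-dimensional, so $\Gamma\in\cE_{f^\ell}$, the image $a:=f^\ell(\Gamma)$ is a point, and $\Gamma_*:=f^{\ell-1}(\Gamma)\in\cE_f$ with $a=f(\Gamma_*)$. Proposition \ref{i before e} yields $k$ with $\Gamma_*\subset\supp(f_*)^k\cE^-_f$ or $a\subset\supp(f_*)^k\cE^-_f$; since $\cE^-_f\subset\cE^-_\infty$ and $\div(\cE^-_\infty)$ is $f_*$-invariant, $\supp(f_*)^k\cE^-_f\subset\cE^-_\infty$. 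In the second alternative $a$ lies on a component $C_\Gamma\subset\cE^-_\infty$ of $(f_*)^k\cE^-_f$; as $\Gamma\in\cE_{f^\ell}$ with $f^\ell(\Gamma)=a\in C_\Gamma$, Proposition \ref{curves} applied to $f^\ell$ puts $\Gamma$ in the exceptional part of $(f^\ell)^*C_\Gamma$, and since $(f^*)^\ell C_\Gamma-(f^\ell)^*C_\Gamma$ is an effective divisor (as in the proof of Proposition \ref{zero}) we get $\Gamma\subset\supp(f^*)^\ell C_\Gamma$ (take $\ell_\Gamma=\ell$). In the first alternative $\Gamma_*\subset\cE^-_\infty$, and $f^{\ell-1}(\Gamma)=\Gamma_*$ exhibits $\Gamma$ as a strict-transform component, so $\Gamma\subset\supp(f^{\ell-1})^*\Gamma_*\subset\supp(f^*)^{\ell-1}\Gamma_*$ (take $C_\Gamma=\Gamma_*$, $\ell_\Gamma=\ell-1$).

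It then remains to synchronize over the finitely many components $\Gamma_1,\dots,\Gamma_r$ of $\supp D$. Put $n:=\max_j\ell_{\Gamma_j}$, and use the auxiliary observation that any irreducible $C\subset\cE^-_\infty$ satisfies $C\subset\supp(f^*)^mC'$ for some curve $C'\subset\cE^-_\infty$ and every $m\ge0$: for $m=1$, write $C\subset f^s(q)$ with $q\in I_{f^s}$; if $C\not\subset\cE_f$ then $f(C)$ is a curve, $f^{s+1}(q)$ is one-dimensional, hence $q\in I_{f^{s+1}}$ and $C'=f^{s+1}(q)\subset\cE^-_{f^{s+1}}\subset\cE^-_\infty$ contains $f(C)$ (so $C$ maps onto it), while if $C\subset\cE_f$ one takes for $C'$ a component of $\cE^-_\infty$ through $f(C)$; then iterate, using that $\supp A\subset\supp B$ implies $\supp(f^*)^mA\subset\supp(f^*)^mB$ for effective $A,B$. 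Applying this with $m=n-\ell_{\Gamma_j}$ gives $C_j'\subset\cE^-_\infty$ with $\Gamma_j\subset\supp(f^*)^{\ell_{\Gamma_j}}C_{\Gamma_j}\subset\supp(f^*)^nC_j'$, and $C:=\bigcup_j C_j'\subset\cE^-_\infty$ (a possibly reducible curve) yields $\supp D=\bigcup_j\Gamma_j\subset\supp(f^*)^nC$, so $n$ works.

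The routine ingredients are the $\R$-linear reductions, the $f_*$-invariance of $\div(\cE^-_\infty)$, the inequality $(f^*)^m\ge(f^m)^*$ on effective divisors, and the fact that pullback of a curve charges exactly the exceptional curves lying over centers on it (Proposition \ref{curves}). The one point requiring real care — and the reason the statement is not literally ``immediate'' — is the passage from the $f_*$-formulation of Proposition \ref{i before e} to $f^*$-pullbacks: in the first alternative of the core step, and in the case $C\subset\cE_f$ of the synchronization, one must ensure that the relevant point lands on a genuine one-dimensional component of $\cE^-_\infty$, and ruling out the degenerate configuration where it does not again relies on the same Hodge-index/Zariski descent (Proposition \ref{zariski}, under the hypothesis $(\alpha^+)^2>0$ or $\alpha^+$ non-effective) that powers Proposition \ref{i before e} itself.
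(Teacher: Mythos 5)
Your first assertion and the core of your second one are correct, and they amount to the chaining of Propositions \ref{curves} and \ref{i before e} that the paper treats as "immediate" (it writes down no proof). In particular, for each irreducible component $\Gamma$ of $\supp D$ your argument correctly produces $C_\Gamma\subset\cE^-_\infty$ and $\ell_\Gamma$ with $\Gamma\subset\supp(f^*)^{\ell_\Gamma}C_\Gamma$; this per-component statement is all the paper ever uses (e.g. to conclude $\cE^+_\infty\subset\cE^-_\infty$ in Lemma \ref{nearly holo}).

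The gap is in the synchronization. Your auxiliary observation requires, for an irreducible $C\subset\cE_f\cap\cE^-_\infty$, a component of $\cE^-_\infty$ passing through the point $f(C)$, and you appeal to Proposition \ref{i before e} for this. But as stated that proposition only gives "either $C$ or $f(C)$ lies in $\supp(f_*)^k\cE^-_f$", and its first alternative is automatically satisfied in your situation (you already know $C\subset\cE^-_\infty$), so the disjunction yields nothing about $f(C)$. You identify this as the delicate point and assert it is handled "by the same Hodge-index/Zariski descent," but you never carry that out, and it is exactly the nontrivial content of the padding step. The repair is to reopen the proof of Proposition \ref{i before e} rather than use its statement: the hypothesis $C\not\subset\cE^-_{f^n}$ is never actually used there — the backward chain $C_n$ of distinct preimage curves exists for any $C\subset\cE_f$, and if $f(C)$ lay on no curve of $\supp(f_*)^{n}\cE^-_f$ then connectedness of $\supp(f_*)^{n+1}C_n$ would force $(f_*)^{n+1}C_n=0$, hence $\pair{\alpha^+}{C_n}=0$ for infinitely many distinct $C_n$, contradicting Proposition \ref{zariski}; so the second alternative always holds and your padding goes through. (A minor further remark: in the second alternative of your core step you can avoid the effectivity of $(f^*)^\ell C-(f^\ell)^*C$ altogether by chaining Proposition \ref{curves} one step at a time — $\Gamma_*\subset\supp f^*C_\Gamma$ because $f(\Gamma_*)=a\in C_\Gamma$, and $f^j(\Gamma)\subset\supp f^*(f^{j+1}(\Gamma))$ for $j<\ell-1$ — together with monotonicity of $f^*$ on effective divisors.)
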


\begin{thm}
\label{holoize}
Suppose that $\lambda_1^2 > \lambda_2$, that $(\alpha^+)^2 = 0$ and that $\alpha^+$ is not cohomologous to an effective divisor.  Then there is a modification $\pi:X\to\check X$ of a singular surface $\check X$ by $X$ that conjugates $f$ to a holomorphic map $\check f:\check X\self$.
The exceptional set of $\pi$ is $\cE^-_\infty$.
\end{thm}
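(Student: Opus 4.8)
The plan is to contract the curves forming $\cE^-_\infty$ by Grauert's contractibility criterion and then to check that $f$ descends to a holomorphic self-map of the resulting normal (possibly singular) surface $\check X$. The two ingredients that make this work are the negative definiteness of the intersection form on $\cE^-_\infty$ — which will follow from $(\alpha^+)^2=0$ together with the hypothesis that $\alpha^+$ is not cohomologous to an effective divisor — and the $f_*$-invariance of $\div(\cE^-_\infty)$ recorded after Proposition \ref{curves}.

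First I would show that $\pair{\alpha^+}{C}=0$ for every irreducible curve $C\subset\cE^-_\infty$. Let $\theta^+\geq 0$ be a representative of $\alpha^+$ with bounded potentials (Theorem \ref{thm:bounded}). Exactly as in the last paragraph of the proof of Proposition \ref{zero}, an induction on $n$ based on Proposition \ref{pullback lelong 2} gives $\nu((f^*)^n\theta^+,p)=0$ for all $n\in\N$ and all $p\in X$; since the difference $(f^*)^n\theta^+-(f^n)^*\theta^+$ is an effective divisor, the Lelong numbers of $(f^n)^*\theta^+$ vanish everywhere as well. As $\theta^+$ then charges no curve, Proposition \ref{pullback lelong 2} applied to $f^n$ forces $\pair{\alpha^+}{f^n(p)}=\pair{\theta^+}{f^n(p)}=0$ for every $p\in I_{f^n}$. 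Since $\cE^-_\infty=\bigcup_n f^n(I_{f^n})$ and $\alpha^+$ is nef, every irreducible component $C$ of such a curve $f^n(p)$ satisfies $\pair{\alpha^+}{C}=0$.

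Next I would deduce that $D^2<0$ for every nonzero effective $D\in\div(\cE^-_\infty)$. Indeed $\pair{\alpha^+}{D}=0$ by the previous step, so, since $(\alpha^+)^2=0$ and $\alpha^+\neq 0$ is nef, the Hodge index theorem gives $D^2\leq 0$, with equality only if $D$ is a multiple of $\alpha^+$; but a nonzero effective $D$ cannot be a multiple of $\alpha^+$, because $\alpha^+$ is not cohomologous to an effective divisor. Therefore $D^2<0$, and Proposition \ref{zariski} applies: $\cE^-_\infty$ has only finitely many irreducible components, and the intersection form is negative definite on it. By Grauert's contractibility criterion there is then a normal complex surface $\check X$ and a proper modification $\pi:X\to\check X$ whose exceptional set is exactly $\cE^-_\infty$, contracting each connected component of $\cE^-_\infty$ to a (possibly singular) point of the finite set $\check S\eqdef\pi(\cE^-_\infty)$ and restricting to a biholomorphism of $X\setminus\cE^-_\infty$ onto $\check X\setminus\check S$. (If $\cE^-_\infty=\emptyset$ then $\cE^-_f=\emptyset$, so $I_f=\emptyset$, $f$ is already holomorphic, and there is nothing to prove.)

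It remains to show that $\check f\eqdef\pi\circ f\circ\pi^{-1}$, a priori only meromorphic, is in fact a holomorphic self-map of $\check X$; this I expect to be the main obstacle. The key point is the invariance $f(I_f\cup\cE^-_\infty)\subset\cE^-_\infty$: on the one hand $f(p)\subset\cE^-_f\subset\cE^-_\infty$ for $p\in I_f$ by definition of $\cE^-_f$; on the other hand, for an irreducible $C\subset\cE^-_\infty$ not contracted by $f$, the $f_*$-invariance of $\div(\cE^-_\infty)$ together with Proposition \ref{curves} gives $f(C)\subset\supp(f_*C)\subset\cE^-_\infty$. Consequently the blow-up curves of $f$ over $I_f$ are all contracted by $\pi$, and working on the graph of $f$ over a neighborhood of each $p\in I_f$ one sees that $\pi\circ f:X\to\check X$ is holomorphic on all of $X$. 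Moreover, for each connected component $\Sigma$ of $\cE^-_\infty$, the set $f(\Sigma)$ is a union of curves contained in $\cE^-_\infty$ together with finitely many points, so $\pi\circ f(\Sigma)$ is finite; being also connected, it is a single point. Thus $\pi\circ f$ is a holomorphic map that is constant on every fiber of $\pi$, and since $\check X$ is normal ($\pi_*\mathcal{O}_X=\mathcal{O}_{\check X}$) it factors as $\pi\circ f=\check f\circ\pi$ for a holomorphic, dominating map $\check f:\check X\to\check X$. Over $\check X\setminus\check S$ this $\check f$ coincides with $\pi\circ f\circ\pi^{-1}$, so $\pi$ conjugates $f$ to $\check f$, and the exceptional set of $\pi$ is $\cE^-_\infty$ by construction.
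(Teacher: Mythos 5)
Your proof is correct, and its skeleton --- negative definiteness of the intersection form on $\cE^-_\infty$ via the Hodge index theorem and Proposition \ref{zariski}, contraction by Grauert's criterion, then descent of $f$ --- is the same as the paper's. You diverge in two places, though, and both are worth noting. First, you obtain $\div(\cE^-_\infty)\subset(\alpha^+)^\perp$ from the vanishing of the Lelong numbers of $(f^n)^*\theta^+$ together with Proposition \ref{pullback lelong 2}; the paper instead starts from $\pair{\alpha^+}{\cE^-_f}=0$ (item (2) of Proposition \ref{zero}) and propagates it using the proportionality of $f^*\alpha^+$ and $f_*\alpha^+$ to $\alpha^+$, applying the Hodge-index/Zariski argument to the a priori larger set $S=\bigcup_n \supp (f^*)^n D$ and then proving $S=\cE^-_\infty$. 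Second, and more substantially, your descent step is different: you show that $\pi\circ f$ is holomorphic on all of $X$ (because the total image of each $p\in I_f$ is a connected curve in $\cE^-_\infty$, hence contracted to a single point) and constant on fibers of $\pi$, and then factor through the normal surface $\check X$ by the rigidity/extension argument. The paper instead defines $\check f$ as a meromorphic map and rules out $\cE_{\check f}$ and $I_{\check f}$ by curve-chasing, which requires the additional conclusions of its Lemma \ref{nearly holo}, namely that $\div(\cE^-_\infty)$ is $f^*$-invariant and that $\cE^+_\infty\subset\cE^-_\infty$. Your route needs only the $f_*$-invariance of $\div(\cE^-_\infty)$ (to see that a non-contracted component of $\cE^-_\infty$ maps back into $\cE^-_\infty$), so it reaches holomorphy of $\check f$ by a genuinely shorter path, at the modest cost of invoking the standard fact that a continuous map of a normal surface which is holomorphic off a finite set is holomorphic.
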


We begin by showing that $\cE^-_\infty$ can be contracted.

\begin{lem}
\label{nearly holo} $\cE^-_\infty$ is a union of finitely many curves on which the intersection form is negative definite.  Moreover, $\div(\cE^-_\infty)$ is invariant under both $f^*$ and $f_*$ and we have in particular that $\cE^+_\infty\subset \cE^-_\infty$. 
\end{lem}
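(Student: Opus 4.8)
The plan is to show first that every irreducible component of $\cE^+_\infty\cup\cE^-_\infty$ is orthogonal to $\alpha^+$, then deduce finiteness and negative definiteness from Zariski's criterion, and finally derive the two invariance statements together with the inclusion $\cE^+_\infty\subset\cE^-_\infty$ by means of a backward‑orbit argument.

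\emph{Step 1: orthogonality.} The hypotheses $\lambda_1^2>\lambda_2$ and $(\alpha^+)^2=0$ allow us to apply Proposition \ref{zero}; in particular $\lambda_1=r_1$, $f^*\alpha^+=\lambda_1\alpha^+$, $f_*\alpha^+=\frac{\lambda_2}{\lambda_1}\alpha^+$, and $(f^*)^n\alpha^+=(f^n)^*\alpha^+$ for all $n$. Using this last identity and the description of the spectrum in Theorem \ref{spectral}, one checks that each iterate $f^n$ again satisfies the hypotheses of Proposition \ref{zero}, with the same nef invariant class $\alpha^+$ (now for the eigenvalue $\lambda_1^n$) of zero self-intersection. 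Applying the equivalence (1)$\Leftrightarrow$(2) of Proposition \ref{zero} to $f^n$ then gives $\pair{\alpha^+}{C}=0$ for every $C\subset\cE^-_{f^n}$, hence for every component of $\cE^-_\infty$. Likewise, if $W\subset\cE_{f^k}$ then $(f^k)_*W=0$, so $\lambda_1^k\pair{\alpha^+}{W}=\pair{(f^k)^*\alpha^+}{W}=\pair{\alpha^+}{(f^k)_*W}=0$; thus every component of $\cE^+_\infty$ is orthogonal to $\alpha^+$ as well.

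\emph{Step 2: finiteness and negative definiteness.} Let $D$ be a non-zero effective divisor supported on $\cE^+_\infty\cup\cE^-_\infty$. By Step 1, $\pair{\alpha^+}{D}=0$, so $D\in(\alpha^+)^\perp$; since $(\alpha^+)^2=0$ and $\alpha^+\neq 0$, the Hodge index theorem yields $D^2\leq 0$, with equality only if $D$ is proportional to $\alpha^+$. As $D$ is effective and non-zero while $\alpha^+$ is by hypothesis not cohomologous to an effective divisor, equality is impossible, so $D^2<0$. Proposition \ref{zariski} now shows that $\cE^+_\infty\cup\cE^-_\infty$ consists of finitely many curves and that the intersection form is negative definite on $\div(\cE^+_\infty\cup\cE^-_\infty)$, in particular on $\div(\cE^-_\infty)$.

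\emph{Step 3: invariance, the inclusion, and the obstacle.} That $\div(\cE^-_\infty)$ is $f_*$-invariant is immediate from Proposition \ref{curves}, since $f$ maps a component of $\cE^-_{f^n}$ into $\cE^-_{f^{n+1}}$ and the residual divisor lies over $\cE^-_f$. For $f^*$, Proposition \ref{curves} reduces the claim to: (a) every irreducible curve $W$ with $f(W)\in\cE^-_\infty$ lies in $\cE^-_\infty$; and (b) $\cE_f\subset\cE^-_\infty$. Both follow from one argument. Suppose $W\notin\cE^-_\infty$, with either $f(W)=C\in\cE^-_\infty$ (case (a)) or $W\in\cE_f$ (case (b)); in both cases $\pair{\alpha^+}{W}=0$, using Step 1 together with $f_*\alpha^+=\frac{\lambda_2}{\lambda_1}\alpha^+$, resp. $f^*\alpha^+=\lambda_1\alpha^+$. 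Since $W\notin\cE^-_\infty\supset\cE^-_f$, Proposition \ref{pushpull2} provides a component $W_1$ of $f^*W$ with $f(W_1)=W$; iterating, we obtain an infinite backward orbit $W=W_0,W_1,W_2,\dots$ with $f(W_{i+1})=W_i$, each $W_i$ orthogonal to $\alpha^+$ and, by the same reasoning, not in $\cE^-_\infty$. These curves must be pairwise distinct: a coincidence $W_i=W_j$ would make $W$ periodic under $f$, which in case (a) puts $W$ into $\cE^-_\infty$ by forward-invariance of $\cE^-_\infty$, and in case (b) forces $W$ to be the image of an indeterminacy point of some iterate, i.e. $W\subset\cE^-_\infty$ again. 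Infinitely many distinct curves orthogonal to $\alpha^+$ contradict Step 2, so (a) and (b) hold; hence $\div(\cE^-_\infty)$ is $f^*$-invariant, and $\cE^+_\infty\subset\cE^-_\infty$ follows by induction on $k$ (a curve contracted by $f^{k+1}$ is either contracted by $f$, hence in $\cE_f\subset\cE^-_\infty$ by (b), or maps onto a curve contracted by $f^k$, hence in $\cE^-_\infty$ by (a)). Alternatively, once $f^*$-invariance is known, the inclusion drops out of Corollary \ref{i before e again}. The main obstacle I anticipate is exactly fact (a) — accounting for \emph{all} curves that map onto a component of $\cE^-_\infty$, not just the evident ``tail'' curves — and the backward-orbit construction combined with Zariski's finiteness criterion is the device designed to handle it.
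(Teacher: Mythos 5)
Your proof is correct and rests on the same three ingredients as the paper's: orthogonality of the relevant curves to $\alpha^+$ (via Proposition \ref{zero} and the proportionality of $f^*\alpha^+$, $f_*\alpha^+$ to $\alpha^+$), the Hodge index theorem plus Proposition \ref{zariski} for finiteness and negative definiteness, and a backward-orbit/periodicity argument; the only organizational difference is that the paper obtains $f^*$-invariance by introducing the a priori $f^*$-invariant saturation $S=\bigcup_{n,D}\supp (f^*)^nD$ and then collapsing it to $\cE^-_\infty$, whereas you prove invariance of $\cE^-_\infty$ head-on. Two harmless slips: for contracted $W$ the pushforward $(f^k)_*W$ is not $0$ but an effective divisor supported on $\cE^-_{f^k}$ (the conclusion $\pair{\alpha^+}{W}=0$ survives since that divisor is orthogonal to $\alpha^+$), and the existence of a curve $W_1$ with $f(W_1)=W$ when $W\not\subset\cE^-_f$ follows from dominance and Proposition \ref{curves}, not Proposition \ref{pushpull2}.
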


\begin{proof}
If $(\alpha^+)^2 = 0$, then by Proposition \ref{zero} we have $\div(\cE^-_f) \subset (\alpha^+)^\perp$.  Moreover, both $f^*\alpha^+$ and $f_*\alpha^+$ are proportional to $\alpha^+$, so we have further that $\div(S)\subset (\alpha^+)^\perp$ where 
$$
S \eqdef \bigcup_{n\in\N,\, D\in\div(\cE^-_\infty)} \supp (f^*)^n D.
$$
As before the assumptions imply that  $D^2 < 0$ for every effective $D\in\div(S)$. Hence $S$ has finitely many irreducible components and the intersection form is negative definite on $S$.

Now $\div(S)$ is $f^*$-invariant by definition, so we will be finished once we show that $S = \cE^-_\infty$.  To this end, let $C$ be
any irreducible component of $\div(S)$.  We consider two cases.  If $C = f^k(C)$ is periodic, then since $f^n(C) \subset \cE^-_\infty$ for some $n\in\N$, we deduce that $C \subset f^n(C)\cup \dots \cup f^{n+k}(C)$ is also included in $\cE^-_\infty$.  

If $C$ is not periodic, then we may consider a (maximal) sequence of curves $C_0,\dots,C_j\subset S$ defined as follows.  Taking $C_0 = C$, we choose $C_{j+1}$ to be any curve such that $f(C_{j+1}) = C_j$.  As $C$ is not periodic, we must have $C_{j+1} \neq C_i$ for any $i\leq j$.  On the other hand, $S$ has only finitely many irreducible components, so eventually we will be unable to find the desired $C_{j+1}$.  The only alternative is that $C_j\subset \cE^-_f$.  That is, $C \subset \cE^-_{f^j} \subset \cE^-_\infty$.  Thus $S = \cE^-_\infty$.
\end{proof}

From Lemma \ref{nearly holo}, we complete the proof of Theorem \ref{holoize} as follows.  Since the intersection form is negative definite on $\cE^-_\infty$, and since this set has finitely many irreducible components,  a criterion of Grauert \cite[page 91]{BPV} implies then that there is a bimeromorphic morphism $\check\pi:X\to\check X$ of $X$ onto a singular surface $\check{X}$ with exceptional set $\cE_\pi = \cE^-_\infty$.  Each connected component of $\cE^-_\infty$ maps to a distinct point of $\check{X}$. 

Suppose now that $\check C \subset \check X$ is an irreducible curve with $\check f(C)$ a point.  Then $C = \pi(C')$ for some
irreducible $C'\subset X$ and $\pi(f(C')) = \check f(C)$.  If $f(C')$ is a point, then $C'\subset\cE_f$ and by Theorem \ref{nearly holo}
$C'\subset\cE^-_\infty$.  Hence $\pi(C')$ is a point, contrary to our choice of $C'$.  If $f(C')$ is a curve, then $f(C')$ is exceptional for $\pi$ and therefore a component of $\cE^-_\infty$.  But Theorem \ref{nearly holo} also tells us that 
$\div(\cE^-_\infty)$ is $f^*$-invariant, so it follows that $C'\leq f^*f(C')$ is itself a component of $\cE^-_\infty$.  Again we are forced to conclude that $\pi(C')$ is a point rather than $C$.  It follows that $\cE_{\check f} = \emptyset$.  One shows similarly that $I_{\check{f}}$ is empty.  Thus $\check f$ is holomorphic.
\qed\medskip

It is natural to wonder what the analogues of the above results are when we work with the $f_*$-invariant class $\alpha^-$ instead of $\alpha^+$.  

\begin{prop}  Suppose that $\lambda_1^2 > \lambda_2$.  Then 
\begin{itemize}
\item $\alpha^-$ is determined in $X$ if and only if 
$\pair{\alpha^-}{\cE_f} = 0$.
\item If $\alpha^+$ is determined in $X$ and is not cohomologous to an effective divisor, then $\alpha^-$ is also determined in $X$.
\end{itemize} 
\end{prop}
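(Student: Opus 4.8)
The plan is to reduce the ``iff'' in the first bullet to a computation with negative-definite intersection forms on exceptional loci, and then to derive the second bullet from Lemma~\ref{nearly holo} and the spectral gap encoded in Proposition~\ref{pushpull1}. Throughout, I read ``$\alpha^-$ is determined in $X$'' as the statement dual to Proposition~\ref{zero}(4): for every proper modification $\pi:\hat X\to X$, with $\hat f$ the induced map, one has $\hat f_*(\pi^*\alpha^-)=r_1\pi^*\alpha^-$ (equivalently $\pi^*\alpha^-$ is the incarnation on $\hat X$ of the $f_*$-eigenclass; this matches the language of \cite{BFJ}). Note $r_1^2\ge\lambda_1^2>\lambda_2$, so Theorem~\ref{spectral} and Proposition~\ref{zero} are available.

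\emph{Algebraic reduction and the ``if'' direction.} Fix a modification $\pi:\hat X\to X$. Taking the adjoint of $f^*=\pi_*\hat f^*\pi^*$ gives $f_*=\pi_*\hat f_*\pi^*$, and $\pi^*\pi_*\gamma=\gamma+(\text{divisor on }\cE_\pi)$ (apply Proposition~\ref{pushpull2} to $\pi^{-1}$), whence
$$
r_1\,\pi^*\alpha^-=\pi^* f_*\alpha^-=\hat f_*\pi^*\alpha^-+D_\pi ,
$$
with $D_\pi$ supported on $\cE_\pi$; moreover $D_\pi$ is effective since $\pair{\hat f_*\pi^*\alpha^-}{C}=\pair{\pi^*\alpha^-}{\hat f^*C}\ge 0$ for every $C\subset\cE_\pi$ ($\pi^*\alpha^-$ nef, $\hat f^*C$ effective by Proposition~\ref{curves}). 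As the form is negative definite on $\cE_\pi$, $D_\pi=0$ iff $\pair{D_\pi}{C}=-\pair{\alpha^-}{\pi_*\hat f^*C}=0$ for all $C\subset\cE_\pi$. Finally $\pi_*\hat f^*C$ is an effective divisor supported on $\cE_f$: by Proposition~\ref{curves} every irreducible $B$ in $\hat f^*C$ is either contracted by $\hat f$ or satisfies $\hat f(B)=C\subset\cE_\pi$, so in either case $f(\pi(B))=\pi(\hat f(B))$ is a point, i.e. $\pi(B)\subset\cE_f$ whenever it is a curve. Hence $\alpha^-$ is determined in $X$ iff $\pair{\alpha^-}{\pi_*\hat f^*C}=0$ for all such $\pi$ and $C$, and each of these numbers is a nonnegative combination of the numbers $\pair{\alpha^-}{C'}\ge 0$, $C'\subset\cE_f$. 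In particular, if $\pair{\alpha^-}{\cE_f}=0$ then all $D_\pi$ vanish, so $\alpha^-$ is determined.

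\emph{The ``only if'' direction.} Conversely, suppose $\pair{\alpha^-}{C'}>0$ for some irreducible $C'\subset\cE_f$, and set $q=f(C')$. Blowing up $q$ together with finitely many infinitely near points one obtains a modification $\pi:\hat X\to X$ for which the strict transform $B$ of $C'$ is no longer contracted by $\hat f$; since $f(\pi(B))=q$ is a point, $\hat f(B)$ is then an irreducible component $C$ of $\cE_\pi$. By Proposition~\ref{curves} $B$ occurs in $\hat f^*C$ with positive multiplicity, and $\pi_*B=C'$, so $\pair{\alpha^-}{\pi_*\hat f^*C}>0$, i.e. $D_\pi\ne 0$ and $\alpha^-$ is not determined. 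I expect the main technical obstacle to be precisely this ``de-collapsing'' step: verifying that blowing up over $q=f(C')$ turns $C'$ into a curve that surjects onto an exceptional component. This is a local computation (factor out the common divisor of the coordinate functions of $f$ along $C'$ and iterate point blowups over $q$), dual to the mechanism in the proof of Proposition~\ref{pullback lelong 2}; it terminates because $f$ is generically finite.

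\emph{The second bullet.} Assume $\alpha^+$ is determined in $X$ and not cohomologous to an effective divisor. By Proposition~\ref{zero}, $(\alpha^+)^2=0$ and $r_1=\lambda_1$, so Lemma~\ref{nearly holo} applies: $V\eqdef\operatorname{span}_\R(\cE^-_\infty)$ is finite dimensional with negative definite intersection form, is $f^*$-invariant, and contains $\cE^+_\infty\supseteq\cE_f$. Put $\norm{\gamma}^2=-\pair{\gamma}{\gamma}>0$ for $\gamma\in V\setminus\{0\}$. Proposition~\ref{pushpull1} gives $\norm{f^*\gamma}^2=\lambda_2\norm{\gamma}^2-Q(\gamma,\gamma)\le\lambda_2\norm{\gamma}^2$, so the operator $f^*|_V$ has norm $\le\sqrt{\lambda_2}$ and hence spectral radius $<\lambda_1$ (here $\lambda_1^2>\lambda_2$). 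Now the linear functional $\phi(\gamma)\eqdef\pair{\alpha^-}{\gamma}$ on $V$ satisfies $\phi(f^*\gamma)=\pair{f^*\gamma}{\alpha^-}=\pair{\gamma}{f_*\alpha^-}=\lambda_1\phi(\gamma)$, i.e. $\phi$ is an eigenvector of the transpose of $f^*|_V$ for the eigenvalue $\lambda_1$; since that transpose has spectral radius $<\lambda_1$, we conclude $\phi\equiv 0$ on $V$. In particular $\pair{\alpha^-}{\cE_f}=0$, and by the first bullet $\alpha^-$ is determined in $X$.
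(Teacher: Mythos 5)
Your architecture matches the paper's: the first bullet is the dual of the equivalence of (3) and (4) in Proposition \ref{zero}, and the second bullet combines Lemma \ref{nearly holo} with the quadratic-form inequality of Proposition \ref{pushpull1} to force $\pair{\alpha^-}{\cE_f}=0$. Your ``if'' direction is correct, and your second bullet is a clean variant of the paper's argument: the paper deduces $\lambda_1^{-n}(f^*)^nE\to 0$ for $E\subset\cE^+_\infty$ and invokes Theorem \ref{spectral}, while you observe that $\pair{\alpha^-}{\cdot}$ would otherwise be a nonzero $\lambda_1$-eigenfunctional of an operator of spectral radius at most $\sqrt{\lambda_2}<\lambda_1$; this is the same content.

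The one genuine gap is the ``only if'' direction of the first bullet. You reduce it to the claim that finitely many blowups over $q=f(C')$ make the strict transform of $C'$ dominate an exceptional component, and you justify termination only by ``$f$ is generically finite.'' That claim is true but not a formality: it amounts to showing that the image under $f$ of the divisorial valuation attached to $C'$ is again divisorial, and generic finiteness alone does not obviously prevent the sequence of infinitely near points from continuing forever. More importantly, the de-collapsing step is unnecessary. Let $\pi$ be the single blowup of $X$ at $q$, with exceptional curve $E_q$, and let $\tilde C'$ be the strict transform of $C'$. Since $\pi\circ\hat f=f\circ\pi$ off a finite set and $f$ sends the generic point of $C'$ to $q$, we get $\hat f(\tilde C')\subset E_q$. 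Hence either $\tilde C'$ dominates $E_q$, in which case Proposition \ref{curves} puts it in $\hat f^*E_q$ with multiplicity a local degree $\geq 1$, or $\tilde C'\in\cE_{\hat f}$ is contracted to a point of $E_q$, in which case it lies in the support of the divisor $D$ of Proposition \ref{curves}, again with positive coefficient. Since $\pi_*\tilde C'=C'$, $\alpha^-$ is nef and $\hat f^*E_q$ is effective, this yields $\pair{\alpha^-}{\pi_*\hat f^*E_q}\geq\mu\pair{\alpha^-}{C'}>0$ for some $\mu>0$, so $D_\pi\neq 0$ in your notation and determination fails after one blowup. With that substitution your proof is complete and coincides with the paper's intended argument.
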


\begin{proof}
The first item is proved in the same way as equivalence of (3) and (4) in Proposition \ref{zero}.  To prove the second, recall from the same Proposition that $\alpha^+$ is determined in $X$ if and only if $(\alpha^+)^2 = 0$.
Since $\alpha^+$ is not cohomologuous to an effective divisor, Lemma \ref{nearly holo} tells us that the intersection $\cE^+_\infty$ is an $f^*$-invariant subspace not containing $\alpha^+$.  Thus $\lambda_1^{-n}(f^*)^n E \to 0$ for every irreducible $E\subset \cE^+_\infty$.  From Theorem \ref{spectral} we get $\pair{\alpha^-}{\cE_f} = 0$.
\end{proof}

It does not seem likely that determination of $\alpha^-$ in $X$ is equivalent to $(\alpha^-)^2 = 0$.  There is however an implication in one direction.

\begin{prop}
Suppose that $\lambda_1^2 > \lambda_2$.  
\begin{itemize}
 \item $(\alpha^-)^2 = 0$ if and only if $f^*\alpha^- = \frac{\lambda_2}{\lambda_1}\alpha^-$.
 \item If $(\alpha^-)^2 = 0$ and $\alpha^-$ is not the class of an effective divisor, then $(\alpha^+)^2 = 0$.
\end{itemize}
\end{prop}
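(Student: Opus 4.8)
The plan is to treat the two assertions separately, each time playing the push-pull formula of \cite{DF} against the nef classes $\alpha^\pm$ and using that $f^*$ and $f_*$ preserve $\nef(X)$ and that $f_*\alpha^-=r_1\alpha^-$, where $r_1\geq\lambda_1$ is the common spectral radius of $f^*$ and $f_*$. For the equivalence, ``$\Leftarrow$'' is immediate: if $f^*\alpha^-=(\lambda_2/\lambda_1)\alpha^-$ then Proposition \ref{pushpull1} with $\alpha=\beta=\alpha^-$ gives $\frac{\lambda_2^2}{\lambda_1^2}(\alpha^-)^2=(f^*\alpha^-)^2=\lambda_2(\alpha^-)^2+Q(\alpha^-,\alpha^-)\geq\lambda_2(\alpha^-)^2$, and since $(\alpha^-)^2\geq0$ by nefness while $\lambda_1^2>\lambda_2$, we get $(\alpha^-)^2=0$. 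For ``$\Rightarrow$'', suppose $(\alpha^-)^2=0$; then $f^*\alpha^-$ is a nef class with $(f^*\alpha^-)^2\geq0$ and $\pair{f^*\alpha^-}{\alpha^-}=\pair{\alpha^-}{f_*\alpha^-}=r_1(\alpha^-)^2=0$, so the Hodge index theorem, applied to the non-zero class $\alpha^-$ of vanishing self-intersection, forces $f^*\alpha^-=c\alpha^-$ for some $c\geq0$. Feeding this back into Proposition \ref{pushpull1} yields $Q(\alpha^-,\alpha^-)=0$, hence $\pair{\alpha^-}{C}=0$ for every irreducible $C\subset\cE^-_f$; the cohomological form of Proposition \ref{pushpull2} then gives $f_*f^*\alpha^-=\lambda_2\alpha^-+E^-(\alpha^-)$ with $E^-(\alpha^-)=\sum\pair{\alpha^-}{F_i}F_i=0$, i.e. $f_*(c\alpha^-)=\lambda_2\alpha^-$, so $c=\lambda_2/r_1$. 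What remains is to identify $r_1$ with $\lambda_1$, which I come back to below.

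For the second statement assume moreover that $\alpha^-$ is not cohomologous to an effective divisor. By the first part $f^*\alpha^-=(\lambda_2/r_1)\alpha^-$ and $\pair{\alpha^-}{C}=0$ for $C\subset\cE^-_f$; since $f^*$ then preserves $\R\alpha^-$, a short induction using Proposition \ref{curves} and the $f_*$-invariance of $\div(\cE^-_\infty)$ propagates this to $\pair{\alpha^-}{C}=0$ for every irreducible $C\subset\cE^-_\infty$ (the vanishing holds on $\cE^-_f$ and is preserved under the $f_*$-transport that generates $\cE^-_\infty$). Hence $\div(\cE^-_\infty)$ lies in $(\alpha^-)^\perp$, on which the intersection form is negative semi-definite with radical $\R\alpha^-$; since $\alpha^-$ is not effective, every non-zero effective element of $\div(\cE^-_\infty)$ has strictly negative self-intersection, so Proposition \ref{zariski} applies: $\cE^-_\infty$ has finitely many irreducible components and the form is negative definite on the finite-dimensional, hence closed, subspace $W:=\mathrm{span}\,\div(\cE^-_\infty)$. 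Now suppose, for contradiction, that $(\alpha^+)^2>0$. Proposition \ref{pushpull2} gives $f_*\alpha^+=(\lambda_2/r_1)\alpha^++v$ with $v=r_1^{-1}E^-(\alpha^+)$ effective and supported on $\cE^-_f$; iterating the operator $f_*$, $(f_*)^n\alpha^+=(\lambda_2/r_1)^n\alpha^++w_n$ where $w_n=\sum_{j=0}^{n-1}(\lambda_2/r_1)^{n-1-j}(f_*)^jv$ lies in $W$. On the other hand Theorem \ref{spectral} for $f_*$ gives $(f_*)^n\alpha^+=(\alpha^+)^2\,r_1^n\alpha^-+\delta_n$ with $\norm{\delta_n}=O(\lambda_2^{n/2})$. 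Dividing the two expressions by $r_1^n$ and using $r_1^2>\lambda_2$ (so $(\lambda_2/r_1^2)^n\to0$ and $(\sqrt{\lambda_2}/r_1)^n\to0$) gives $r_1^{-n}w_n\to(\alpha^+)^2\alpha^-$ inside the closed set $W$, so $\alpha^-\in W$; this contradicts negative definiteness of the form on $W$, since $(\alpha^-)^2=0$ and $\alpha^-\neq0$. Therefore $(\alpha^+)^2=0$.

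The push-pull bookkeeping is routine, and the limiting argument in the second part is, I expect, the real content; the one awkward point is the identity $r_1=\lambda_1$ in the first statement. Unlike $\alpha^+$, the class $\alpha^-$ need not have a positive closed representative with bounded potentials (Theorem \ref{thm:bounded} asks for small topological degree), so the Lelong-number argument of Proposition \ref{zero} is not available off the shelf. I would either rerun that argument with a smooth representative of $\alpha^-$---using that $(f_*\alpha^-)^2=r_1^2(\alpha^-)^2=0$ together with the push-pull formula forces $\pair{\alpha^-}{C}=0$ for $C\subset\cE_f$, which keeps iterated pushforwards of the representative Lelong-free, so that $(f^n)_*\alpha^-=(f_*)^n\alpha^-=r_1^n\alpha^-$ and $r_1\leq\lambda_1$ on comparing with $(f^n)_*\omega_X$---or else split off the case in which $\alpha^-$ is cohomologous to an effective divisor (handled as in Proposition \ref{ineffective}, via the $f_*$-analogue of Proposition \ref{zero}), the remaining case being covered by the second statement combined with Proposition \ref{zero}, which already gives $(\alpha^+)^2=0$ and hence $r_1=\lambda_1$.
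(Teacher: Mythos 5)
Your proof is correct and follows essentially the same route as the paper: Hodge index plus the push-pull formula for the first equivalence, then orthogonality of $\alpha^-$ to $\cE^-_\infty$, Zariski's criterion (Proposition \ref{zariski}) for negative definiteness, and the spectral decomposition of $f_*$ to force $\alpha^-$ into the span of $\cE^-_\infty$ if $(\alpha^+)^2>0$ — the paper phrases this last step directly (showing the span lies in $(\alpha^+)^\perp$) rather than by contradiction, but the mechanism is identical. Your attention to the $r_1$ versus $\lambda_1$ distinction is more careful than the paper, which simply writes $\lambda_1$ here; your proposed resolutions are reasonable and do not alter the structure of the argument.
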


Observe that if $\alpha^-$ is represented by an effective divisor, then one shows as in Proposition \ref{ineffective} that $\lambda_1$ and $\lambda_2/\lambda_1$ are integers.

\begin{proof}
If $f^*\alpha^- = \frac{\lambda_2}{\lambda_1}\alpha^-$, then one shows $(\alpha^-)^2 = 0$ as in the proof that (2) implies (1) in Proposition \ref{zero}.  For the reverse implication, we observe that if $(\alpha^-)^2=0$, then $\pair{f^*\alpha^-}{\alpha^-} = \pair{\alpha^-}{f_*\alpha^-} = 0$.  The Hodge Index Theorem implies $f^*\alpha^-= t\alpha^-$ for some $t\geq 0$.  From Proposition \ref{pushpull2}, we deduce that (on the level of cohomology)
$$
(t\lambda_1 - \lambda_2) \alpha^- = E^-(\alpha^-).
$$
Thus $\pair{\alpha^-}{E^-(\alpha^-)} = 0$, which according to the characterization of $\supp E^-(\alpha^-)$ in Proposition \ref{pushpull2} implies $E^-(\alpha^-) = 0.$  We conclude that $t = \lambda_2/\lambda_1$.

Continuing with the assumption $(\alpha^-)^2 = 0$, we further have from Proposition \ref{pushpull2} that $\pair{\alpha^-}{\cE_f^-} = 0$.  Since $f^*\alpha^-$ is a multiple of $\alpha^-$, we obtain more generally that
$\pair{\alpha^-}{E} = 0$ for every curve $E\subset \cE_\infty^-$.  If $\alpha^-$ is not cohomologous to an effective divisor, then we have from Proposition \ref{zariski} that the intersection form is negative on $\cE_\infty^-$.  Since
$\cE_\infty^-$ is $f_*$-invariant, we conclude from Theorem \ref{spectral} that $\pair{\alpha^+}{E} = 0$ for every
$E\subset\cE_\infty^-$, and in particular for every $E\subset\cE_f^-$.  Thus $(\alpha^+)^2 = 0$.
\end{proof}

It is well known that if $f$ is holomorphic, i.e. $I_f=\emptyset$, then $T^+$ has continuous potentials (see \cite{Sib}).  We end with the observation that $T^+$ and $T^-$ are similarly well-behaved when their self-intersections vanish.

\begin{thm}\label{pro:g+}
Suppose $f$ is $1$-stable and has small topological degree.  If  $(\alpha^+)^2=0$, then $T^+$ has bounded potentials.  If  $(\alpha^-)^2 =0$, then  both $T^+$ and $T^-$ have bounded potentials.
\end{thm}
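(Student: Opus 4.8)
The plan is to show that each of the two hypotheses forces the incremental potential $\gamma^+$ (respectively $\gamma^-$) occurring in the constructions of $T^+$ and $T^-$ to be bounded, not merely bounded above. Recall from Theorems \ref{thm:cv+} and \ref{thm:cv-} that, for smooth representatives $\theta^\pm$ of $\alpha^\pm$, one writes $\lambda_1^{-1}f^*\theta^+=\theta^++dd^c\gamma^+$ and $T^+=\theta^++dd^c g^+$ with $g^+=\sum_{j\ge0}\lambda_1^{-j}\,\gamma^+\circ f^j$, and $\lambda_1^{-1}f_*\theta^-=\theta^-+dd^c\gamma^-$ and $T^-=\theta^-+dd^c g^-$ with $g^-=\sum_{j\ge0}\lambda_1^{-j}\,f^j_*\gamma^-$. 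If $\gamma^+\in L^\infty(X)$ then, since $\lambda_1>1$, the series for $g^+$ converges uniformly with $\|g^+\|_\infty\le\frac{\lambda_1}{\lambda_1-1}\|\gamma^+\|_\infty$, so $T^+$ has bounded potentials; if $\gamma^-\in L^\infty(X)$ then, using $\sup|f_*\psi|\le\lambda_2\sup|\psi|$ together with $\lambda_2<\lambda_1$, one gets $\|g^-\|_\infty\le\frac{\lambda_1}{\lambda_1-\lambda_2}\|\gamma^-\|_\infty$, so $T^-$ has bounded potentials. Hence everything reduces to boundedness of $\gamma^+$ (resp. $\gamma^-$).

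Suppose first that $(\alpha^+)^2=0$. I would begin from $\lambda_1^{-1}f^*\theta^+=\pi_{1*}\pi_2^*\theta^+$, where $\pi_1,\pi_2\colon\Gamma\to X$ are the projections of the desingularized graph, and apply the push--pull formula \cite[Theorem 3.3]{DF} to the modification $\pi_1$, exactly as in the proof of Proposition \ref{pullback lelong 2}: this gives $\pi_1^*\pi_{1*}\pi_2^*\theta^+=\pi_2^*\theta^++E$, with $E$ an effective divisor supported on $\cE_{\pi_1}$ that vanishes as soon as $\pair{\pi_2^*\alpha^+}{C}=0$ for every irreducible $C\subset\cE_{\pi_1}$. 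For such a $C$ the curve $\pi_{2*}C$ is either $0$ or an irreducible component of $\cE^-_f$; since $(\alpha^+)^2=0$, Proposition \ref{zero}(2) applies (note that $1$-stability gives $r_1=\lambda_1$ and small topological degree gives $\lambda_1^2>\lambda_2$) and yields $\pair{\pi_2^*\alpha^+}{C}=\pair{\alpha^+}{\pi_{2*}C}=0$. Therefore $E=0$, so $\pi_1^*f^*\theta^+=\pi_2^*\theta^+$; comparing cohomology classes, $\pi_2^*\alpha^+=\lambda_1\pi_1^*\alpha^+$, and the $dd^c$-lemma on the compact K\"ahler surface $\Gamma$ provides $w\in C^\infty(\Gamma)$ with $\pi_2^*\theta^+=\lambda_1\pi_1^*\theta^++dd^c w$. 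Pushing forward by $\pi_1$ (and using $\pi_{1*}\pi_1^*=\id$) gives $f^*\theta^+=\lambda_1\theta^++dd^c(\pi_{1*}w)$; because $\pi_1$ is a modification, $\pi_{1*}w$ coincides off the finite set $I_f$ with $w$ composed with the local inverse of $\pi_1$, hence $\pi_{1*}w\in L^\infty(X)$ (compare Lemma \ref{negative1}). Comparing with $\lambda_1^{-1}f^*\theta^+=\theta^++dd^c\gamma^+$ shows $\gamma^+=\lambda_1^{-1}\pi_{1*}w$ up to an additive constant, so $\gamma^+$ is bounded and the first paragraph concludes.

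Now suppose $(\alpha^-)^2=0$. First note that $\alpha^-$ is not cohomologous to an effective divisor: otherwise, arguing as in Proposition \ref{ineffective} with $\alpha^-$ in place of $\alpha^+$, $\lambda_2/\lambda_1$ would be a positive integer, hence $\ge1$, contradicting $\lambda_2<\lambda_1$. By Theorem \ref{main 3} this forces $(\alpha^+)^2=0$, so $T^+$ has bounded potentials by the preceding paragraph; moreover $\alpha^+$ is then determined in $X$ (Proposition \ref{zero}(4)) and, by Proposition \ref{ineffective} again, not cohomologous to an effective divisor, so the earlier proposition on determination of $\alpha^-$ gives that $\alpha^-$ is determined in $X$, i.e. $\pair{\alpha^-}{\cE_f}=0$. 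One now repeats the argument of the second paragraph with $f_*$ in place of $f^*$: writing $\lambda_1^{-1}f_*\theta^-=\pi_{2*}\pi_1^*\theta^-$ and applying the push--pull formula to the modification $\pi_2$, the correction $E'$ on $\cE_{\pi_2}$ vanishes because, for $C\subset\cE_{\pi_2}$, the curve $\pi_{1*}C$ is either $0$ or --- since $\pi_2$ contracts $C$ while $f=\pi_2\circ\pi_1^{-1}$ off $I_f$ --- an irreducible curve in $\cE_f$, on which $\alpha^-$ vanishes. Hence $\pi_2^*f_*\theta^-=\pi_1^*\theta^-$, so $\pi_1^*\alpha^-=\lambda_1\pi_2^*\alpha^-$ (recall $f_*\alpha^-=\lambda_1\alpha^-$); the $dd^c$-lemma on $\Gamma$ gives $\pi_1^*\theta^-=\lambda_1\pi_2^*\theta^-+dd^c w'$ with $w'\in C^\infty(\Gamma)$, and pushing forward by $\pi_2$ yields $f_*\theta^-=\lambda_1\theta^-+dd^c(\pi_{2*}w')$ with $\pi_{2*}w'\in L^\infty(X)$. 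Thus $\gamma^-$ is bounded, and $T^-$ (as well as $T^+$) has bounded potentials.

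The only delicate points are the verifications that the push--pull corrections $E$ and $E'$ really vanish --- this is exactly where the hypotheses $(\alpha^\pm)^2=0$ enter, through Proposition \ref{zero} and the determination results --- together with the remark that small topological degree rules out the degenerate case in which $\alpha^\pm$ is itself an effective divisor. Conceptually, $(\alpha^+)^2=0$ forces $\pi_2^*\alpha^+$ and $\pi_1^*\alpha^+$ to be proportional on $\Gamma$ (and dually for $\alpha^-$), which is precisely the statement that these eigenclasses are ``determined in $X$'', and the $dd^c$-lemma turns this proportionality of classes into boundedness of $\gamma^\pm$, which is all that is required.
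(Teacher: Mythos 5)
Your reduction to boundedness of $\gamma^{\pm}$ and your treatment of the case $(\alpha^+)^2=0$ are correct, and in fact your graph computation for $f^*\theta^+$ is just an unrolled version of the paper's Lemma \ref{negative2} (the paper simply applies that lemma to $\pm\theta^+$ after noting $\pair{\alpha^+}{C}=0$ for $C\subset\cE^-_f$). The chain you use to get $\pair{\alpha^-}{\cE_f}=0$ from $(\alpha^-)^2=0$ (ruling out effectivity via small topological degree, passing to $(\alpha^+)^2=0$, then using the determination results) is also sound.

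The gap is in the $T^-$ argument: you apply the modification push--pull identity $\pi_2^*\pi_{2*}\eta=\eta+E'$ to $\pi_2$, but $\pi_2:\Gamma\to X$ is \emph{not} a modification when $\lambda_2>1$; it is a generically $\lambda_2$-to-$1$ map, and $\pi_2^*\pi_{2*}$ does not reproduce $\eta$ up to an exceptional divisor (it mixes the $\lambda_2$ sheets). The claimed conclusion $\pi_2^*f_*\theta^-=\pi_1^*\theta^-$ is already false in cohomology: using $f^*\alpha^-=\tfrac{\lambda_2}{\lambda_1}\alpha^-$ one computes $\pair{\lambda_1\pi_2^*\alpha^-}{\pi_1^*\omega_X}=\lambda_1\pair{f^*\alpha^-}{\omega_X}=\lambda_2$, whereas $\pair{\pi_1^*\alpha^-}{\pi_1^*\omega_X}=1$, so the two classes differ as soon as $\lambda_2>1$. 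The repair is easy and is exactly what the paper does: once you know $\pair{\alpha^-}{C}=0$ for every $C\subset\cE_f$, apply the pushforward half of Lemma \ref{negative2} to both $\theta^-$ and $-\theta^-$ to get $\gamma^-$ bounded above and below. The proof of that lemma handles the non-invertibility correctly by factoring $f=\pi\circ h$ with $h$ collapsing no curves (so $h_*\theta^-$ has bounded local potentials) and $\pi$ an honest modification to which the push--pull argument applies; your symmetric graph argument cannot substitute for this factorization.
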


\begin{proof}
Let $\theta^+, \gamma^+, g^+$ be as in the proof of Theorem \ref{thm:cv+}.  When $(\alpha^+)^2 = 0$, we have $\pair{\alpha^+}{C} = 0$ for every $\cE^-_f$.  So applying Proposition \ref{negative2} to $\pm\theta^+$ tells us that $\gamma^+$ is bounded.  From this it is easy to see that the sequence defining $g^+$ is uniformly convergent on $X$.  Thus $T^+$ has a bounded potentials.  When $(\alpha^-)^2 = 0$, the reasoning is similar for $T^-$, 
and $(\alpha^-)^2 = 0$ implies $(\alpha^+)^2 = 0$.
\end{proof}

\bibliographystyle{alpha}
\bibliography{refs3}
\end{document}